\numberwithin{equation}{section}
\theoremstyle{definition}
\theoremstyle{plain}
\newtheorem{thm}{Theorem}
\newtheorem{prop}{Proposition}[section]
\newtheorem{lem}[prop]{Lemma}
\newtheorem{Cor}[prop]{Corollary}
 \newtheorem{Rmk}[prop]{Remark} 
\def\bC {\mathbb{C}}
 \def\N {\mathbb{N}}
\def\R {\mathbb{R}}
\def\Sp {\mathbb{S}}
\def\T {\mathbb{T}}
\def\Z {\mathbb{Z}}
\def\b {{\beta}}
\def\eps {{\varepsilon}}
\def\e {{\varepsilon}}
\newcommand{\Div}{\operatorname{div}}
\newcommand{\ba}{\begin{aligned}}
\newcommand{\ea}{\end{aligned}}
\newcommand{\be}{\begin{equation}}
\newcommand{\ee}{\end{equation}}
\numberwithin{equation}{section}
\begin{document}
\author{{\sc Isabelle Gallagher}}
\address{DMA, \'Ecole normale sup\'erieure, CNRS, PSL   University, 75005 Paris, France
and UFR de math\'ematiques, Universit\'e Paris-Diderot, Sorbonne Paris-Cit\'e, 75013 Paris, France
E-mail: {\tt isabelle.gallagher@ens.fr}}
\author{{\sc Isabelle Tristani}}
\address{D\'epartement de math\'ematiques et applications, \'Ecole normale sup\'erieure, CNRS, PSL   University, 45 rue d'Ulm, 75005 Paris, France
E-mail: {\tt isabelle.tristani@ens.fr}}
\title[]{On the convergence of  smooth solutions from     Boltzmann   to     Navier-Stokes}
\begin{abstract}
In this work, we are interested in the link between strong solutions of the Boltzmann and the Navier-Stokes equations. To justify this connection, our main idea is to use information on the limit system (for instance the fact that the Navier-Stokes equations are globally wellposed in two space dimensions or when the data are small). In particular we prove   that the life span of the solutions to the rescaled Boltzmann equation is bounded from below by that of the Navier-Stokes system.   We deal with general initial data in the whole space in dimensions 2 and 3, and also with well-prepared data in the case of periodic boundary conditions. 
\end{abstract}
\maketitle


\section{Introduction}
In this paper, we are interested in the link between the Boltzmann and Navier-Stokes equations. The problem of deriving hydrodynamic equations from the Boltzmann equation   goes back to Hilbert~\cite{Hilbert} and can be seen as an intermediate step in the problem of deriving macroscopic equations from microscopic ones, the final goal being to obtain a unified description of gas dynamics including all the different scales of description. The first justifications of this type of limit (mesoscopic to macroscopic equations) were formal and based on asymptotic expansions,  given by Hilbert~\cite{Hilbert} and Chapman-Enskog~\cite{ChapEns}. Later on, Grad introduced a new formal method to derive hydrodynamic equations from the Boltzmann equation in~\cite{Gradhydro} called the moments method. 

The first convergence proofs based on asymptotic expansions were given by Caflisch~\cite{Caflisch} for the compressible Euler equation. The idea here was to justify the limit up to the first singular time for the limit equation. In this setting, let us also mention the paper by Lachowicz~\cite{Lachowicz} in which more general initial data are treated and also the paper by De Masi, Esposito and Lebowitz~\cite{DeMasi-Esposito-Lebowitz} in which roughly speaking, it is proved that in the torus, if the Navier-Stokes equation has a smooth solution on some interval $[0,T_*]$, then there also exists a solution to the rescaled Boltzmann equation on this interval of time. Our main theorem is actually reminiscent of this type of result, also in the spirit of~\cite{BMN,CDGG,Grenier,Schochet}: we try to use information on the limit system (for instance the fact that the Navier-Stokes equations are globally wellposed in two space dimensions) to obtain results on the life span of solutions to the rescaled Boltzmann equation. We would like to emphasize here that in our result, if the solution to the limit equation is global (regardless of its size), then, we are able to construct a global solution to the Boltzmann equation, which is not the case in the aforementioned result. Moreover, we treat both the case of the torus and of the whole space. 

Let us also briefly recall some convergence proofs based on spectral analysis, in the framework of strong solutions close to   equilibrium introduced by Grad~\cite{Grad} and Ukai~\cite{Ukai} for the Boltzmann equation. They  go back to Nishida~\cite{Nishida} for the compressible Euler equation (this is a local in time result) and this type of proof was also developed for the incompressible Navier-Stokes equation by Bardos and Ukai~\cite{Bardos-Ukai} in the case of smooth global solutions in three space dimensions, the initial velocity field being taken small. These results use the description of the spectrum of the linearized Boltzmann equation performed by Ellis and Pinsky in~\cite{Ellis-Pinsky}. In~\cite{Bardos-Ukai}, Bardos and Ukai only treat the case of the whole space, with a smallness assumption on the initial data which allows them to work with global solutions in time. In our result, no smallness assumption is needed and we can thus treat the case of non global in time solutions to the Navier-Stokes equation. We would also like to emphasize that Bardos and Ukai also deal with the case of ill-prepared data but their  result is not strong up to~$t=0$ contrary to the present work  (where the strong convergence holds in an averaged sense in time).

More recently, Briant in~\cite{Briant}  and Briant, Merino-Aceituno and Mouhot in~\cite{Briant-Mouhot-Merino} obtained  convergence to equilibrium results    for the rescaled Boltzmann equation uniformly in the rescaling parameter using hypocoercivity and ``enlargement methods", that enabled them to weaken the assumptions on the data down to Sobolev spaces with polynomial weights.

Finally, let us mention that this problem has been extensively studied in the framework of weak solutions, the goal being to obtain solutions for the fluid models from   renormalized solutions introduced by Di Perna and Lions in~\cite{DiPerna-Lions} for the Boltzmann equation. We shall not make an extensive presentation of this program as it is out of the realm of this study, but let us mention that it was started by Bardos, Golse and Levermore at the beginning of the nineties in~\cite{bgl1,bgl2} and was continued by those  authors, Saint-Raymond, Masmoudi, Lions among others. We mention here  a (non exhaustive) list of papers which are part of this program: see~\cite{GSR1,GSR2,Levermore-Masmoudi,lionsmasmoudi,Saint-Raymond}.

\subsection{The models}
We start by introducing the Boltzmann equation which models the evolution of a rarefied gas through the evolution of the density of particles $f=f(t,x,v)$ which depends on time $t \in \R^+$, position $x \in \Omega$ and velocity $v \in \R^d$ when only binary collisions are taken into account. We take $\Omega $ to be the~$d$-dimensional unit periodic box~${\mathbb T}^d$  (in which case the functions we shall consider will be assumed to be mean free) or the whole space~$  \R^d$~in dimension $2$ or $3$. We  focus here  on hard-spheres   collisions (our proof should be adaptable to the case of hard potentials with cut-off). The Boltzmann equation reads:
$$
\partial_t f + v \cdot \nabla_x f = {1 \over \eps} Q(f,f)
$$
where $\eps$ is the Knudsen number which is the inverse of the average number of collisions for each particle per unit time and $Q$ is the Boltzmann collision operator. It is defined as
$$
Q(g,f):=\int_{\R^d \times \Sp^{d-1}} |v-v_*| \left[g'_*f' - g_* f \right] \, d\sigma \, dv_* \,.
$$ 
Here and below, we are using the shorthand notations $f=f(v)$, $g_*=g(v_*)$, $f'=f(v')$ and $g'_*=g(v'_*)$. In this expression,  $v'$,~$v'_*$ and~$v$, $v_*$ are the velocities of a pair of particles before and after collision.  More precisely we parametrize
the  solutions to the conservation of momentum and energy (which are the physical laws of elastic collisions): 
$$
v+v_*=v'+v'_* \, ,
$$
$$
|v|^2+|v_*|^2=|v'|^2+ |v'_*|^2\, ,
$$
so that the pre-collisional velocities are given by 
$$
v':=\frac{v+v_*}{2} + \frac{|v-v_*|}{2}\, \sigma \,, \quad v'_*:=\frac{v+v_*}{2} - \frac{|v-v_*|}{2}\, \sigma\, , \quad \sigma \in \Sp^{d-1} \, .
$$
Taking $\eps$ small has the effect of enhancing the role of collisions and thus when $\eps \to 0$, in view of Boltzmann~$H$-theorem, the solution looks more and more like a local thermodynamical equilibrium. 
As suggested in previous works~\cite{bgl1}, we consider the following rescaled Boltzmann equation in which an additional dilatation of the macroscopic  time scale has been done in order to be able to reach the Navier-Stokes equation in the limit:
\begin{equation}\label{scaledboltzmann}
\partial_t f^\e + \frac1\e v \cdot \nabla_x f^\e = \frac1{\e^2} Q(f^\e,f^\e) \quad \mbox{in} \quad \R^+ \times  \Omega \times {\mathbb R}^d \, .
\end{equation}
It is a well-known fact that global equilibria of the Boltzmann equation are local Maxwellians in velocity. In what follows, we only consider the following global normalized Maxwellian defined by
$$
M(v):=\frac1{(2\pi)^\frac d2} e^{-\frac{|v|^2}2} \, .
$$
To relate the Boltzmann equation to the incompressible Navier-Stokes equation, we look at equation~\eqref{scaledboltzmann} under the following linearization of order $\eps$:
$$
f^\e(t,x,v)= M(v) + \e M^\frac12(v) g^\e(t,x,v)\, .
$$
Let us recall that taking $\eps$ small in this linearization corresponds to taking a small Mach number, which enables one to get in the limit the incompressible Navier-Stokes equation. If~$f^\e$ solves~(\ref{scaledboltzmann}) then equivalently~$g^\e$ solves 
\begin{equation}\label{eqgeps}
\partial_t g^\e + \frac1\e v \cdot\nabla_x g^\e = \frac1{\e^2}  Lg^\e +  \frac1\e\Gamma(g^\e,g^\e) \quad \mbox{in} \quad \R^+ \times  \Omega\times {\mathbb R}^d\, 
\end{equation}
with
\begin{equation} \label{defoperator}
\begin{aligned}
 Lh & :=M^{-\frac12}\big(
Q(M,M^\frac12 h) + Q(M^\frac12 h,M)
\big)\\
 \mbox{and} \quad \Gamma (h_1,h_2)  &:=\frac12M^{-\frac12}
\big(
Q(M^\frac12 h_1,M^\frac12 h_2) + Q(M^\frac12 h_2,M^\frac12 h_1)\big)\, .
\end{aligned}
\end{equation}
In the following we shall denote by~$
\Pi_{L}   $
  the orthogonal projector onto $\operatorname{Ker} L$ . It is well-known that 
$$
\operatorname{Ker} L = \hbox{Span} \big(M^\frac12, v_1M^\frac12, \dots, v_dM^\frac12, |v|^2M^\frac12\big) \,.
$$ 
Appendix~\ref{appendixbotlz} collects a number of well-known results on the Cauchy problem for~(\ref{eqgeps}).

\subsection{Notation}  
Before stating the convergence result, let us define the functional setting we shall be working with.  For any real number~$\ell\geq 0$, the space~$H^\ell_x$ (which we sometimes denote by $H^\ell$ or~$H^\ell(\Omega)$) is the space of functions defined on~$\Omega$ such that
$$
\|f\|_{H^\ell_x}^2:=\int_{\R^d} \langle \xi\rangle^{2\ell} |\widehat f (\xi)|^2\, d\xi <\infty \quad \text{if} \quad \Omega = \R^d \, ,
$$
or
$$
\|f\|_{H^\ell_x}^2:=\sum_{\xi \in \Z^d} \langle \xi\rangle^{2\ell} |\widehat f (\xi)|^2 <\infty \quad \text{if} \quad \Omega = \T^d \,,
$$
where~$\widehat f $ is the Fourier transform of~$f$ in $x$ with dual variable $\xi$ and where
$$
\langle \xi\rangle^{2 }:= (1+|\xi|)^2\,.
$$
We shall sometimes note~${\mathcal F}_x f$ for~$\widehat f$. We also recall the definition of homogeneous Sobolev spaces (which are Hilbert spaces for~$s<d/2$), defined through the norms
$$
 \|f\|_{ \dot H^s(\R^d)}^2:= \int_{\R^d} | \xi |^{2s}   |\widehat f(\xi)|^2 \, d\xi  \quad \mbox{and} \quad    \|f\|_{ \dot H^s(\T^d))}^2:=\sum_{\xi \in \Z^d} | \xi |^{2s}   |\widehat f(\xi)|^2 \, .
 $$ 
 In the case when~$ \Omega = \T^d$ we further make the assumption that the functions under study are mean free.
 Note that for mean free functions defined on~$\T^d$,  homogeneous and inhomogeneous norms are equivalent.
 We also define~$W^{\ell,\infty}_x$  (or~$W^{\ell,\infty}$ or~$W^{\ell,\infty}(\Omega)$)  the space of functions defined on~$\Omega$ such that
$$
\|f\|_{W^{\ell,\infty}_x} :=\sum_{|\alpha| \le \ell} \sup_{x \in \Omega}|\partial^\alpha_x f(x)|<\infty \,  ,
$$

We set, for any real number~$k$
$$
L^{\infty,k}_v :=\Big\{f = f(v) \, / \, \langle v\rangle^k f \in L^\infty (\R^d)\Big\}
$$
endowed with the norm
$$
\|f\|_{L^{\infty,k}_v}:=\sup_{v\in\R^d} \,   \langle v\rangle^k |f(v) | \,.
$$
The following spaces will be of constant use:
\begin{equation} \label{Xellk}
X^{\ell,k}:=\Big\{f = f(x,v) \, / \,\|f(\cdot,v)\|_{H^\ell_x}\in L^{\infty,k}_v, \, \, \sup_{|v| \ge R} \langle v\rangle^k \|f(\cdot,v)\|_{H^\ell_x} \xrightarrow[R \to \infty]{}0\Big\}
\end{equation}
{(note that the $R\to\infty$ property included in this definition is here to ensure the continuity property of the semi-group generated by the non homogeneous linearized Boltzmann operator~\cite{Ukai})} and we set
$$
\|f\|_{\ell,k}:=\sup_{v\in\R^d} \,  \langle v\rangle^k \big\|f(\cdot,v)\big\|_{H^\ell_x}\,.
$$

\subsection{Main result} Let us now present our main result, which
states that the  hydrodynamical limit of~(\ref{scaledboltzmann}) as~$\e$ goes to zero is   the Navier-Stokes-Fourier system associated with the Boussinesq equation   which writes
\begin{equation}\label{NSF}
\left\{
\begin{aligned}
\partial_t u + u \cdot \nabla u- \mu_1 \Delta u &= - \nabla p \\
\partial_t \theta  + u \cdot \nabla \theta- \mu_2 \Delta \theta &= 0 \\
\mbox{div}\, u & = 0 \\
 \nabla(\rho+\theta) &= 0 \, .
\end{aligned}
\right.
\end{equation}
In this system~$\theta$ (the temperature),~$\rho$ (the density) and~$p$ (the pressure) are scalar unknowns and~$u$ (the velocity) is a~$d$-component unknown  vector field. The pressure can actually be eliminated from the equations by applying to the momentum equation the projector~${\mathbb P}$ onto the space of divergence free vector fields. This projector is bounded over~$H^\ell_x$ for all~$\ell$, and in~$L^p_x$ for all~$1<p<\infty$.  To define the viscosity coefficients, let us introduce the two unique functions $\Phi$ (which is a matrix function) and $\Psi$ (which is a vectorial function) orthogonal to~$\operatorname{Ker} L$ such that 
$$
M^{-\frac12}L\big(M^{\frac12} \Phi\big) =  {|v|^2\over d } {\rm{Id}} -v\otimes v 
\quad \text{and} \quad 
M^{-\frac12}L\big(M^{\frac12} \Psi\big) =v\Big(\frac{d+2}{2}-{|v|^2\over 2}\Big) \, .
$$
The viscosity coefficients are then defined by
$$ 
\begin{aligned}
\mu_1 :=\frac{1}{(d-1)(d+2)}\int\Phi : L\big(M^{\frac12}\Phi\big) M^{\frac12} \, dv \quad \text{and} \quad
\mu_2  :=\frac{2}{d(d+2)} \int \Psi \cdot L\big(M^{\frac12}\Psi\big) M^{\frac12} \, dv  \, .
\end{aligned}
$$
Before stating our main results, let us mention that  Appendix~\ref{appendixNS}
provides some useful results on the Cauchy problem for~(\ref{NSF}).  
\begin{thm}\label{mainthmwp}
 Let~$\ell>d/2$ and~$k>d/2 +1$ be given and consider~$(\rho_{\rm in},u_{\rm in},\theta_{\rm in})$ in~$ H^\ell
 (\Omega)$ {if~$\Omega \neq \R^2$ and in $ H^\ell
 (\Omega) \cap L^1(\Omega)$ if $\Omega = \R^2$}. If $\Omega= \T^d$, we furthermore assume that~$\rho_{\rm in},u_{\rm in},\theta_{\rm in}$ are mean free. Define   
\begin{equation}\label{relationbarnotbar}
\bar \rho_{\rm in}:= \frac2{d+2} \rho_{\rm in}- \frac{d}{d+2} \theta_{\rm in} \, , \quad \bar u_{\rm in} = \mathbb{P} u_{\rm in}, \quad \bar \theta_{\rm in} := - \bar \rho_{\rm in} \,.
\end{equation}
 Let~$(\rho,u,\theta)$
be the unique solution to~{\rm(\ref{NSF})} associated with the initial data~$(\bar \rho_{\rm in},\bar u_{\rm in},\bar \theta_{\rm in})$ on a time interval~$[0,T]$. Set
 \begin{equation}\label{defg0dataperiodi}
\bar g_{\rm in}(x,v):=M^\frac12(v)\Big(\bar \rho_{\rm in} (x) + \bar u_{\rm in}(x) \cdot v + \frac12(|v|^2- d) \bar \theta_{\rm in} (x)\Big)\, ,
\end{equation}
and define on~$[0,T] \times \Omega \times \R^d$
\begin{equation}\label{defg0solution}
g
(t,x,v) := M^\frac12(v)\Big(\rho (t,x) + u (t,x) \cdot v + \frac12(|v|^2- d) \theta(t,x) \Big)\, .
\end{equation}
\noindent $\bullet$ $ $ {\it The well prepared case: } 
Assume~$\Omega = \T^d$ or~$\R^d$, $d= 2,3$.  There is~$\e_0>0$ such that for all~$\eps \le \eps_0$  there is a unique solution~$g^\e$ to~{\rm(\ref{eqgeps})} in~{$L^\infty([0,T],X^{\ell,k})$} with initial data~$\bar g_{\rm in}$, and it satisfies 
\begin{equation}\label{limgepsg0wp}
\lim_{\eps\to 0} \,   \big\|
g^\eps-g
\big\|_{L^\infty([0,T],X^{\ell,k})} = 0 \, .
\end{equation}
Moreover, if the solution $(\rho,u,\theta)$ to~\eqref{NSF} is defined on $\R^+$,  then~$\eps_0$ depends only on the initial data and not on~$T$ and there holds
$$
\lim_{\eps\to 0}  \big\|
 g^\eps-g  \big\|_{L^\infty(\R^+,X^{\ell,k})} = 0 \, .
$$ 
\noindent $\bullet$ $ $ {\it The ill prepared case: }  Assume~$\Omega = \R^d$, $d= 2,3$.  For   all initial data~$g_{\rm in}$ in~$X^{\ell,k}$ satisfying
$$
\begin{aligned}
\rho_{\rm in}(x)= \int_{\R^d} g_{{\rm in}} (x,v)M^\frac12 (v) \, dv \, , \quad  u_{\rm in} (x)= \int_{\R^d} v\,  g_{{\rm in}} (x,v)M^\frac12 (v) \, dv\, , \\
 \theta_{\rm in} (x)= {1 \over d} \int_{\R^d} (|v|^2-d) g_{{\rm in}} (x,v)M^\frac12 (v) \, dv \, ,
\end{aligned}
$$
there is~$\e_0>0$ such that  for all~$\e\leq \e_0$ there is a unique solution~$g^\e$ to~{\rm(\ref{eqgeps})} in~{$L^\infty([0,T],X^{\ell,k})$} with initial data~$g_{\rm in}$.
It satisfies for all~$p>2/(d-1)$
\begin{equation}\label{limgepsg0ip}
\lim_{\eps\to 0} \,   \big\|
g^\eps-g
\big\|_{L^\infty([0,T],X^{\ell,k}) + {L^p(\R^+,L^{\infty,k}_v(W^{\ell,\infty}_x{+ H^\ell_x})(\R^d ))}} = 0 \, .
\end{equation}
Moreover, if the solution $(\rho,u,\theta)$ to~\eqref{NSF} is defined on $\R^+$, then~$\eps_0$ depends only on the initial data and not on~$T$ and there holds
$$
\lim_{\eps\to 0} \,  \big\|
 g^\eps-g 
\big\|_{L^\infty(\R^+,X^{\ell,k}) +L^p(\R^+,L^{\infty,k}_v(W^{\ell,\infty}_x{+ H^\ell_x})(\R^d )) }= 0 \, .
$$ 
 \end{thm}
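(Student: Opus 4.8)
\emph{Overall strategy.} The idea is to use the Navier--Stokes--Fourier solution as a reference object: one builds from it an approximate solution $g^{\mathrm{app}}_\e$ of (\ref{eqgeps}), writes $g^\e=g^{\mathrm{app}}_\e+R^\e$, and controls the remainder $R^\e$ on $[0,T]$ by a Grönwall estimate whose constant is dictated \emph{only} by norms of $(\rho,u,\theta)$ over $[0,T]$ --- so that the life span of $g^\e$ (and, when $(\rho,u,\theta)$ is global, the global existence and convergence) is inherited from (\ref{NSF}) via its Cauchy theory (Appendix~\ref{appendixNS}). The approximate solution is obtained by a Chapman--Enskog/Hilbert expansion truncated at low order. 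Since $Lg=0$ pointwise in $(t,x)$, since $\Gamma(\cdot,\cdot)$ is valued in $(\Ker L)^\perp$, and since $\Pi_{L}(v\cdot\nabla_x g)=0$ --- the last fact being exactly the two constraints $\Div u=0$ and $\nabla(\rho+\theta)=0$ of (\ref{NSF}) --- the residual of $g$ in (\ref{eqgeps}) is of size $O(1/\e)$ and lies in $(\Ker L)^\perp$; one removes it by the corrector $\e\,g^{(1)}$ with $g^{(1)}:=L^{-1}(I-\Pi_{L})\big(v\cdot\nabla_x g-\Gamma(g,g)\big)$, which is expressed through $\Phi$ and $\Psi$, and one adds if necessary a further corrector $\e^2 g^{(2)}$; the momentum and temperature equations of (\ref{NSF}), with the coefficients $\mu_1$ and $\mu_2$, are precisely the $\Pi_{L}$-orthogonality (solvability) condition that makes this expansion consistent at the next order. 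In the ill-prepared case one adds, moreover, an acoustic corrector $g^{\mathrm{osc}}_\e$, solving the linear equation $\partial_t g^{\mathrm{osc}}_\e=\big(\frac1{\e^2}L-\frac1\e v\cdot\nabla_x\big)g^{\mathrm{osc}}_\e$ with the non--well-prepared part of $g_{\mathrm{in}}$ as datum, so that $g^{\mathrm{app}}_\e(0)=g_{\mathrm{in}}$ up to $O(\e)$. The resulting consistency error $\mathcal S_\e$ is $O(\e)$ in $L^\infty_tX^{\ell,k}$ away from $t=0$, and $O(\e)$ in a time-integrated norm near $t=0$; here one uses the full regularity and the $L^2_tH^{\ell+1}$ parabolic smoothing of $(\rho,u,\theta)$ from Appendix~\ref{appendixNS}, the initial layer (where the correctors are only in $H^{\ell-1}_x$) being treated separately using $\int_0^\tau\|(\rho,u,\theta)(s)\|_{H^{\ell+1}}^2\,ds\to 0$ as $\tau\to 0$.

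\emph{The remainder estimate (main point).} The equation for $R^\e:=g^\e-g^{\mathrm{app}}_\e$ reads
\begin{equation*}
\partial_t R^\e+\frac1\e v\cdot\nabla_x R^\e-\frac1{\e^2}LR^\e-\frac2\e\Gamma(g^{\mathrm{app}}_\e,R^\e)=\frac1\e\Gamma(R^\e,R^\e)-\mathcal S_\e,\qquad R^\e(0)=O(\e)\ \text{in}\ X^{\ell,k}.
\end{equation*}
The term that looks singular, $\frac2\e\Gamma(g^{\mathrm{app}}_\e,R^\e)$, is in fact harmless: because $\Gamma$ is valued in $(\Ker L)^\perp$, in an energy estimate it pairs only with $(I-\Pi_{L})R^\e$, so that $\frac1\e\|g^{\mathrm{app}}_\e\|_{\ell,k}\|(I-\Pi_{L})R^\e\|_{\ell,k}\|R^\e\|_{\ell,k}\le\frac\lambda{4\e^2}\|(I-\Pi_{L})R^\e\|_{\ell,k}^2+\frac C\lambda\|g^{\mathrm{app}}_\e\|_{\ell,k}^2\|R^\e\|_{\ell,k}^2$ by Young, and the first term is absorbed into the dissipation $\frac\lambda{\e^2}\|(I-\Pi_{L})R^\e\|_{\ell,k}^2$ produced by the spectral gap of $L$; the transport term drops by skew-symmetry. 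The same mechanism handles $\frac1\e\Gamma(R^\e,R^\e)$, leaving only a quartic term $\|R^\e\|_{\ell,k}^4$ free of inverse powers of $\e$. Carrying this out rigorously in the $L^\infty_v$-weighted, $H^\ell_x$-based spaces $X^{\ell,k}$ --- via the Ukai splitting of the semigroup of $\frac1{\e^2}L-\frac1\e v\cdot\nabla_x$ into a regularizing part and an exponentially damped part, together with the hypocoercive and bilinear estimates of Appendix~\ref{appendixbotlz} --- yields a differential inequality
\begin{equation*}
\frac{d}{dt}\|R^\e\|_{\ell,k}^2\le C\big(1+\|g^{\mathrm{app}}_\e\|_{\ell,k}^2\big)\|R^\e\|_{\ell,k}^2+\|\mathcal S_\e\|_{\ell,k}\|R^\e\|_{\ell,k}+C\|R^\e\|_{\ell,k}^4,
\end{equation*}
with additional, controllable contributions of $g^{\mathrm{osc}}_\e$ in the ill-prepared case. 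A continuity argument exploiting that $\e$ is small (to absorb the quartic term) and then Grönwall's lemma bound $\|R^\e\|_{L^\infty([0,T],X^{\ell,k})}$ by a quantity tending to $0$ with $\e$, the Grönwall constant depending on $T$ only through $\int_0^T\big(1+\|(\rho,u,\theta)(s)\|_{H^\ell}^2\big)\,ds$, which is finite by Appendix~\ref{appendixNS}. Running the same estimates on the Picard scheme based on that semigroup gives existence of $g^\e$ throughout $[0,T]$, and uniqueness follows by applying the linearized estimate to the difference of two solutions.

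\emph{Limit, ill-prepared case, and global statement.} In the well-prepared case $g^\e-g=\e g^{(1)}+R^\e$ (plus $\e^2g^{(2)}$ if that corrector was used), and all terms tend to $0$ in $L^\infty([0,T],X^{\ell,k})$ --- the correctors thanks to the regularity of $(\rho,u,\theta)$ and the control of $\|(\rho,u,\theta)(t)\|_{H^{\ell+1}}$ near $t=0$ that follows from the data lying in $H^\ell$ --- which is (\ref{limgepsg0wp}). In the ill-prepared case the only term not tending to $0$ in $L^\infty_t X^{\ell,k}$ is $g^{\mathrm{osc}}_\e$; on $\R^d$ it disperses, and uniform-in-$\e$ Strichartz estimates for the rescaled acoustic semigroup give its decay in $L^p(\R^+,L^{\infty,k}_v(W^{\ell,\infty}_x+H^\ell_x))$ precisely for $p>2/(d-1)$, while a non-resonance (time-averaging) analysis shows that its quadratic self-interactions and interactions with $g$ also vanish in the limit; this gives (\ref{limgepsg0ip}) and accounts for the exclusion of the torus, where the acoustic part does not disperse. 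Finally, if $(\rho,u,\theta)$ is global --- which, in the present range of dimensions, happens in $d=2$ (the $L^1(\R^2)$ assumption forcing enough time-decay of the Navier--Stokes solution that $\int_0^\infty\|(\rho,u,\theta)\|_{H^\ell}^2\,dt<\infty$) or for small data in $d=3$ --- the Grönwall constant above is uniform in $T$, hence $\e_0$ may be chosen independently of $T$ and all the convergences hold on $\R^+$.

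\emph{Main obstacle.} I expect the bulk of the work to be the second step: the hypocoercive absorption that makes the $\frac1\e$ terms harmless is transparent in an $L^2$ setting, but it has to be run in the $L^\infty_v$-weighted spaces $X^{\ell,k}$, which forces the use of the Ukai decomposition of the semigroup, of uniform-in-$\e$ bilinear estimates for $\Gamma$ in these spaces, and of a careful accounting of the balance between the regularity index $\ell$, the velocity weight $k$, and the powers of $\e$ --- the goal being that every constant end up controlled by a quantity furnished by the Navier--Stokes Cauchy theory, uniformly in $T$ when $(\rho,u,\theta)$ is global. A second substantial difficulty, confined to the ill-prepared case on $\R^d$, is to combine these estimates with the dispersive (Strichartz) bounds and to carry out the resonance bookkeeping for the fast acoustic oscillations.
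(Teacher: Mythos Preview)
Your proposal follows the classical Hilbert--expansion route (Caflisch, De~Masi--Esposito--Lebowitz): build correctors $\e g^{(1)}+\e^2 g^{(2)}$ and an acoustic layer $g^{\mathrm{osc}}_\e$, then close a Gr\"onwall estimate on the remainder. The paper takes a genuinely different path. It never introduces Chapman--Enskog correctors and never writes a differential inequality; instead it works entirely with the Duhamel formulation, decomposes the semigroup $U^\e(t)$ via the Ellis--Pinsky spectral analysis, and applies a fixed-point lemma (Lemma~\ref{cacciopoli+}) directly to the equation satisfied by $g^{\e,\eta}-g^\eta-\overline\delta^{\e,\eta}$. The ``large linear term'' $2\Psi^\e(g^\eta,\,\cdot\,)$ that would spoil the contraction is tamed not by spectral-gap absorption but by introducing an exponential weight $e^{-\lambda\int_0^t\|\overline g^{\e,\eta}\|^r}$ in the function space; choosing $\lambda$ large makes the linear part contractive (Proposition~\ref{linear}). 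The loss of regularity that in your scheme appears through $g^{(1)}\sim\nabla_x g$ is handled in the paper by an auxiliary smoothing of the data ($g^\eta_{\rm in}\in\mathcal S$), with the resulting errors controlled uniformly via the stability of~(\ref{NSF}).

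There is a real gap in your argument. The core step---``a differential inequality $\frac{d}{dt}\|R^\e\|_{\ell,k}^2\le\ldots$ with the $\frac1\e$ terms absorbed by the spectral-gap dissipation $\frac\lambda{\e^2}\|(I-\Pi_L)R^\e\|_{\ell,k}^2$''---does not make sense in $X^{\ell,k}$, which is $L^\infty_v$--based and carries no inner product: there is no energy identity, no spectral-gap coercivity $\langle LR,R\rangle\le-\lambda\|(I-\Pi_L)R\|^2$, and no skew-symmetric transport cancellation in that topology. You acknowledge this and invoke the ``Ukai splitting of the semigroup'', but that device produces \emph{integral} bounds of the form $\|U^\e(t)\|_{X^{\ell,k}\to X^{\ell,k}}\lesssim 1$ and $\|\frac1\e U^\e(t)(I-\Pi_L)\|\lesssim t^{-1/2}$ (Lemmas~\ref{lem:Uepscont}--\ref{lem:decayWeps}), not a differential inequality; once you pass to the integral formulation the Gr\"onwall structure disappears and you are essentially back to the paper's fixed-point framework---except that you now also have to control the correctors. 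And those correctors cost regularity: $g^{(1)}$ needs $(\rho,u,\theta)\in H^{\ell+1}$ to lie in $X^{\ell,k}$, and $g^{(2)}$ needs $H^{\ell+2}$; your treatment of this (``initial layer where the correctors are only in $H^{\ell-1}_x$'') is too vague to give convergence in $L^\infty_t X^{\ell,k}$ for data merely in $H^\ell$. The paper sidesteps both issues by (i) comparing $g^\e$ directly to $g$ without intermediate correctors, pushing all the $\e$-smallness into $(U^\e-U)\bar g_{\rm in}$ and $(\Psi^\e-\Psi)(g,g)$ via Ellis--Pinsky (Lemma~\ref{lem:UepstoU0}, Lemma~\ref{estimatePsiepslimitPsi0Xellk}), and (ii) regularizing the data by $\eta$.
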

 Notice that the last assumption (that  the solution $(\rho,u,\theta)$ to~\eqref{NSF} is defined on $\R^+$) always holds when~$d=2$ and is also known to hold for small data in dimension 3 or without any smallness assumption in some cases (see examples in~\cite{cg1} in the periodic case, \cite{cg} in the whole space for instance): see Appendix~\ref{appendixNS}
 for more on~(\ref{NSF}).
  \begin{Rmk} 
We choose   initial data for~{\rm(\ref{eqgeps})}
 which does not depend on~$\eps$, but it is easy
 to modify the proof if the initial data is a family
 depending on~$\eps$, as long as it is compact in~$X^{\ell,k}$. 
   \end{Rmk}
  \begin{Rmk} 
{In the case of $\R^2$, we have made the additional assumption that our initial data lie in $L^1(\Omega)$. Actually, it would be enough to suppose that the projection onto the kernel of~$L$ is in $L^1(\Omega)$.}
  \end{Rmk}
  \begin{Rmk} 
Let us mention that if we work with smooth data, we can obtain a rate of convergence of $\eps^\frac12$ in~\eqref{limgepsg0wp} and~\eqref{limgepsg0ip} -- which is probably not the optimal rate.
  \end{Rmk}
  
 \begin{Rmk}
  As noted in~\cite{luzhang}, the original solution to the Boltzmann equation, constructed as~$f^\e(t,x,v)= M(v) + \e M^\frac12(v) g^\e(t,x,v)$, is nonnegative under our assumptions, as soon as the initial data is nonnegative (which is an assumption that can be made in the statement of Theorem~\ref{mainthmwp}).  
  \end{Rmk} 

 The proof of the theorem mainly relies on a fixed point argument, which enables us to prove that the equation satisfied by the difference~$h^\e$ between the solution~$g^\e$  of the Boltzmann equation and its expected limit~$g$  does have a solution (which is arbitrarily small) as long as~$g$ exists. In order to develop this fixed point argument, we have to filter the unknown~$h^\e$  by some well chosen exponential function which depends on the solution of the Navier-Stokes-Fourier equation. This enables us to obtain a contraction estimate. Let us also point out that the analysis of the operators that appear in the equation on~$h^\e$ is akin to the one made by Bardos and Ukai~\cite{Bardos-Ukai} and it relies heavily on the Ellis and Pinsky decomposition~\cite{Ellis-Pinsky}.
In the case of ill-prepared data,  the  fixed point argument needs some adjusting. 
Indeed the linear propagator consists in two classes of operators, one of which vanishes identically when applied to well-prepared case, and in general decays to zero in an averaged sense in time due to   dispersive properties.
 Consequently, we choose to  apply the fixed point theorem not to~$h^\e$ but to the difference between~$h^\e$ and those dispersive-type remainder terms.  This induces some additional terms to estimate, which turn out to  be harmless thanks to their dispersive nature.

 \noindent\textbf{Acknowledgments.} The authors thank Fran\c{c}ois Golse for his valuable advice. The second author thanks the ANR EFI:  ANR-17-CE40-0030. 
 
  \section{Main steps of the proof  of  Theorem~\ref{mainthmwp}}
\subsection{Main reductions}
Given~$g_{\rm in} \in X^{{\ell},k}$, the classical Cauchy theory on the Boltzmann equation recalled in Appendix~\ref{appendixbotlz} states that there is a time~$T^\eps$ and a  unique solution~$g^\e $ in~$ \mathcal{C}^0([0,T^\e],X^{{\ell},k})$ to~{\rm(\ref{eqgeps})} associated with the data~$g_{\rm in}$.  The proof of  Theorem~\ref{mainthmwp}   consists   in proving that the life span of~$g^\eps$ is actually at least that of the limit system~(\ref{NSF}) by proving the convergence result~(\ref{limgepsg0wp}).  Our proof is based on a fixed point argument of the following type.
\begin{lem}
\label{cacciopoli+}
  Let~$X$ be a Banach space, let~${\mathcal L}$ be a  continuous linear map
from~$X$ to~$X$,  and let~${\mathcal B}$ be a bilinear map from~$X\times X$ to~$X$.
Let us define
\[
\|{\mathcal L}\|  := \sup_{\|x\|=1} \|{\mathcal L}x\|\quad\hbox{and}\quad
\|{\mathcal B}\| := \sup_{\|x\|=\|y\|=1} \|{\mathcal B}(x,y)\| \, .
\]
If~$\|{\mathcal L}\| <1$, then for any~$x_{0}$ in~$X$ such that
\[
\|x_{0}\|_{X}< \frac{(1-\|{\mathcal L}\| )^2} {4\|{\mathcal B}\| } 
\]
the equation
\[
x=x_{0}+{\mathcal L}x+{\mathcal B}(x,x)
\]
has a unique solution in the ball of center~$0$ and
radius~$\displaystyle \frac {1-\|{\mathcal L}\| }{2\|{\mathcal B}\|} $  and there is a constant~$C_0$ such that
$$
\|x\|\le C _0\|x_0\| \, .
$$
\end{lem}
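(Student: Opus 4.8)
### Proof plan for Lemma \ref{cacciopoli+}

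The plan is to run the standard Banach fixed point argument on the map $\Phi(x) := x_0 + \mathcal{L}x + \mathcal{B}(x,x)$, carefully tracking the role of $\|\mathcal{L}\|$ in the stability and contraction estimates. First I would set $r := \frac{1-\|\mathcal{L}\|}{2\|\mathcal{B}\|}$ and let $\bar B_r$ denote the closed ball of radius $r$ centered at $0$ in $X$; the goal is to show $\Phi$ maps $\bar B_r$ into itself and is a contraction there. For stability: if $\|x\| \le r$ then, using bilinearity and the definitions of the operator norms,
\begin{equation*}
\|\Phi(x)\| \le \|x_0\| + \|\mathcal{L}\|\,\|x\| + \|\mathcal{B}\|\,\|x\|^2 \le \|x_0\| + \|\mathcal{L}\| r + \|\mathcal{B}\| r^2.
\end{equation*}
Since $\|\mathcal{B}\| r^2 = \frac{(1-\|\mathcal{L}\|)^2}{4\|\mathcal{B}\|} \cdot \frac{1}{1}$... more precisely $\|\mathcal{B}\|r^2 = \frac{(1-\|\mathcal{L}\|)^2}{4\|\mathcal{B}\|}$ and $\|\mathcal{L}\| r = \frac{\|\mathcal{L}\|(1-\|\mathcal{L}\|)}{2\|\mathcal{B}\|}$, one checks that $\|\mathcal{L}\|r + \|\mathcal{B}\|r^2 = \frac{(1-\|\mathcal{L}\|)(1+\|\mathcal{L}\|)}{4\|\mathcal{B}\|} + \frac{\|\mathcal{L}\|(1-\|\mathcal{L}\|)}{4\|\mathcal{B}\|}\cdot 2$, and after simplification this equals $r - \|x_0\|_{\max}$ where the hypothesis $\|x_0\| < \frac{(1-\|\mathcal{L}\|)^2}{4\|\mathcal{B}\|}$ is exactly what guarantees $\|x_0\| + \|\mathcal{L}\|r + \|\mathcal{B}\|r^2 \le r$. (The clean way: $r - \|\mathcal L\| r - \|\mathcal B\| r^2 = r(1-\|\mathcal L\|) - \|\mathcal B\|r^2 = \frac{(1-\|\mathcal L\|)^2}{2\|\mathcal B\|} - \frac{(1-\|\mathcal L\|)^2}{4\|\mathcal B\|} = \frac{(1-\|\mathcal L\|)^2}{4\|\mathcal B\|} > \|x_0\|$.) Hence $\Phi(\bar B_r) \subseteq \bar B_r$.

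Next, for contraction: for $x,y \in \bar B_r$, write $\mathcal{B}(x,x) - \mathcal{B}(y,y) = \mathcal{B}(x-y,x) + \mathcal{B}(y,x-y)$, so
\begin{equation*}
\|\Phi(x) - \Phi(y)\| \le \|\mathcal{L}\|\,\|x-y\| + \|\mathcal{B}\|\big(\|x\| + \|y\|\big)\|x-y\| \le \big(\|\mathcal{L}\| + 2\|\mathcal{B}\| r\big)\|x-y\|.
\end{equation*}
Since $2\|\mathcal{B}\|r = 1 - \|\mathcal{L}\|$, the contraction constant is $\|\mathcal{L}\| + (1-\|\mathcal{L}\|) = 1$, which is borderline and not strictly less than $1$. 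To fix this I would instead work in the slightly smaller closed ball $\bar B_{r'}$ with $r' = \frac{1-\|\mathcal{L}\|}{2\|\mathcal{B}\|}\cdot\theta$ for a suitable $\theta \in (0,1)$, or — cleaner — observe that the smallness hypothesis on $\|x_0\|$ is strict, so one can pick $r'' < r$ with $\|x_0\| \le r''(1-\|\mathcal L\|) - \|\mathcal B\|(r'')^2$ still holding and $2\|\mathcal B\| r'' < 1-\|\mathcal L\|$ strictly; then $\Phi$ is a genuine contraction with ratio $<1$ on the complete metric space $\bar B_{r''}$, and Banach's fixed point theorem yields a unique fixed point $x$ there. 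Uniqueness in the full ball of radius $r$ follows by a separate direct argument: if $x,\tilde x$ are two solutions with $\|x\|,\|\tilde x\| \le r$, then $\|x - \tilde x\| \le (\|\mathcal L\| + \|\mathcal B\|(\|x\|+\|\tilde x\|))\|x-\tilde x\|$; if either norm is $<r$ the factor is $<1$ and we are done, and the boundary case is handled by noting a fixed point on the sphere would force $\|x_0\| = \frac{(1-\|\mathcal L\|)^2}{4\|\mathcal B\|}$, contradicting strictness.

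Finally, for the bound $\|x\| \le C_0\|x_0\|$: from the fixed point equation, $\|x\| \le \|x_0\| + \|\mathcal{L}\|\|x\| + \|\mathcal{B}\|\|x\|^2$, so $(1-\|\mathcal{L}\|)\|x\| \le \|x_0\| + \|\mathcal{B}\|\|x\|^2$; using $\|x\| \le r = \frac{1-\|\mathcal{L}\|}{2\|\mathcal{B}\|}$ gives $\|\mathcal{B}\|\|x\|^2 \le \frac{1-\|\mathcal{L}\|}{2}\|x\|$, whence $\frac{1-\|\mathcal{L}\|}{2}\|x\| \le \|x_0\|$, i.e. $\|x\| \le \frac{2}{1-\|\mathcal{L}\|}\|x_0\|$, so $C_0 = \frac{2}{1-\|\mathcal{L}\|}$ works. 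I do not expect any real obstacle here — this is the classical Picard-type lemma; the only point requiring a little care is the borderline contraction constant, which is why the strict inequality in the hypothesis on $\|x_0\|$ matters and must be exploited rather than the non-strict version.
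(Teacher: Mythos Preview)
Your argument is correct and is exactly the standard Picard-type proof one would expect. Note that the paper does not actually supply a proof of this lemma: it is stated as a classical abstract fixed point result and used as a black box, so there is nothing to compare your approach against beyond confirming it is the usual one. One small cleanup for your write-up: in the uniqueness step you can bypass the case distinction entirely by first observing (as you implicitly do in the final bound) that any fixed point in $\bar B_r$ must satisfy $\|x\| \le \frac{2}{1-\|\mathcal L\|}\|x_0\| < r$ strictly, so the borderline case $\|x\|=\|\tilde x\|=r$ never occurs and the contraction factor $\|\mathcal L\|+\|\mathcal B\|(\|x\|+\|\tilde x\|)$ is automatically $<1$.
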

We are now going to give a formulation of the problem which falls within this framework. To this end, let us introduce the integral formulation of~(\ref{eqgeps})
 \begin{equation}\label{integralformulation}
g^\e(t) =U^\eps(t)  g _{{\rm in}}    +    \Psi^\e(t) \big(g^\e  ,g^\e\big) 
\end{equation}
where~$U^\eps(t) $ denotes the semi-group associated with~$\displaystyle - \frac1\e v \cdot\nabla_x   + \frac1{\e^2}  L  $ (see~\cite{Ukai, Bardos-Ukai} as well as  Appendix~\ref{specBoltz}) and where
\begin{equation}\label{defPsiepsh1h2}
 \Psi^\e(t)  (f_1,f_2) :=\frac1\e \int_0^t U^\e (t-t') \Gamma \big(f_1(t'),f_2(t')\big) \, dt' \, ,
\end{equation}
with~$\Gamma$ defined in~(\ref{defoperator}). It follows from the results and notations recalled in Appendix~\ref{specBoltz}
 (in particular Remark~\ref{defPsi}) that given~$\bar  g_{{\rm in}} \in X^{{\ell},k}$   of the form~(\ref{defg0dataperiodi})
the function~$g$ defined in~(\ref{defg0solution}) satisfies
$$
g(t) =U (t)\bar    g_{{\rm in}}    +    \Psi(t)  (g  ,g  ) \, .
$$
It will be useful in the following to assume that~$   g_{{\rm in}} $  and~$ \bar  g_{{\rm in}} $ are  as smooth and decaying as necessary in~$x$. So we consider   families~$ ( \rho^\eta_{{\rm in}}, u^\eta_{{\rm in}},\theta^\eta_{{\rm in}})_{\eta \in (0,1)} $ in the Schwartz class~$ {\mathcal S}_x$, as well as~$  ( g^\eta_{{\rm in}})_{\eta \in (0,1)} $ and~$  (\bar  g^\eta_{{\rm in}})_{\eta \in (0,1)} $ related by
\begin{equation}\label{bargeta}
\bar g^\eta_{\rm in}(x,v)=M^\frac12(v)\Big(\bar \rho^\eta_{\rm in} (x) + \bar u^\eta_{\rm in}(x) \cdot v + \frac12(|v|^2- d) \bar \theta^\eta_{\rm in} (x)\Big)
\end{equation}
with~$ (\bar \rho^\eta_{{\rm in}}, \bar u^\eta_{{\rm in}},\bar\theta^\eta_{{\rm in}}) $ defined by notation~(\ref{relationbarnotbar}), with
\begin{equation}\label{bargetageta}
\begin{aligned}
\rho^\eta_{\rm in}(x)= \int_{\R^d} g^\eta_{{\rm in}} (x,v)M^\frac12 (v) \, dv \, , \quad  u^\eta_{\rm in} (x)= \int_{\R^d} v\,  g^\eta_{{\rm in}} (x,v)M^\frac12 (v) \, dv\, , \\
 \theta^\eta_{\rm in} (x)= {1 \over d} \int_{\R^d} (|v|^2-d) g^\eta_{{\rm in}} (x,v)M^\frac12 (v) \, dv \, ,
\end{aligned}
\end{equation}
and such that
\begin{equation} \label{deltaineta}
\begin{aligned}
 \forall\, \eta \in (0,1) \, , \quad g^\eta_{{\rm in}} \, , \bar  g^\eta_{{\rm in}}\in {\mathcal S}_{x,v} \quad \mbox{and} \quad  \|    \delta^\eta_{{\rm in}}  \|_{\ell,k} +\|  \bar\delta^\eta_{{\rm in}}  \|_{\ell,k} \leq \eta \, , \\
    \mbox{with} \quad \delta^\eta_{{\rm in}}  := g^\eta_{{\rm in}}  -   g _{{\rm in}} 
\, \quad    \mbox{and} \quad \bar   \delta^\eta_{{\rm in}}  :=\bar   g^\eta_{{\rm in}}  -  \bar   g _{{\rm in}}  \,.
\end{aligned}
\end{equation}
If $\Omega = \R^2$, we furthermore assume, recalling that~$( \rho_{{\rm in}}, u_{{\rm in}},\theta_{{\rm in}})$ belong to~$H^\ell_x \cap L^1_x$, that  
\begin{equation} \label{deltainetaR2}
\|    \delta^\eta_{{\rm in}}  \|_{L^2_vL^1_x} \le \eta \, .
\end{equation}
Thanks to the stability of the Navier-Stokes-Fourier equation recalled in Appendix~\ref{appendixNS}
we know that
\begin{equation}\label{eqgeta}
g^\eta(t) :=U (t)\bar    g^\eta_{{\rm in}}    +    \Psi(t)  (g^\eta  ,g^\eta  )
\end{equation}
satisfies  
\begin{equation}\label{estimatebardeltaepseta0}
\lim_{\eta\to0}\big\|g^\eta - g\big\|_{L^\infty([0,T],X^{\ell,k})}=0 \, ,
\end{equation}
uniformly in~$T $ if the solution~$g$ is global.
Moreover
setting
\begin{equation}\label{defgepseta}
g^{\eps,\eta} :=g^\e + \delta^{\eps,\eta}  \, , \quad  \delta^{\eps,\eta} (t) :=  U^\eps(t)   \delta^\eta_{{\rm in}}
\end{equation}
there holds
\begin{equation}\label{eqgepseta}
g^{\eps,\eta}(t) =U^\eps(t)  g^\eta _{{\rm in}}    +    \Psi^\e(t) \big(g^{\eps,\eta}-  \delta^{\eps,\eta}  ,g^{\eps,\eta}-   \delta^{\eps,\eta} \big) \, .
\end{equation}
Thanks to~(\ref{deltainetaR2}) and the continuity  of~$U^\eps(t) $   recalled in Lemma~\ref{lem:Uepscont} we know that
  \begin{equation}\label{estimatedeltaepseta0}
 \| \delta^{\eps,\eta}   \|_{L^\infty(\R^+,{X}^{\ell,k})}\lesssim \eta
\end{equation} 
hence with~(\ref{estimatebardeltaepseta0}) it is enough to prove the convergence results~(\ref{limgepsg0wp}) and~(\ref{limgepsg0ip}) with~$ g^\eps $ and~$g$ respectively replaced by~$ g^{\eps,\eta} $ and~$ g^{\eta} $  (the parameter~$\eta$ will be converging to zero uniformly in~$\e$).  Indeed we have the following inequality
 $$
\|g^\eps-g\|_{L^\infty ([0,T],{X}^{\ell,k})} \le \|\delta^{\eps,\eta}\|_{L^\infty (\R^+,{X}^{\ell,k})} + \|g-g^\eta\|_{L^\infty ([0,T],{X}^{\ell,k})} + \|g^{\eps,\eta}-g^\eta\|_{L^\infty ([0,T],{X}^{\ell,k})} \, ,
$$
which is uniform in time if~$  g  _{{\rm in}}  $ (and hence also~$  g^\eta  _{{\rm in}}  $ if~$\eta$ is small enough, thanks to Proposition~\ref{propNSF}) generates a global solution to the limit system.
In order to achieve this goal let us now write the equation satisfied by~$ g^{\eps,\eta}-g^{\eta}  $. Our plan is   to conclude thanks to Lemma~\ref{cacciopoli+}, however there are two difficulties in this strategy. First,  linear terms   appear in the equation on~$ g^{\eps,\eta}-g^{\eta} $, whose operator norms are of the order of   norms of~$g^\eta$  which are not small -- those linear operators therefore do not satisfy the assumptions of Lemma~\ref{cacciopoli+}. 
 In order to circumvent this difficulty we shall introduce weighted Sobolev spaces, where the weight is exponentially small in~$g^\eta$ in order for the linear operator to become a contraction. The second difficulty in the ill-prepared case is that  the linear propagator~$U^\eps-U$ acting on the initial data can be decomposed into several orthogonal operators (as explained in Appendix~\ref{specBoltz}), some of which
 vanish in the well-prepared case only, and are dispersive (but not small in the energy space) in the  ill-prepared case. These terms need to be removed from~$ g^{\eps,\eta}-g^{\eta}$ if one is to apply the fixed point lemma  in the energy space.
 All these reductions are carried out in the following lemma, where we prepare the problem so as to apply Lemma~\ref{cacciopoli+}. 
\begin{lem} \label{lem:reduction}
Let~$r >4$ and~$\lambda \geq 0$ be given. With the notation introduced in Lemma~{\rm\ref{EllisPinsky}} Remark~{\rm\ref{defU}}
set
$$\overline\delta^{\eps,\eta}  (t)  :=U^\eps_{\rm{disp}}(t) g^\eta_{{\rm in}} + U^{\eps \sharp}(t)g^\eta_{{\rm in}} \quad \mbox{and}\quad\widetilde \delta^{\eps,\eta} (t)   := U^\eps(t) (g_{\rm in}^\eta - \bar g^\eta_{{\rm in}}) - \overline\delta^{\eps,\eta} (t)  \,.$$
Finally set $$\overline g^{\eps,\eta}:=g^\eta + \overline\delta^{\eps,\eta}$$ and define $h^{\e,\eta}_\lambda$ as the solution of the equation
\begin{equation}\label{equationhepsetalambda}
h^{\e,\eta}_\lambda(t) = \mathcal{D}_\lambda^{\e}(t)   +  \mathcal{L}_\lambda^{\e}(t) h^{\e,\eta}_\lambda(t) + \Phi^\e_\lambda(t)  (h^{\e,\eta}_\lambda ,h^{\e,\eta}_\lambda   ) 
 \end{equation}
where (dropping the dependence on~$\eta$ on the operators to simplify) we have written
 \begin{equation}\label{defopslambda}
\begin{aligned}
 \mathcal{D}_\lambda^\e (t)  & := e^{-\lambda \int_0^t \|\overline g^{\eps,\eta}(t')\|_{\ell,k}^r\, dt' \, 
}  \mathcal{D}^\e  (t)   \\
 \mathcal{D}^\e  (t) 
 & :=  \widetilde \delta^{\eps,\eta} + \big( U^\eps (t)   - U (t)\big)\bar g^\eta_{{\rm in}} +  \big(\Psi^\eps (t) -  \Psi(t) \big)  (g^\eta  ,g^\eta  )
\\
 &  \quad + 2 \Psi^\eps (t) \Big({g^\eta+\frac 12 \overline\delta^{\eps,\eta}}- \delta^{\eps,\eta}  , \overline\delta^{\eps,\eta}\Big) +\Psi^\eps (t) 
 \big( \delta^{\eps,\eta}  -2g^\eta ,  \delta^{\eps,\eta} \big) \\
 \mathcal{L}^\e _\lambda (t) h& :=  2\Psi_\lambda^{\e}(t)  ( \overline g^{\eps,\eta}-  \delta^{\eps,\eta}   ,h  )  \quad\mbox{with}  \\
  \Psi_\lambda^{\e }(t) (h_1 ,h_2) &:=\frac1\e 
  \int_0^t e^{- \lambda \int_{t'}^t \|\overline g^{\eps,\eta}(t'')\|_{\ell ,k}^r\, dt''
 } U^\eps(t-t')\Gamma (h_1 ,h_2) (t')\, dt'  \quad  \mbox{and}  
\\
  \Phi_\lambda^{\e }(t) (h_1,h_2) &:=\frac1\e e^{\lambda \int_0^t \|\overline g^{\eps,\eta}(t')\|_{\ell,k}^r\, dt' }
  \int_0^t e^{-2\lambda \int_{t'}^t \|\overline g^{\eps,\eta}(t'')\|_{\ell,k}^r\, dt''
 }    \\
&\qquad\qquad\qquad\qquad\qquad\qquad\times  U^\eps(t-t')\Gamma (h_1,h_2) (t')\, dt'
   \, .
 \end{aligned}
 \end{equation}
 Then to prove Theorem~{\rm\ref{mainthmwp}}, it is enough to prove the following convergence results: In the well-prepared case, for~$\lambda$ large enough
 $$
 \lim_{\eta\to 0} \lim_{\eps\to 0} \,   \big\|
h^{\eps,\eta}_\lambda
\big\|_{L^\infty([0,T],X^{\ell,k})} = 0   $$
 and in the ill-prepared case for all~$p>2/(d-1)$ and for~$\lambda$ large enough
 $$
 \lim_{\eta\to 0} \lim_{\eps\to 0} \,   \big\| h^{\eps,\eta}_\lambda
\big\|_{L^\infty([0,T],X^{\ell,k})+ {L^p(\R^+,L^{\infty,k}_v(W^{\ell,\infty}{+ H^\ell_x})(\R^d )))}} = 0  \, ,
 $$
 where the convergence is uniform in~$T$ if~$\bar g_{\rm{in}}^\eta$ gives rise to a global  unique solution.
 \end{lem}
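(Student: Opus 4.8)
The plan is to treat Lemma~\ref{lem:reduction} as a bookkeeping statement: I will derive the integral equation solved by $g^{\eps,\eta}-\overline g^{\eps,\eta}$, conjugate it by the exponential weight built on $\overline g^{\eps,\eta}$ to obtain exactly~\eqref{equationhepsetalambda}, and finally check that the smallness of the conjugated unknown $h^{\eps,\eta}_\lambda$ forces the two convergence statements of Theorem~\ref{mainthmwp}. All of this takes place on the (a priori short) time interval on which $g^\eps$, hence $g^{\eps,\eta}$, is already known to exist by the Cauchy theory of Appendix~\ref{appendixbotlz}; the extension of that interval up to $[0,T]$ is then a consequence of the quantitative control obtained afterwards via the blow-up criterion.

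First I would subtract~\eqref{eqgeta} from~\eqref{eqgepseta}. Since by definition $\overline\delta^{\eps,\eta}(t)+\widetilde\delta^{\eps,\eta}(t)=U^\eps(t)(g^\eta_{\rm in}-\bar g^\eta_{\rm in})$, this replaces $U^\eps(t)g^\eta_{\rm in}$ by $U^\eps(t)\bar g^\eta_{\rm in}$ and leaves, for $g^{\eps,\eta}-\overline g^{\eps,\eta}$, the sum $\widetilde\delta^{\eps,\eta}+(U^\eps(t)-U(t))\bar g^\eta_{\rm in}+\Psi^\eps(t)(g^{\eps,\eta}-\delta^{\eps,\eta},g^{\eps,\eta}-\delta^{\eps,\eta})-\Psi(t)(g^\eta,g^\eta)$. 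Writing $g^{\eps,\eta}-\delta^{\eps,\eta}=g^\eta+(\overline\delta^{\eps,\eta}-\delta^{\eps,\eta})+(g^{\eps,\eta}-\overline g^{\eps,\eta})$, expanding $\Psi^\eps(t)(\cdot,\cdot)$ by bilinearity and symmetry of $\Gamma$, and sorting the monomials by their degree in $g^{\eps,\eta}-\overline g^{\eps,\eta}$, I recognize: the degree-zero part, together with $\widetilde\delta^{\eps,\eta}$, $(U^\eps-U)\bar g^\eta_{\rm in}$ and $(\Psi^\eps-\Psi)(g^\eta,g^\eta)$, as $\mathcal D^\eps$; the degree-one part as $2\Psi^\eps(t)(\overline g^{\eps,\eta}-\delta^{\eps,\eta},\,\cdot\,)$; the degree-two part as $\Psi^\eps(t)(\,\cdot\,,\,\cdot\,)$. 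Matching this with~\eqref{defopslambda} is only bookkeeping of the $\tfrac12$ and of the signs of the $\delta^{\eps,\eta}$-monomials. Then, with $m^{\eps,\eta}(t):=\int_0^t\|\overline g^{\eps,\eta}(t')\|_{\ell,k}^r\,dt'$ and $h^{\eps,\eta}_\lambda:=e^{-\lambda m^{\eps,\eta}}(g^{\eps,\eta}-\overline g^{\eps,\eta})$, I multiply the equation by $e^{-\lambda m^{\eps,\eta}(t)}$ and substitute $g^{\eps,\eta}(t')-\overline g^{\eps,\eta}(t')=e^{\lambda m^{\eps,\eta}(t')}h^{\eps,\eta}_\lambda(t')$ inside the time integrals; the scalar factors that appear, $e^{-\lambda(m^{\eps,\eta}(t)-m^{\eps,\eta}(t'))}$ in the linear term and $e^{\lambda m^{\eps,\eta}(t)}e^{-2\lambda(m^{\eps,\eta}(t)-m^{\eps,\eta}(t'))}$ in the quadratic one, are precisely the kernels of $\Psi^\eps_\lambda$, $\mathcal L^\eps_\lambda$ and $\Phi^\eps_\lambda$ once one notes $m^{\eps,\eta}(t)-m^{\eps,\eta}(t')=\int_{t'}^t\|\overline g^{\eps,\eta}(t'')\|_{\ell,k}^r\,dt''$; hence $h^{\eps,\eta}_\lambda$ solves~\eqref{equationhepsetalambda}.

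For the reduction of the conclusion I would write, for every $\eta\in(0,1)$, $g^\eps-g=-\delta^{\eps,\eta}+\big(e^{\lambda m^{\eps,\eta}}h^{\eps,\eta}_\lambda+\overline\delta^{\eps,\eta}\big)+(g^\eta-g)$, hence, in the norm of Theorem~\ref{mainthmwp}, $\|g^\eps-g\|\le\|\delta^{\eps,\eta}\|+e^{\lambda\|m^{\eps,\eta}\|_{L^\infty_t}}\|h^{\eps,\eta}_\lambda\|+\|\overline\delta^{\eps,\eta}\|+\|g^\eta-g\|$. By~\eqref{estimatedeltaepseta0} the first term is $\lesssim\eta$; by~\eqref{estimatebardeltaepseta0} the last one tends to $0$ with $\eta$, uniformly in $T$ when $g$ is global; and $\overline\delta^{\eps,\eta}=U^\eps_{\rm disp}(t)g^\eta_{\rm in}+U^{\eps\sharp}(t)g^\eta_{\rm in}$ tends to $0$ as $\eps\to0$ (it is $O(\eta)$ in $X^{\ell,k}$ in the well-prepared case, and dispersive, hence small in $L^p(\R^+,L^{\infty,k}_v(W^{\ell,\infty}_x+H^\ell_x))$ for $p>2/(d-1)$, in the ill-prepared case, by the decomposition of $U^\eps$ recalled in Appendix~\ref{specBoltz}). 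Provided $\|m^{\eps,\eta}\|_{L^\infty_t}\le C$ uniformly in $\eps$, the $h^{\eps,\eta}_\lambda$-term is bounded by $e^{\lambda C}\|h^{\eps,\eta}_\lambda\|$, which tends to $0$ as $\eps\to0$. Since the left-hand side does not depend on $\eta$, letting $\eps\to0$ and then $\eta\to0$ yields~\eqref{limgepsg0wp} and~\eqref{limgepsg0ip}, uniformly in $T$ when $\bar g^\eta_{\rm in}$ generates a global solution.

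The only non-formal ingredient, and hence the main obstacle, is the uniform-in-$\eps$ (and, for the global statement, uniform-in-$t$) bound on the weight $m^{\eps,\eta}$, i.e. the time-integrability of $\|\overline g^{\eps,\eta}\|_{\ell,k}^r\lesssim\|g^\eta\|_{\ell,k}^r+\|\overline\delta^{\eps,\eta}\|_{\ell,k}^r$. On a finite interval $[0,T]$ it is immediate from $g^\eta\in L^\infty([0,T],X^{\ell,k})$; globally one needs $t\mapsto\|g^\eta(t)\|_{\ell,k}\in L^r(\R^+)$, which comes from the decay of the Navier--Stokes--Fourier solution recalled in Appendix~\ref{appendixNS} (exponential on $\T^d$, polynomial on $\R^d$ under the $L^1$ or smallness assumption), together with $t\mapsto\|\overline\delta^{\eps,\eta}(t)\|_{\ell,k}\in L^r(\R^+)$ uniformly in $\eps$, which holds because $r>4$ (a fortiori $r>2/(d-1)$) and the non-fluid part of $U^\eps$ decays dispersively; this is precisely what dictates the hypothesis $r>4$ and the $L^1$/smallness restrictions.
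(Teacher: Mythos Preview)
Your proof is correct and follows essentially the same route as the paper. The paper first introduces the intermediate quantity $\widetilde h^{\eps,\eta}:=g^{\eps,\eta}-g^\eta$, derives its integral equation, then subtracts $\overline\delta^{\eps,\eta}$ to obtain $h^{\eps,\eta}$, and finally conjugates by the exponential weight; you collapse the first two steps by working directly with $g^{\eps,\eta}-\overline g^{\eps,\eta}$, which is only a cosmetic reorganisation. Your bilinear expansion, the identification of $\mathcal D^\eps$, $\mathcal L^\eps$, and the exponential conjugation producing $\Psi^\eps_\lambda$ and $\Phi^\eps_\lambda$ all match the paper's computation, and your final reduction via $g^\eps-g=-\delta^{\eps,\eta}+e^{\lambda m^{\eps,\eta}}h^{\eps,\eta}_\lambda+\overline\delta^{\eps,\eta}+(g^\eta-g)$ together with the uniform bound on $m^{\eps,\eta}$ is exactly what the paper does (its equations~(\ref{ginLrXell}) and Lemma~\ref{resultstildegeps}-\eqref{estimateLpXtildeg}).

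One small slip: in the well-prepared case $\overline\delta^{\eps,\eta}$ is not ``$O(\eta)$ in $X^{\ell,k}$'' as you write in passing. Rather, for well-prepared $g^\eta_{\rm in}$ one has $U^\eps_{\rm disp}(t)g^\eta_{\rm in}=0$ by~(\ref{P012}), and the remaining piece $U^{\eps\sharp}(t)g^\eta_{\rm in}$ tends to $0$ in $L^\infty(\R^+,X^{\ell,k})$ as $\eps\to0$ (see the end of the proof of Lemma~\ref{lem:UepstoU0}); the smallness comes from $\eps$, not from $\eta$. This does not affect your argument since your main claim ``$\overline\delta^{\eps,\eta}\to0$ as $\eps\to0$'' is the correct one.
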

\begin{proof}
Let us set, with notation~(\ref{eqgeta}) and~(\ref{defgepseta}),
$$
\widetilde h^{\e,\eta} := g^{\eps,\eta} - g^\eta
$$
which satisfies the following system in integral form, due to~(\ref{eqgeta}) and~(\ref{eqgepseta}) 
\begin{equation}\label{integralformtildeheps}
\widetilde h^{\e,\eta}(t) =  \mathcal{\widetilde D}^{\eps}(t)  +  \mathcal{\widetilde L}^{\e}(t) \widetilde h^{\e,\eta}+ \Psi^\e(t)  (\widetilde h^{\e,\eta}  ,\widetilde h^{\e,\eta}   ) 
\end{equation}
where
$$
\begin{aligned}
  \mathcal{\widetilde D}^{\eps}(t)  
 & :=U^\eps (t)  (g^\eta_{{\rm in}} - \bar g^\eta_{{\rm in}} )  +   \big( U^\eps (t)   - U (t)\big)\bar g^\eta_{{\rm in}}  +\Psi^\eps (t) 
  (  \delta^{\eps,\eta}  ,    \delta^{\eps,\eta}   ) 
\\&\quad\quad {- 2 \Psi^\eps(t) (g^\eta, \delta^{\eps,\eta} )} + \big(\Psi^\eps (t) -  \Psi(t) \big)  (g^\eta  ,g^\eta  )\\
  \widetilde {\mathcal{L}}^{\e}(t) h& := 2 \Psi^\e(t)  (g^\eta-   \delta^{\eps,\eta}   ,h  )   \, .
 \end{aligned}
 $$
{The conclusion of  Theorem~{\rm\ref{mainthmwp}} will be deduced from the fact that}~$\widetilde h^{\e,\eta} 
$ converges to zero in~$L^\infty([0,T],X^{\ell,k})$ (resp. in the space~$L^\infty([0,T],X^{\ell,k})+ {L^p(\R^+,L^{\infty,k}_v(W^{\ell,\infty}{+ H^\ell_x})(\R^d ))}$ in the well-prepared case (resp. in the ill-prepared case).

{In order to apply Lemma~\ref{cacciopoli+}, we would need   the linear operator~$   \widetilde {\mathcal{L}}^{\e}$ appearing in~(\ref{integralformtildeheps}) to be a contraction in~$L^\infty([0,T],X^{\ell,k})$, and 
the term~$\mathcal{\widetilde D}^{\eps}(t) $ to be small in~$L^\infty([0,T],X^{\ell,k})$. It turns out that in the $\R^2$-case, to reach this goal, we have to introduce a weight in time (note that in the references mentioned above in this context, only the three-dimensional case is treated, in which case it is not necessary to introduce that weight).
We thus introduce a function $\chi_\Omega(t)$ defined by
$$
\forall \, t \in \R^+, \quad \chi_\Omega(t) :=
 \left\lbrace \begin{array}{lcl}
1 & \text{if} &\Omega = \T^d, \, d = 2,3, \text{ or } \R^3, \\
\langle t \rangle^\frac14 &\text{if} & \Omega = \R^2 \, .
\end{array} \right. 
$$
For a given~$T>0 $ we define the associate weighted in time space
$$
\mathcal{X}_T^{\ell,k} := \Big\{f = f(t,x,v) \, / \,  f\in L^\infty(\mathds{1}_{[0,T]}(t)\chi_\Omega(t),X^{\ell,k})
\Big\}
$$
 endowed with the norm
 $$
 \|f\|_{\mathcal{X}_T^{\ell,k} }:= \sup_{t\in [0,T]}\chi_\Omega(t)\|f(t)\|_{\ell,k}\,.
 $$
In order to apply Lemma~\ref{cacciopoli+}, we   then need the term~$\mathcal{\widetilde D}^{\eps}(t) $ to be small in~$\mathcal{X}^{\ell,k}_T$.
Concerning this fact, it turns out that the first term appearing in~$  \mathcal{\widetilde D}^{\eps}(t) $  namely~$U^\eps (t)  (g^\eta_{{\rm in}} - \bar g^\eta_{{\rm in}} )
$, which is small (in fact zero) in the well-prepared case since~$g^\eta_{\rm in} = \bar g^\eta_{\rm in} $, contains in the case of ill-prepared data, a part which 
is not small in~${\mathcal X}_T^{{\ell},k}$ but in a different space: that is
$$
\overline\delta^{\eps,\eta}(t)=U^\eps_{\rm{disp}}(t) g^\eta_{{\rm in}} +
 U^{\eps \sharp}(t)g^\eta_{{\rm in}}\,.
 $$
 This is stated (among other estimates on $\bar \delta^{\eps,\eta}$) in the following lemma, which is proved in Section~\ref{proofstrichartz}.}
\begin{lem}\label{resultstildegeps}
Let $p \in (1, \infty]$. There exist a constant~$C$ such that for all $\eta \in (0,1)$ and all~$\eps \in (0,1)$,  
\begin{equation}\label{estimateLpXtildeg}
\|\overline\delta^{\eps,\eta}\|_{L^p(\R^+,{X}^{\ell,k})}
\leq C\,.
\end{equation}
Moreover there is a constant~$C$ such that for all $\eta \in (0,1)$ and all~$\eps \in (0,1)$
\begin{equation}\label{estimatedelta2}
\| U^{\eps \sharp}(t)g^\eta_{{\rm in}} 
\|_{\ell,k} \leq C e^{-\alpha {t \over \eps^2}} 
\end{equation}
where $\alpha$ is the rate of decay defined in~\eqref{rateUsharp}, and    for all $\eta \in (0,1)$ there   is a constant~$C_\eta$ such that for all~$\eps \in (0,1)$
\begin{equation}\label{estimatedelta1}
\|U^\eps_{\rm{disp}}(t) g^\eta_{{\rm in}} 
 \|_{L^{\infty,k}_v W^{\ell,\infty}_x} \leq C_\eta\Big( 1 \wedge \left(\eps \over t \right)^{\frac{d-1}{2}} \Big)
 \quad \text{and} \quad 
 {\|U^\eps_{\rm{disp}}(t) g^\eta_{{\rm in}} \|_{\ell,k} \le \frac{C_\eta}{\langle t \rangle^\frac d4}} \, \cdotp
 \end{equation}
 In particular~$\overline\delta^{\eps,\eta}$ satisfies
   for all $\eta \in (0,1)$
   $$
{\lim_{\e\to0} \|\overline\delta^{\eps,\eta}\|_{\mathcal{X}^{\ell,k}_\infty} \le C_\eta}
 \quad \text{and} \quad 
 \lim_{\e\to0}\|\overline\delta^{\eps,\eta}\|_{L^p(\R^+,L^{\infty,k}_v(W^{\ell,\infty}_x{+ H^\ell_x})(\R^d ))} = 0\, , \quad \forall \, p \in ({2}/{(d-1)} , \infty)  \, .
$$
\end{lem}
Returning to the proof of Lemma~\ref{lem:reduction}, let us set
$$
  h^{\e,\eta} :=  \widetilde h^{\e,\eta}  - \overline\delta^{\eps,\eta}  \,  , \quad \overline g^{\eps,\eta}:= g^{ \eta} +\overline \delta^{\eps,\eta}\, , 
  $$
and notice that~$  h^{\e,\eta}$ satisfies the following system in integral form
\begin{equation}\label{integralformheps}
  h^{\e,\eta}(t) = \mathcal{D}^{\e}(t)  +  \mathcal{L}^{\e}(t)   h^{\e,\eta}+ \Psi^\e(t)  (  h^{\e,\eta}  ,  h^{\e,\eta}   ) 
\end{equation}
with
$$
\begin{aligned}
 \mathcal{D}^\e  (t) 
 & :=  \widetilde \delta^{\eps,\eta} + \big( U^\eps (t)   - U (t)\big)\bar g^\eta_{{\rm in}} +  \big(\Psi^\eps (t) -  \Psi(t) \big)  (g^\eta  ,g^\eta  )
\\
 &  \quad + 2 \Psi^\eps (t) \Big({g^\eta+\frac 12 \overline\delta^{\eps,\eta}}- \delta^{\eps,\eta}  , \overline\delta^{\eps,\eta}\Big) +\Psi^\eps (t) 
 \big( \delta^{\eps,\eta}  -2g^\eta ,  \delta^{\eps,\eta} \big) \\
 \mathcal{L}^\e   (t) h& :=  2\Psi ^{\e}(t)  ( \overline g^{\eps,\eta}-  \delta^{\eps,\eta}   ,h  )  \quad\mbox{with}  \quad 
  \Psi ^{\e }(t) (h_1 ,h_2)  :=\frac1\e 
  \int_0^t  U^\eps(t-t')\Gamma (h_1 ,h_2) (t')\, dt'      \, .
 \end{aligned}
$$
In order to apply Lemma~\ref{cacciopoli+}, we need~$\mathcal{L}^{\eps}$ to be  a contraction, so we introduce
a modified space, in the spirit of~\cite{cg}, in the following way.
Since $
g^\eta$ and~$\overline\delta^{\eps,\eta}$ belong  to~$L^\infty([0,T],X^{\ell,k})$, then for all~$2\leq r \leq\infty$,  there holds
\begin{equation}\label{ginLrXellT}
\overline g^{\eps,\eta}:=  \overline\delta^{\eps,\eta} +g^\eta\in L^r([0,T],X^{\ell,k})
\end{equation}
with a norm depending on~$T$.  Moreover as recalled in  Proposition~\ref{propNSF}, if the unique solution to~(\ref{NSF}) is global in time then in particular
\begin{equation}\label{ginLrXell}
g^\eta \in L^r(\R^+,X^{\ell,k}) \, , \quad \forall \, r>4\, .
\end{equation}
So  thanks to~(\ref{estimateLpXtildeg}) we can fix~$r \in (4,\infty)$  from now on and define for  all~$\lambda> 0$
$$
h^{\e,\eta}_\lambda (t):=h^{\e,\eta}(t) \exp \Big(
-\lambda \int_0^t \|\overline g^{\eps,\eta}(t')\|_{\ell ,k}^r\, dt'
\Big)\, .
 $$
 The quantity
appearing in the exponential is finite thanks to~(\ref{estimateLpXtildeg})
 and~(\ref{ginLrXell}).
 The parameter~$\lambda>0$ will be fixed, and tuned later for~$   \mathcal{L}^{\e}$ to become a contraction. 
  Then~$h^{\e,\eta}_\lambda$ satisfies
$$h^{\e,\eta}_\lambda(t) = \mathcal{D}_\lambda^{\e}(t)   +  \mathcal{L}_\lambda^{\e}(t) h^{\e,\eta}_\lambda(t) + \Phi^{\e}_\lambda(t)  (h^{\e,\eta}_\lambda ,h^{\e,\eta}_\lambda   ) 
$$
with the notation~(\ref{defopslambda}). This concludes the proof of the lemma.
\end{proof}

 \subsection{End of the proof of Theorem~\ref{mainthmwp}} 

 The following results, together with Lemma~\ref{cacciopoli+}, are the key to the proof of Theorem~{\rm\ref{mainthmwp}}. They will be proved in the next sections. 
  \begin{prop} \label{linear}
Under the assumptions of Theorem~{\rm\ref{mainthmwp}}, there is a constant~$C$ such that  for all~$T>0$,~$\eta>0$ 
 and~$\lambda>0$
 $$
\lim_{\e \to 0}\big \|    \mathcal{L}_\lambda^{\e}(t) h\big\|_{\mathcal{X}_T^{\ell,k}} \leq  C\Big(
\frac1{\lambda^\frac1r } + \eta\Big)\| h  \|_{\mathcal{X}_T^{\ell,k}}\,.
 $$
 \end{prop}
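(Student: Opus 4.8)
The plan is to estimate the operator norm of $\mathcal{L}_\lambda^{\eps}$ in $\mathcal{X}_T^{\ell,k}$ by splitting it into two contributions: one coming from the ``large but nice'' part $\overline g^{\eps,\eta}$ (which we control with the time-weight/exponential trick), and one coming from the ``small'' part $\delta^{\eps,\eta}$ (which is $O(\eta)$ by~\eqref{estimatedeltaepseta0}). Recall $\mathcal{L}_\lambda^\eps(t)h = 2\Psi_\lambda^\eps(t)(\overline g^{\eps,\eta}-\delta^{\eps,\eta},h)$, so by bilinearity
\begin{equation}\label{splitL}
\mathcal{L}_\lambda^\eps(t)h = 2\Psi_\lambda^\eps(t)(\overline g^{\eps,\eta},h) - 2\Psi_\lambda^\eps(t)(\delta^{\eps,\eta},h)\,.
\end{equation}
First I would record the basic continuity estimates needed: the boundedness of $U^\eps(t)$ on $X^{\ell,k}$ (from Appendix~\ref{specBoltz}, Lemma~\ref{lem:Uepscont}) and the bilinear estimate $\|\Gamma(h_1,h_2)\|_{\ell,k}\lesssim \|h_1\|_{\ell,k}\|h_2\|_{\ell,k}$ — more precisely the gain-of-weight/loss-of-derivative estimate for $\Gamma$ together with the smoothing of $U^\eps$ across the collision frequency (this is exactly the structure exploited by Bardos--Ukai~\cite{Bardos-Ukai} via the Ellis--Pinsky decomposition~\cite{Ellis-Pinsky}). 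Because $\Gamma(h_1,h_2)$ is purely a collisional term (its range is orthogonal to $\Ker L$ and it comes with the factor $1/\eps$), the semigroup $U^\eps(t-t')$ acting on it produces a factor $\eps$ of smoothing in the fluid variables, compensating the $1/\eps$ in front; this is what makes $\Psi^\eps$ bounded uniformly in $\eps$, and passing to the limit $\eps\to0$ it converges (Remark~\ref{defPsi}) to the Navier-Stokes bilinear operator $\Psi$.

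The heart of the matter is the first term in~\eqref{splitL}. Writing out the definition,
\begin{equation}\label{Psilambda}
\Psi_\lambda^\eps(t)(\overline g^{\eps,\eta},h) = \frac1\eps\int_0^t e^{-\lambda\int_{t'}^t \|\overline g^{\eps,\eta}(t'')\|_{\ell,k}^r\,dt''}\,U^\eps(t-t')\,\Gamma(\overline g^{\eps,\eta}(t'),h(t'))\,dt'\,,
\end{equation}
I would take the $\ell,k$ norm, use the uniform boundedness of $U^\eps$ and the $\Gamma$-estimate to bound the integrand by $C\|\overline g^{\eps,\eta}(t')\|_{\ell,k}\|h(t')\|_{\ell,k}$, insert the time-weight $\chi_\Omega$ by writing $\|h(t')\|_{\ell,k}\le \chi_\Omega(t')^{-1}\|h\|_{\mathcal{X}_T^{\ell,k}}$, and then use Young/Hölder in time against the kernel $\|\overline g^{\eps,\eta}(t')\|_{\ell,k}\,e^{-\lambda\int_{t'}^t\|\overline g^{\eps,\eta}\|_{\ell,k}^r}$. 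The key self-improving device: if $F(t):=\int_0^t\|\overline g^{\eps,\eta}(t')\|_{\ell,k}^r\,dt'$, then $\frac{d}{dt'}F(t') = \|\overline g^{\eps,\eta}(t')\|_{\ell,k}^r$, so by Hölder with exponents $r$ and $r'=r/(r-1)$,
\begin{equation}\label{holderkey}
\int_0^t \|\overline g^{\eps,\eta}(t')\|_{\ell,k}\,e^{-\lambda(F(t)-F(t'))}\,dt' \le \Big(\int_0^t \|\overline g^{\eps,\eta}(t')\|_{\ell,k}^r e^{-\lambda r'(F(t)-F(t'))/r}\,dt'\Big)^{\!1/r}\!\Big(\int_0^t e^{-\lambda r'(F(t)-F(t'))}\,dt'\Big)^{\!1/r'}
\end{equation}
— wait, one must be a touch careful distributing the exponential; the clean route is to bound $\|\overline g^{\eps,\eta}(t')\|_{\ell,k}\le 1 + \|\overline g^{\eps,\eta}(t')\|_{\ell,k}^r$ is too crude, so instead apply Hölder directly with the weight split as $e^{-\lambda(F(t)-F(t'))} = e^{-\frac{\lambda}{r}(F(t)-F(t'))}\cdot e^{-\frac{\lambda}{r'}(F(t)-F(t'))}$ and pair the first factor with $\|\overline g^{\eps,\eta}(t')\|_{\ell,k}$. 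Then $\int_0^t \|\overline g^{\eps,\eta}(t')\|_{\ell,k}^r e^{-\frac{\lambda}{r}(F(t)-F(t'))\cdot r}\,dt' = \int_0^t F'(t')e^{-\lambda(F(t)-F(t'))}\,dt' \le 1/\lambda$ exactly, while $\int_0^t e^{-\frac{\lambda}{r'}(F(t)-F(t'))r'}\,dt' = \int_0^t e^{-\lambda(F(t)-F(t'))}\,dt'$ must itself be controlled — but here one does not need integrability of this last factor; rather one re-groups so that the sup-in-$t$ of the time-weighted norm closes. Concretely, taking $r>4$ and using~\eqref{ginLrXellT}--\eqref{ginLrXell} so that $F(\infty)<\infty$, the term $\int_0^t e^{-\lambda(F(t)-F(t'))}\chi_\Omega(t')^{-1}\,dt'$ is bounded by $\|\chi_\Omega^{-1}\|_{L^{r'}}\cdot\lambda^{-1/r}$, which is finite precisely because $\chi_\Omega(t')^{-1}=\langle t'\rangle^{-1/4}$ in the $\R^2$ case and $r'<4/3<4$ — this is exactly why the time-weight $\chi_\Omega$ was introduced. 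Multiplying by the leftover $\chi_\Omega(t)$ and taking the supremum gives the gain $\lambda^{-1/r}$ after sending $\eps\to0$ (the $\eps\to0$ limit is where the $1/\eps$ and the $\eps$-smoothing of $U^\eps$ on the range of $\Gamma$ cancel and $\Psi^\eps\to\Psi$).

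For the second term $2\Psi_\lambda^\eps(t)(\delta^{\eps,\eta},h)$, I would argue identically but now bound $\|\delta^{\eps,\eta}\|_{L^\infty(\R^+,X^{\ell,k})}\lesssim\eta$ by~\eqref{estimatedeltaepseta0}, extracting the factor $\eta$; the time integral $\int_0^t e^{-\lambda(F(t)-F(t'))}\chi_\Omega(t')^{-1}\,dt'$ is handled as above and absorbed into the constant $C$ (it no longer needs to be small — it just needs to be finite uniformly, which again is where $r>4$ enters). Putting the two pieces together yields $\lim_{\eps\to0}\|\mathcal{L}_\lambda^\eps(t)h\|_{\mathcal{X}_T^{\ell,k}}\le C(\lambda^{-1/r}+\eta)\|h\|_{\mathcal{X}_T^{\ell,k}}$. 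The main obstacle — and the point requiring the most care — is the $\eps\to0$ passage in~\eqref{Psilambda}: one must show that the $\eps$-dependent oscillatory/acoustic part of $U^\eps$ does not spoil the bound, using the Ellis--Pinsky spectral decomposition (Lemma~\ref{EllisPinsky}, Remark~\ref{defU}) to split $U^\eps = U^\eps_{\rm disp} + U^{\eps\sharp} + (\text{fluid part}\to U)$ and checking that $\Psi^\eps$ built from each piece is uniformly bounded, the dispersive and $\sharp$-parts contributing vanishing or exponentially small contributions in the limit as in Lemma~\ref{resultstildegeps}. A secondary subtlety is making the Hölder bookkeeping in~\eqref{holderkey} fully rigorous so that the power of $\lambda$ that emerges is genuinely $\lambda^{-1/r}$ and not something weaker; this is a routine but delicate juggling of the exponents $r$ and $r'$ against the integrability of $\chi_\Omega^{-1}$.
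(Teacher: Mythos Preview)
Your overall strategy --- split along $\overline g^{\eps,\eta}$ and $\delta^{\eps,\eta}$, then use H\"older with exponents $r,r'$ against the exponential weight to extract $\lambda^{-1/r}$ --- is the right idea, and it is what the paper does. But two essential mechanisms are missing from your implementation, and without them the argument does not close.

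\medskip

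\textbf{The $1/\eps$ is not cancelled by a bounded factor, but by a singular kernel.} You write ``use the uniform boundedness of $U^\eps$ and the $\Gamma$-estimate to bound the integrand by $C\|\overline g^{\eps,\eta}(t')\|_{\ell,k}\|h(t')\|_{\ell,k}$'', with the $1/\eps$ compensated by ``$\eps$-smoothing of $U^\eps$ on the range of $\Gamma$''. That is not what happens: the relevant estimate (Lemma~\ref{lem:decayWeps}) is
\[
\Big\|\tfrac{1}{\eps}U^\eps(t)(I-\Pi_L)f\Big\|_{H^\ell_xL^2_v}\ \lesssim\ \frac{1}{t^{1/2}}\,\tilde\chi_\Omega(t)\,\|f\|\,,
\]
so the $1/\eps$ is traded for a kernel $(t-t')^{-1/2}$ (times further decay). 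This kernel is not a nuisance --- it is exactly what makes the $L^{r'}_{t'}$ factor in your H\"older step finite uniformly in $t$. In your version the $L^{r'}$ factor is essentially $\int_0^t e^{-\lambda(F(t)-F(t'))}\chi_\Omega(t')^{-1}\,dt'$; since $F$ is bounded (by~\eqref{ginLrXell}), the exponential is bounded below by a positive constant, and $\langle t'\rangle^{-1/4}$ is \emph{not} integrable on $\R^+$, so this quantity is unbounded in $t$. (Your claim that $\|\chi_\Omega^{-1}\|_{L^{r'}}<\infty$ because $r'<4/3$ is backwards: $\langle t\rangle^{-1/4}\in L^{r'}(\R^+)$ requires $r'>4$.) In the paper the corresponding factor is $\big\| (t)^{-1/2}\langle t\rangle^{-a}\big\|_{L^{r'}_t}$ (with $a=\sigma$, $3/4$ or $1/2$ depending on $\Omega$), which is finite precisely because $r'<4/3<2$.

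\medskip

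\textbf{The decay of $W^\eps$ holds in $H^\ell_xL^2_v$, not in $X^{\ell,k}$.} Even once you insert the right kernel, you cannot run the argument directly in $X^{\ell,k}$. The paper first performs the Grad--Duhamel decomposition $U^\eps=e^{tA^\eps}+\int e^{(t-s)A^\eps}\eps^{-2}KU^\eps(s)\,ds$ to reduce $\|\mathcal L^\eps_\lambda h\|_{\mathcal X^{\ell,k}_T}$ to $\sum_{j\le k}\|\mathcal L^{\eps,1}_\lambda h\|_{\mathcal X^{\ell,j}_T}+\|\mathcal L^\eps_\lambda h\|_{\mathcal Y^\ell_T}$. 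The first sum is $O(\eps)$ (from the explicit semigroup $e^{tA^\eps}$) and vanishes in the limit; the second is the $H^\ell_xL^2_v$ estimate where the $W^\eps$ decay and the H\"older/$\lambda^{-1/r}$ argument apply. This bootstrap step is not optional and is absent from your plan. The passage $\eps\to0$ is therefore not where ``$\Psi^\eps\to\Psi$'' does the work; rather, the $\eps$-uniform bound already holds for each $\eps$ in $\mathcal Y^\ell_T$, and the limit only kills the residual $O(\eps)$ piece $\mathcal L^{\eps,1}_\lambda$.
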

  \begin{prop}\label{nonlinear}
Under the assumptions of Theorem~{\rm\ref{mainthmwp}}, there is a constant~$C$ such that  for all~$T>0$,~$\eta>0$,~$\e>0$ 
 and~$\lambda\geq0$
   $$
\big \|  \Phi_\lambda^{\e}(t) (f_1,f_2)\big\|_{\mathcal{X}_T^{\ell,k} } \leq C
\exp \Big(   \lambda \int_0^T {\|\bar g^{\eps,\eta}(t)\|_{\ell,k}^r}\, dt \Big)
 \| f_1  \|_{\mathcal{X}_T^{\ell,k}} \| f_2  \|_{\mathcal{X}_T^{\ell,k}} \, .
 $$
 \end{prop}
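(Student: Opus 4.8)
The plan is to estimate the bilinear operator $\Phi_\lambda^\eps$ directly from its integral definition~\eqref{defopslambda}, pulling the time weights out as a uniform multiplicative constant and then reducing to a weight-free estimate on $\Psi^\eps$. First I would observe that for $0\le t'\le t\le T$, the two exponential factors in $\Phi_\lambda^\eps(t)(h_1,h_2)$ combine as
$$
e^{\lambda \int_0^t \|\bar g^{\eps,\eta}(s)\|_{\ell,k}^r\, ds}\, e^{-2\lambda \int_{t'}^t \|\bar g^{\eps,\eta}(s)\|_{\ell,k}^r\, ds}
= e^{\lambda \int_0^{t'} \|\bar g^{\eps,\eta}(s)\|_{\ell,k}^r\, ds}\, e^{-\lambda \int_{t'}^t \|\bar g^{\eps,\eta}(s)\|_{\ell,k}^r\, ds}\,,
$$
so since $\|\bar g^{\eps,\eta}\|_{\ell,k}^r\ge 0$ both remaining exponentials are bounded by $\exp\big(\lambda \int_0^T \|\bar g^{\eps,\eta}(s)\|_{\ell,k}^r\, ds\big)$; in fact the second one is bounded by $1$, so only a single copy of the exponential prefactor survives. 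Thus pointwise in $t$,
$$
\big\| \Phi_\lambda^\eps(t)(h_1,h_2)\big\|_{\ell,k}
\le \exp\Big(\lambda \int_0^T \|\bar g^{\eps,\eta}(s)\|_{\ell,k}^r\, ds\Big)\,
\frac1\eps \int_0^t \big\| U^\eps(t-t')\Gamma\big(h_1(t'),h_2(t')\big)\big\|_{\ell,k}\, dt'\,.
$$

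Next I would control the remaining integral against the weighted norms $\|f_j\|_{\mathcal X_T^{\ell,k}}$. The key inputs are: (i) the continuity of the semigroup $U^\eps(t)$ on $X^{\ell,k}$, uniformly in $\eps$, recalled in Appendix~\ref{specBoltz}/Lemma~\ref{lem:Uepscont}; (ii) the gain of a factor $\eps$ coming from the fact that $\Gamma(h_1,h_2)$ takes values in $(\Ker L)^\perp$, on which $U^\eps(t-t')$ decays like $e^{-\alpha(t-t')/\eps^2}$ (the spectral gap away from the kernel, as in the Ellis--Pinsky decomposition), so that $\frac1\eps\int_0^t e^{-\alpha(t-t')/\eps^2}\,dt'\lesssim \eps$; and (iii) the bilinear continuity estimate for $\Gamma$, $\|\Gamma(h_1,h_2)(t')\|_{\ell,k}\lesssim \|h_1(t')\|_{\ell,k}\|h_2(t')\|_{\ell,k}$. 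Combining these gives, modulo the $\chi_\Omega$ weight,
$$
\frac1\eps\int_0^t \big\| U^\eps(t-t')\Gamma(h_1,h_2)(t')\big\|_{\ell,k}\, dt'
\lesssim \int_0^t \frac1\eps e^{-\alpha(t-t')/\eps^2}\,\|h_1(t')\|_{\ell,k}\|h_2(t')\|_{\ell,k}\, dt'\,.
$$
To recover the $\mathcal X_T^{\ell,k}$ norm I then multiply by $\chi_\Omega(t)$ and use $\|h_j(t')\|_{\ell,k}\le \chi_\Omega(t')^{-1}\|h_j\|_{\mathcal X_T^{\ell,k}}$; since $\chi_\Omega(t)/\chi_\Omega(t')^{\,2}\le \chi_\Omega(t)\le \langle T\rangle^{1/4}$ is harmless (or, in the $\R^3$ and torus cases, $\chi_\Omega\equiv 1$), the kernel $\frac1\eps e^{-\alpha(t-t')/\eps^2}$ integrates to $O(\eps)\le O(1)$ in $t'$, yielding $\|\Phi_\lambda^\eps(t)(h_1,h_2)\|_{\ell,k}\chi_\Omega(t)\lesssim \exp(\lambda\int_0^T\cdots)\|f_1\|_{\mathcal X_T^{\ell,k}}\|f_2\|_{\mathcal X_T^{\ell,k}}$, and taking the supremum over $t\in[0,T]$ finishes the bound. (One should be slightly careful that the $\langle t\rangle^{1/4}$ growth of $\chi_\Omega$ in the $\R^2$ case is absorbed into the constant $C$ which is allowed to depend on $T$ implicitly through $\bar g^{\eps,\eta}$; alternatively one keeps the decay in $(t-t')$ and splits $\chi_\Omega(t)\lesssim \chi_\Omega(t-t')\chi_\Omega(t')$.)

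The main obstacle is item (ii): extracting the genuine $\eps$-gain requires knowing that $\Gamma$ lands in the orthogonal complement of $\Ker L$ — which is standard since $\Gamma(h_1,h_2)$ is a collision term and $\int \Gamma(h_1,h_2)\,\phi\,M^{1/2}\,dv=0$ for the collision invariants $\phi$ — together with the decay estimate for $U^\eps$ restricted to $(\Ker L)^\perp$ coming from the Ellis--Pinsky spectral analysis; this is precisely the mechanism already used for $\Psi^\eps$ in~\cite{Bardos-Ukai}, so I would invoke it as a cited/Appendix result rather than reprove it. A secondary technical point, relevant only for $\Omega=\R^2$, is the bookkeeping of the $\chi_\Omega$ weight, handled as above by the submultiplicativity $\chi_\Omega(t)\le \chi_\Omega(t-t')\,\chi_\Omega(t')$ and the integrability of $\frac1\eps e^{-\alpha(t-t')/\eps^2}\,\chi_\Omega(t-t')$. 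Note that no smallness in $\lambda$ or $\eps$ is claimed here — the exponential prefactor is allowed to be large — which is why this estimate, unlike Proposition~\ref{linear}, holds for every $\lambda\ge 0$ and every $\eps>0$.
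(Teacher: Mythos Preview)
Your first step --- combining the two exponentials and bounding them by the single factor $\exp\big(\lambda\int_0^T\|\bar g^{\eps,\eta}\|_{\ell,k}^r\,ds\big)$ --- is exactly what the paper does. After that, the paper simply invokes the computations of Lemma~\ref{lem:Psiepscont} (the bilinear continuity of $\Psi^\eps$ on $\mathcal X_T^{\ell,k}$), since once the exponential is pulled out as a $t,t'$-independent constant, what remains is literally $\Psi^\eps(t)(h_1,h_2)$ and its estimate~\eqref{contest} applies verbatim.

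Your attempt to re-derive that bilinear estimate, however, contains a genuine error. You claim in item (ii) that $U^\eps(t-t')$ restricted to $(\Ker L)^\perp$ decays like $e^{-\alpha(t-t')/\eps^2}$. This is false: only the $U^{\eps\sharp}$ piece has that fast exponential decay (see~\eqref{rateUsharp}). The low-frequency hydrodynamic modes $U_j^\eps$ contribute, through the $P_j^1$ and $P_j^2$ projectors, a part on $(\Ker L)^\perp$ whose decay is of heat-kernel type, $|\xi|e^{-\beta|\xi|^2 t}$ in Fourier --- this is precisely the content of Lemma~\ref{lem:decayWeps}, which gives $\|W^\eps(t)f\|_{H^\ell_x L^2_v}\lesssim t^{-1/2}\|f\|$ (with additional $\langle t\rangle^{-d/4}$ decay when an $L^1_x$ norm is available). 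Your kernel $\frac1\eps e^{-\alpha(t-t')/\eps^2}$ therefore misses the dominant contribution entirely, and the resulting bound does not follow.

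This matters especially in the $\R^2$ case. Your parenthetical suggests absorbing the $\langle t\rangle^{1/4}$ growth of $\chi_\Omega$ into a $T$-dependent constant, but the statement requires $C$ to be uniform in $T$ (this uniformity is the whole point for global-in-time solutions). The paper's Lemma~\ref{lem:Psiepscont} handles this by using the genuine $t^{-1/2}\langle t\rangle^{-1/2}$ decay of $W^\eps$ (from the $L^1_x$ estimate in Lemma~\ref{lem:decayWeps}) and then checking by hand that $\langle t\rangle^{1/4}\int_0^t (t-t')^{-1/2}\langle t-t'\rangle^{-1/2}\langle t'\rangle^{-1/2}\,dt'$ is bounded uniformly in $t$. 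Your fast-decaying kernel would make this step trivial --- but that kernel is not available. The fix is simply to cite Lemma~\ref{lem:Psiepscont} after extracting the exponential, as the paper does.
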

 \begin{prop}\label{limitdatawp}
 Under the assumptions of Theorem~{\rm\ref{mainthmwp}},  there holds
uniformly in~$\lambda\geq 0$  (and uniformly in~$T$ if~$\bar g^\eta_{\rm{in}}$ gives rise to a global  unique solution)
 $$
\lim_{\eta\to 0} \lim_{\eps\to 0}   \big\| \mathcal{D}_\lambda^{\e}(t) \big\|_{\mathcal{X}_T^{\ell,k}}=0\, .
$$
 \end{prop}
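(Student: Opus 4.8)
The plan is to estimate each of the five groups of terms making up $\mathcal{D}^\e(t)$ in the appropriate norm, noting first that the exponential prefactor $e^{-\lambda \int_0^t \|\overline g^{\eps,\eta}\|_{\ell,k}^r}$ defining $\mathcal{D}_\lambda^\e$ is bounded by $1$, so it suffices to prove $\lim_{\eta\to 0}\lim_{\eps\to 0}\|\mathcal{D}^\e(t)\|_{\mathcal{X}_T^{\ell,k}}=0$ (with uniformity in $T$ when $g^\eta$ is global); the $\chi_\Omega(t)$ weight in $\mathcal{X}_T^{\ell,k}$ will be absorbed in each case by the decay estimates of Lemma~\ref{resultstildegeps} and by appropriate properties of $U^\eps-U$ and $\Psi^\eps-\Psi$. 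First I would handle $\widetilde\delta^{\eps,\eta}(t) = U^\eps(t)(g^\eta_{\rm in}-\bar g^\eta_{\rm in}) - \overline\delta^{\eps,\eta}(t)$: by construction $U^\eps(t)(g^\eta_{\rm in}-\bar g^\eta_{\rm in})$ differs from $\overline\delta^{\eps,\eta}$ only by the ``fluid'' part of the propagator applied to the difference of data, which is purely acoustic and dispersive, so $\widetilde\delta^{\eps,\eta}$ should be controlled via the Ellis--Pinsky decomposition of Lemma~\ref{EllisPinsky}/Remark~\ref{defU}; here I expect the key input to be the $L^\infty_t X^{\ell,k}$ bound together with the dispersive decay of the acoustic part, giving a bound that is $C_\eta(\eps/t)^{(d-1)/2}$-type and hence $\to 0$ after multiplication by $\chi_\Omega$. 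Second, for the term $(U^\eps(t)-U(t))\bar g^\eta_{\rm in}$ and for $(\Psi^\eps(t)-\Psi(t))(g^\eta,g^\eta)$, since $\bar g^\eta_{\rm in}$ and $g^\eta$ are Schwartz (fixed $\eta$), I would invoke the strong convergence $U^\eps \to U$ and $\Psi^\eps\to\Psi$ on smooth data recorded in Appendix~\ref{specBoltz} (Remark~\ref{defPsi}), which gives convergence to $0$ in $L^\infty_t X^{\ell,k}$ for fixed $\eta$ — the point being that these are the ``consistency'' error terms and they vanish as $\eps\to 0$ at fixed regularity, with uniformity in $T$ coming from the global-in-time $L^r$ integrability of $g^\eta$ in~\eqref{ginLrXell}.

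Third, the two remaining terms $2\Psi^\eps(t)(g^\eta+\tfrac12\overline\delta^{\eps,\eta}-\delta^{\eps,\eta},\overline\delta^{\eps,\eta})$ and $\Psi^\eps(t)(\delta^{\eps,\eta}-2g^\eta,\delta^{\eps,\eta})$ are bilinear error terms containing either $\delta^{\eps,\eta}$ (which is $O(\eta)$ in $L^\infty_t X^{\ell,k}$ by~\eqref{estimatedeltaepseta0}) or $\overline\delta^{\eps,\eta}$ (the dispersive remainder). For the $\delta^{\eps,\eta}$ factor I would use the continuity of $\Psi^\eps$ as a bilinear map (with the $\tfrac1\e$ absorbed by the gain in the linearized Boltzmann semigroup, as in the Cauchy theory of Appendix~\ref{specBoltz}, or more precisely by the same mechanism used to prove Propositions~\ref{linear}--\ref{nonlinear}), so that these contributions are $\lesssim \eta\,(\|g^\eta\|+\eta)$, hence $\to 0$ as $\eta\to 0$ uniformly in $\eps$. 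For the $\overline\delta^{\eps,\eta}$ factor, the bilinear estimate on $\Psi^\eps$ combined with the $L^p(\R^+, L^{\infty,k}_v(W^{\ell,\infty}_x+H^\ell_x))$-smallness of $\overline\delta^{\eps,\eta}$ from the conclusion of Lemma~\ref{resultstildegeps} should yield $\lim_{\eps\to 0}\|\cdot\|_{\mathcal{X}_T^{\ell,k}} \le C_\eta \lim_{\eps\to0}\|\overline\delta^{\eps,\eta}\|_{L^p} = 0$; the $\chi_\Omega$ weight is handled here because $\Psi^\eps$ maps into the energy space and the time integration against the dispersive decay rate produces the needed $\langle t\rangle^{1/4}$ gain in the $\R^2$ case.

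The main obstacle I anticipate is the treatment of $\widetilde\delta^{\eps,\eta}$ and, more generally, making the $\chi_\Omega(t)$-weighted bounds work in the $\R^2$ case: the dispersive decay exponent $(d-1)/2 = 1/2$ is borderline, so one must be careful that the $L^p$ and pointwise-in-time decay estimates of Lemma~\ref{resultstildegeps} genuinely beat the $\langle t\rangle^{1/4}$ weight, and that the bilinear terms involving $\Psi^\eps$ acting on a dispersive factor do not lose this margin when the $\tfrac1\e$ prefactor is present — this is exactly where the fine structure of the Ellis--Pinsky decomposition and the Strichartz-type estimates proved in Section~\ref{proofstrichartz} are essential. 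A secondary technical point is ensuring all bounds are uniform in $T$ when $g^\eta$ generates a global solution; this follows by replacing $L^\infty([0,T])$ norms of $g^\eta$ with the global $L^r(\R^+,X^{\ell,k})$ norm from Proposition~\ref{propNSF} wherever $g^\eta$ appears as a coefficient.
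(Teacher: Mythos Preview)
Your overall strategy --- split into the five groups and show each goes to zero --- is the same as the paper's, but two of your mechanisms are misidentified and a third term is much harder than you suggest.

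\textbf{The term $\widetilde\delta^{\eps,\eta}$.} You claim this is controlled by dispersive decay of the acoustic part, with a $C_\eta(\eps/t)^{(d-1)/2}$ bound. This is backwards: the whole point of subtracting $\overline\delta^{\eps,\eta}$ was to \emph{remove} the part that is only dispersively small and leave something that is small in $\mathcal{X}_T^{\ell,k}$ itself. The paper combines $\widetilde\delta^{\eps,\eta}$ with $(U^\eps-U)\bar g^\eta_{\rm in}$ into a single term $\mathcal{D}^{\eps,1}$ and observes the algebraic identity
\[
\mathcal{D}^{\eps,1}(t)=\big(U^\eps(t)-U^\eps_{\rm disp}(t)-U^{\eps\sharp}(t)-U(t)\big)g^\eta_{\rm in}+U(t)(g^\eta_{\rm in}-\bar g^\eta_{\rm in}),
\]
and the second summand \emph{vanishes identically} because $\bar g^\eta_{\rm in}=U(0)g^\eta_{\rm in}$ and $U(t)U(0)=U(t)$. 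The first summand is then $O(\eps)$ on smooth data by Lemma~\ref{lem:UepstoU0}, estimate~\eqref{estimatesmoothdata}. Without this cancellation you cannot close: a dispersive rate in $W^{\ell,\infty}_x$ does not give smallness in the weighted $X^{\ell,k}$ norm (indeed $\|\overline\delta^{\eps,\eta}\|_{\mathcal{X}^{\ell,k}_\infty}$ is only bounded, not small, as $\eps\to0$).

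\textbf{The term $(\Psi^\eps-\Psi)(g^\eta,g^\eta)$.} You invoke Remark~\ref{defPsi}, but that remark only defines $\Psi$; it does not prove convergence. This term is the most technical part of the proof (Lemma~\ref{estimatePsiepslimitPsi0Xellk} in the paper). The issue is the oscillatory pieces $\Psi^\eps_{j0}$, $j=1,2$, which carry a factor $e^{i\alpha_j|\xi|(t-t')/\eps}$ with no explicit $\eps$ smallness. One must integrate by parts in $t'$ against this phase to gain a factor $\eps/(\alpha_j|\xi|)$, and this in turn produces $\partial_{t'}\Gamma(g^\eta,g^\eta)$, which must then be controlled using the equation on $g^\eta$ and the estimates of Lemmas~\ref{lem:gamma&tildegamma}--\ref{lem:gamma&tildegammaR2}. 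In $\R^2$ the $\langle t\rangle^{1/4}$ weight makes this delicate.

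\textbf{The term $\Psi^\eps\big(g^\eta+\tfrac12\overline\delta^{\eps,\eta},\overline\delta^{\eps,\eta}\big)$.} You propose to use the general bilinear continuity of $\Psi^\eps$ together with the $L^p$ smallness of $\overline\delta^{\eps,\eta}$. But the bilinear estimate of Lemma~\ref{lem:Psiepscont} requires both inputs in $\mathcal{X}_T^{\ell,k}$, and $\overline\delta^{\eps,\eta}$ is only \emph{bounded} there, not small. The paper instead proves a dedicated estimate (Lemma~\ref{lem:Psiepsbardelta}) that inserts the pointwise dispersive and exponential decay of the two pieces of $\overline\delta^{\eps,\eta}$ inside the time integral defining $\Psi^\eps$, yielding $\|\Psi^\eps(\overline\delta^{\eps,\eta},f)\|_{\mathcal{X}_T^{\ell,k}}\lesssim C_\eta\,\eps^{1/2}\|f\|_{\mathcal{X}_T^{\ell,k}}$.

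Your treatment of the $\delta^{\eps,\eta}$ terms and the observation that the exponential weight is $\le 1$ are correct.
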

Assuming those results to be true, let us   apply Lemma~\ref{cacciopoli+}   to Equation~(\ref{equationhepsetalambda}) and~$X ={\mathcal X}_T^{\ell,k}$, with~$x_{0} = \mathcal{D}_\lambda^{\e}$, ${\mathcal L}  =  {\mathcal L} _\lambda ^{\e}$ and~${\mathcal B}  =  \Phi_\lambda ^{\e}$. Proposition~\ref{nonlinear}, \eqref{estimateLpXtildeg} along with~(\ref{ginLrXell})
ensure that~$ \Phi ^{\e} _\lambda$ is a bounded bilinear operator over~$\mathcal{X}_T^{\ell,k} $, uniformly in~$T$ if~$\bar g_{\rm{in}}^\eta$ gives rise to a global  unique solution. 
Moreover choosing~$\lambda$ large enough,~$\e$ small enough (depending  on~$\eta$, and on~$T$ except  if~$\bar g^\eta_{\rm{in}}$ gives rise to a global  unique solution) and~$\eta$ small enough uniformly in the other parameters, Proposition~\ref{linear} ensures that~$   {\mathcal L}_\lambda ^{\e} $ is a contraction in~${\mathcal X}_T^{\ell,k}$. Finally thanks to Proposition~\ref{limitdatawp}    the assumption  of Lemma~\ref{cacciopoli+} on~$\mathcal{D}^{\e}_\lambda $ is  satisfied as soon as~$\e$ and~$\eta$ are small enough. There is therefore a unique   solution to~{\rm(\ref{equationhepsetalambda})} in~$\mathcal{X}_T^{\ell,k}$, which satisfies, uniformly in~$T$  if~$\bar g^\eta_{\rm{in}}$ gives rise to a global  unique solution,
\begin{equation}\label{shorttimeestimate}
\lim_{\eta\to 0} \lim_{\eps\to 0}  \,    \|
h_\lambda^{\e,\eta} 
 \|_{\mathcal{X}_T^{\ell,k}}  = 0 \, .
\end{equation}
Thanks to Lemma~\ref{lem:reduction}, this ends the proof of Theorem~\ref{mainthmwp}. \qed
    
    \bigskip
    
 To conclude it remains to prove Propositions~\ref{linear} to~\ref{limitdatawp}   as well as Lemma~\ref{resultstildegeps}. {Note that the proofs of Propositions~\ref{linear} to~\ref{limitdatawp} are conducted to obtain estimates uniform in $T$, and this information is actually only useful in the case of global solutions (which is, for example, always the case in dimension $2$). Note also that, here and in what follows, we have denoted by~$A \lesssim B$ if there exists a universal constant $C$ (in particular independent of the parameters~$T,\eps,\lambda,\eta$) such that~$A \leq CB$.
 
 Before going into the proofs of Propositions~\ref{linear} to~\ref{limitdatawp}, we are going to state lemmas about continuity properties of $U^\eps(t)$ and $\Psi^\eps(t)$ in the next section that are useful in the rest of the paper.}
 
\section{Estimates on $U^\eps(t)$ and $\Psi^\eps(t)$}
 Let us mention that some of the following results (Lemmas~\ref{lem:Uepscont},~\ref{lem:decayWeps} and~\ref{lem:Psiepscont}) have already been proved in some cases (see~\cite{Bardos-Ukai}) but for the sake of completeness, we write the main steps of the proof in this paper, especially because the $\R^2$-case is not always clearly treated in previous works.  The conclusions of the following lemmas hold for $\Omega=\T^d$ or $\R^d$ with $d=2,3$ unless otherwise specified. 

 \subsection{Estimates on $U^\eps(t)$}
\begin{lem} \label{lem:Uepscont}
 Let~$\ell \ge 0$ and~$k>d/2+1$ be given.  Then for all~$\eps>0$, the operator~$ U^\eps(t) $ is a strongly continuous semigroup on~$X^{\ell,k}$ and there is a constant~$C$ such that for all~$\eps \in (0,1)$ and all~$t \geq 0$
\begin{equation} \label{boundUeps}
 \|U^\eps(t) f \|_{\ell,k} \leq C \| f\|_{\ell,k} \, , \quad \forall \, f \in X^{\ell,k} \, .
\end{equation}
 \end{lem}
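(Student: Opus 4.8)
\textbf{Proof plan for Lemma~\ref{lem:Uepscont}.}
The semigroup $U^\eps(t)$ is generated by $-\frac1\e v\cdot\nabla_x + \frac1{\e^2}L$, so the natural starting point is the spectral decomposition of this generator, obtained by Fourier transform in $x$. The plan is to use the Ellis--Pinsky decomposition (Lemma~\ref{EllisPinsky}, referenced in Appendix~\ref{specBoltz}) of the symbol. After conjugating by the Fourier transform, $U^\eps(t)$ acts at each frequency $\xi$ as $\exp\big(\frac{t}{\e^2}(L - i\e\, v\cdot\xi)\big)$, and the scaling $\xi\mapsto \e\xi$ together with $t\mapsto t/\e^2$ reduces all estimates to uniform-in-frequency bounds on the semigroup $e^{tB(\zeta)}$, $B(\zeta):=L-iv\cdot\zeta$, applied with $\zeta=\e\xi$ and time $t/\e^2$. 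The key structural input is the splitting of the resolvent/semigroup into (i) a ``fluid'' part supported on the low-frequency ($|\zeta|$ small) eigenvalue branches coming from $\Ker L$, whose real parts are $O(-|\zeta|^2)$ hence harmless, and (ii) a ``remainder'' part uniformly exponentially damped (decay rate $\alpha$ as in~\eqref{rateUsharp}) thanks to the spectral gap of $L$ on $(\Ker L)^\perp$ together with the hypocoercive effect of the transport term for $|\zeta|$ bounded away from zero.

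The actual estimate~\eqref{boundUeps} is a boundedness statement in the $X^{\ell,k}$ norm, i.e. $\sup_v \langle v\rangle^k\|U^\eps(t)f(\cdot,v)\|_{H^\ell_x}$, so I would proceed as follows. First, since $L$ and the transport operator commute with $x$-derivatives, the $H^\ell_x$ weight $\langle\xi\rangle^{2\ell}$ passes through the Fourier multiplier trivially, and it suffices to bound the operator norm of $e^{\frac{t}{\e^2}B(\e\xi)}$ on the velocity space $L^{\infty,k}_v$ uniformly in $\xi$ and in $t\ge 0$, $\e\in(0,1)$. Second, for the fluid part one uses the explicit Ellis--Pinsky expansion of the finitely many eigenvalues $\lambda_j(\e\xi)$ and eigenprojectors $P_j(\e\xi)$, valid for $|\e\xi|$ below some fixed $\delta_0$: the real parts satisfy $\mathrm{Re}\,\lambda_j(\e\xi)\le -c|\e\xi|^2\le 0$, so $|e^{\frac{t}{\e^2}\lambda_j(\e\xi)}|\le 1$, and the projectors are bounded in $L^{\infty,k}_v$ uniformly; for $|\e\xi|\ge\delta_0$ the whole semigroup $e^{\frac{t}{\e^2}B(\e\xi)}$ is bounded (in fact exponentially decaying) uniformly by the spectral gap argument. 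This is precisely the content of the Ellis--Pinsky/Ukai analysis recalled in the appendix, so the proof is essentially an assembly of those cited ingredients. Third, strong continuity on $X^{\ell,k}$ follows from the $\eps$-fixed version: $U^\eps(t)$ is, for each fixed $\eps$, a bounded perturbation (by the bounded operator $\frac1{\e^2}L$) of the transport semigroup $e^{-\frac t\e v\cdot\nabla_x}$, which is strongly continuous on $X^{\ell,k}$ by the construction of Ukai~\cite{Ukai} --- here the $R\to\infty$ decay condition built into the definition~\eqref{Xellk} of $X^{\ell,k}$ is exactly what guarantees continuity of the free transport group, and the Duhamel series converges in $L^\infty_{loc}(t)$ on $X^{\ell,k}$.

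The main obstacle is obtaining the bound \emph{uniformly in $\eps$}: the naive operator-norm estimate on the generator blows up like $\e^{-2}$, so one genuinely must exploit the cancellation between $L$ (dissipative) and $v\cdot\nabla_x$ (skew-adjoint, conservative) encoded in the Ellis--Pinsky eigenvalue branches, and one must control the low-frequency region $|\e\xi|\le\delta_0$ (which after rescaling becomes the region where the perturbative expansion of the eigenvalues is valid) separately from the high-frequency region, matching the two at the fixed threshold $\delta_0$. A secondary technical point is that the relevant norm is $L^\infty_v$ with polynomial weight $\langle v\rangle^k$ rather than an $L^2_v$ norm, so the pointwise-in-$v$ bounds on the eigenfunctions and on the semigroup kernel --- again available from~\cite{Ukai, Ellis-Pinsky, Bardos-Ukai} --- are what must be invoked, and this is the reason for the hypothesis $k>d/2+1$ (needed so that moments of the collision kernel against $\Ker L$ are finite and the projectors $\Pi_L$ act boundedly on $L^{\infty,k}_v$). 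Once these cited facts are in place, \eqref{boundUeps} is immediate and strong continuity is standard.
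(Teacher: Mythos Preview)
Your reduction ``it suffices to bound the operator norm of $e^{\frac{t}{\e^2}B(\e\xi)}$ on $L^{\infty,k}_v$ uniformly in $\xi$'' does not close. The norm $\|\cdot\|_{\ell,k}$ is $\sup_v \langle v\rangle^k\|\cdot\|_{H^\ell_x}$, i.e.\ $L^{\infty,k}_v L^2_\xi(\langle\xi\rangle^\ell)$ on the Fourier side; a uniform-in-$\xi$ bound on the multiplier in $L^{\infty,k}_v$ only controls the output by $\|\hat f\|_{L^2_\xi(\langle\xi\rangle^\ell) L^{\infty,k}_v}$, which is the larger Minkowski-ordered norm and is \emph{not} dominated by $\|f\|_{\ell,k}$. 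Moreover, the Ellis--Pinsky lemma gives the remainder bound~\eqref{rateUsharp} only as $L^2_v\to L^2_v$, so you have no direct $L^{\infty,k}_v$ control on $\widehat U^\sharp$; what you label a ``secondary technical point'' is in fact the heart of the argument.

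The paper's proof avoids this by \emph{not} working frequency-by-frequency in $L^{\infty,k}_v$. It uses Grad's splitting $L=-\nu(v)+K$ and the Duhamel formula $U^\e(t)=e^{tA^\e}+\int_0^t e^{(t-t')A^\e}\frac{1}{\e^2}K\,U^\e(t')\,dt'$, where $A^\e=-\frac{1}{\e^2}(\e v\cdot\nabla_x+\nu(v))$ has the explicit semigroup $e^{tA^\e}h=e^{-\nu(v)t/\e^2}h(x-vt/\e,v)$. This free-streaming-with-absorption semigroup acts pointwise in $v$, hence is bounded (with decay $e^{-\nu_0 t/\e^2}$) on \emph{both} $H^\ell_x L^2_v$ and $X^{\ell,j}$; since $K$ gains velocity regularity ($H^\ell_x L^2_v\to X^{\ell,0}$ and $X^{\ell,j}\to X^{\ell,j+1}$), iterating the Duhamel identity yields $\|U^\e(t)f\|_{L^\infty_t X^{\ell,k}}\lesssim \|f\|_{\ell,k}+\|U^\e(t)f\|_{L^\infty_t H^\ell_x L^2_v}$. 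Only at this last stage is Ellis--Pinsky invoked, and there the $L^2_v\to L^2_v$ bound~\eqref{rateUsharp} on $\widehat U^\sharp$ is exactly what is available. This Grad--Duhamel bootstrap is the missing mechanism in your outline.
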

 \begin{proof}
 For the generation of the semigroup, we refer for example to~\cite{Ukai,Glassey}. Concerning the estimate on $U^\eps(t)$, following Grad's decomposition~\cite{Grad}, we start by spliting the operator $L$ defined in~(\ref{defoperator}) as 
$$
L h= -\nu(v) h + Kh \, , 
$$
where
the collision frequency $\nu$ is defined through
\begin{equation}\label{defnu(v)}
\nu(v) := \int_{\R^d \times \Sp^{d-1} } |v-v_*|M(v_*)\, d\sigma \, dv_*
\end{equation}
and satisfies for some constants~$0<\nu_0<\nu_1$,
$$
 \quad \nu_0(1+|v|) \leq \nu (v)  \leq \nu_1(1+|v|) \, .
$$
The operator $K$ is then defined as
$$
Kh := Lh + \nu (v) h \, ,
$$
it is bounded from~$H^\ell_x L^2_v$ to~$X^{\ell,0}$ and from~$X^{\ell,j}$ to~$X^{\ell,j+1}$ for any~$j \ge 0$ (see~\cite{Ukai}). Then, denoting 
$$
A^\e:=-{1 \over \e^2} \left(\e v \cdot \nabla_x + \nu(v)\right) \quad \text{and} \quad B^\e:=A^\e + {1\over \e^2} K\,,
$$
we use the Duhamel formula to decompose $U^\e(t)$ as follows:
\begin{equation} \label{eq:decompUeps}
U^\e(t)=e^{tA^\e} + \int_0^t e^{(t-t')A^\e} {1 \over \e^2} K U^\e(t') \, dt'\,. 
\end{equation}
Moreover, the semigroup $e^{tA^\e}$ is explicitly given by 
\begin{equation} \label{semigroupAeps}
e^{tA^\e} h = e^{-\nu(v)\frac{t}{\e^2}} h\Big(x-v{t\over\e},v\Big)\,,
\end{equation}
so   $e^{tA^\e}$ satisfies
$$
\|e^{tA^\e} h\|_X \le e^{-\nu_0\frac{t}{\e^2}} \|h\|_X
$$
for $X=H^\ell_x  L^2_v$ or $X=X^{\ell,j}$ for $j \ge 0$. From this and the fact that $K$ is bounded from~$H^\ell_x L^2_v$ to~$X^{\ell,0}$ and from~$X^{\ell,j}$ to~$X^{\ell,j+1}$ for any $j \ge 0$, we deduce that there exists a constant $C$ such that
\begin{align*}
\|U^\e(t)f\|_{X} &\le e^{-\nu_0{\frac{t}{\eps^2}}} \|f\|_X + {C \over \eps^2}\int_0^t e^{-\nu_0 \frac{t-t'}{\eps^2}} \|U^\e(t') f\|_Y \, dt'
\end{align*}
and thus
$$
\|U^\e(t)f\|_{L^\infty(\R^+,X)}\le \|f\|_X + C \|U^\e(t) f\|_{L^\infty(\R^+,Y)} 
$$
for $(X,Y)=(X^{\ell,0},H^\ell_xL^2_v)$ or $(X,Y)=(X^{\ell,j},X^{\ell,j-1})$ for any $j \ge 1$. Reiterating the process, we obtain that 
\begin{equation} \label{reiter}
\|U^\e(t)f\|_{L^\infty(\R^+,X^{\ell,k})}\lesssim \|f\|_{\ell,k} + \|U^\eps(t) f \|_{L^\infty(\R^+,H^\ell_xL^2_v)} \, .
\end{equation}
It now remains to estimate $U^\eps(t)f$ in $H^\ell_xL^2_v$. Taking the Fourier transform in~$x$ we have   for all~$\xi $ thanks to~(\ref{decompUeps}) in Lemma~\ref{EllisPinsky} 
\begin{equation}\label{decUepsEP}
\begin{aligned}
  U^\e(t ) &= \sum_{j=1}^4    U_j^\e(t ) + U^{\e\sharp}( t) \quad \text{with} \\
 \widehat U_j^\e(t,\xi) &:= \widehat U_j\left({t \over \e^2},\e \xi\right)  \quad  \text{and} \quad\widehat U^{\e\sharp}(t,\xi) := \widehat U^\sharp\left({t \over \e^2},\e \xi\right)  
\end{aligned}
\end{equation}
and for $1\le j \le 4$
$$
\widehat U^\e_j(t,\xi) = \chi\Big(\frac{\e |\xi|}\kappa\Big) e^{t \mu_j^\e(\xi)} P_j(\e \xi)
$$
with 
$$
\mu_j^\e(\xi):={1 \over \e^2} \lambda_j(\e \xi) = i \alpha_j {|\xi| \over \e} - |\xi|^2\big(\beta_j + O(\e |\xi|)\big) \, .
$$
Denoting $\beta := \min_j \beta_j/2$ and recalling that $\alpha$ is the rate of decay defined in~\eqref{rateUsharp}, we obtain the following bound:
$$
\|\widehat U^\eps(t,\xi)\|_{L^2_v \to L^2_v}  \lesssim e^{-\beta |\xi|^2 t} + e^{-\alpha \frac{t}{\eps^2}} \, .
$$
From this and using that $k >d/2$, we deduce that 
$$
 \|U^\eps(t) f \|_{L^\infty(\R^+,H^\ell_xL^2_v)} \lesssim \|f\|_{H^\ell_xL^2_v} \lesssim \|f\|_{\ell,k}\, ,
$$
which allows us to conclude the proof thanks to~\eqref{reiter}.
 \end{proof}
We now state a lemma which provides decay estimates on $\frac1\eps U^\eps(t)$ on the orthogonal of~$\operatorname{Ker} L$. 
 \begin{lem} \label{lem:decayWeps}
 Let $\ell\ge 0$. We denote $
W^\e (t):=\frac 1\e U^\e (t)(I-\Pi_{L}  ).
$
We then have the following estimates: there exists $\sigma>0$ such that 
$$
\|W^\e(t)f\|_{H^\ell_xL^2_v} \lesssim 
\left\{
\begin{array}{lrl}
\frac{e^{-\sigma t}}{t^{\frac12}} \|f\|_{H^\ell_xL^2_v} &\text{if} &\Omega = \T^d \,, \vspace{0.1cm}\\
 {1 \over t^{\frac12}} \|f\|_{H^\ell_xL^2_v}  &\text{if} &\Omega = \R^d\, , \vspace{0.1cm} \\
{1 \over {t^{\frac12} \langle t \rangle^{d \over 4}}} (\|f\|_{H^\ell_xL^2_v} + \|f\|_{L^2_vL^1_x})  &\text{if} &\Omega = \R^d \, .
\end{array}
\right.
$$
 \end{lem}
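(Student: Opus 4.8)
The plan is to prove the decay estimates on $W^\e(t) = \frac1\e U^\e(t)(I - \Pi_L)$ by passing to the Fourier side in $x$ and exploiting the Ellis--Pinsky decomposition~\eqref{decUepsEP} of $U^\e(t)$ recalled in Lemma~\ref{EllisPinsky}. Since $W^\e(t)$ acts on the orthogonal complement of $\operatorname{Ker} L$, the ``purely kinetic'' remainder $U^{\e\sharp}$ already carries the exponential decay $e^{-\alpha t/\e^2}$ from~\eqref{rateUsharp}, so the factor $\frac1\e$ is absorbed by noting $\frac1\e e^{-\alpha t/\e^2} \lesssim t^{-1/2}$ (and similarly $\lesssim e^{-\sigma t} t^{-1/2}$ on the torus, where $\e \le 1$ forces $t/\e^2 \ge t$). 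The real work is with the hydrodynamic modes $U_j^\e$, $1 \le j \le 4$, whose Fourier symbols are $\chi(\e|\xi|/\kappa)\, e^{t\mu_j^\e(\xi)} P_j(\e\xi)$ with $\mu_j^\e(\xi) = i\alpha_j |\xi|/\e - |\xi|^2(\beta_j + O(\e|\xi|))$.

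First I would treat the periodic case $\Omega = \T^d$. Here the relevant observation is that because the functions are mean free, the Fourier variable $\xi$ ranges over $\Z^d \setminus \{0\}$, hence $|\xi| \ge 1$. On the support of $\chi(\e|\xi|/\kappa)$ we have $\e|\xi| \lesssim \kappa$, so $|\xi|^2 \beta_j \gtrsim |\xi| \ge 1$ and the real part of $\mu_j^\e$ is bounded above by $-\beta|\xi|^2 \le -\sigma$ for $\sigma > 0$; this yields $\|\widehat{U_j^\e}(t,\xi)\|_{L^2_v \to L^2_v} \lesssim e^{-\beta|\xi|^2 t}$ (using that $P_j$ is a bounded projector). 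To produce the $t^{-1/2}$ I would factor out one power: $\frac1\e$ must be handled by the oscillation/decay balance. The cleanest route: on $\operatorname{supp}\chi$, $\frac1\e \lesssim |\xi|/\kappa$, and $|\xi| e^{-\beta|\xi|^2 t} \lesssim t^{-1/2} e^{-\beta|\xi|^2 t/2}$, so $\frac1\e \|\widehat{U_j^\e}(t,\xi)\|_{L^2_v \to L^2_v} \lesssim t^{-1/2} e^{-\sigma t/2}$. Summing $\langle \xi\rangle^{2\ell}$ against $|\widehat f(\xi)|^2$ and using that $e^{-\beta|\xi|^2 t}$ is bounded gives the desired $\frac{e^{-\sigma t}}{t^{1/2}}\|f\|_{H^\ell_x L^2_v}$ bound after replacing $\sigma/2$ by $\sigma$.

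For the whole space $\Omega = \R^d$, the same symbol bound $\frac1\e\|\widehat{U_j^\e}(t,\xi)\|_{L^2_v\to L^2_v} \lesssim |\xi| e^{-\beta|\xi|^2 t} \lesssim t^{-1/2} e^{-\beta|\xi|^2 t/2}$ holds pointwise in $\xi$, but now $\xi$ is not bounded away from $0$, so there is no exponential gain. Integrating $\langle\xi\rangle^{2\ell} |\widehat f(\xi)|^2$ against the bounded factor $e^{-\beta|\xi|^2 t}$ still gives the second estimate $\|W^\e(t)f\|_{H^\ell_x L^2_v} \lesssim t^{-1/2}\|f\|_{H^\ell_x L^2_v}$. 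For the third, sharper estimate I would split the frequency integral at $|\xi| \sim 1$: the high-frequency part $|\xi| \gtrsim 1$ is controlled exactly as before, picking up from $e^{-\beta|\xi|^2 t}$ an extra factor $\langle t\rangle^{-d/4}$ (bound $e^{-\beta|\xi|^2 t} \le e^{-\beta|\xi|^2}$ times the decaying factor), while the low-frequency part $|\xi| \lesssim 1$ is estimated in $L^\infty_\xi$: $\|\widehat f\|_{L^\infty_\xi} \lesssim \|f\|_{L^1_x}$, so that part is bounded by $\big(\int_{|\xi|\lesssim 1} |\xi|^2 e^{-\beta|\xi|^2 t}\, d\xi\big)^{1/2} \|f\|_{L^2_v L^1_x} \lesssim t^{-1/2}\langle t\rangle^{-d/4}\|f\|_{L^2_v L^1_x}$ by the scaling $\xi \mapsto \xi/\sqrt t$. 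Combining the two ranges and incorporating the $U^{\e\sharp}$ term (whose contribution $\frac1\e e^{-\alpha t/\e^2} \lesssim t^{-1/2}\langle t\rangle^{-d/4}$, using $\e \le 1$) yields the claimed bound.

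The main obstacle I expect is bookkeeping the low-frequency region carefully: one must verify that the projector $P_j(\e\xi)$ extends to $\e\xi = 0$ with a uniform bound (so that the $O(\e|\xi|)$ correction in $\mu_j^\e$ is genuinely lower order on $\operatorname{supp}\chi$), and that the restriction to $(I - \Pi_L)$ is exactly what kills the non-decaying zero mode in the hydrodynamic part — in the whole-space case it is the $|\xi| \to 0$ behavior of the $e^{t\mu_j^\e}$ factors, not a spectral gap, that provides integrability, so the $\frac1\e$ must be entirely traded via $\frac1\e \lesssim |\xi|$ on the cutoff support. Once that trade is set up cleanly, the rest is the standard heat-kernel-type frequency integration plus the elementary inequality $\frac1\e e^{-\alpha t/\e^2} \lesssim t^{-1/2}$ for the remainder term.
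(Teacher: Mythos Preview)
Your overall architecture is right --- Fourier in $x$, Ellis--Pinsky decomposition, treat $U^{\e\sharp}$ via $\frac1\e e^{-\alpha t/\e^2}\lesssim t^{-1/2}$, and then heat-kernel frequency integration --- but the mechanism you propose for absorbing the $\frac1\e$ in the hydrodynamic part is wrong. You write that on $\operatorname{supp}\chi$ one has $\frac1\e \lesssim |\xi|/\kappa$. The cutoff condition $\e|\xi|\le\kappa$ gives $|\xi|\le \kappa/\e$, i.e.\ $\frac1\e \ge |\xi|/\kappa$: the inequality goes the other way. There is no upper bound on $\frac1\e$ coming from the support of $\chi$, and in the whole-space case $|\xi|$ can be arbitrarily small while $\frac1\e$ is large, so this trade simply fails.

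The actual source of the cancellation is the projector expansion from Lemma~\ref{EllisPinsky}: since $P_j(\e\xi)=P_j^0(\xi/|\xi|)+\e|\xi|P_j^1(\xi/|\xi|)+\e^2|\xi|^2 P_j^2(\e\xi)$ and $\sum_j P_j^0=\Pi_L$ with each $P_j^0$ projecting into $\operatorname{Ker}L$, one has $P_j^0(I-\Pi_L)=0$, hence
\[
P_j(\e\xi)(I-\Pi_L)=\e|\xi|\Big(P_j^1\big(\tfrac{\xi}{|\xi|}\big)+\e|\xi|P_j^2(\e\xi)\Big).
\]
It is this structural $\e|\xi|$ factor that cancels the $\frac1\e$ in $W^\e$ and produces the pointwise symbol bound $\|\widehat{W^\e}(t,\xi)\|_{L^2_v\to L^2_v}\lesssim |\xi|e^{-\beta|\xi|^2 t}+\frac1\e e^{-\alpha t/\e^2}$. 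Once you have this bound, your downstream arguments (the $\T^d$ spectral gap from $|\xi|\ge1$, the bare $t^{-1/2}$ on $\R^d$, and the low/high frequency split with $\|\widehat f\|_{L^\infty_\xi}\lesssim\|f\|_{L^1_x}$ for the refined $\R^d$ estimate) go through essentially as in the paper. You sensed in your final paragraph that $(I-\Pi_L)$ was the key, but the precise way it enters is through killing $P_j^0$, not through the cutoff geometry.
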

 
 \begin{proof}
 We use again~(\ref{decUepsEP}) and we recall that $$
P_j(\e \xi)(I-\Pi_{L}  )= \e|\xi| \left(P^1_{j}\Big({\xi \over |\xi|}\right) + \e |\xi| P^2_{j}(\e |\xi|)\Big)\,. 
$$
Using results from Lemma~\ref{EllisPinsky} on $P^1_j$ and $P^2_j$, denoting $\beta := \min_j \beta_j/2$, we obtain the following bound:
\begin{equation} \label{finalboundWeps}
\|\widehat W^\e(t,\xi)\|_{L^2_v \to L^2_v} \lesssim |\xi| e^{-\beta |\xi|^2t} + {1 \over \e} e^{-\alpha \frac{t}{\e^2}}
\end{equation}
where $\alpha$ is the rate of decay defined in~\eqref{rateUsharp}. 
From this we shall deduce a bound in~$H^\ell_xL^2_v$, arguing differently according to the definition of~$\Omega$.
We first notice that for any $t \ge 0$,
\begin{equation} \label{texp}
{1 \over \e} e^{-\alpha {t\over \e^2}} \lesssim {e^{-\alpha {t\over2}}\over t^\frac12} \, \cdotp
\end{equation}

\noindent $\bullet $ $ $  {\it The case of $\T^d$.}$ $  
Since~$\xi \in \Z^d$, we have 
$$
|\xi| e^{-\beta |\xi|^2t} \lesssim \frac{e^{-\beta {t\over 2}} }{t^\frac12}\, \cdotp
$$
We can thus deduce from~\eqref{finalboundWeps} that for $\sigma:=\min(\alpha,\beta)/2>0$, and for any~$f = f(v)\in L^2_v$,
$$
\|\widehat W^\e(t,\xi)f\|_{L^2_v} \lesssim {e^{- \sigma t} \over t^\frac12} \|f\|_{L^2_v} \, . 
$$
It is then clear that for any~$f = f(x,v)\in H^\ell_xL^2_v$,
$$
\|W^\e(t) f\|_{H^\ell_xL^2_v} \lesssim {e^{-\sigma t} \over t^\frac12} \|f\|_{H^\ell_xL^2_v}\, .
$$
\noindent $\bullet $ $ $ {\it The case of $\R^d$.} $ $Note that
\begin{equation} \label{xiexp}
\forall \, t>0 \, , \quad |\xi| e^{-\beta |\xi|^2t} \lesssim {e^{-\beta |\xi|^2{t \over 2}} \over t^\frac12}.
\end{equation}
This together with~(\ref{finalboundWeps}) and~(\ref{texp})
gives  directly that
$$
\|W^\e(t) f\|_{H^\ell_xL^2_v}   \lesssim {1 \over t^\frac12 } \, \|f\|_{H^\ell_xL^2_v}\,.
$$
Finally let us prove the last estimate. We can suppose that~$t \gtrsim 1$. Then
using~\eqref{finalboundWeps} and~\eqref{texp}, we write that for any function~$f$ 
\begin{align*}
&\|W^\e(t) f\|^2_{H^\ell_xL^2_v} \\
&\quad \lesssim \int_{\R^d} \Big(|\xi|^2 e^{-2\beta t |\xi|^2} + {e^{-\alpha t} \over t}\Big)(1+|\xi|^{2\ell}) \|\widehat f (\xi, \cdot)\|^2_{L^2_v} \, d\xi \\
&\quad \lesssim \int_{\R^d} |\xi|^2 e^{-2\beta t |\xi|^2}\|\widehat f (\xi, \cdot)\|^2_{L^2_v} \, d\xi + \int_{\R^d} |\xi|^{2+2\ell} e^{-2\beta t |\xi|^2}\|\widehat f (\xi, \cdot)\|^2_{L^2_v} \, d\xi + \frac{e^{-\alpha t}}{t} \|f\|^2_{H^\ell_xL^2_v}\\
&=: I_1+I_2+I_3\,. 
\end{align*}
We treat $I_1$ using~\eqref{xiexp} and a change of variable: since~$t \gtrsim 1$ then
\begin{align*}
I_1 &\lesssim {1 \over t} \int_{\R^d} e^{-\beta t |\xi|^2} \, d\xi \, \|\widehat f \|^2_{L^2_v L^\infty_\xi} \\
&\lesssim {1 \over t^{1+\frac d2}} \, \|f\|^2_{L^2_vL^1_x} \lesssim {1 \over {t\langle t \rangle^{\frac d2}}} \, \|f\|^2_{L^2_vL^1_x}\,.
\end{align*}
The term $I_2$ is handled just by using a change of variable and we obtain (since~$t \gtrsim 1$)
$$
I_2 \lesssim  {1 \over t^{1+\ell+\frac d2}} \, \|f\|^2_{L^2_vL^1_x} \lesssim {1 \over {t\langle t \rangle^{\frac d2}}} \, \|f\|^2_{L^2_vL^1_x}\,.
$$
The decay in time of $I_3$ is even better, so in the end, we get that for any $t \ge 0$, there holds
$$
\|W^\e(t) f\|_{H^\ell_xL^2_v} \lesssim  {1 \over {t^\frac12\langle t \rangle^{\frac d4}}} \, \left(\|f\|_{H^\ell_xL^2_v} +\|f\|_{L^2_vL^1_x}\right)\,.
$$
Lemma~\ref{lem:decayWeps}
is proved.
\end{proof}

We now give some estimates on the different parts of $U^\eps(t)$ from the decomposition given in~(\ref{decompUeps}). 
\begin{lem} \label{lem:Uepsdisp}
Let $\Omega = \R^2$. Fix~$\ell \ge0$ and $k>2$  and consider~$f$ in~$X^{\ell,k}   \cap L^{2}_v L^1_x$. Then with the notation introduced in Remark~{\rm\ref{defU}} there holds for all $\eps \in (0,1)$
$$
\sup_{t \ge 0} \bigg( \langle t \rangle^{\frac12} \big\|\big(U^\eps_{\rm{disp}}(t)  +U(t) + U^{\eps \sharp}(t)\big) f\big \|_{\ell,k}\bigg) \lesssim \|f\|_{\ell,k} + \|f\|_{L^{2}_v L^1_x} \, .
$$
\end{lem}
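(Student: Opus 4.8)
The statement is a dispersive decay estimate in $\R^2$ for the ``remainder'' part of the semigroup $U^\eps$, namely all the pieces of the Ellis--Pinsky decomposition except the four hydrodynamic branches $U^\eps_j$ that survive in the limit. The plan is to reduce the $X^{\ell,k}$ estimate to an $H^\ell_x L^2_v$ estimate via the same bootstrap-through-$K$ argument as in Lemma~\ref{lem:Uepscont}, and then to prove the $H^\ell_x L^2_v$ decay by a stationary-phase / Fourier-multiplier analysis of each term.

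First I would note that by the Duhamel formula~\eqref{eq:decompUeps} together with the boundedness of $K$ from $H^\ell_x L^2_v$ to $X^{\ell,0}$ and from $X^{\ell,j}$ to $X^{\ell,j+1}$, it suffices to prove the weighted bound $\langle t\rangle^{1/2}\|V^\eps(t) f\|_{H^\ell_xL^2_v}\lesssim \|f\|_{\ell,k}+\|f\|_{L^2_vL^1_x}$, where $V^\eps(t):=U^\eps_{\rm disp}(t)+U(t)+U^{\eps\sharp}(t)$; one has to check that the time-convolution in~\eqref{eq:decompUeps} preserves the $\langle t\rangle^{-1/2}$ decay, which it does since $\int_0^t \langle t-t'\rangle^{-1/2\text{ (from }e^{(t-t')A^\e}\text{, effectively fast decay)}}\langle t'\rangle^{-1/2}\,dt'\lesssim\langle t\rangle^{-1/2}$ — in fact $e^{(t-t')A^\e}$ carries the exponential factor $e^{-\nu_0(t-t')/\eps^2}$ which is far stronger, so the convolution loses nothing and the reiteration closes exactly as in Lemma~\ref{lem:Uepscont}. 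For the term $U^{\eps\sharp}(t)$, estimate~\eqref{estimatedelta2}-type reasoning (or directly the spectral gap $\alpha$ of the $\sharp$-part) gives $\|U^{\eps\sharp}(t)f\|_{H^\ell_xL^2_v}\lesssim e^{-\alpha t/\eps^2}\|f\|_{H^\ell_xL^2_v}\lesssim \langle t\rangle^{-1/2}\|f\|_{\ell,k}$ uniformly in $\eps\in(0,1)$, using $e^{-\alpha t/\eps^2}\lesssim \langle t\rangle^{-1/2}$ for $\eps\le 1$.

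The heart of the matter is the dispersive part. For $U(t)$ (the limiting semigroup, built from the acoustic/heat branches with $\eps=0$) the Fourier symbol on each branch is, schematically, $\chi(|\xi|/\kappa)e^{\pm i\alpha_j|\xi| t - \beta_j|\xi|^2 t}P_j(0)$; on the low-frequency region the heat factor alone gives $L^1_x\to L^2_x$ decay $\lesssim t^{-d/4}=t^{-1/2}$ in $d=2$ by a change of variables, while the cutoff $\chi$ confines $|\xi|$ and the $H^\ell_x$ weight $\langle\xi\rangle^{2\ell}$ is then bounded, so the $H^\ell_x L^2_v$ bound follows from $\|f\|_{L^2_vL^1_x}$; a $t^{-1/2}$ bound also follows trivially from $\|f\|_{H^\ell_xL^2_v}$ when $t\lesssim 1$ or from the heat kernel decay when $t\gtrsim1$. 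For $U^\eps_{\rm disp}(t)$ one has the rescaled symbol $\chi(\eps|\xi|/\kappa)e^{i\alpha_j|\xi|t/\eps - |\xi|^2(\beta_j+O(\eps|\xi|))t}P_j(\eps\xi)$: the phase $\alpha_j|\xi|t/\eps$ is genuinely oscillatory, and combining the standard $2$-dimensional stationary-phase estimate for the wave-type propagator $e^{i|\xi|t/\eps}\chi$ (which gives an $L^\infty_x$ gain of $(\eps/t)^{(d-1)/2}=(\eps/t)^{1/2}$, cf.\ estimate~\eqref{estimatedelta1}) with the parabolic smoothing $e^{-\beta|\xi|^2 t}$ yields, after interpolating against the trivial $L^2$ bound, a uniform-in-$\eps$ decay $\langle t\rangle^{-1/2}$ controlled by $\|f\|_{\ell,k}+\|f\|_{L^2_vL^1_x}$. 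The $v$-variable plays no role beyond the fact that $P_j(\eps\xi)$ is bounded on $L^2_v$ uniformly, and the $k>2$ hypothesis is only used to pass from $X^{\ell,k}$ norms to $H^\ell_xL^2_v$ norms via $\|f\|_{H^\ell_xL^2_v}\lesssim\|f\|_{\ell,k}$.

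\textbf{Main obstacle.} The delicate point is obtaining the decay \emph{uniformly in $\eps\in(0,1)$}: one cannot simply quote a fixed dispersive estimate because the propagator depends on $\eps$ through both the rescaled frequency $\eps\xi$ in the cutoff and eigenprojectors and the singularly-scaled phase $|\xi|t/\eps$. The resolution is to work at the level of the Fourier symbol and split frequency space according to whether $|\xi|^2 t$ is large or small: where $\beta_j|\xi|^2 t\gtrsim 1$ the Gaussian alone beats any polynomial; where $|\xi|^2 t\lesssim 1$ one uses the oscillation, and here the crucial observation is that the stationary-phase constant in the $d=2$ estimate for $e^{i|\xi|t/\eps}$ scales precisely as $(\eps/t)^{1/2}$, which combines with the $L^1_x$ mass to give the claimed $\langle t\rangle^{-1/2}$ — uniformly, since $(\eps/t)^{1/2}\wedge 1\le \langle t\rangle^{-1/2}$ for $\eps\le1$ up to a harmless adjustment for small $t$ where the trivial $L^2$ bound suffices. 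Handling the error term $O(\eps|\xi|)$ in the exponent requires only that it be absorbed into the Gaussian on the support of $\chi(\eps|\xi|/\kappa)$, which forces $\eps|\xi|\le\kappa$ small.
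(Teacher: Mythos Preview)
Your proposal has a genuine structural gap in the reduction step, and a misdirected argument for the main estimate.

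\textbf{The bootstrap reduction does not apply.} The Duhamel identity~\eqref{eq:decompUeps} is specific to the full semigroup $U^\eps$; the operator $V^\eps(t)=U^\eps_{\rm disp}(t)+U(t)+U^{\eps\sharp}(t)$ is a piece of the Ellis--Pinsky decomposition (namely $\sum_{j=1}^4 U^\eps_{j0}+U^{\eps\sharp}$), not a semigroup, and does not satisfy any relation of the form $V^\eps(t)=e^{tA^\eps}+\int_0^t e^{(t-t')A^\eps}\tfrac{1}{\eps^2}KV^\eps(t')\,dt'$. So you cannot lift an $H^\ell_xL^2_v$ bound to $X^{\ell,k}$ this way. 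The paper instead uses a property you overlooked: $P^0_j(\xi/|\xi|)$ is bounded from $L^2_v$ into $L^{\infty,k}_v$ uniformly in $\xi$ (Lemma~\ref{EllisPinsky}). This allows a \emph{direct} $X^{\ell,k}$ estimate,
\[
\|U^\eps_{j0}(t)f\|^2_{\ell,k}\lesssim \int\langle\xi\rangle^{2\ell}e^{-2\beta_jt|\xi|^2}\big\|P^0_j(\xi/|\xi|)\widehat f(\xi,\cdot)\big\|^2_{L^{\infty,k}_v}\,d\xi
\lesssim\int\langle\xi\rangle^{2\ell}e^{-2\beta_jt|\xi|^2}\|\widehat f(\xi,\cdot)\|^2_{L^2_v}\,d\xi\,,
\]
with no bootstrap needed. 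For $U^{\eps\sharp}$ the $X^{\ell,k}$ exponential bound is quoted from~\cite{Bardos-Ukai}.

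\textbf{Stationary phase is a detour.} The oscillatory factor $e^{i\alpha_j|\xi|t/\eps}$ has modulus one and contributes nothing to an $L^2_\xi$-based estimate; the paper simply discards it. The $\langle t\rangle^{-1/2}$ decay comes entirely from the heat factor via the standard $L^1_x\to L^2_x$ bound $\int_{\R^2}e^{-2\beta_jt|\xi|^2}\|\widehat f(\xi,\cdot)\|^2_{L^2_v}\,d\xi\lesssim t^{-1}\|f\|^2_{L^2_vL^1_x}$ (change of variables, using $\|\widehat f\|_{L^\infty_\xi}\le\|f\|_{L^1_x}$), together with the trivial $H^\ell_x$ bound for $t\lesssim1$. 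Your wave-dispersive rate $(\eps/t)^{1/2}$ is an $L^1_x\to L^\infty_x$ estimate; interpolating it against the conserved $L^2$ norm gives no $L^2$ decay, so it does not yield the $H^\ell_x$ bound you claim. That dispersive estimate is indeed used in the paper, but in Lemma~\ref{resultstildegeps} for a different purpose ($L^{\infty,k}_vW^{\ell,\infty}_x$ control), not here.
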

\begin{Rmk}
We only need $f$ to be in $L^{2}_v L^1_x$ in order to estimate the terms $U^\eps_{\rm{disp}}(t)$ and~$U(t)$. Indeed, the decay in time of those terms comes from the decay of the heat flow and thus requires a loss of integrability in space. 
\end{Rmk}

\begin{proof}
Let us start with the terms $(U^\eps_{\rm disp}(t)+U(t))f$. We focus on large times $t \gtrsim 1$, the case of small times $t \lesssim 1$ can be treated in an easier way just using the continuity of the heat flow in $H^\ell_x$. We remark that given the form of $U^\eps_{\rm disp}(t)$ and~$U(t)$, we just need to estimate 
\begin{align*}
&\Big\| \mathcal{F}_x^{-1} \big(e^{-\beta_j t|\xi|^2} e^{i \alpha_j t \frac{|\xi|}{\eps}}  P^0_j{\Big({\xi \over |\xi|} \Big)} \widehat f \big) \Big\|_{\ell,k} \\
&\quad = \Big\| \Big\| e^{\beta_j t \Delta} \mathcal{F}_x^{-1} \Big(e^{i \alpha_j t \frac{|\xi|}{\eps}}  P^0_j\Big({\xi \over |\xi|} \Big) \widehat f \Big)\Big\|_{H^\ell_x} \Big\|_{L^{\infty,k}_v} 
\end{align*}
with $\alpha_j \in \R$  (and can be~0)   and $\beta_j>0$. We have
\begin{align*}
&\Big\| \mathcal{F}_x^{-1} \big(e^{-\beta_j t|\xi|^2} e^{i \alpha_j t \frac{|\xi|}{\eps}}  P^0_j\left(\xi \over |\xi| \right) \widehat f \big) \Big\|^2_{\ell,k}  \\
&\quad \lesssim \sup_v \langle v \rangle^{2k} \int_{\R^2} \langle \xi \rangle^{2\ell}  e^{-\beta_j t|\xi|^2}  \Big|  e^{i \alpha_j t \frac{|\xi|}{\eps}}  P^0_j\left(\xi \over |\xi| \right) \widehat f \Big|^2 \, d\xi \\
&\quad \lesssim \int_{\R^2} \langle \xi \rangle^{2\ell}e^{-\beta_j t|\xi|^2}  \Big\|P^0_j\Big({\xi \over |\xi|} \Big) \widehat f\Big\|^2_{L^{\infty,k}_v} \, d\xi \, .
\end{align*}
Then, using that $P_{j}^0(\xi/|\xi|)$ is bounded from $L^2_v$ into $L^{\infty,k}_v$ uniformly in $\xi$ from Lemma~\ref{EllisPinsky} and the fact that $L^{\infty,k}_v \hookrightarrow L^2_v$, we obtain:
\begin{align*}
&\left\| \mathcal{F}_x^{-1} \left(e^{-\beta_j t|\xi|^2} e^{i \alpha_j t \frac{|\xi|}{\eps}}  P^0_j\Big({\xi \over |\xi|} \Big) \widehat f \right) \right\|_{\ell,k}^2 \\
&\quad \lesssim  \int_{\R^2} \langle \xi \rangle^{2\ell} e^{-\beta_j t|\xi|^2} \| \widehat f \|^2_{L^2_v} \, d\xi \lesssim   \int_{\R^2}
\int_{\R^2} (1+|\xi|^{2\ell}) e^{-\beta_j t|\xi|^2}|\widehat f|^2\, d\xi \, dv \, .
\end{align*}
Using now the decay properties of the heat flow and bounding~$ \|f\| _{\ell-1,k} $ by~$ \|f\| _{\ell,k}$, we get:
$$
\left\| \mathcal{F}_x^{-1} \left(e^{-\beta_j t|\xi|^2} e^{i \alpha_j t \frac{|\xi|}{\eps}}  P^0_j\Big({\xi \over |\xi| }\Big) \widehat f \right) \right\|_{\ell,k}^2 \lesssim {1 \over t} \big(\|f\|^2_{L^2_v L^1_x} + \|f\|^2_{\ell,k}\big) \, .
$$
Let us now estimate the last remainder term $\|U^{\eps \sharp}(t) f\|_{\ell,k}$. From~\eqref{rateUsharp}, one can prove (see the proof of Lemma~6.2 in~\cite{Bardos-Ukai}) that 
\begin{equation} \label{decayUepssharp}
\|U^{\eps \sharp}(t) f\|_{\ell,k} \lesssim e^{-\alpha {t\over \eps^2}} \|f\|_{\ell,k}  
\end{equation}
and thus 
$$
\langle t \rangle^\frac12 \|U^{\eps \sharp}(t)f\|_{\ell,k}\lesssim \|f\|_{\ell,k}  \quad \forall \, t \lesssim 1 \, .
$$
We then notice that for $t \gtrsim 1$,
$$
t ^\frac12e^{-\alpha {t\over\eps^2}} \lesssim \eps \frac{t^\frac12}{\eps} e^{-\alpha {t\over\eps^2}} \lesssim \eps
$$
to deduce that 
$$
t^\frac12 \|U^{\eps \sharp}(t)f\|_{\ell,k}\lesssim \eps \|f\|_{\ell,k}  \quad \forall \, t \gtrsim 1 \, .
$$
Lemma~\ref{lem:Uepsdisp} is proved.
\end{proof}

\begin{lem}\label{lem:UepstoU0}
   Fix~$\ell \ge0$, $k>d/2+1$  and consider~{$f$ in~$X^{\ell,k}$}. 
   Then with the notation introduced in Remark~{\rm\ref{defU}} there holds
   \begin{equation} \label{boundUeps-U0ip}
\sup_{t \ge 0} \, \bigg(\langle t \rangle^{\frac12} \big\|  \big( U^\eps (t)   - U^\eps_{\rm{disp}}(t)  - U^{\eps \sharp}(t)- U (t)\big)  {f} \big \|_{\ell,k}\bigg) \lesssim \|f\|_{\ell,k} \, .
\end{equation}
If moreover $f \in X^{\ell+1,k}$, there holds:
\begin{equation} \label{estimatesmoothdata}
\sup_{t \ge 0} \, \bigg(\langle t \rangle^{\frac12} \big\|  \big( U^\eps (t)   - U^\eps_{\rm{disp}}(t)  - U^{\eps \sharp}(t)- U (t)\big)  {f} \big \|_{\ell,k}\bigg) \lesssim \eps \|f\|_{\ell+1,k} \, ,
\end{equation}
and if~{$f \in X^{\ell+1,k}$} is a well-prepared data in the sense of~{\rm(\ref{defwp})}, 
then
\begin{equation} \label{boundUeps-U0}
\lim_{\eps\to0}\sup_{t \ge 0} \, \bigg(\langle t \rangle^{\frac12} \big \|  \big( U^\eps (t)   - U (t)\big) {f}  \big\|_{{\ell,k}} \bigg)= 0\,.
\end{equation}
\end{lem}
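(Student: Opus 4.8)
The statement collects three estimates on $U^\eps(t)$, all obtained from the Ellis--Pinsky decomposition $U^\eps(t)=\sum_{j=1}^4 U_j^\eps(t)+U^{\eps\sharp}(t)$ recalled in~\eqref{decUepsEP} together with the finer description of the eigenprojectors $P_j(\eps\xi)$. The guiding idea is that $U^\eps_{\mathrm{disp}}(t)$ isolates the "leading" dispersive/heat part of $U_1^\eps,\dots,U_4^\eps$ (the terms with projector $P_j^0(\xi/|\xi|)$ and phase $e^{-\beta_j t|\xi|^2}e^{i\alpha_j t|\xi|/\eps}$), while $U(t)$ is the formal $\eps\to 0$ limit (the incompressible modes, $\alpha_j=0$). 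Hence the difference $U^\eps-U^\eps_{\mathrm{disp}}-U^{\eps\sharp}-U$ is built entirely from the \emph{subleading} pieces of the $P_j(\eps\xi)$ expansion, which carry an extra power of $\eps|\xi|$.

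First I would treat~\eqref{boundUeps-U0ip}. Going to Fourier variables and using the cutoff $\chi(\eps|\xi|/\kappa)$, each remaining term has symbol of the shape $\chi(\eps|\xi|/\kappa)\, e^{t\mu_j^\eps(\xi)}\,\bigl(P_j(\eps\xi)-P_j^0(\xi/|\xi|)\bigr)$, and by Lemma~\ref{EllisPinsky} one has $P_j(\eps\xi)-P_j^0(\xi/|\xi|)=\eps|\xi|\,\widetilde P_j(\eps\xi)$ with $\widetilde P_j$ bounded from $L^2_v$ to $L^{\infty,k}_v$ uniformly on the support of the cutoff. The real part of $\mu_j^\eps(\xi)$ is $-|\xi|^2(\beta_j+O(\eps|\xi|))\le -\beta|\xi|^2$ for $\eps|\xi|$ small, so the symbol is bounded by $\eps|\xi|\,e^{-\beta t|\xi|^2}$ on $|\xi|\lesssim 1/\eps$ and by $C$ on the rest of that region, after using $\eps|\xi|\lesssim 1$. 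Then $\|\,\cdot\,\|_{\ell,k}$ is controlled as in the proof of Lemma~\ref{lem:Uepsdisp}: the velocity weight is absorbed by the uniform $L^2_v\to L^{\infty,k}_v$ bound on $\widetilde P_j$, and one is left with $\int \langle\xi\rangle^{2\ell}\,\eps^2|\xi|^2 e^{-2\beta t|\xi|^2}\|\widehat f(\xi)\|_{L^2_v}^2\,d\xi$. Using $\eps^2|\xi|^2 e^{-2\beta t|\xi|^2}\lesssim \min(1,\eps^2/t)\cdot(\text{Gaussian})$ — indeed $\eps^2|\xi|^2 e^{-2\beta t|\xi|^2}\le \eps^2|\xi|^2$ and also $\le C/t$ — one gets a bound $\lesssim (1\wedge\langle t\rangle^{-1})\|f\|_{\ell,k}^2$, hence~\eqref{boundUeps-U0ip}; for small $t\lesssim 1$ one just uses $\|U^\eps(t)f\|_{\ell,k}\lesssim\|f\|_{\ell,k}$ from Lemma~\ref{lem:Uepscont} together with the analogous bounds on the pieces. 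The same computation with one further Taylor term gives~\eqref{estimatesmoothdata}: if $f\in X^{\ell+1,k}$ we keep the factor $\eps|\xi|$ as a genuine gain, writing $\eps|\xi|\langle\xi\rangle^\ell\lesssim\eps\langle\xi\rangle^{\ell+1}$ and bounding $\langle t\rangle^{1/2}$ times the resulting integral by $\eps\|f\|_{\ell+1,k}$ (splitting $t\lesssim 1$ and $t\gtrsim 1$, and using $\eps|\xi|e^{-\beta t|\xi|^2}\lesssim\eps/\sqrt t$ on the relevant range).

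For~\eqref{boundUeps-U0}, the point is that for well-prepared data the dispersive part $U^\eps_{\mathrm{disp}}(t)f$ and the boundary-layer part $U^{\eps\sharp}(t)f$ contribute nothing (resp.\ essentially nothing): well-preparedness in the sense of~\eqref{defwp} means $f$ lies in the incompressible modes, on which the projectors associated with the acoustic eigenvalues ($\alpha_j\neq 0$) and with $U^{\eps\sharp}$ vanish, so that $\bigl(U^\eps(t)-U(t)\bigr)f=\bigl(U^\eps(t)-U^\eps_{\mathrm{disp}}(t)-U^{\eps\sharp}(t)-U(t)\bigr)f$ up to a term already controlled by~\eqref{decayUepssharp}. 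Then~\eqref{estimatesmoothdata} applied to a smooth $f\in X^{\ell+1,k}$ gives $\langle t\rangle^{1/2}\|(U^\eps-U)f\|_{\ell,k}\lesssim\eps\|f\|_{\ell+1,k}\to 0$; for general well-prepared $f\in X^{\ell+1,k}$ one could even invoke a density argument combined with the uniform bound~\eqref{boundUeps-U0ip}, but since the hypothesis already gives $f\in X^{\ell+1,k}$ the $\eps$-bound suffices directly.

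\textbf{Main obstacle.} The only delicate point is bookkeeping in the Ellis--Pinsky decomposition: one must be careful that the "subleading" remainder $P_j(\eps\xi)-P_j^0(\xi/|\xi|)$ genuinely gains a factor $\eps|\xi|$ \emph{with coefficient bounded in the right operator norm uniformly on the cutoff support}, and that the phases $e^{i\alpha_j t|\xi|/\eps}$ play no role here (they are harmless since we only take moduli — no stationary-phase/dispersion is needed for these $L^\infty_v(H^\ell_x)$ bounds, unlike in Lemma~\ref{lem:Uepsdisp}). Handling the interface between the region $|\xi|\lesssim 1/\eps$ (where the Taylor expansion of $\mu_j^\eps$ and $P_j$ is valid) and making sure the cutoff $\chi(\eps|\xi|/\kappa)$ confines everything to that region is the other place where one must be slightly careful, but it is exactly the same mechanism already used in Lemmas~\ref{lem:Uepscont}--\ref{lem:Uepsdisp}.
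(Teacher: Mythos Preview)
Your identification of the remainder is incomplete. You write that each remaining term has symbol $\chi(\eps|\xi|/\kappa)\,e^{t\mu_j^\eps(\xi)}\bigl(P_j(\eps\xi)-P_j^0(\xi/|\xi|)\bigr)$, but recall from Remark~\ref{defU} that $U^\eps_{\rm disp}$ and $U$ are built from $U^\eps_{j0}$, which uses the \emph{simplified} phase $e^{i\alpha_j|\xi|t/\eps-\beta_j t|\xi|^2}$ and \emph{no cutoff}. Hence
\[
U^\eps-U^\eps_{\rm disp}-U^{\eps\sharp}-U=\sum_{j=1}^4\bigl(U^{\eps\sharp}_{j0}+U^\eps_{j1}+U^\eps_{j2}\bigr),
\]
three pieces, not one. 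Your symbol captures only $U^\eps_{j2}$. You never discuss $U^\eps_{j1}$, the phase-correction term carrying $\chi\,e^{-\beta_j t|\xi|^2}\bigl(e^{t\gamma_j(\eps|\xi|)/\eps^2}-1\bigr)P_j^0$; this requires the separate estimate $\bigl|e^{t\gamma_j(\eps|\xi|)/\eps^2}-1\bigr|\lesssim t\eps|\xi|^3 e^{\frac12\beta_j t|\xi|^2}$ (from $\gamma_j(s)=O(s^3)$ and $|e^a-1|\le|a|e^{|a|}$), which then indeed yields the factor $\eps|\xi|$ after absorbing $t|\xi|^2$ into the Gaussian. The cutoff piece $U^{\eps\sharp}_{j0}$ similarly needs $|\chi(\eps|\xi|/\kappa)-1|\lesssim\eps|\xi|$. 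Once these two additional pieces are handled, your argument for $U^\eps_{j2}$ is essentially the paper's.

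There is a second, sharper gap in your treatment of~\eqref{boundUeps-U0}. You claim that for well-prepared $f$ the term $U^{\eps\sharp}(t)f$ either vanishes or is controlled by~\eqref{decayUepssharp}. Neither is true uniformly in $t$: the bound $\|U^{\eps\sharp}(t)f\|_{\ell,k}\lesssim e^{-\alpha t/\eps^2}\|f\|_{\ell,k}$ gives nothing at $t=0$ and hence does not yield $\sup_{t\ge0}\langle t\rangle^{1/2}\|U^{\eps\sharp}(t)f\|_{\ell,k}\to 0$. The missing idea is to exploit that well-prepared $f$ lies in $\operatorname{Ker}L$: writing $U^{\eps\sharp}(t)f=U^\eps(t)U^{\eps\sharp}(0)f$ and using $\sum_j P_j^0=\Pi_L$, one finds $\widehat{U^{\eps\sharp}(0)f}=(1-\chi)\widehat f-\chi\,\eps|\xi|\sum_j\tilde P_j\widehat f$, so $\|U^{\eps\sharp}(0)f\|_{\ell,k}\lesssim\eps\|f\|_{\ell+1,k}$; combined with Lemma~\ref{lem:Uepscont} this handles $t\lesssim 1$, while~\eqref{decayUepssharp} handles $t\gtrsim 1$.
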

\begin{proof}[Proof of Lemma~{\rm\ref{lem:UepstoU0}}]
Wes shall prove simultaneously estimates~\eqref{boundUeps-U0ip} and~\eqref{estimatesmoothdata}. 
Using the notation introduced in Appendix~\ref{specBoltz}, we   consider $1 \le j \le 4$ and we want to estimate the terms~$\langle t \rangle^{\frac12}\|U^\eps_{jm}(t)f\|_{\ell,k}$ for~$1 \le m \le 2$ as well as~$\langle t \rangle^{\frac 12}\|U^{\eps\sharp}_{j0}(t)f\|_{\ell,k}$.  We restrict ourselves to the case~$\Omega = \R^d$, the case of the torus can be treated similarly.    
We start with~$U^\eps_{j1}(t)$. We first consider small times~$t \lesssim 1$. We have 
\begin{align*}
&\|U^\eps_{j1} (t)f\|_{\ell,k}^2 \\
&\quad = \Big \| \int_{\R^d}  \langle \xi \rangle^{2\ell} \chi^2\Big(\frac{\eps|\xi|}{\kappa}\Big) e^{- 2\beta_j t |\xi|^2} \left|e^{t \frac{\gamma_j(\eps|\xi|)}{\eps^2}}-1 \right|^2\Big|P_{j}^0\left(\xi\over|\xi|\right)\widehat{f}(\xi,\cdot)\Big|^2 \, d\xi\Big \|_{L^{\infty,k}_v} \\
&\quad \lesssim \int_{\R^d}  \chi^2\Big(\frac{\eps|\xi|}{\kappa}\Big) e^{- 2\beta_j t |\xi|^2} \Big|e^{t \frac{\gamma_j(\eps|\xi|)}{\eps^2}}-1 \Big|^2\Big\|P_{j}^0\Big({\xi\over|\xi|}\Big)\langle \xi \rangle^\ell \widehat{f}(\xi,\cdot)\Big\|^2_{L^{\infty,k}_v}  \, d\xi\\
&\quad \lesssim \int_{\R^d}  \chi^2\Big(\frac{\eps|\xi|}{\kappa}\Big) e^{- 2\beta_j t |\xi|^2} \Big|e^{t \frac{\gamma_j(\eps|\xi|)}{\eps^2}}-1 \Big|^2\big\|\langle \xi \rangle^\ell \widehat{f}(\xi,\cdot)\big\|^2_{L^{2}_v} \, d\xi
\end{align*}
where we used, as in the proof of Lemma~\ref{lem:Uepsdisp}, the fact that $P_{j}^0(\xi/|\xi|)$ is bounded from~$L^2_v$ into~$L^{\infty,k}_v$ uniformly in $\xi$ to get the last inequality. Using~\eqref{estimatelambdaj} in Lemma~\ref{EllisPinsky} and the inequality $|e^a-1| \le |a| e^{|a|}$ for any~$a \in \R$, we now bound from above the term~$\Big|e^{t \frac{\gamma_j(\eps|\xi|)}{\eps^2}}-1 \Big|$:
\begin{equation}\label{estimategamma1}
\chi\Big (\frac{\eps|\xi|}\kappa\Big) e^{- \beta_j t |\xi|^2} \Big|e^{t \frac{\gamma_j(\eps|\xi|)}{\eps^2}}-1 \Big| \lesssim \chi\Big (\frac{\eps|\xi|}\kappa\Big) e^{-  \frac{\beta_j }2 t |\xi|^2}
 t   |\xi|^2 \lesssim 1\, ,
\end{equation}
and 
\begin{equation}\label{estimategamma2}
\chi\Big (\frac{\eps|\xi|}\kappa\Big) e^{- \beta_j t |\xi|^2} \left|e^{t \frac{\gamma_j(\eps|\xi|)}{\eps^2}}-1 \right| \lesssim \chi\Big (\frac{\eps|\xi|}\kappa\Big) e^{-  \frac{\beta_j }2 t |\xi|^2}
 t \eps  |\xi|^3 \lesssim \eps  |\xi| \, . 
\end{equation}
This gives, for any~$t\geq 0$,
$$
\|U^\eps_{j1}(t)f\|_{\ell,k} ^2\lesssim  \|f\|_{L^2_v H^{\ell}_x }^2 \quad \text{and} \quad \|U^\eps_{j1}(t)f\|_{\ell,k} ^2 \lesssim \eps^2 \|f\|^2_{L^2_vH^{\ell+1}_x} \, .
$$
Using that $ L^{\infty,k}_v \hookrightarrow L^2_v$, we get:
$$
\|U^\eps_{j1}(t)f\|_{\ell,k} \lesssim  \|f\|_{\ell,k} \quad \text{and} \quad \|U^\eps_{j1}(t)f\|_{\ell,k} \lesssim \eps \|f\|_{\ell+1,k} \,, \quad \forall \, t \lesssim 1\, .
$$
Now for large times $t \gtrsim 1$, we notice that using~\eqref{estimatelambdaj} in Lemma~\ref{EllisPinsky}, we can write
$$
t^{\frac12}\chi\Big (\frac{\eps|\xi|}\kappa\Big) e^{- \beta_j t |\xi|^2} \left|e^{t \gamma_j(\eps|\xi|)/\eps^2}-1 \right| \lesssim \chi\Big (\frac{\eps|\xi|}\kappa\Big) e^{-  \frac{\beta_j }2 t |\xi|^2}
 t^{\frac 32} \eps  |\xi|^3 \lesssim \eps
$$
so that as previously
\begin{align*}
t^{\frac 12} \|U^\eps_{j1}(t)f\|_{\ell,k} &\lesssim  \eps \|f\|_{\ell,k}\,, \quad \forall \, t \gtrsim 1\, .
\end{align*}
We are thus able to conclude that 
$$
\langle t \rangle^{\frac 12} \|U^\eps_{j1}(t)f\|_{{\ell,k}} \lesssim   \|f\|_{\ell,k} \quad \text{and} \quad \langle t \rangle^\frac12 \|U^\eps_{j1}(t)f\|_{\ell,k} \lesssim \eps \|f\|_{\ell+1,k}\, ,\quad \forall \, t \ge 0 \, . 
$$
For $\|U^\eps_{j2}(t)f\|_{\ell,k}$, we consider $t \lesssim 1$ and we write 
\begin{align*}
&\|U^\eps_{j2}(t)f\|_{\ell,k}^2 \\ 
&\quad =\Big\|  \int_{\R^d}  \langle \xi \rangle^{2\ell} \chi\Big (\frac{\eps|\xi|}\kappa\Big) e^{- 2\beta_j t |\xi|^2+ 2t \frac{\gamma_j(\eps|\xi|)}{\eps^2}} \eps^2 |\xi|^2 \Big|\tilde P_{j}\Big(\eps \xi,{\xi \over |\xi|}\Big)\widehat{f}(\xi,\cdot)\Big|^2 \, d\xi\Big\|_{L^{\infty,k}_v} \\
&\quad \lesssim \int_{\R^d}  \chi\Big (\frac{\eps|\xi|}\kappa\Big) e^{- 2\beta_j t |\xi|^2+ 2t \frac{\gamma_j(\eps|\xi|)}{\eps^2}} \eps^2 |\xi|^2 \Big\|\tilde P_{j}\Big(\eps \xi,{\xi \over |\xi|}\Big) \langle \xi \rangle^{\ell} \widehat{f}(\xi,\cdot)\Big\|_{L^{\infty,k}_v}^2 d\xi \, .
\end{align*}
In view of the definition of $\tilde P_{j}$ in~\eqref{deftildeP1j} and the fact that $P_{j}^1(\xi/|\xi|)$ and $P_{j}^2(\xi)$ are bounded from $L^2_v$ into $L^{\infty,k}_v$ uniformly in $|\xi| \le \kappa$ from Lemma~\ref{EllisPinsky}, we deduce that 
$$
\chi\Big (\frac{\eps|\xi|}\kappa\Big) \Big\|\tilde P_{j}\Big(\eps \xi,{\xi \over |\xi|}\Big) \langle \xi \rangle^{\ell}\widehat{f}(\xi,\cdot)\Big\|_{L^{\infty,k}_v} \lesssim \chi\Big (\frac{\eps|\xi|}\kappa\Big) \big\|  \langle \xi \rangle^{\ell}\widehat{f}(\xi,\cdot)\big\|_{L^2_v} \, . 
$$
Using again~\eqref{estimatelambdaj}, we have as long as~$\e|\xi|\leq \kappa$
$$
e^{- 2\beta_j t |\xi|^2+ 2t \frac{\gamma_j(\eps|\xi|)}{\eps^2}} \le e^{-\beta_j t |\xi|^2}\, . 
$$
Since $\chi(\e|\xi|/\kappa) \eps^2|\xi|^2 \lesssim 1$ and $\chi(\e|\xi|/\kappa) \eps^2|\xi|^2 \lesssim \eps^2|\xi|^2$, we can bound $\|U^\eps_{j2} (t)f\|_{\ell,k}$ as 
$$
\|U^\eps_{j2}(t)f\|_{\ell,k} \lesssim  \|f\|_{\ell,k}  \quad \text{and} \quad \|U^\eps_{j2}(t)f\|_{\ell,k} \lesssim  \eps \|f\|_{\ell+1,k} ,\quad \forall \, t \lesssim 1.
$$
For large times $t \gtrsim 1$, we have 
$$
t \|U^\eps_{j2}(t)f\|^2_{\ell,k} \lesssim
\eps^2  \int_{\R^d} t|\xi|^2 e^{-\beta_j t|\xi|^2} \big\|  \langle \xi \rangle^{\ell}\widehat{f}(\xi,\cdot)\big\|^2_{L^2_v} \, d\xi
$$
which implies that 
$$
t^\frac12 \|U^\eps_{j2}(t)f\|_{\ell,k}\lesssim \eps \|f\|_{\ell,k} \,, \quad \forall \, t \gtrsim 1 
$$
and thus
$$
\langle t \rangle^{\frac 12} \|U^\eps_{j2}(t)f\|_{{\ell,k}} \lesssim  \|f\|_{\ell,k}  \quad \text{and} \quad \langle t \rangle^\frac12 \|U^\eps_{j2}(t)f\|_{\ell,k} \lesssim  \eps \|f\|_{\ell+1,k} \, , \quad \forall \, t \ge 0 \, .  
$$
Finally, for $\|U^{\eps\sharp}_{j0} (t) f\|_{\ell,k}$, we proceed in the same way using the inequalities
\begin{equation}\label{estimatechi}
\Big|\chi\Big (\frac{\eps|\xi|}\kappa\Big)-1\Big| \lesssim 1 \quad \text{and} \quad \Big|\chi\Big (\frac{\eps|\xi|}\kappa\Big)-1\Big| \lesssim \eps |\xi|
\end{equation}
to get 
$$
\langle t \rangle^{\frac12} \|  U^{\eps\sharp}_{j0} (t)f\|_{{\ell,k}} \lesssim \|f\|_{\ell,k} \quad \text{and} \quad \langle t \rangle^{\frac12} \|  U^{\eps\sharp}_{j0} (t)f\|_{{\ell,k}} \lesssim\eps \|f\|_{\ell+1,k}\quad \forall \, t \ge 0 \,.
$$
This proves~(\ref{boundUeps-U0ip})-(\ref{estimatesmoothdata}).

\smallskip

\noindent
Let us now consider $f$ a well-prepared data. To prove~(\ref{boundUeps-U0}) we use the decomposition~(\ref{decompUeps}), and we notice on the one hand (see Remark~\ref{defU}) that
$$
U^\e_{30} = U_{30} \, , \quad 
U^\e_{40}=
U_{40}\quad \mbox{and}\quad U =  U_{30}+U_{40}$$
and on the other hand that from~\eqref{P012}, if $f$ is a well-prepared data, then
$$
U^\eps_{\rm{disp}}(t) f=0\, .
 $$
 This proves~(\ref{boundUeps-U0}), up to the
 fact that
 $$
\limsup_{\eps\to0} \sup_{t \ge 0} \big(\langle t \rangle^{\frac12} \|U^{\eps \sharp}(t) f\|_{\ell,k} \big)= 0 \, .
 $$
 We thus estimate this last remainder term $\|U^{\eps \sharp}(t) f\|_{\ell,k}$. The estimate for large times has already been obtained at the end of the proof of Lemma~\ref{lem:Uepsdisp}. 
For small times, in~\cite[Lemma~6.2]{Bardos-Ukai}, the authors   notice that 
$$
U^{\eps \sharp}(t )f = U^\eps(t)U^{\eps \sharp}(0 ) f= U^\eps(t) \Big[\mathcal{F}_x^{-1} \Big({\rm Id}-\chi\Big (\frac{\eps|\xi|}\kappa\Big)\sum_{j=1}^4 P_j(\eps\xi)\Big) \mathcal{F}_x{f (\xi)} \Big]
$$
so  since $f$ belongs to~$ \operatorname{Ker} L$, we have 
\begin{equation} \label{Ueps5}
U^{\eps \sharp}(t ) f = U^\eps(t) \Bigg[\mathcal{F}_x^{-1} \Bigg( \Big({\rm Id}-\chi\Big (\frac{\eps|\xi|}\kappa\Big)\Big) - \eps |\xi| \chi\Big (\frac{\eps|\xi|}\kappa\Big) \sum_{j=1}^4 \tilde P_{j} (\eps \xi)\Bigg) \mathcal{F}_x{f (\xi)}\Bigg] 
\end{equation}
with notation~(\ref{deftildeP1j}).
The $X^{\ell,k}$-norm of the first term in the right-hand side of~\eqref{Ueps5} is simply estimated using~(\ref{estimatechi}). The terms coming from the second part of the right-hand side of~\eqref{Ueps5} are estimated as the terms $U^\eps_{jm}$ for $1 \le j \le 4$ and~$1 \leq m \leq 2$. In conclusion, we obtain
$$
\|U^{\eps \sharp}(t)f\|_{\ell,k} \lesssim \eps \| f\|_{\ell+1,k}\quad \forall \, t \lesssim 1 \,.
$$
Lemma~\ref{lem:UepstoU0} is proved.
\end{proof}
The following corollary is an immediate consequence of Lemmas~\ref{lem:Uepsdisp} and~\ref{lem:UepstoU0}  along with the triangular inequality.
\begin{Cor}\label{estimateUepsd=2}
Let $\Omega = \R^2$. Fix~$\ell \ge0$ and $k>2$  and consider~$f$ in~$X^{\ell,k}   \cap L^{2}_v L^1_x$. Then  there holds for all $\eps \in (0,1)$
$$
\sup_{t \ge 0} \, \langle t \rangle^{\frac12} \big\| U^\eps (t)f\big \|_{\ell,k}  \lesssim \|f\|_{\ell,k} + \|f\|_{L^{2}_v L^1_x} \, .
$$
\end{Cor}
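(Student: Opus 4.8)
The plan is to reduce everything to the two preceding lemmas by writing $U^\eps(t)$ as the sum of the ``remainder'' propagator and the three explicit pieces $U^\eps_{\rm{disp}}(t)$, $U(t)$, $U^{\eps\sharp}(t)$ coming from the Ellis--Pinsky decomposition. Concretely, for $f \in X^{\ell,k} \cap L^2_v L^1_x$ and $t \ge 0$ I would split
\[
U^\eps(t) f = \big( U^\eps(t) - U^\eps_{\rm{disp}}(t) - U^{\eps \sharp}(t) - U(t)\big) f \;+\; \big( U^\eps_{\rm{disp}}(t) + U(t) + U^{\eps \sharp}(t)\big) f \,.
\]

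The first bracket is controlled by estimate~\eqref{boundUeps-U0ip} in Lemma~\ref{lem:UepstoU0}, whose hypothesis $k > d/2 + 1 = 2$ is exactly the one assumed here, giving
\[
\sup_{t \ge 0} \langle t \rangle^{\frac12} \big\| \big( U^\eps(t) - U^\eps_{\rm{disp}}(t) - U^{\eps \sharp}(t) - U(t)\big) f \big\|_{\ell,k} \lesssim \|f\|_{\ell,k} \,,
\]
uniformly in $\eps \in (0,1)$. The second bracket is precisely the quantity estimated in Lemma~\ref{lem:Uepsdisp} (the hypotheses $\ell \ge 0$, $k > 2$, $f \in X^{\ell,k} \cap L^2_v L^1_x$ coincide with those of the corollary), which yields
\[
\sup_{t \ge 0} \langle t \rangle^{\frac12} \big\| \big( U^\eps_{\rm{disp}}(t) + U(t) + U^{\eps \sharp}(t)\big) f \big\|_{\ell,k} \lesssim \|f\|_{\ell,k} + \|f\|_{L^2_v L^1_x} \,,
\]
again uniformly in $\eps \in (0,1)$. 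Summing the two displays by the triangle inequality produces the claimed bound.

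Since both ingredients are already in hand, there is essentially no real obstacle: the only thing to verify is that the weight $\langle t\rangle^{1/2}$, the admissible ranges of $\ell$ and $k$, and the $\eps$-uniformity are common to Lemmas~\ref{lem:Uepsdisp} and~\ref{lem:UepstoU0}, which is the case, so the decomposition above lands directly on the two statements. (One could also keep $U(t)$ and $U^{\eps\sharp}(t)$ grouped differently, but this splitting is the most economical way to match the two lemmas verbatim.)
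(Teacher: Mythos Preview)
Your proof is correct and matches the paper's approach exactly: the paper states that the corollary is an immediate consequence of Lemmas~\ref{lem:Uepsdisp} and~\ref{lem:UepstoU0} together with the triangle inequality, which is precisely the decomposition and combination you carry out.
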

\subsection{Estimates on $\Psi^\eps(t)$}
Let us now give some estimates on the bilinear operator $\Psi^\eps(t)$. We also state some specific estimates in the case of $\R^2$, which  is different due to the presence of the weight in time in the definition of $\mathcal{X}^{\ell,k}_T$, when one of the two variables is $\delta^{\eps,\eta}$ which is defined in~\eqref{defgepseta} (see Lemma~\ref{lem:PsiepscontR2}). Finally, to end this section, we give another specific estimate on $\Psi^\eps(t)$ when one of the two variables is $\bar \delta^{\eps,\eta}$ (defined in Lemma~\ref{lem:reduction}) in the case of~$\R^d$, $d=2,3$, which will be useful to treat ill-prepared data. 

\begin{lem} \label{lem:Psiepscont}
 Let~$\ell>d/2$,~$k>d/2+1$ be given.  Then $\Psi^\eps(t) $ is a bilinear symmetric continuous map  from~$\mathcal{C}_b ([0,T], X^{\ell,k}) \times \mathcal{C}_b ([0,T], X^{\ell,k} )$ to~$\mathcal{C}_b ([0,T], X^{\ell,k})$, and there is a constant~$C$ such that  for all~$T\geq 0$ and all~$\eps>0$, {\begin{equation}\label{contest}
\|\Psi^\eps(t)(f_1,f_2)\|_{\mathcal{X}^{\ell,k}_T} \leq C \| f_1  \|_{\mathcal{X}^{\ell,k}_T}  \| f_2  \|_{\mathcal{X}^{\ell,k}_T} \, , \quad \forall \, f_1, f_2 \in \mathcal{X}^{\ell,k}_T\, .
\end{equation}}
  \end{lem}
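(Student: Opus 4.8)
The plan is to reduce the bilinear estimate to the two ingredients already available: the uniform boundedness of the semigroup $U^\eps(t)$ on $X^{\ell,k}$ (Lemma~\ref{lem:Uepscont}) and the smoothing/decay properties of $W^\eps(t) = \frac1\eps U^\eps(t)(I-\Pi_L)$ (Lemma~\ref{lem:decayWeps}). The key structural observation is that $\Gamma(h_1,h_2)$ lies in the range of $I-\Pi_L$, i.e. $\Pi_L \Gamma(h_1,h_2) = 0$, since $\Gamma$ is built from the collision operator $Q$ whose image is orthogonal to the collision invariants. Hence in the Duhamel integral $\Psi^\eps(t)(f_1,f_2) = \frac1\eps\int_0^t U^\eps(t-t')\Gamma(f_1(t'),f_2(t'))\,dt'$ one may insert $I-\Pi_L$ for free and write the integrand as $W^\eps(t-t')\,\Gamma(f_1(t'),f_2(t'))$, thereby trading the dangerous $1/\eps$ prefactor for the integrable singularity $(t-t')^{-1/2}$ supplied by Lemma~\ref{lem:decayWeps}.

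First I would establish the pointwise-in-$v$ continuity estimate for $\Gamma$: using $\ell > d/2$ so that $H^\ell_x$ is an algebra, and standard bounds on the bilinear collision kernel in the velocity variable (as recalled in the appendix on the Cauchy theory), one gets $\|\Gamma(h_1,h_2)\|_{H^\ell_x L^2_v} \lesssim \|h_1\|_{\ell,k}\|h_2\|_{\ell,k}$ for $k > d/2+1$; the weight $\langle v\rangle^{k}$ on the factors controls the loss of one power of $\langle v\rangle$ coming from $\nu(v)$ in the gain term. Then, applying Lemma~\ref{lem:decayWeps} (in whichever of the three cases $\Omega$ falls),
\begin{align*}
\|\Psi^\eps(t)(f_1,f_2)(t)\|_{H^\ell_x L^2_v} &\lesssim \int_0^t \frac{1}{(t-t')^{1/2}}\,\|\Gamma(f_1(t'),f_2(t'))\|_{H^\ell_x L^2_v}\,dt' \\
&\lesssim \Big(\int_0^t \frac{dt'}{(t-t')^{1/2}}\Big)\,\sup_{t'\le t}\big(\|f_1(t')\|_{\ell,k}\|f_2(t')\|_{\ell,k}\big),
\end{align*}
and on a finite interval $[0,T]$ the time integral is $\lesssim T^{1/2}$, which is the source of the $T$-dependence mentioned in the statement. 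To upgrade from the $H^\ell_x L^2_v$ norm to the full $\|\cdot\|_{\ell,k}$ norm I would reuse the bootstrap device from the proof of Lemma~\ref{lem:Uepscont}: split $L = -\nu(v) + K$, use the explicit contraction $e^{tA^\eps}$ together with the regularizing action of $K$ from $X^{\ell,j}$ to $X^{\ell,j+1}$, and iterate finitely many times (about $\lceil k\rceil$ steps) to reduce the $X^{\ell,k}$ estimate to the $H^\ell_x L^2_v$ one just obtained. Symmetry of $\Psi^\eps$ is immediate from the symmetry of $\Gamma$, and continuity in time (membership in $\mathcal C_b$) follows from dominated convergence once the above bounds are in hand. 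Finally, when $\Omega = \R^2$ and the weight $\chi_{\R^2}(t) = \langle t\rangle^{1/4}$ enters the definition of $\mathcal X^{\ell,k}_T$, one must check that $\chi_{\R^2}(t)\int_0^t (t-t')^{-1/2}\chi_{\R^2}(t')^{-2}\,dt'$ stays bounded by $C\,T^{1/2}$ (or a $T$-dependent constant); this is a routine Beta-function-type computation since $\langle t'\rangle^{-1/2}$ is not integrable-improving but the extra $\langle t\rangle^{1/4}$ is harmless on a bounded interval.

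The main obstacle is purely bookkeeping rather than conceptual: one must be careful that the velocity weights match up in the $\Gamma$ estimate — the gain term $g'_* f'$ produces a function whose $\langle v\rangle$-decay is governed by the worse-decaying of the two inputs after the change of variables $\sigma$-integration, so $k > d/2+1$ (with the extra $+1$ absorbing the $\nu(v)\sim\langle v\rangle$ factor) is exactly what is needed, and getting the algebra estimate in $x$ to cooperate with the $L^{\infty,k}_v$ structure requires handling $\|\cdot\|_{H^\ell_x}$ inside the $\sup_v\langle v\rangle^k$. Everything else is a direct assembly of Lemmas~\ref{lem:Uepscont} and~\ref{lem:decayWeps} with the observation $\Pi_L\Gamma = 0$.
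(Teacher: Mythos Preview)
Your overall architecture --- use $\Pi_L\Gamma=0$ to rewrite the Duhamel integral via $W^\eps$, control $\|\Gamma(f_1,f_2)\|_{H^\ell_xL^2_v}$ by $\|f_1\|_{\ell,k}\|f_2\|_{\ell,k}$, and then bootstrap from $H^\ell_xL^2_v$ to $X^{\ell,k}$ via the $A^\eps/K$ splitting --- matches the paper's proof. But there is one genuine gap.

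You have misread the statement: the constant $C$ is required to be \emph{uniform in $T$} (``there is a constant $C$ such that for all $T\ge0$''). There is no ``$T$-dependence mentioned in the statement''. Your estimate
\[
\int_0^t \frac{dt'}{(t-t')^{1/2}} = 2t^{1/2}
\]
blows up as $t\to\infty$, so it cannot yield the lemma. Using only the second (weakest) bound of Lemma~\ref{lem:decayWeps} is not enough; you must exploit the full large-time decay. In the paper, for $\Omega=\T^d$ the kernel is $e^{-\sigma(t-t')}/(t-t')^{1/2}$, which is in $L^1(\R^+)$; for $\Omega=\R^3$ one uses the third estimate of Lemma~\ref{lem:decayWeps}, giving the kernel $(t-t')^{-1/2}\langle t-t'\rangle^{-3/4}\in L^1(\R^+)$, and this requires additionally controlling $\|\Gamma(f_1,f_2)\|_{L^2_vL^1_x}\lesssim\|f_1\|_{\ell,k}\|f_2\|_{\ell,k}$ (estimate~\eqref{estimUkai2}), which you do not mention.

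For $\Omega=\R^2$ the situation is more delicate, and this is precisely \emph{why} the weight $\chi_{\R^2}(t)=\langle t\rangle^{1/4}$ was introduced: the kernel $(t-t')^{-1/2}\langle t-t'\rangle^{-1/2}$ just barely fails to lie in $L^1(\R^+)$, and the extra $\langle t'\rangle^{-1/2}$ coming from the two weighted norms $\|f_i\|_{\mathcal X^{\ell,k}_T}$ compensates. The verification that
\[
\langle t\rangle^{1/4}\int_0^t \frac{1}{(t-t')^{1/2}\langle t-t'\rangle^{1/2}}\,\frac{dt'}{\langle t'\rangle^{1/2}}
\]
is bounded \emph{uniformly in $t\ge0$} is not ``harmless on a bounded interval'' but is the heart of the $\R^2$ case; the paper carries this out by splitting at $t/2$ and treating $t\lesssim1$ and $t\gtrsim1$ separately. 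Your dismissal of this step as routine, with the target ``$C\,T^{1/2}$ or a $T$-dependent constant'', misses the point entirely.
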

 \begin{proof}
 As in~(\ref{eq:decompUeps}), we decompose $\Psi^\eps(t)$ into two parts:
 \begin{align*}
  &\quad  \Psi^{\e}(t) (f_1,f_2) \\
  &= 
    {1 \over \eps} \int_0^te^{(t-{t'})A^\e}\Gamma(f_1,f_2)({t'})  \, d{t'} + {1 \over \e}  \int_0^t \int_0^{t-{t'}} e^{(t-{t'}-\tau)A^\e} {1 \over \e^2} K U^\e(\tau)\Gamma(f_1,f_2)({t'}) \, d\tau \, d{t'} \\
&=: \Psi^{\e,1}(t)(f_1,f_2) +  {1 \over \e^2} \int_0^t e^{(t-{t'})A^\e} K \Psi^{\e}({t'})(f_1,f_2) \, d{t'} \,. 
 \end{align*}
 As in the proof of Lemma~\ref{lem:UepstoU0}, using properties of $K$, we have that
 \begin{equation}\label{estimatePsiPsi1}
 \|  \Psi^\eps (t)(f_1,f_2)\|_{{\mathcal X}_T^{\ell,k} } \lesssim \sum_{j=0}^k \|\Psi^{\e,1}(t)(f_1,f_2)\|_{{\mathcal X}_T^{\ell,j}} + \|    \Psi^\eps (t)(f_1,f_2)\|_{{\mathcal Y}_T^\ell}\,,
 \end{equation}
 where we have defined
 $$\mathcal{Y}_T^\ell := 
 \Big\{f = f(t,x,v) \, / \,  f\in L^\infty(\mathds{1}_{[0,T]}(t)\chi_\Omega(t),H^\ell_xL^2_v)\Big\}
$$
endowed with the norm
$$
 \|f\|_{\mathcal{Y}_T^\ell }:=\sup_{t\in [0,T]}\chi_\Omega(t)\|f(t)\|_{H^\ell_xL^2_v}\,.
 $$

\noindent{\it Estimates on $\Psi^{\e,1}(t)$.} Let $0 \le j \le k$ be given. We first use the explicit form of~$e^{tA^\e}$ given by~\eqref{semigroupAeps} in order to deduce that 
\begin{align*}
&\|\Psi^{\e,1}(t)(f_1,f_2)\|_{{\ell,j}} \le 
\left\| {1 \over \e} \int_0^t e^{-\nu(v)\frac{t-t'}{\e^2}} \|\Gamma(f_1 ,f_2)(t')\|_{H^\ell_x}  \, d{t'} \right\|_{L^{\infty,j}_v}\, .
\end{align*}
We have 
\begin{equation} \label{eq:Psiepsellj}
\begin{aligned}
&\|\Psi^{\e,1}(t)(f_1,f_2)\|_{{\ell,j}} \\
&\quad \le\sup_{v \in \R^d} {1 \over \e} \int_0^t e^{-\nu(v)\frac{t-{t'}}{\e^2}} \nu(v) \nu^{-1}(v) \langle v \rangle^j   \| \Gamma( f_1,f_2)  ({t'})\|_{H^\ell_x}  \, d{t'}  \\
&\quad \le  \|\Lambda^{-1} \Gamma (f_1,f_2)  \|_{\mathcal{X}_T^{\ell,k} } \left\| {1 \over \e} \int_0^t e^{-\nu(v)\frac{t-{t'}}{\e^2} } \nu(v) \, d{t'} \right\|_{L^\infty_v} \\
&\quad \lesssim \e  \|\Lambda^{-1}  \Gamma (f_1,f_2)  \|_{\mathcal{X}_T^{\ell,k} }\,,
\end{aligned}
\end{equation}
where we have defined, with notation~(\ref{defnu(v)}),
\begin{equation}\label{defLambda(v)}
\Lambda(v):=\nu(v)\,.
\end{equation}
Using~\eqref{estimUkai1}, we immediately have that 
$$
\|\Lambda^{-1} \Gamma (f_1,f_2)  \|_{\mathcal{X}^{\ell,k}_T} \lesssim \|f_1\|_{\mathcal{X}^{\ell,k}_T} \|f_2\|_{\mathcal{X}^{\ell,k}_T}\,. 
$$
From this, we conclude that for any $0 \le j \le k$,
$$
\|\Psi^{\e,1}(t)(f_1,f_2)\|_{\mathcal{X}_T^{\ell,j} } \le C\e \|f_1\|_{\mathcal{X}_T^{\ell,k} }\|f_2\|_{\mathcal{X}^{\ell,k}_T}  \,.
$$
%

\noindent {\it Estimates on $\Psi^\e(t)$ in $\mathcal{Y}^\ell_T$.} Recalling that~$
\Pi_{L}   $
is the orthogonal projector onto $\operatorname{Ker} L$ and using a weak formulation of the collision operator $\Gamma$, it can be shown   thanks to physical laws of elastic collisions that
$$
\Pi_{L}  \Gamma(f_1,f_2) = 0 \quad \forall \,f_1,f_2 
$$
so   we are going to be able to use Lemma~\ref{lem:decayWeps}. 
Let us start with the case when~$\Omega$ is not~$\R^2$  and let us define
\begin{equation} \label{def:tildechi} \tilde \chi_\Omega(t):= \left\lbrace \begin{array}{lcl}
t^{-\frac12} e^{-\sigma t} & \text{if} &\Omega = \T^d, \, \, d = 2,3 \\
t^{-\frac12} \langle t \rangle^{-\frac 34} &\text{if} & \Omega = \R^3 \, .
\end{array} \right. 
\end{equation}
We then estimate $\|\Psi^{\e}(t) (f_1,f_2)\|_{\mathcal{Y}^\ell_T}$ using the fact that thanks to~\eqref{estimUkai1}-\eqref{estimUkai2}
$$
\begin{aligned}
 \big \|  \Psi^{\e}(t) (f_1,f_2)\big\|_{H^\ell_x L^2_v} &\lesssim  \int_0^t \tilde \chi_\Omega(t-t')    \|f_1(t')\|_{\ell,k} \|f_2(t')\|_{\ell,k} \, dt'\\
& \lesssim  \int_0^t\tilde \chi_\Omega(t-t') \, dt' \| f _1 \|_{\mathcal{X}^{\ell,k}_T}\| f _2 \|_{\mathcal{X}^{\ell,k}_T}
\lesssim   \| f _1 \|_{\mathcal{X}^{\ell,k}_T}\| f _2 \|_{\mathcal{X}^{\ell,k}_T}
\end{aligned}
 $$
 since $\tilde \chi_\Omega$ is integrable over $\R^+$.  To conclude it remains  to deal with the case when~$\Omega=\R^2$.
Arguing in a similar fashion we have
 $$
 \|\Psi^\eps(t) (f_1,f_2) \|_{\mathcal{Y}^{\ell}_\infty} \lesssim   \sup_{t \ge 0} \, \langle t \rangle^{\frac14} \int_0^t \frac{1}{(t-t')^\frac12 \langle t-t' \rangle^\frac12} {1 \over \langle t' \rangle^\frac12} \, dt'   \|f_1\|_{\mathcal{X}^{\ell,k}_\infty} \|f_2\|_{\mathcal{X}^{\ell,k}_\infty}    \, ,
 $$
 so let us prove that 
 $$
 \langle t \rangle^{\frac14}   \int_0^t \frac{1}{(t-t')^\frac12\langle t-t' \rangle^\frac12} {1 \over \langle t' \rangle^\frac12} \, dt' 
 $$
 is uniformly bounded in $t \ge 0$. We
  define
 $$
 I(t,s,\tau):= \langle t \rangle^{\frac14}  \int_s^\tau \frac{1}{(t-t')^{\frac12}\langle t-t' \rangle^{\frac12}}{dt' \over \langle t' \rangle^{\frac12}}   
 $$
 and let us write~$ I(t,0,t)=  I(t,0,t/2)+ I(t,t/2,t)$ and estimate both terms separately. The second one is the easiest since
 $$
\begin{aligned} I(t,t/2,t) &= \int_{\frac t2}^t \frac{\langle t \rangle^{\frac14} }{(t-t')^{\frac12}\langle t-t' \rangle^{\frac12}}{dt' \over \langle t' \rangle^{\frac12}} \lesssim  \int_{\frac t2}^t \frac{1}{(t-t')^{\frac12}\langle t-t' \rangle^{\frac12}}{dt' \over \langle t' \rangle^{\frac14}} \\
\end{aligned} $$
so that 
$$
I(t,t/2,t) \lesssim \left\| {1 \over {t^{\frac12}\langle t \rangle^{\frac12}}} \right\|_{L^{\frac54} } \left\| {1 \over {\langle t \rangle^\frac14}} \right\|_{L^5 } < \infty\, .
$$
As to the first term we start by assuming that~$t\lesssim 1$, then
$$
\begin{aligned} 
I(t,0,t/2)
&\lesssim  \int_0^{t/2} \frac{dt'}{\sqrt{t-t'}} < \infty\, .
\end{aligned}$$
On the other hand if~$t\gtrsim 1$
then using the fact that when~$0\leq t'\leq t/2$ then~$1 \lesssim t/2\leq t-t'\leq t$ we have
$$
\frac{\langle t \rangle^{\frac14} }{(t-t')^\frac12 \langle t-t' \rangle^\frac12}\lesssim \frac{1}{(t-t')^\frac12 \langle t-t' \rangle^\frac14}
$$
so
$$
\begin{aligned} 
I(t,0,t/2)
&\lesssim \left\| {1 \over {\langle t \rangle^{\frac34}}} \right\|_{L^{\frac32} } \left\| {1 \over {\langle t \rangle^{\frac12}} }\right\|_{L^3 } < \infty\, .
\end{aligned}$$
The proof is complete.
 \end{proof}
 \begin{lem} \label{lem:PsiepscontR2}
 Let $\Omega = \R^2$, $\ell>1$ and~$k>2$. For any $f \in L^\infty(\R^+, X^{\ell,k})$, there holds
 $$
 \lim_{\eps \to 0} \|\Psi^\eps(t) (\delta^{\eps,\eta},f) \|_{\mathcal{X}^{\ell,k}_\infty} \lesssim \eta \|f\|_{L^\infty(\R^+, X^{\ell,k})}  
 $$
 where we recall that $\delta^{\eps,\eta}$ is defined in~\eqref{defgepseta}.   \end{lem}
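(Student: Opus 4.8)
The plan is to retrace the proof of Lemma~\ref{lem:Psiepscont} in the~$\R^2$ case, the only new ingredient being a pointwise-in-time decay for~$\delta^{\eps,\eta}$ which compensates for the fact that~$f$ is merely assumed bounded in time. First I would record that, since~$\delta^{\eps,\eta}(t)=U^\eps(t)\delta^\eta_{{\rm in}}$ by~\eqref{defgepseta} and since~$\|\delta^\eta_{{\rm in}}\|_{\ell,k}\le\eta$ and~$\|\delta^\eta_{{\rm in}}\|_{L^2_vL^1_x}\le\eta$ by~\eqref{deltaineta}--\eqref{deltainetaR2}, Corollary~\ref{estimateUepsd=2} gives
\[
\|\delta^{\eps,\eta}(t)\|_{\ell,k}\lesssim\frac{\eta}{\langle t\rangle^{\frac12}}\,,\qquad\forall\,t\ge0,\ \forall\,\eps\in(0,1)\,.
\]
This~$\langle t\rangle^{-1/2}$ gain is the heart of the matter: the target norm~$\mathcal{X}^{\ell,k}_\infty$ carries the \emph{growing} weight~$\chi_\Omega(t)=\langle t\rangle^{1/4}$, and it is precisely this decay of~$\delta^{\eps,\eta}$ (together with the smoothing decay of the linear semigroup) that will absorb it.

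Next I would split~$\Psi^\eps(t)=\Psi^{\eps,1}(t)+\frac1{\eps^2}\int_0^te^{(t-t')A^\eps}K\Psi^\eps(t')\,dt'$ exactly as in the proof of Lemma~\ref{lem:Psiepscont}, iterate the Duhamel formula and use the mapping properties of~$K$ to reduce, as in~\eqref{estimatePsiPsi1}, to controlling~$\Psi^{\eps,1}(\delta^{\eps,\eta},f)$ in~$\mathcal{X}^{\ell,j}_\infty$ for~$0\le j\le k$ and~$\Psi^\eps(\delta^{\eps,\eta},f)$ in~$\mathcal{Y}^\ell_\infty$; since~$\chi_\Omega(t)/\chi_\Omega(t')\le 1+(t-t')^{1/4}$, the weighted kernels appearing in this iteration stay bounded uniformly in~$t$ and~$\eps$, so the reduction carries over. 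For the~$\Psi^{\eps,1}$ term, reproducing the computation~\eqref{eq:Psiepsellj} with the weight~$\chi_\Omega$ yields~$\|\Psi^{\eps,1}(t)(\delta^{\eps,\eta},f)\|_{\mathcal{X}^{\ell,j}_\infty}\lesssim\eps\,\|\Lambda^{-1}\Gamma(\delta^{\eps,\eta},f)\|_{\mathcal{X}^{\ell,k}_\infty}$, and by the bilinear estimate~\eqref{estimUkai1} together with the decay above,
\[
\|\Lambda^{-1}\Gamma(\delta^{\eps,\eta},f)\|_{\mathcal{X}^{\ell,k}_\infty}\lesssim\sup_{t\ge0}\langle t\rangle^{\frac14}\,\frac{\eta}{\langle t\rangle^{\frac12}}\,\|f\|_{L^\infty(\R^+,X^{\ell,k})}\lesssim\eta\,\|f\|_{L^\infty(\R^+,X^{\ell,k})}\,,
\]
so this contribution is~$O(\eps\,\eta\,\|f\|_{L^\infty(\R^+,X^{\ell,k})})$ and vanishes as~$\eps\to0$.

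For the~$\mathcal{Y}^\ell_\infty$ term I would use that~$\Pi_L\Gamma=0$, hence~$\frac1\eps U^\eps(t-t')\Gamma(\cdot,\cdot)=W^\eps(t-t')(I-\Pi_L)\Gamma(\cdot,\cdot)$, and invoke the~$\R^2$-estimate of Lemma~\ref{lem:decayWeps} together with the continuity estimates for~$\Gamma$ (as in the proof of Lemma~\ref{lem:Psiepscont}) to get
\[
\|\Psi^\eps(t)(\delta^{\eps,\eta},f)\|_{H^\ell_xL^2_v}\lesssim\eta\,\|f\|_{L^\infty(\R^+,X^{\ell,k})}\int_0^t\frac{dt'}{(t-t')^{\frac12}\langle t-t'\rangle^{\frac12}\langle t'\rangle^{\frac12}}\,.
\]
Multiplying by~$\langle t\rangle^{1/4}$ and using that~$\langle t\rangle^{1/4}\int_0^t(t-t')^{-1/2}\langle t-t'\rangle^{-1/2}\langle t'\rangle^{-1/2}\,dt'$ is bounded uniformly in~$t\ge0$ --- which is exactly the convolution bound established at the end of the proof of Lemma~\ref{lem:Psiepscont} --- gives~$\|\Psi^\eps(t)(\delta^{\eps,\eta},f)\|_{\mathcal{Y}^\ell_\infty}\lesssim\eta\,\|f\|_{L^\infty(\R^+,X^{\ell,k})}$ uniformly in~$\eps$. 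Summing the two contributions and letting~$\eps\to0$ yields the statement.

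The main obstacle is conceptual rather than computational: one is asking for an estimate in a space with a growing time weight while allowing a datum~$f$ with no decay whatsoever, so a direct appeal to Lemma~\ref{lem:Psiepscont} --- which would only bound the~$\mathcal{X}^{\ell,k}_\infty$-norm of the output by the \emph{same} norm of~$f$, i.e.\ by~$\sup_t\langle t\rangle^{1/4}\|f(t)\|_{\ell,k}$ --- is of no use. The fix is to trade the~$\langle t\rangle^{-1/2}$ decay of~$\delta^{\eps,\eta}$ and the~$W^\eps$-decay against the weight, which forces one to track all time weights explicitly through the Duhamel iteration and in the convolution estimates; the one genuinely delicate point is to check that the surviving~$\Psi^{\eps,1}$ term, which gains no time decay at all, is nonetheless harmless because it comes with a full power of~$\eps$.
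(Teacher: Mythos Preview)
Your proof is correct and follows essentially the same route as the paper: the key decay $\|\delta^{\eps,\eta}(t)\|_{\ell,k}\lesssim\eta\langle t\rangle^{-1/2}$ from Corollary~\ref{estimateUepsd=2}, the reduction~\eqref{estimatePsiPsi1} to $\Psi^{\eps,1}$ in $\mathcal{X}^{\ell,j}_\infty$ and $\Psi^\eps$ in $\mathcal{Y}^\ell_\infty$, and the same convolution bound from Lemma~\ref{lem:Psiepscont} for the $\mathcal{Y}^\ell_\infty$ part. The only cosmetic difference is how the weight $\langle t\rangle^{1/4}$ is pushed through the $\Psi^{\eps,1}$ estimate: the paper splits the time integral at $t/2$ (absorbing the weight into the exponential on $[0,t/2]$ and transferring it to $\langle t'\rangle^{1/4}$ on $[t/2,t]$), whereas you use the single inequality $\langle t\rangle^{1/4}/\langle t'\rangle^{1/4}\le 1+(t-t')^{1/4}$ and then integrate $(1+s^{1/4})e^{-\nu(v)s/\eps^2}$ --- both give the same $O(\eps)$ bound.
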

 \begin{proof}
 Following the proof of Lemma~\ref{lem:Psiepscont}, and in particular~(\ref{estimatePsiPsi1}), it is enough to estimate~$\|\Psi^{\eps,1}(t)(\delta^{\eps,\eta},f) \|_{\mathcal{X}^{\ell,k}_\infty}$ and~$\|\Psi^{\eps}(t)(\delta^{\eps,\eta},f) \|_{\mathcal{Y}^{\ell}_\infty}$. 
 Let us notice that using  Corollary~\ref{estimateUepsd=2} we find that 
 \begin{equation} \label{eq:decaydeltaepseta}
 \|\delta^{\eps,\eta}(t) \|_{\ell,k}\lesssim {1 \over \langle t \rangle^\frac12} \big(\|\delta^\eta_{\rm in}\|_{\ell,k} + \|\delta^\eta_{\rm in}\|_{L^2_vL^1_x} \big) \lesssim \frac{\eta}{\langle t \rangle^\frac12}
 \end{equation}
 from~\eqref{deltaineta}-\eqref{deltainetaR2}. 
For the estimate of~$\|\Psi^{\eps,1}(t)(\delta^{\eps,\eta},f) \|_{\mathcal{X}^{\ell,k}_\infty}$, we notice that if $0 \le {t'} \le t/2$, then there holds~$t/2 \le t-{t'} \le t$ from which we deduce that 
$$
\langle t \rangle^{\frac14}  e^{-\nu(v)\frac{t-{t'}}{\e^2}} \lesssim e^{-\nu(v)\frac{t-{t'}}{2\e^2}}\, .
$$
If $t/2 \le {t'} \le t$, we have
$$
 \langle t \rangle^{\frac14}   \lesssim  \langle t'\rangle^{\frac14} \, .
$$
In all cases, we can thus write the following bound for~$0\leq j \leq k$:
\begin{align*}
&\langle t \rangle^{\frac14} \|\Psi^{\eps,1}(t)(\delta^{\eps,\eta},f) \|_{{\ell,k}} \\
&\quad \le\sup_{v \in \R^2} {2 \over \e} \int_0^t e^{-\nu(v)\frac{t-{t'}}{2\e^2}} \nu(v) \nu^{-1}(v) \langle v \rangle^j \langle t'\rangle^{\frac14}\| \Gamma(\delta^{\eps,\eta},f)  ({t'})\|_{H^\ell_x}  \, d{t'}  \\
&\quad \le 2\|\Lambda^{-1}  \Gamma (\delta^{\eps,\eta},h)  \|_{\mathcal{X}_\infty^{\ell,k} } \left\| {1 \over \e} \int_0^t e^{-\nu(v)\frac{t-{t'}}{2\e^2}} \nu(v) \, d{t'} \right\|_{L^\infty_v} \\
&\quad \lesssim \e  \|\Lambda^{-1}  \Gamma (\delta^{\eps,\eta},h)  \|_{\mathcal{X}_\infty^{\ell,k} }\,.
\end{align*}
Using~\eqref{estimUkai1} and~\eqref{eq:decaydeltaepseta}, we   have that 
$$
\|\Lambda^{-1} \Gamma (\delta^{\eps,\eta},h)  \|_{\mathcal{X}^{\ell,k}_\infty} \lesssim \|\delta^{\eps,\eta}\|_{\mathcal{X}^{\ell,k}_\infty} \|f\|_{L^\infty(\R^+, X^{\ell,k})} \lesssim \eta \|f\|_{L^\infty(\R^+, X^{\ell,k})}\,. 
$$
From this, we are able to conclude for the first part of the estimate. 

 As to the estimate in~$\mathcal{Y}^\ell_\infty$, we proceed as in the proof of Lemma~\ref{lem:Psiepscont} to deduce that 
 $$
 \|\Psi^\eps(t) (\delta^{\eps,\eta},f) \|_{\mathcal{Y}^{\ell}_\infty} \lesssim \eta \, \sup_{t \ge 0} \, \langle t \rangle^{\frac14} \int_0^t \frac{1}{(t-t')^\frac12 \langle t-t' \rangle^\frac12} {1 \over \langle t' \rangle^\frac12} \, dt' \|f\|_{L^\infty(\R^+,X^{\ell,k})} \, ,
 $$
 and the result follows directly as above. Lemma~\ref{lem:PsiepscontR2} is proved. \end{proof}
 
 \begin{lem} \label{lem:Psiepsbardelta}
 Let $\Omega= \R^d$, $d=2,3$, $\ell>d/2$ and~$k>d/2+1$. For any $f \in \mathcal{X}^{\ell,k}_T$, for any~$\eta>0$, there exists~$C_\eta>0$, independent of~$T$, such that 
 $$
 \|\Psi^\eps(t)(\bar \delta^{\eps,\eta},f)\|_{\mathcal{X}^{\ell,k}_T} \lesssim C_\eta \, \eps^\frac12 \|f\|_{\mathcal{X}^{\ell,k}_T} \, .
 $$
 \end{lem}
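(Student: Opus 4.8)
The plan is to reproduce the proof of Lemma~\ref{lem:Psiepscont} almost verbatim, the difference being that everywhere one will gain an extra positive power of~$\eps$ from the smallness (dispersive/exponential decay in time, with an $\eps$ prefactor) of $\bar\delta^{\eps,\eta}$ recorded in Lemma~\ref{resultstildegeps}. First I would split $\Psi^\eps(t)$ exactly as in that proof, $\Psi^\eps(t)(f_1,f_2)=\Psi^{\eps,1}(t)(f_1,f_2)+\frac1{\eps^2}\int_0^t e^{(t-t')A^\eps}K\,\Psi^\eps(t')(f_1,f_2)\,dt'$, and use the mapping properties of $K$ together with the Duhamel iteration~\eqref{estimatePsiPsi1} to reduce the claim to two estimates: $\|\Psi^{\eps,1}(t)(\bar\delta^{\eps,\eta},f)\|_{\mathcal{X}_T^{\ell,j}}\lesssim C_\eta\,\eps^{1/2}\|f\|_{\mathcal{X}_T^{\ell,k}}$ for $0\le j\le k$, and $\|\Psi^\eps(t)(\bar\delta^{\eps,\eta},f)\|_{\mathcal{Y}_T^\ell}\lesssim C_\eta\,\eps^{1/2}\|f\|_{\mathcal{X}_T^{\ell,k}}$.

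For the first (smoothing) part I would repeat the computation leading to~\eqref{eq:Psiepsellj}: inserting the explicit formula~\eqref{semigroupAeps} for $e^{tA^\eps}$, the elementary bound $\frac1\eps\int_0^t e^{-\nu(v)(t-t')/\eps^2}\nu(v)\,dt'\lesssim\eps$, and the bilinear estimate $\|\Lambda^{-1}\Gamma(f_1,f_2)\|_{\mathcal{X}_T^{\ell,k}}\lesssim\|f_1\|_{\mathcal{X}_T^{\ell,k}}\|f_2\|_{\mathcal{X}_T^{\ell,k}}$ coming from~\eqref{estimUkai1}, one obtains $\|\Psi^{\eps,1}(t)(\bar\delta^{\eps,\eta},f)\|_{\mathcal{X}_T^{\ell,j}}\lesssim\eps\,\|\bar\delta^{\eps,\eta}\|_{\mathcal{X}_T^{\ell,k}}\|f\|_{\mathcal{X}_T^{\ell,k}}$. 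It then suffices to observe that $\|\bar\delta^{\eps,\eta}\|_{\mathcal{X}_T^{\ell,k}}\le C_\eta$ uniformly in $\eps\in(0,1)$ and $T>0$: by~\eqref{estimatedelta2}--\eqref{estimatedelta1} one has $\|\bar\delta^{\eps,\eta}(t)\|_{\ell,k}\lesssim C e^{-\alpha t/\eps^2}+C_\eta\langle t\rangle^{-d/4}$, and multiplying by $\chi_\Omega(t)$ (which is $1$ except on $\R^2$, where it is $\langle t\rangle^{1/4}$, and $\langle t\rangle^{1/4}e^{-\alpha t/\eps^2}\lesssim1$ uniformly in $\eps\le1$ while $\langle t\rangle^{1/4}\langle t\rangle^{-1/2}\le1$) keeps it bounded by $C_\eta$. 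This already gives a bound of order~$\eps$, better than required.

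The $\eps^{1/2}$ gain is produced by the second estimate. Since $\Pi_{L}\Gamma(\cdot,\cdot)=0$ one writes $\Psi^\eps(t)(\bar\delta^{\eps,\eta},f)=\int_0^t W^\eps(t-t')\Gamma(\bar\delta^{\eps,\eta}(t'),f(t'))\,dt'$, and combines the decay estimate $\|W^\eps(\tau)g\|_{H^\ell_x L^2_v}\lesssim\tau^{-1/2}\|g\|_{H^\ell_x L^2_v}$ from Lemma~\ref{lem:decayWeps} with the bilinear estimate
$$\big\|\Lambda^{-1}\Gamma(\bar\delta^{\eps,\eta}(t'),f(t'))\big\|_{H^\ell_x L^2_v}\lesssim\|\bar\delta^{\eps,\eta}(t')\|_{L^{\infty,k}_v W^{\ell,\infty}_x}\,\|f(t')\|_{\ell,k}\,,$$
which is valid for $\ell>d/2$ (the collision frequency weight being reabsorbed exactly as in the derivation of~\eqref{eq:Psiepsellj}, and the $L^\infty_x$ slot placed on $\bar\delta^{\eps,\eta}$), to get $\|\Psi^\eps(t)(\bar\delta^{\eps,\eta},f)\|_{H^\ell_x L^2_v}\lesssim\int_0^t(t-t')^{-1/2}\|\bar\delta^{\eps,\eta}(t')\|_{L^{\infty,k}_v W^{\ell,\infty}_x}\|f(t')\|_{\ell,k}\,dt'$. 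I would then insert the dispersive bound~\eqref{estimatedelta1}, $\|\bar\delta^{\eps,\eta}(t')\|_{L^{\infty,k}_v W^{\ell,\infty}_x}\lesssim C_\eta\big(1\wedge(\eps/t')^{(d-1)/2}\big)$ together with the harmless exponentially small contribution of $U^{\eps\sharp}$ from~\eqref{estimatedelta2}, and $\|f(t')\|_{\ell,k}\le\chi_\Omega(t')^{-1}\|f\|_{\mathcal{X}_T^{\ell,k}}$, and split $\int_0^t=\int_0^{t/2}+\int_{t/2}^t$. On $[t/2,t]$ one has $\int_{t/2}^t(t-t')^{-1/2}\,dt'\lesssim t^{1/2}$ and $t^{1/2}\big(1\wedge(\eps/t)^{(d-1)/2}\big)\lesssim\eps^{1/2}$ for $d=2,3$; on $[0,t/2]$ the kernel is $\lesssim t^{-1/2}$, and after multiplication by the weight $\chi_\Omega(t)$ the remaining integral $\int_0^{t/2}\big(1\wedge(\eps/t')^{(d-1)/2}\big)\chi_\Omega(t')^{-1}\,dt'$ is controlled, again by $\eps^{1/2}$ (up to a bounded, possibly $d$-dependent constant), using the extra $\langle t'\rangle^{-1/4}$ decay of $f$ encoded in $\chi_\Omega$.

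The hard part, and the only place where the argument is not entirely routine, is this last time integral in the physical space $\R^2$ (and, marginally, $\R^3$): the dispersive factor $(\eps/t')^{(d-1)/2}$ fails to be square-integrable against the $W^\eps$-kernel over all of $[0,\infty)$ — only logarithmically integrable when $d=3$ — so the bound uniform in $T$ can be obtained only by exploiting the additional decay $\langle t'\rangle^{-1/4}$ carried by $f\in\mathcal{X}_T^{\ell,k}$ together with the weight $\chi_\Omega(t)=\langle t\rangle^{1/4}$, splitting the time integral around $t/2$ exactly as in the proof of Lemma~\ref{lem:Psiepscont} and treating small and large times $t$ separately. The other point to be careful about is to keep the $W^{\ell,\infty}_x$-norm (rather than an $H^\ell_x$-norm, which carries no $\eps$-gain) on $\bar\delta^{\eps,\eta}$ in the bilinear estimate for $\Gamma$; this is what makes the dispersive decay, hence the $\eps^{1/2}$, available.
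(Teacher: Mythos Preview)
Your strategy is essentially the one in the paper: the same $\Psi^{\eps,1}$/$\mathcal Y^\ell_T$ reduction via~\eqref{estimatePsiPsi1}, the same $\eps$-gain on $\Psi^{\eps,1}$, and the same idea of feeding the dispersive decay of $\bar\delta^{\eps,\eta}$ (in the $L^{\infty,k}_vW^{\ell,\infty}_x$ slot, via~\eqref{eq:GammaHellxL2v}) against the $W^\eps$ kernel. Two points, however, deserve correction.

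First, the contribution of $\bar\delta^{\eps,\eta}_2=U^{\eps\sharp}(t)g^\eta_{\rm in}$ cannot be handled by the bilinear estimate you wrote: \eqref{estimatedelta2} is a bound in $X^{\ell,k}$, not in $L^{\infty,k}_vW^{\ell,\infty}_x$. The paper therefore splits $\bar\delta^{\eps,\eta}=\bar\delta^{\eps,\eta}_1+\bar\delta^{\eps,\eta}_2$ and treats the two pieces with \emph{different} bilinear estimates: \eqref{eq:GammaHellxL2v} for the dispersive piece $\bar\delta^{\eps,\eta}_1$, and the symmetric estimates \eqref{estimUkai1}--\eqref{estimUkai2} (both entries in $X^{\ell,k}$, plus $L^2_vL^1_x$) for $\bar\delta^{\eps,\eta}_2$, the latter combined with the \emph{third} $W^\eps$ decay estimate $(t^{-1/2}\langle t\rangle^{-d/4})$ of Lemma~\ref{lem:decayWeps}. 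Your ``harmless exponentially small'' remark is morally right but needs this adjustment to go through.

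Second, your description of the ``hard part'' mixes up the two dimensions. In $\R^3$ one has $\chi_\Omega\equiv 1$, so there is \emph{no} extra $\langle t'\rangle^{-1/4}$ decay to borrow from $f$; instead the paper simply uses $1\wedge(\eps/t')\le(\eps/t')^{1/2}$ and notes that $\int_0^t(t-t')^{-1/2}(t')^{-1/2}\,dt'$ is a bounded Beta integral, which delivers the $\eps^{1/2}$ directly. It is only in $\R^2$ that the weight $\chi_\Omega(t)=\langle t\rangle^{1/4}$ on the left forces one to use the compensating $\langle t'\rangle^{-1/4}$ from $f$, and the $[0,t/2]\cup[t/2,t]$ splitting is used there (and for the $\bar\delta^{\eps,\eta}_2$ integrals) exactly as you describe.
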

 \begin{proof}
Recall that by definition
$$
\overline\delta^{\eps,\eta}= U^\e_{\rm{disp}}(t)g^{\eta}_{\rm{in}} +U^{\e\sharp}(t) g^{\eta}_{\rm{in}}\,.
$$
Defining
\begin{equation}\label{decompositiontildeg}
\overline  \delta^{\eps,\eta}_1(t):=U^\e_{\rm{disp}}(t)g^{\eta}_{\rm{in}} \quad \mbox{and} \quad \overline \delta^{\eps,\eta}_2(t):=U^{\e\sharp}(t) g^{\eta}_{\rm{in}} \, , 
\end{equation}
we shall study separately the contributions of~$\Psi^\eps(t)(\overline  \delta^{\eps,\eta}_1,f)$ and $\Psi^\eps(t)(\overline  \delta^{\eps,\eta}_2,f)$. Following the proof of Lemma~\ref{lem:Psiepscont}, it is enough to estimate~$\|\Psi^{\eps,1}(t)(\bar\delta^{\eps,\eta},f) \|_{\mathcal{X}^{\ell,k}_T}$ and~$\|\Psi^{\eps}(t)(\bar\delta^{\eps,\eta},f) \|_{\mathcal{Y}^{\ell}_T}$. 

\smallskip

\noindent {\it Step 1: estimates in $\mathcal{Y}^\ell_T$.}
We separate the analysis according the different cases for $\Omega$.

\smallskip
\noindent $\bullet$ $ $  {\it The case of $\R^2$. } We first focus on the estimate of $\|\Psi^\eps(t) (\overline  \delta^{\eps,\eta}_1, f)\|_{\mathcal{Y}^\ell_\infty}$. We use the estimate~\eqref{eq:GammaHellxL2v} and the second estimate coming from Lemma~\ref{lem:decayWeps}. We have
\begin{align*}
\|\Psi^\eps(t) (\overline  \delta^{\eps,\eta}_1,f)\|_{\mathcal{Y}^\ell_\infty} \lesssim \sup_{t \ge 0} \, \langle t \rangle^{1 \over 4} \int_0^t {1 \over {(t-t')^{1\over 2}}} \|\overline  \delta^{\eps,\eta}_1(t')\|_{L^{\infty,k}_vW^{\ell,\infty}_x} {1 \over \langle t' \rangle^{1 \over 4}} \, dt' \|f\|_{\mathcal{X}^{\ell,k}_\infty}
\end{align*}
and thus thanks to Lemma~\ref{resultstildegeps}, estimate~(\ref{estimatedelta1})
\begin{align*}
\|\Psi^\eps(t) (\overline  \delta^{\eps,\eta}_1, f)\|_{\mathcal{Y}^\ell_\infty} &\le C_\eta  \, \eps^{1\over 2} \sup_{t \ge 0} \, \langle t \rangle^{1 \over 4} \int_0^t {1 \over {(t-t')^{1\over 2}}} {1 \over (t')^{1 \over 2}} {1 \over \langle t' \rangle^{1 \over 4}} \, dt' \|f\|_{\mathcal{X}^{\ell,k}_\infty} \\
&\le C_\eta \, \eps^{1\over 2} \sup_{t \ge 0} \, I_1(t,0,t) \|f\|_{\mathcal{X}^{\ell,k}_\infty}
\end{align*}
with 
$$
I_1(t,s,\tau) := \langle t \rangle^{1 \over 4} \int_s^\tau {1 \over {(t-t')^{1\over 2}}} {1 \over (t')^{1 \over 2}} {1 \over \langle t' \rangle^{1 \over 4}} \, dt' \, .
$$
It thus remains to verify that 
$I_1(t,0,t)$
is uniformly bounded in time. First, we notice that 
$$
I_1(t,t/2,t)
\lesssim \int_{t/2}^t  {1 \over {(t-t')^{1\over 2}}} {dt' \over (t')^{1 \over 2}} \lesssim {1 \over t^{1\over 2}} \int_{t/2}^t {dt' \over {(t-t')^{1\over 2}}} \lesssim 1 \, .
$$
Then, if $t \lesssim 1$, 
$$
I_1(t,0,t/2)
\lesssim {1 \over t^{1\over 2}} \int_0^{t/2} \frac{dt'}{(t')^{1\over 2}} \lesssim 1 \, .
$$
Finally if $t \gtrsim 1$, 
$$
I_1(t,0,t/2)
\lesssim {1 \over t^{1\over 4}} \int_0^1 \frac{dt'}{(t')^{1\over 2}} +  {1 \over t^{1\over 4}} \int_1^{t/2} \frac{dt'}{(t')^{3\over 4}}  \lesssim 1 \, ,
$$
from which we are able to conclude. 

We now turn to the estimate of $\|\Psi^\eps(t) (\overline  \delta^{\eps,\eta}_2, f)\|_{\mathcal{Y}^\ell_\infty}$. We   use the third estimate given by Lemma~\ref{lem:decayWeps} and Lemma~\ref{resultstildegeps}, estimate~(\ref{estimatedelta2}), combined with~(\ref{decayUepssharp}) and~\eqref{estimUkai1}-\eqref{estimUkai2} to get
$$
\|\Psi^\eps(t) (\overline  \delta^{\eps,\eta}_2, f)\|_{\mathcal{Y}^\ell_\infty} 
\lesssim \sup_{t \ge 0} \, \langle t \rangle^{1 \over 4} \int_0^t {1 \over {(t-t')^{1 \over 2} \langle t-t'\rangle^{1 \over 2}}} e^{- \alpha {t' \over \eps^2}} \, dt' \|f\|_{\mathcal{X}^{\ell,k}_\infty}= \sup_{t \ge 0} I_2^\eps(t,0,t) \|f\|_{\mathcal{X}^{\ell,k}_\infty}
$$
where 
$$
I_2^\eps(t,s,\tau) := \langle t \rangle^{1 \over 4} \int_s^\tau {1 \over {(t-t')^{1 \over 2} \langle t-t'\rangle^{1 \over 2}}} e^{- \alpha {t' \over \eps^2}} \, dt' \, .
$$
First, let us notice that if $t \lesssim 1$, then 
$$
I_2^\eps(t,0,t) \lesssim \int_0^t {1 \over (t-t')^{1 \over 2}} e^{- \alpha {t' \over \eps^2}} \, dt' 
$$
and thus, using Young's inequality, 
$$
I_2^\eps(t,0,t) \lesssim \left\| {1 \over t^{\frac12}}\right\|_{L^{\frac32}_t([0,1])}\Big\|e^{- \alpha {t' \over \eps^2}}\Big\|_{L^3_t} \lesssim \eps^{\frac23} \, . 
$$
Similarly, when $t \gtrsim 1$, we have 
$$
I_2^\eps(t,0,t/2) \lesssim \int_0^{t/2} {1 \over {(t-t')^{1 \over 4} \langle t-t'\rangle^{1 \over 2}}} e^{-\alpha {t' \over \eps^2}} \, dt' 
\lesssim \left\| {1 \over {t^{\frac14} \langle t \rangle^{\frac12}}}\right\|_{L^{2}_t}\Big\|e^{- \alpha {t' \over \eps^2}}\Big\|_{L^2_t} \lesssim \eps \,.
$$
Finally 
$$
I_2^\eps(t,t/2,t) \lesssim \int_{t/2}^t {1 \over {(t-t')^{1 \over 2} \langle t-t'\rangle^{1 \over 2}}} e^{-\alpha {t' \over {2\eps^2}}} \, dt' 
\lesssim \left\| {1 \over {t^{\frac12} \langle t \rangle^{\frac12}}}\right\|_{L^{\frac32}_t}\Big\|e^{-\alpha {t' \over \eps^2}}\Big\|_{L^3_t} \lesssim \eps^{\frac23} \, . 
$$
\smallskip
\noindent $\bullet$ $ $  {\it The case of $\R^3$.} The strategy of the proof is similar to the case of $\R^2$ so we skip the details. For the term $\|\Psi^\eps(t) (\overline  \delta^{\eps,\eta}_1, f)\|_{\mathcal{Y}^\ell_T}$, we notice that Lemma~\ref{resultstildegeps}, estimate~(\ref{estimatedelta1}) implies that 
$$
\|\overline  \delta^{\eps,\eta}_1(t')\|_{L^{\infty,k}_v W^{\ell,\infty}_x} \le C_\eta \left(\frac{\eps}{t'}\right)^\frac12 \, . 
$$
It is thus enough to check that the following integral is uniformly bounded in time
$$
\int_0^t {1 \over (t-t')^\frac12} {1 \over (t')^\frac12} \, dt' =J_1(0,t)\, , \quad \text{with} \quad J_1(s,t):=\int_s^t {1 \over (t-t')^\frac12} {1 \over (t')^\frac12} \, dt'  \, .
$$
We have 
$$
J_1(0,t/2) \lesssim {1 \over t^\frac12} \int_0^{t/2} {dt' \over (t')^\frac12} \lesssim 1
\quad \text{and} \quad 
J_1(t/2,t) \lesssim {1 \over t^\frac12} \int_{t/2}^t {dt' \over (t-t')^\frac12} \lesssim 1 \, ,
$$
which yields the result. 
In order to estimate $\|\Psi^\eps(t) (\overline  \delta^{\eps,\eta}_2, f)\|_{\mathcal{Y}^\ell_T}$, we just have to bound 
$$
\int_0^t {1 \over {(t-t')^\frac12 \langle t-t' \rangle^\frac34}} e^{- \alpha \frac{t}{\eps^2}} \, dt' =: J_2^\eps(0,t) \, . 
$$
Using Young's inequality, we have:
$$
J_2^\eps(0,t) \lesssim \left\| {1 \over {t^{\frac12} \langle t \rangle^{\frac34}}}\right\|_{L^{\frac32}_t}\Big\|e^{- \alpha {t' \over \eps^2}}\Big\|_{L^3_t} \lesssim \eps^{\frac23} \, .
$$

\medskip
\noindent {\it Step 2: estimates in $\mathcal{X}^{\ell,j}_T$.}
As in the proof of Lemma~\ref{lem:Psiepscont} (see estimate~\eqref{eq:Psiepsellj}) and of Lemma~\ref{lem:PsiepscontR2} for the $\R^2$-case, we have for any~$0 \le j \le k$ and any~$t\in [0,T]$
\begin{align*}
&\chi_\Omega(t)\big\|\Psi^{\eps,1}(t)\big(\overline\delta^{\eps,\eta},f\big)\big\|_{{\ell,j}} \\
&\quad \lesssim \e \Big \|\Lambda^{-1}  \Gamma\big(\overline\delta^{\eps,\eta},f\big) \Big\|_{\mathcal{X}_T^{\ell,k} } \\
&\quad \lesssim \eps \|\overline\delta^{\eps,\eta}\|_{\mathcal{X}^{\ell,k}_T} \|f\|_{L^\infty([0,T],X^{\ell,k})}
\le C_\eta \, \eps \, \|f\|_{L^\infty([0,T],X^{\ell,k})}\,,
\end{align*}
using Lemma~\ref{resultstildegeps} and this concludes the proof of Lemma~\ref{lem:Psiepsbardelta}. 
\end{proof}

\subsection{Proof of Lemma \ref{resultstildegeps}}\label{proofstrichartz} 
Recalling the notation~(\ref{decompositiontildeg})
 we note that the results on~$ \bar \delta^{\eps,\eta}_2$ follow directly  from the properties on~$U^{\eps \sharp}$ recalled in Appendix~\ref{specBoltz}, namely Lemma~\ref{EllisPinsky}.  Turning to~$ \bar \delta^{\e,\eta}_1$, we remark that we can proceed similarly as in the proof of Lemma~\ref{lem:Uepsdisp} using the heat flow to obtain
 \begin{equation} \label{eq:deltabar2}
 \|\bar \delta^{\eps,\eta}_2\|_{\ell,k} \lesssim {1 \over \langle t \rangle^{\frac d4}} \big(\|g_{\rm in}^\eta\|_{\ell,k} + \|g_{\rm in}^\eta\|_{L^2_vL^1_x} \big) \lesssim \frac{C_\eta}{\langle t \rangle^\frac d4} \, \cdotp
 \end{equation}
 Next, to prove the dispersion estimate 
 $$
 \|\bar \delta^{\eps,\eta}_1\|_{L^{\infty,k}_v W^{\ell,x}_x} \le C_\eta \Big(1 \wedge \Big(\frac{\eps}{t}\Big)^{\frac{d-1}{2}} \Big) \, ,
 $$ 
 recall that in Fourier variables, the terms inside $\widehat U^\eps_{\rm{disp}}(t,\xi)$ are of the form
 $$
\exp \Big(
i\alpha |\xi| \frac t\e - \beta t |\xi|^2 \Big)P^0\Big(\frac\xi{|\xi|}\Big) \quad \text{with} \quad \alpha, \beta >0
 $$
 and where~$P^0\big(\frac\xi{|\xi|}\big)$ can be expressed as a finite sum of functions of the form
 $$
 P^0\Big(\frac\xi{|\xi|}\Big)\hat u = a\Big(\frac\xi{|\xi|}\Big)b(v)\int c(v)\hat u (\xi,v)\,dv
 $$
 where~$a$ is a smooth function on the sphere, and $b$ and~$c$  are in~$L^{\infty,\beta}$ for all~$\beta \geq 0$.
 It follows that
 $$
 \big |U^\eps_{\rm{disp}}(t) g^\eta_{{\rm in}} (t,x,v)\big|  \lesssim   |b(v)| \Big| \int e^{ix\cdot \xi + {i\alpha |\xi| \frac t\e}} a\Big(\frac\xi{|\xi|}\Big)
  {\mathcal F}_x \widetilde g (t,\xi)\, d\xi 
   \Big|
  $$
 with
 $$
 {\mathcal F}_x \widetilde g (t,\xi):= \int c(v)e^{- \beta t |\xi|^2}\widehat g_{\rm{in}} (\xi,v)\,dv \,.
 $$
 But by~\cite{Ukai} and classical dispersive estimates on the wave operator in~$d$ space dimensions (it is here that we use the fact that~$\Omega = \R^d$) we know that 
  $$
  \Big| \int e^{ix\cdot \xi + {i\alpha |\xi| \frac t\e}} a\Big(\frac\xi{|\xi|}\Big)
  {\mathcal F}_x\widetilde g(t,\xi)\, d\xi 
   \Big|
\lesssim \Big(1+\frac\e t\Big)^{\frac{d-1}2} \big(\| \widetilde g(t)\|_{L^1} + \| \widetilde g(t)\|_{H^\ell}\big)
 $$
 and the result follows by continuity of the heat flow.
 Concerning the term coming from~$U^\eps_{\rm disp}(t)$, we can proceed as in the proof of Lemma~\ref{lem:Uepsdisp} and just use the continuity of the heat flow in~$H^\ell_x$ to get 
 $$
  \|\bar \delta^{\eps,\eta}_2\|_{\ell,k} \lesssim \|g_{\rm in}^\eta\|_{\ell,k} \lesssim 1
 $$
as previously. 
 Lemma \ref{resultstildegeps} is proved. 
 \qed

\section{Proof of Propositions~\ref{linear}, \ref{nonlinear} and~\ref{limitdatawp}}
\subsection{Proof of Proposition~\ref{linear}} The first steps of the proof follow the ones of Lemma~\ref{lem:Psiepscont}: first, we split the operator~$L$ defined in~(\ref{defoperator}) into two parts as in~(\ref{eq:decompUeps}), which provides   the decomposition~\eqref{estimateLHlk}. The last two steps are then devoted to the analysis of the terms of this decomposition.
  
 \smallskip
 \noindent
{\it {Step 1.}} $ $
From the decomposition~\eqref{eq:decompUeps}, we deduce that for any~$\lambda \geq 0$, with notation~(\ref{defopslambda}),
\begin{align*}
 \mathcal{L}_\lambda^{\e}(t)h 
&= {2 \over \eps} \int_0^te^{(t-t')A^\e}  \Gamma(\overline g^{\eps,\eta}- \delta^{\eps,\eta} ,h) (t') e^{-\lambda \int_{t'}^t \|\overline g^{\eps,\eta}(t'')\|_{\ell,k}^r\, dt'' } \, dt'\\
&    \qquad+ {2 \over \e} \int_0^t \int_0^{t-t'} e^{(t-t'-\tau)A^\e} {1 \over \e^2} K U^\e(\tau)  \Gamma(\overline g^{\eps,\eta}- \delta^{\eps,\eta} ,h) (t')e^{-\lambda \int_{t'}^t \|\overline g^{\eps,\eta}(t'')\|_{\ell,k}^r\, dt'' } \, d\tau \, dt' \\
& =: \mathcal{L}_\lambda^{\e,1}(t)h + \mathcal{L}_\lambda^{\e,2}(t)h\,.
\end{align*}
Performing a change of variables, one can notice that 
$$
 \mathcal{L}_\lambda^{\e,2}(t)h = {1 \over \e^2} \int_0^t e^{(t-t')A^\e} K \mathcal{L}^{\eps}_\lambda (t') h \, dt'\,. 
$$
Exactly as we obtained~\eqref{reiter}, we are then able to prove that
\begin{equation} \label{estimateLHlk}
\|  \mathcal{L}^{\eps}_\lambda (t)h\|_{{\mathcal X}_T^{\ell,k} } \lesssim \sum_{j=0}^k \|\mathcal{L}_\lambda^{\e,1}(t)h\|_{{\mathcal X}_T^{\ell,j}} + \|   \mathcal{L}^{\eps}_\lambda (t)h\|_{{\mathcal Y}_T^\ell} \, ,
\end{equation}
where we recall that  $$\mathcal{Y}_T^\ell := 
 \Big\{f = f(t,x,v) \, / \,  f\in L^\infty(\mathds{1}_{[0,T]}(t)\chi_\Omega(t),H^\ell_xL^2_v)\Big\}\,.
$$
In the two next steps,  we are going to estimate respectively the quantities~$\| \mathcal{L}^{\eps}_\lambda (t)h\|_{\mathcal{Y}_T^\ell}$ and~$\|\mathcal{L}_\lambda^{\e,1}(t)h\|_{{\mathcal X}_T^{\ell,j}}$ for $0 \le j \le k$. 

 \smallskip
 \noindent
{\it {Step 2.}} $ $
Let us prove that
\begin{equation}\label{estimateYTell}
 \forall \, \lambda >0 \, , \quad    \big \|    \mathcal{L}_\lambda^{\e}(t) h\big\|_{\mathcal{Y}^\ell_T}
       \lesssim\Big(  {1 \over \lambda^\frac1r} +  \eta \Big)\| h  \|_{\mathcal{X}_T^{\ell,k}}\,.
\end{equation}
As in the proof of Lemma~\ref{lem:Psiepscont}, we are going to be able to use results from Lemma~\ref{lem:decayWeps} since 
$$
\Pi_{L}  \Gamma(f_1,f_2) = 0 \quad \forall \,f_1,f_2 \, .
$$

\noindent $\bullet $ $ $  {\it The case of $\T^d$, $d = 2,3$.}$ $ 
With the definition of~ $ \mathcal{L}_\lambda^{\e}$ given in~(\ref{defopslambda}), the first estimate from Lemma~\ref{lem:decayWeps} and~\eqref{estimUkai1} we get
$$
\begin{aligned}
 \big \|    \mathcal{L}_\lambda^{\e}(t) h\big\|_{H^\ell_xL^2_v} &  \lesssim  \int_0^t \frac{e^{-\sigma(t-t')}}{(t-t')^\frac12}  e^{- \lambda \int_{t'}^t \|\overline g^{\eps,\eta}(t'')\|_{\ell,k}^r\, dt''}
\|(\overline g^{\eps,\eta}-\delta^{\eps,\eta})(t')\|_{\ell,k}\|h(t')\|_{\ell,k} \, dt'\\
&\lesssim  \int_0^t \frac{e^{-\sigma(t-t')}}{(t-t')^\frac12}  e^{- \lambda \int_{t'}^t \|\overline g^{\eps,\eta}(t'')\|_{\ell,k}^r\, dt''}
\left(\|\overline g^{\eps,\eta}(t')\|_{\ell,k} + \|\delta^{\eps,\eta}(t')\|_{\ell,k}\right) \, dt'  \,  \|h\|_{\mathcal{X}_T^{\ell,k}} \, . \\
\end{aligned}
$$
When~$\lambda >0$, writing~$\displaystyle \frac1r+\frac1{r'} = 1$ with $1<r'<4/3$ (since~$r>4$ by definition) gives thanks to~(\ref{estimatedeltaepseta0})
$$
\begin{aligned}
\big \|    \mathcal{L}_\lambda^{\e}(t) h\big\|_{H^\ell_xL^2_v}  &\lesssim \Big(\big\|   e^{- \lambda \int_{t'}^t \|\overline g^{\eps,\eta}(t'')\|_{\ell,k}^r\, dt'' } \|\overline g^{\eps,\eta}(t')\|_{\ell,k} \big\|_{L^r_{t'}} \left\|\frac{e^{-\sigma t}}{t^\frac12}\right\|_{L^{r'}_{t}} + \eta  \left\|\frac{e^{-\sigma t}}{t^\frac12}\right\|_{L^{1}_{t}}\Big)
\| h  \|_{\mathcal{X}_T^{\ell,k}}\\
& \lesssim \Big( {1 \over \lambda^\frac1r}+ \eta\Big) \| h  \|_{\mathcal{X}_T^{\ell,k}}\,.
\end{aligned}
$$
The estimate~(\ref{estimateYTell}) follows.
\medskip

\noindent $\bullet $ $ $ {\it The case of $\R^3$.} $ $  From the third estimate in Lemma~\ref{lem:decayWeps} combined with~\eqref{estimUkai1}-\eqref{estimUkai2}, we have: 
$$
\|W^\e(t) f\|_{H^\ell_xL^2_v} \lesssim  {1 \over {t^\frac12\langle t \rangle^{\frac 34}}} \, \left(\|f\|_{H^\ell_xL^2_v} +\|f\|_{L^2_vL^1_x}\right)\,.
$$
We use that from~\eqref{estimUkai2}, we also have:
$$
  \|\Gamma(f_1,f_2)\|_{{L^2_vL^1_x}}\lesssim \| f_1 \|_{{\ell,k}} \|  f_2\|_{{\ell,k}} \, .
$$
 We can thus conclude as in the case of the torus, we write $\displaystyle \frac1r+\frac1{r'} = 1$ with $1\leq r'<4/3$, then $t \mapsto t^{-\frac12} \langle t \rangle^{-\frac 34}$ is in $L^{r'}(\R^+)$ and we obtain~(\ref{estimateYTell}).

\medskip

\noindent  $\bullet $ $ ${\it The case of $\R^2$.}  
We start by noticing that 
for any $t \in \R^+$, we have thanks to~\eqref{estimUkai1}-\eqref{estimUkai2} and~\eqref{eq:decaydeltaepseta}:
\begin{equation} \label{eq:GammadeltaepsetaR2}
\forall \, h \in X^{\ell,k}, \quad \|\Gamma(\delta^{\eps,\eta},h)\|_{H^\ell_xL^2_v} + \|\Gamma(\delta^{\eps,\eta},h)\|_{L^2_vL^1_x} \lesssim \|\delta^{\eps,\eta}\|_{\ell,k} \|h\|_{\ell,k}\lesssim \frac{\eta}{\langle t \rangle^{\frac12}} \|h\|_{\ell,k} \, .\end{equation}
We also recall that the space~$\mathcal{X}_T^{\ell,k}$ involves a weight in time, namely~$\chi_\Omega(t)= \langle t \rangle^\frac14$. The part involving $\bar g^{\eps,\eta}$ is treated using~\eqref{estimUkai1} and the third estimate in Lemma~\ref{lem:decayWeps}. For the part with~$\delta^{\eps,\eta}$, we use~\eqref{estimUkai1},~\eqref{eq:GammadeltaepsetaR2} and the second estimate given in Lemma~\ref{lem:decayWeps}, 
we deduce 
$$\begin{aligned}
 &  \big \|    \mathcal{L}_\lambda^{\e}(t) h\big\|_{H^\ell_xL^2_v} \lesssim  \int_0^t \frac{1}{(t-t')^\frac12\langle t-t' \rangle^\frac12}  e^{- \lambda \int_{t'}^t \|\overline g^{\eps,\eta}(t'')\|_{\ell,k}^r\, dt''}  
\|\overline g^{\eps,\eta}\|_{\ell,k}  \, \|h(t')\|_{\ell,k} \, dt' \\
&\qquad + \int_0^t \frac{1}{(t-t')^\frac12} \frac{\eta}{\langle t' \rangle^{\frac12}} \, \|h(t')\|_{\ell,k} \, dt'\\
& \lesssim   \int_0^t \frac{1}{(t-t')^\frac12\langle t-t' \rangle^\frac12}  e^{- \lambda \int_{t'}^t \|\overline g^{\eps,\eta}(t'')\|_{\ell,k}^r\, dt''} \|\overline g^{\eps,\eta}\|_{\ell,k}
{dt' \over {\langle t' \rangle^{\frac14}}} \,  \|h\|_{\mathcal{X}_T^{\ell,k}} \\
&\qquad + { \eta \int_0^t \frac{1}{(t-t')^\frac12 \langle t-t' \rangle^\frac12 }
{dt' \over {\langle t' \rangle^{\frac34}}} \,  \|h\|_{\mathcal{X}_T^{\ell,k}} } \, .
\end{aligned}
$$
 By H\"older's inequality, writing $\displaystyle \frac1r+\frac1{r'} = 1$ with~$1<r'<4/3$, we have 
\begin{equation} \label{LlambdaHell}
\begin{aligned} 
&\langle t \rangle^{1 \over 4} \big \|    \mathcal{L}_\lambda^{\e}(t) h\big\|_{H^\ell_x L^2_v} \\
&\quad \lesssim  \Big\|   e^{- \lambda \int_{t'}^t \|\overline g^{\eps,\eta}(\sigma)\|_{\ell,k}^r\, d\sigma } \|\overline g^{\eps,\eta}\|_{\ell,k} \Big\|_{L^{r}_{t'}} \|h\|_{\mathcal{X}_T^{\ell,k}}\,   I_1(t,0,t)^{\frac1{r' }}\,  {+ \eta \, \|h\|_{\mathcal{X}_T^{\ell,k}}\, I_2(t,0,t)}
\end{aligned}
 \end{equation}
 with
 $$
 I_1(t,s,\tau):= \langle t \rangle^{\frac{r'}4} \int_s^\tau \frac{1}{(t-t')^{\frac{r'}2}\langle t-t' \rangle^{\frac{r'}2}}{1 \over \langle t' \rangle^{\frac{r'}4}} \, dt'
 $$
 and 
 $$
 {I_2(t,s,\tau) := \langle t \rangle^{\frac 14} \int_s^\tau \frac{1}{(t-t')^{\frac{1}2}\langle t-t' \rangle^\frac12}{1 \over \langle t' \rangle^{\frac{3}4}} \, dt' \, .}
 $$
 Let us now show that~$ I_1(t,0,t)$  is uniformly bounded in $t \ge 0$. We first notice that 
 \begin{align*}
 I_1(t,0,t/2) \lesssim \int_{t/2}^t \frac{1}{(t')^{\frac{r'}2}\langle t' \rangle^{\frac{r'}4}} {1 \over \langle t-t' \rangle^{\frac{r'}4}} \,  dt' \, .
 \end{align*}
 If $t \lesssim 1$, we can write the following bound: 
 $$
 I_1(t,0,t/2)
 \lesssim \int_0^1 \frac{dt'}{(t')^{\frac{r'}2}} < \infty
 $$
 because $r'<2$ and if $t \gtrsim 1$, we have:
$$
I_1(t,0,t/2) \lesssim \left\| {1 \over {\langle t \rangle^{\frac{3r'}4}}} \right\|_{L^{\frac43}} \left\| {1 \over {\langle t \rangle^{\frac{r'}4}}} \right\|_{L^4 } < \infty
$$
 since $r'>1$. 
On the other hand, we also have that 
 \begin{align*}
I_1(t,t/2,t) \lesssim \int_0^\infty \frac{dt'}{(t-t')^{\frac{r'}2}\langle t-t' \rangle^{\frac{r'}2}} < \infty
 \end{align*}
 because $r' \in (1,2)$. {Concerning $I_2$,  let us also write~$ I_2(t,0,t)=  I_2(t,0,t/2)+ I_2(t,t/2,t)$ and estimate both terms separately. The second one is the easiest since
 $$
\begin{aligned} I_2(t,t/2,t) &= \int_{ t/2}^t \frac{\langle t \rangle^{\frac14} }{(t-t')^{\frac12} \langle t-t' \rangle^\frac12}{dt' \over \langle t' \rangle^{\frac34}} &\lesssim  \int_{ t/2}^t \frac{1}{(t-t')^{\frac12}}{dt' \over \langle t' \rangle^{\frac12}} 
\lesssim {1 \over t^{\frac12}} \int_{0}^{t/2} \frac {dt'}{(t')^{\frac12}} \lesssim 1 \\
\end{aligned} $$
where we used the fact that $t/2 \le t' \le t$.
For the first term, we start by assuming that~$t\lesssim 1$, then
$$
\begin{aligned} 
I_2(t,0,t/2)
&\lesssim  \int_0^{t/2} \frac{dt'}{\sqrt{t-t'}} < \infty\, .
\end{aligned}$$
On the other hand if~$t\gtrsim 1$
then using the fact that when~$0\leq t'\leq t/2$ then~$t/2\leq t-t'\leq t$ we have
$$
 I_2(t,0,t/2) \lesssim {1 \over  t^{\frac14}} \int_0^{t/2} \frac{dt'}{\langle t'\rangle^\frac34} \lesssim 1 \, . 
$$
Coming back to~\eqref{LlambdaHell}, we thus conclude to~(\ref{estimateYTell}).

\smallskip
\noindent{\it{Step 3.}} $ $
Due to~(\ref{estimateLHlk}), it remains to estimate $\|\mathcal{L}_\lambda^{\e,1}(t)h\|_{\mathcal{X}_T^{\ell,j} }$ for~$0 \le j \le k$  given. The proof is similar to the one of Lemma~\ref{lem:Psiepscont}: we use the explicit form of~$e^{tA^\e}$ given by~\eqref{semigroupAeps} in order to deduce that 
\begin{align*}
&\chi_\Omega(t)\|\mathcal{L}_\lambda^{\e,1}(t)h\|_{{\ell,j}} \\
&\le 
\chi_\Omega(t)\left\| {2 \over \e} \int_0^t e^{-\nu(v)\frac{t-t'}{\e^2}} \|\Gamma( \overline g^{\eps,\eta}-\delta^{\eps,\eta} ,h)(t')\|_{H^\ell_x} e^{-\lambda \int_{t'}^t \|\overline g^{\eps,\eta}(t'')\|^r_{\ell,k} \, dt''} \, d{t'} \right\|_{L^{\infty,j}_v}\, .
\end{align*}

\noindent  $\bullet $ $ $ {\it The case of $\T^d$, $d = 2,3$ and $\R^3$.} As in the proof of Lemma~\ref{lem:Psiepscont} (see~\eqref{eq:Psiepsellj}), we obtain that for all~$\lambda \geq 0$
\begin{align*}
&\|\mathcal{L}_\lambda^{\e,1}(t)h\|_{{\ell,j}} 
\lesssim \e  \|\Lambda^{-1}  \Gamma (\overline g^{\eps,\eta}-\delta^{\eps,\eta},h)  \|_{\mathcal{X}_T^{\ell,k} }\,.
\end{align*}
Using~\eqref{estimUkai1}, we   have that 
\begin{equation} \label{eq:Lambda-1Gamma}
\|\Lambda^{-1} \Gamma (\overline g^{\eps,\eta}-\delta^{\eps,\eta},h)  \|_{\mathcal{X}^{\ell,k}_T} \lesssim \|\overline g^{\eps,\eta}-\delta^{\eps,\eta}\|_{L^\infty([0,T], X^{\ell,k})} \|h\|_{\mathcal{X}^{\ell,k}_T}\,. 
\end{equation}
From this, we conclude that for any $0 \le j \le k$,
$$
\|\mathcal{L}_\lambda^{\e,1}(t)h\|_{\mathcal{X}_T^{\ell,j} } \le C\e \|\overline g^{\eps,\eta}-\delta^{\eps,\eta}\|_{L^\infty([0,T], X^{\ell,k})}  \|h\|_{\mathcal{X}_T^{\ell,k} }\,.
$$
Thanks  to Lemma~\ref{resultstildegeps}-\eqref{estimateLpXtildeg} and~(\ref{estimatedeltaepseta0}), recalling that~$\overline g^{\eps,\eta} =   g^{\eta}+\overline \delta^{\eps,\eta}$, we deduce that
\begin{align*}
\|\mathcal{L}_\lambda^{\e,1}(t)h\|_{\mathcal{X}_T^{\ell,j} } &\le C\e \big(\|{g^\eta } \|_{L^\infty([0,T], X^{\ell,k})} +C\big) \|h\|_{\mathcal{X}_T^{\ell,k} }\,.
\end{align*}

\noindent $\bullet $ $ $  {\it The case of $\R^2$.}
We have $\chi_\Omega(t) = \langle t \rangle^{\frac14} $. Exactly as in the proof of Lemma~\ref{lem:PsiepscontR2}, we can write the following bound for~$0\leq j \leq k$:
\begin{align*}
&\langle t \rangle^{\frac14} \|\mathcal{L}_\lambda^{\e,1}(t)h\|_{{\ell,j}} 
\lesssim \e  \|\Lambda^{-1}  \Gamma (\overline g^{\eps,\eta}-\delta^{\eps,\eta},h)  \|_{\mathcal{X}_T^{\ell,k} }\,.
\end{align*}
As previously, we conclude that for any $0 \le j \le k$,
\begin{align*}
\|\mathcal{L}_\lambda^{\e,1}(t)h\|_{\mathcal{X}_T^{\ell,j} } 
&\le C\e \big(\|{g^\eta} \|_{L^\infty([0,T], X^{\ell,k})}  +C\big) \|h\|_{\mathcal{X}_T^{\ell,k} }\,,
\end{align*}
this concludes the proof of Proposition~\ref{linear}. \qed

 \subsection{Proof of Proposition~\ref{nonlinear}}   The proof  of Proposition~\ref{nonlinear}
 is an immediate consequence of the computations leading to Lemma~\ref{lem:Psiepscont}, bounding the exponential~$e^{\lambda \int_0^t \|\overline g^{\eps,\eta}(t')\|_{\ell,k}^r\, dt' }
$ by a constant.  %
 \qed

\subsection{Proof of Proposition~\ref{limitdatawp}} \label{sec:Deps}
Let us write
$$
 \mathcal{D}_\lambda^{\e} :=e^{-\lambda \int_0^t \|\overline g^{\eps,\eta}(t')\|_{\ell,k}^r\, dt' \, 
} \sum_{j=1}^4 {\mathcal D} ^{\eps,j} \, , 
$$
with
$$
\begin{aligned}
 {\mathcal D} ^{\eps,1}(t) &  :=   \widetilde \delta^{\eps,\eta} + \big( U^\eps (t)   - U (t)\big){\bar g_{{\rm in}}^\eta} \, , \\
 {\mathcal D} ^{\eps,2}(t)  & :=    \big(\Psi^\eps (t) -  \Psi(t) \big)  (g^\eta   ,g^\eta    ) \, ,\\
 {\mathcal D} ^{\eps,3}(t) &  :=2 \Psi^\eps (t) \Big({g^\eta+\frac 12 \overline\delta^{\eps,\eta}}  , \overline\delta^{\eps,\eta}\Big)  \, , \\
 {\mathcal D} ^{\eps,4}(t) &  :=- 2\Psi^\eps (t) (\delta^{\e,\eta} , \overline\delta^{\eps,\eta}) +\Psi^\eps (t) 
 \big( \delta^{\eps,\eta} -2{g^\eta} , \delta^{\eps,\eta} 
 \big) 
\, .
\end{aligned}
$$ 
We shall prove that
$$
\forall \,j \in [1,4] \, , \quad \lim_{\eta\to0} \lim_{\eps\to0} \| {\mathcal D} ^{\eps,j}\|_{\mathcal{X}_T^{\ell,k}} = 0 \, , 
$$
uniformly in~$T$ if~$\bar g^\eta_{\rm{in}}$ generates a global solution. 

\medskip
The result on~$ {\mathcal D}^{\eps,1}$ 
follows from Lemma~\ref{lem:UepstoU0}.
Indeed, recalling that by definition
$$
\widetilde \delta^{\eps,\eta}= U^\eps(t) (g_{\rm in}^\eta - \bar g^\eta_{{\rm in}}) - \overline\delta^{\eps,\eta}
\quad \mbox{and}\quad \overline\delta^{\eps,\eta} = U^\eps_{\rm{disp}}(t) g^\eta_{{\rm in}} + U^{\eps \sharp}(t)g^\eta_{{\rm in}} $$
 we have
$$
 {\mathcal D} ^{\eps,1}(t) =  \big( U^\eps (t)   - U^\eps_{\rm{disp}}(t)  - U^{\eps \sharp}(t)- U (t)\big)  g^\eta_{{\rm in}}  + U (t) (g_{\rm in}^\eta - \bar g^\eta_{{\rm in}}) 
$$
and we conclude that
$$
\lim_{\eps\to0} \| {\mathcal D} ^{\eps,1}\|_{\mathcal{X}_\infty^{\ell,k}} = 0 
$$
 from the fact that
{$$
 U (t) (g_{\rm in}^\eta - \bar g^\eta_{{\rm in}})  = 0 \, ,
$$
which comes from~(\ref{formulaU(0}) and~(\ref{U(t)U(0}) and thanks to~(\ref{bargeta})-(\ref{bargetageta}) which imply that $\bar g^\eta_{{\rm in}} = U(0) g^\eta_{{\rm in}}$.} 

\medskip
Now let us concentrate on~$  {\mathcal D} ^{\eps,2}$, the control of which follows from the following lemma.
\begin{lem}\label{estimatePsiepslimitPsi0Xellk}
   Let~$\ell>d/2$ and $k>d/2+1$ be given and consider a function~$g$ solving the limit system on~$[0,T]$ with initial data in~$X^{\ell,k}$ then  
 $$
 \begin{aligned}
\lim_{\e \to 0}\| \Psi^\eps(t) (g,g)- \Psi(t) (g,g)\|_{{\mathcal X}_T^{\ell,k}} = 0\, ,
 \end{aligned}$$
 uniformly in~$T$ if~$g$ is a global solution.
\end{lem}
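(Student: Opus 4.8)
The plan is to compare $\Psi^\eps(t)(g,g)$ and $\Psi(t)(g,g)$ by writing their difference as a sum of two pieces: the difference of propagators acting on a fixed source term, plus terms arising from the approximation procedure. More precisely, since $g$ solves the limit system, $\Gamma(g(t'),g(t'))$ is a fixed function of $t'$ lying (after applying $\Lambda^{-1}$) in $\mathcal{X}^{\ell,k}_T$ with norm controlled by $\|g\|_{\mathcal{X}^{\ell,k}_T}^2$; recall from the previous section that $\Pi_L \Gamma(g,g)=0$, so $\Gamma(g,g)$ belongs to the range of $(I-\Pi_L)$. The first step is therefore to recall the explicit formulas
$$
\Psi^\eps(t)(g,g) = \frac1\eps\int_0^t U^\eps(t-t')\Gamma(g,g)(t')\,dt'\,, \qquad \Psi(t)(g,g) = \int_0^t U(t-t')\mathfrak{G}(t')\,dt'\,,
$$
where $\mathfrak{G}$ is the limiting source term (as described in Appendix~\ref{specBoltz}, Remark~\ref{defPsi}), and to subtract them.

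The key reduction is that it suffices to prove the statement for a dense subclass of source terms, since $\Psi^\eps$ is bilinear and bounded uniformly in $\eps$ (Lemma~\ref{lem:Psiepscont}) and the same holds for $\Psi$. So I would first approximate $g$ by a smooth, rapidly decaying (in $x$) function $g_\delta$ in $L^\infty([0,T],X^{\ell,k})$ — available since $g$ comes from the Navier-Stokes-Fourier flow, whose solutions can be approximated by smoother ones (Appendix~\ref{appendixNS}, Proposition~\ref{propNSF}) — reducing to the case where $\Gamma(g,g)(t')$ is as smooth and decaying as needed. For such source terms, the difference $\frac1\eps U^\eps(t-s) - U(t-s)$ applied to an element of $(I-\Pi_L)X^{\ell+1,k}$ can be estimated using the refined bounds of Lemma~\ref{lem:UepstoU0} (in particular \eqref{estimatesmoothdata} and \eqref{boundUeps-U0}) together with the decay estimates on $W^\eps(t)=\frac1\eps U^\eps(t)(I-\Pi_L)$ from Lemma~\ref{lem:decayWeps}; the point is that on the kernel-complement the factor $\frac1\eps$ is absorbed, and the dispersive/heat decay makes the time integral converge, uniformly in $T$.

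Concretely, the strategy is: (i) split $\Psi^\eps(t)(g,g)-\Psi(t)(g,g)$ as in the proof of Lemma~\ref{lem:Psiepscont} into a $\Psi^{\eps,1}$-type piece (the $e^{tA^\eps}$ contribution, which carries a gain of $\eps$ by \eqref{eq:Psiepsellj}) and a $\mathcal{Y}^\ell_T$ piece, and do the analogous splitting for $\Psi$ and for the cross terms; (ii) on the $\Psi^{\eps,1}$ piece use the explicit semigroup $e^{tA^\eps}$ from \eqref{semigroupAeps} to get an $O(\eps)$ bound times $\|\Lambda^{-1}\Gamma(g,g)\|_{\mathcal{X}^{\ell,k}_T}$, which tends to $0$; (iii) on the remaining piece write $\frac1\eps U^\eps - U = W^\eps\cdot(I-\Pi_L)-U$ restricted appropriately and use Fourier-side estimates: the kernel-complement part has symbol $\widehat{W^\eps}(t,\xi)$ bounded by $|\xi|e^{-\beta|\xi|^2 t}+\frac1\eps e^{-\alpha t/\eps^2}$ by \eqref{finalboundWeps}, and the difference with the limiting heat-type symbol is $O(\eps|\xi|)$ pointwise on $\{|\xi|\le\kappa/\eps\}$ by the expansion $\mu_j^\eps(\xi)=i\alpha_j|\xi|/\eps-|\xi|^2(\beta_j+O(\eps|\xi|))$ from \eqref{decUepsEP}, plus an exponentially small high-frequency remainder; integrating in $t'$ against the smooth decaying source and using dominated convergence then gives a bound $C_\delta\,\eps^{1/2}+(\text{approximation error in }g_\delta)$, and letting first $\eps\to0$ then refining $g_\delta$ yields the claim, with all bounds uniform in $T$ when $g$ is global because the relevant time kernels ($\tilde\chi_\Omega$, $t^{-1/2}\langle t\rangle^{-d/4}$, $e^{-\alpha t/\eps^2}$, etc.) are integrable on $\R^+$.

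The main obstacle I anticipate is handling the low-frequency regime $|\xi|\lesssim 1$ in the whole-space cases $\R^2$ and $\R^3$ while keeping the estimate uniform in $T$: there the oscillatory factor $e^{i\alpha_j t|\xi|/\eps}$ does not decay and the heat factor $e^{-\beta|\xi|^2 t}$ decays only slowly, so one cannot simply bound $|\widehat{U^\eps}-\widehat U|$ by $\eps|\xi|$ and integrate — instead one must exploit either the extra $L^1_x$ integrability of the data (giving the $\langle t\rangle^{-d/4}$ gain, as in Lemma~\ref{lem:decayWeps}) together with the time weight $\chi_\Omega(t)$ built into $\mathcal{X}^{\ell,k}_T$, or a stationary-phase/dispersive argument on the wave-type propagator as in the proof of Lemma~\ref{resultstildegeps}. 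Carefully combining the $\eps$-gain from \eqref{estimatesmoothdata}, the time-decay from the heat flow, and the convolution estimates of the type already carried out in Lemmas~\ref{lem:Psiepscont}–\ref{lem:PsiepscontR2} is where the technical work lies; but since every ingredient has already been established, assembling them is routine once the splitting is set up correctly.
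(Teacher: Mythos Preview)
Your reduction to the $\Psi^{\eps,1}$-piece and a $\mathcal Y^\ell_T$-piece is reasonable, and your treatment of the remainder terms $\Psi^{\eps\sharp}_{j0}$, $\Psi^\eps_{j1}$, $\Psi^\eps_{j2}$, $\Psi^{\eps\sharp}$ via the pointwise symbol bounds $O(\eps|\xi|)$ is exactly what the paper does. But there is a genuine gap in how you propose to handle the oscillatory modes $\Psi^\eps_{j0}$ for $j=1,2$. For these, the Fourier symbol contains the factor $e^{i\alpha_j|\xi|(t-t')/\eps}$ with $\alpha_j\neq 0$, and there is \emph{no} limiting heat-type symbol to compare to: these terms are not close to anything in the decomposition of $\Psi$, they simply have to be shown to vanish. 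Neither of the mechanisms you suggest works here. The $L^1_x$ integrability and the $\langle t\rangle^{-d/4}$ heat decay give uniform-in-$\eps$ bounds, not smallness in $\eps$; and the dispersive estimate from Lemma~\ref{resultstildegeps} lives in $L^{\infty,k}_v W^{\ell,\infty}_x$, not in the $H^\ell_x$-based norm of $\mathcal X^{\ell,k}_T$, so it cannot close the estimate you need. Dominated convergence also fails: the integrand in $\Psi^\eps_{j0}$ does not converge pointwise in $t'$.

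The missing idea, and the heart of the paper's proof, is an integration by parts in $t'$ against the oscillatory phase: writing $e^{i\alpha_j|\xi|(t-t')/\eps} = \dfrac{\eps}{-i\alpha_j|\xi|}\,\partial_{t'}\big(e^{i\alpha_j|\xi|(t-t')/\eps}\big)$ gains a factor $\eps/|\xi|$ (which cancels the $|\xi|$ already present in $\Psi^\eps_{j0}$) at the price of a time derivative on the source, producing $\partial_{t'}\Gamma(g,g)$ together with two boundary terms. This is why the paper needs the estimates on $\gamma_2(t,\xi):=\|\partial_t\widehat\Gamma(g,g)(t,\xi)\|_{L^2_v}$ developed in Lemmas~\ref{lem:gamma&tildegamma}--\ref{lem:gamma&tildegammaR2}, which in turn require using the Navier-Stokes-Fourier equation on $g$ to bound $\partial_t g_p$ by $\Delta g_p$ and $\mathbb P(g_p\cdot\nabla g_r)$. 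Your density argument in $g$ cannot substitute for this: even for smooth $g_\delta$, the oscillation does not go away without integrating by parts in time.
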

\noindent {\it Proof of Lemma~{\rm\ref{estimatePsiepslimitPsi0Xellk}}.}
A large part of the proof is dedicated to the case~$\Omega = \R^2$ which is   the most intricate one. Then, we conclude the proof by describing the slight changes that need to be made to address the other cases. 

\noindent $\bullet $ $ $ {\it The case of $\R^2$.} We recall that {$\chi_\Omega(t) = \langle t \rangle^{\frac14} $}. 
We start with the decomposition~(\ref{decompositionPsiappendix}) and we deal with each term  in succession, the most delicate one being~$\Psi^\eps_{j0}$ for~$j\in\{1,2\}$. So let us set~$j\in\{1,2\}$.  Defining
$$
H_j(t,t',x):=  \mathcal{F}_x^{-1} \left(e^{- \beta_j (t-t') |\xi|^2} |\xi|P_{j}^1\Big({\xi \over |\xi|}\Big) \widehat {\Gamma}(g,g)(t',\xi)\right)
$$
an integration by parts in time provides (recalling that~$\alpha_j>0$)
\begin{equation}\label{decomposepsij0}
\begin{aligned}
\mathcal{F}_x \left( \Psi^\eps_{j0}(t)(g,g)\right)(\xi) =\frac{\eps}{i \alpha_j|\xi|} &\Big(\int_0^t e^{i\alpha_j |\xi|{{t-t'}\over \eps}}\partial_{t'} \widehat H_j(t,{t'},\xi)\,d{t'}\\
&\quad
- \widehat H_j(t,t,\xi)+ e^{i\alpha_j |\xi| {t \over \eps}}  \widehat H_j(t,0,\xi)
\Big)\,.
\end{aligned}
\end{equation}
The first term on the right-hand side may be split into two parts:
$$
\frac{\eps}{i \alpha_j|\xi|}  \int_0^t e^{i\alpha_j |\xi|\frac{t-t'}{\eps}}\partial_{t'}  \widehat H_j(t,{t'},\xi)\,d{t'}= \widehat H^1_j(t,\xi)+ \widehat H^2_j(t,\xi)
$$
with
$$
\begin{aligned}
H^1_j(t,x)&:= \mathcal{F}_x^{-1} \left(\frac{\eps\beta_j}{i \alpha_j } \int_0^t e^{i\alpha_j |\xi|\frac{t-t'}{\eps}} |\xi|\widehat H_j(t,{t'},\xi)\, d{t'}\right)
\quad \mbox{and}\\
H^2_j(t,x)&:= \mathcal{F}_x^{-1} \left(\frac{\eps }{i \alpha_j } \int_0^t e^{i\alpha_j |\xi|\frac{t-t'}{\eps} - \beta_j (t-{t'}) |\xi|^2}P_{j}^1\Big({\xi \over |\xi|}\Big)\partial_{t'}\widehat {\Gamma}(g,g)({t'},\xi)
\, d{t'}\right)\,.
\end{aligned}
$$
Since~$P_{j}^1(\xi/|\xi|)$  is bounded from $L^2_v$ into $L^{\infty,k}_v$ uniformly in~$\xi$ from Lemma~\ref{EllisPinsky}, we have 
\begin{align*}
\|H^1_j(t)\|_{\ell,k}^2&\lesssim\eps^2 \int_{\R^2}  \langle \xi \rangle^{2\ell} \left( \int_0^t e^{- \beta_j (t-{t'}) |\xi|^2}|\xi|^2 \big\| \widehat {\Gamma}(g,g)({t'},\xi)\big\|_{L^2_v}\,d{t'}\right)^2 \, d\xi
\,.
\end{align*}
Using now Young's inequality in time, we infer that for all~$t\lesssim 1$
\begin{align*}
\|H^1_j(t)\|^2_{\ell,k} 
&\lesssim \eps^2  \int_{\R^2} \langle \xi \rangle^{2\ell} \big\| \widehat {\Gamma}(g,g)({t},\xi)\big\|_{L^\infty_{t}L^2_v} ^2  \, d\xi  \lesssim \eps^2
\end{align*}
from Lemma~\ref{lem:gamma&tildegamma}-$(i)$. 
For the case $t \gtrsim 1$, we write 
\begin{align*}
&{t^\frac12}\|H^1_j(t)\|^2_{\ell,k} \\
&\quad \lesssim \eps^2 \int_{\R^2}  \langle \xi \rangle^{2\ell} \left(\int_0^{t/2} {(t-{t'})^{\frac14} |\xi|^\frac12} e^{-\beta_j|\xi|^2 \frac{t-t'}{2}}  e^{-\beta_j|\xi|^2 \frac{t-t'}{2}} {|\xi|^\frac12}|\xi|  {\gamma_1}({t'},\xi) \, d{t'} \right)^2  d\xi\\
&\qquad + \eps^2 \int_{\R^2} \langle \xi \rangle^{2 \ell}\left(\int_{t/2}^{t} e^{-\beta_j(t-{t'})|\xi|^2} |\xi|^2 {{t'}^\frac14}  {\gamma_1}({t'},\xi) \, d{t'} \right)^2  d\xi
\, , \end{align*}
where to simplify notation we have set
$$
\gamma_1(t,\xi):=\big\| \widehat {\Gamma}(g,g)({t},\xi)\big\|_{L^2_v}\, .
$$
Then
we use Young's inequality in time as well as the fact that $t^{\frac14} |\xi| ^{\frac12} e^{- \beta_j t |\xi|^2}$ is uniformly bounded to obtain
\begin{align*}
&\int_{\R^2}  \langle \xi \rangle^{2\ell} \left(\int_0^{t/2} {(t-{t'})^{\frac14} |\xi|^\frac12} e^{-\beta_j|\xi|^2 \frac{t-t'}{2}}  e^{-\beta_j|\xi|^2 \frac{t-t'}{2}} {|\xi|^\frac12}|\xi|  {\gamma_1}({t'},\xi) \, d{t'}\right)^2 d\xi\\
&\quad \lesssim  \int_{\R^2} \langle \xi \rangle^{2\ell} \big\|  |\xi| {\gamma_1}(t, \xi)\big\|_{L^\frac43_t}^2 \, d\xi\\
&\quad \lesssim 
{\big\| |\xi| \gamma_1(t,\xi) \big\|_{L^\frac43_t L^2_\xi(\langle \xi \rangle^\ell)}^2}
\end{align*}
by Minkowski's inequality. Similarly
$$  \int_{\R^2} \langle \xi \rangle^{2 \ell}\left(\int_{t/2}^{t}  |\xi|  e^{-\beta_j(t-{t'})|\xi|^2} {{t'}^\frac14}   |\xi| {\gamma_1}({t'},\xi) \, d{t'} \right)^2  d\xi   \lesssim   { \big\|{t^{\frac14}} |\xi| \gamma_1(t,\xi)\big\|^2_{L^2_t L^2_\xi(\langle \xi \rangle^\ell)} }\, ,
$$
from which we conclude, using Lemma~\ref{lem:gamma&tildegammaR2}-$(i)$, that 
$$
{t^{\frac14}} \|H^1_j(t)\|_{\ell,k} \lesssim\eps\,  .
$$
Concerning $H^2_j(t)$, again since~$P_{j}^1(\xi/|\xi|)$  is bounded from $L^2_v$ into $L^{\infty,k}_v$ uniformly in~$\xi$, there holds
 $$
\begin{aligned}
\|H^2_j(t)\|^2_{\ell,k} &\lesssim\eps^2 \int_{\R^2}  \langle \xi \rangle^{2\ell} \left( \int_0^t e^{- \beta_j (t-{t'}) |\xi|^2} \big\| \partial_{t'} \widehat {\Gamma}(g,g)({t'},\xi)\big\|_{L^2_v}\,d{t'} \right)^2 d\xi 
\, .
\end{aligned}
$$
For $t \lesssim 1$, we separate low and high frequencies. We use  again Young's inequality in time: defining for simplicity
$$
{\gamma_2}({t},\xi) :=  \|  \partial_{t}  \widehat  {\Gamma}(g,g)(t,\xi) \|_{L^2_v} \, ,
$$
there holds
\begin{align*}
\|H^2_j(t)\|^2_{\ell,k} 
&\lesssim \eps^2  \int_{|\xi| \le 1} \left( \int_0^t |\xi|^\frac12  e^{- \beta_j (t-{t'}) |\xi|^2} |\xi|^{-\frac12}  \big\| \partial_{t'} \widehat {\Gamma}(g,g)({t'},\xi)\big\|_{L^2_v} \, d{t'} \right)^2 d\xi \\
&\quad + \eps^2 \int_{|\xi| \ge 1} \langle \xi \rangle^{2\ell} \left( \int_0^t |\xi| e^{- \beta_j (t-{t'}) |\xi|^2} |\xi|^{-1}   \big\| \partial_{t'} \widehat {\Gamma}(g,g)({t'},\xi)\big\|_{L^2_v} \, d{t'} \right)^2  d\xi \\
&\lesssim \eps^2 {\Big(\big\|   |\xi|^{-\frac12}\gamma_2(t,\xi) \big\|_{L^2_\xi L^\frac43_t} ^2 +\big\| \gamma_2(t,\xi)\big\|^2_{L^2_tL^2_\xi(\langle \xi \rangle^{\ell-1})} \Big)} \, .
\end{align*}
From Minkowski's inequality followed by the Sobolev embedding~$L^\frac43(\R^2)\subset \dot H^{-\frac12}(\R^2)$, we have 
$$
\begin{aligned}
  \big\| |\xi|^{-\frac12}   \gamma_2(t,\xi) \big\|_{L^2_\xi L^\frac43_t} & \lesssim \big\|  |\xi|^{-\frac12}\gamma_2(t,\xi)\big\|_{L^\frac43_tL^2_{\xi} }\\
   & \lesssim \big\| \mathcal F_x^{-1}(\gamma_2)\|_{L^\frac43_{t,x}}
\end{aligned}
$$
which is bounded from Lemma~\ref{lem:gamma&tildegammaR2}-$(ii)$,  as well as {$\big\|        \gamma_2(t,\xi)\big\|_{L^2_tL^2_\xi(\langle \xi \rangle^{\ell-1})} $} from Lemma~\ref{lem:gamma&tildegamma}-$(ii)$.
We now focus on the case~$t \gtrsim 1$, separating again the integral into low and high frequencies and separating times in $(0,t/2)$ and in $(t/2,t)$: 
there holds
\begin{align*}
&{t^\frac12}\|H^2_j(t)\|^2_{\ell,k}  \\
&\quad \lesssim \eps^2 {t^\frac12}\int_{\R^2} \left(\int_0^t e^{-\beta_j(t-{t'})|\xi|^2}{\gamma_2}({t'},\xi) \, d{t'} \right)^2  d\xi \\
& \quad \lesssim \eps^2 \int_{|\xi|\le 1}  \left(\int_0^{t/2} {(t-{t'})^{\frac14} |\xi|^\frac12} e^{-\beta_j(t-{t'})|\xi|^2} {|\xi|^{-\frac12}}{\gamma_2}({t'},\xi)  \, d{t'} \right)^2  d\xi \\
&\qquad + \eps^2 \int_{|\xi| \ge 1} \langle \xi \rangle^{2\ell} \left(\int_0^{t/2} (t-{t'})^{\frac14} e^{-\beta_j\frac{t-t'}{2}} e^{-\beta_j|\xi|^2 \frac{t-t'}{2}} |\xi| |\xi|^{-1} {\gamma_2}({t'},\xi)   \, d{t'} \right)^2  d\xi \\
&\qquad + \eps^2 \int_{|\xi|\le 1} \left(\int_{t/2}^{t} e^{-\beta_j(t-{t'})|\xi|^2} {|\xi|^{\frac12} |\xi|^{-\frac12}{t'}^{\frac14}}  {\gamma_2}({t'},\xi) \, d{t'} \right)^2  d\xi \\
&\qquad + \eps^2 \int_{|\xi| \ge 1}\langle \xi \rangle^{2\ell}  \left(\int_{t/2}^{t} e^{-\beta_j(t-{t'})|\xi|^2} |\xi| |\xi|^{-1} {t'}^{\frac14} {\gamma_2}({t'},\xi) \, d{t'} \right)^2 d\xi =: I_1+I_2+I_3+I_4. 
\end{align*}
For $I_1$, we introduce $1/2<b\le3/4$. From the Cauchy-Schwarz inequality in time and using the fact that $t^{\frac14} |\xi| ^{\frac12} e^{- \beta_j t |\xi|^2}$ is uniformly bounded, we get: 
$$
I_1 \lesssim \eps^2 \|\langle t \rangle^{b} {\mathcal F}_x^{-1} (\gamma_2) \|^2_{L^2_t \dot H^{-\frac12}_x} \lesssim \eps^2 \|\langle t \rangle^{b}  {\mathcal F}_x^{-1} (\gamma_2) \|^2_{L^2_t L^{\frac43}_x}
$$ 
using again the Sobolev embedding~$L^\frac43(\R^2)\hookrightarrow \dot H^{-\frac12}(\R^2)$. We deduce that $I_1 \lesssim \eps^2$ from Lemma~\ref{lem:gamma&tildegammaR2}-$(ii)$. 
The second term can be bounded as follows using Young's inequality in time, as well as the fact that~$t^{\frac14}  e^{- \beta_j t }$ is uniformly bounded:
$$
I_2 \lesssim \eps^2 \int_0^\infty \int_{\R^2}  \langle \xi \rangle^{2(\ell-1)} \gamma_2^2(t',\xi) \, d\xi \, dt'
\lesssim \eps^2{ \big \| \gamma_2(t,\xi) \big\|^2_{L^2_t L^2_\xi(\langle \xi \rangle^{\ell-1})}}
$$
and thus $I_2 \lesssim \eps^2$ from Lemma~\ref{lem:gamma&tildegammaR2}-$(ii)$. For $I_3$, we use first Young's inequality in time to get
$$
I_3 \lesssim \eps^2 \int_{\R^2} \Big\| |\xi|^{-\frac12} t'^{\frac14}  {\gamma_2}(t',\xi) \Big\|_{L^{\frac43}_{t'}}^2 d\xi \, .
$$
Then, again from Minkowski's inequality and the Sobolev embedding~$L^\frac43(\R^2)\hookrightarrow \dot H^{-\frac12}(\R^2)$, we obtain 
$$
I_3 \lesssim \eps^2 \bigg(\int_0^\infty t^{\frac13}{\big \| \mathcal{F}_x^{-1}(\gamma_2) (t,x)\big \|_{\dot H^{-\frac12}_x}^{\frac43}} \, dt \bigg)^\frac32\lesssim \eps^2 {\big\| t^{\frac14} \mathcal{F}_x^{-1}(\gamma_2)(t,x)\big\|^2_{L^{\frac43}_{t,x}}}
$$
so that $I_3 \lesssim \eps^2$ still from Lemma~\ref{lem:gamma&tildegammaR2}-$(ii)$. For the last term $I_4$, we use Young's inequality in time to obtain:
$$
I_4\lesssim \eps^2 \int_0^\infty \int_{\R^2}  \langle \xi \rangle^{2(\ell-1)} \langle t' \rangle^\frac12 \gamma_2^2(t',\xi) \, d\xi \, dt'
\lesssim \eps^2{\big  \|\langle t \rangle^{\frac14}\gamma_2(t,\xi) \big \|_{L^2_t L^2_\xi(\langle \xi \rangle^{\ell-1})}}
$$
and thus $I_4 \lesssim \eps^2$ from Lemma~\ref{lem:gamma&tildegammaR2}-$(ii)$.

Now let us turn to the two other contributions in~(\ref{decomposepsij0}). There holds
$$
\begin{aligned}
& \frac{\eps}{| \alpha_j|\, |\xi|}   \Big( |\widehat H_j(t,t,\xi)|+ \big|e^{i \alpha_j|\xi|{t \over \eps}} \widehat H_j(t,0,\xi) \big| \Big) \\
&\quad \lesssim 
 \eps  \Big| P_{j}^1\Big({\xi \over |\xi|}\Big) \widehat {\Gamma}(g,g)(t,\xi) \Big| +\eps \, e^{- \beta_j t |\xi|^2}    \Big| P_{j}^1\Big({\xi \over |\xi|}\Big)  \widehat {\Gamma}(g,g)(0,\xi) \Big| \\
 &\quad \lesssim \eps  \left(\Big| P_{j}^1\Big({\xi \over |\xi|}\Big) \widehat {\Gamma}(g,g)(t,\xi) \Big| +  e^{- \beta_j t |\xi|^2} \Big| P_{j}^1\Big({\xi \over |\xi|}\Big) \widehat {\Gamma}(g,g)(0,\xi) \Big| \right)\\
 & \quad =: \eps \left( \big|\widehat {\tilde H_j} (t,\xi)\big| + e^{- \beta_j t |\xi|^2} \big|\widehat {\tilde H_j}(0,\xi)\big|\right) \,. 
 \end{aligned}
$$
We have for any $t \lesssim 1$, still using that~$P_{j}^1(\xi/|\xi|)$  is bounded from $L^2_v$ into $L^{\infty,k}_v$:
\begin{align*}
\|   {\tilde H_j}  (t) \|^2_{\ell,k} 
&\lesssim \sup_v \, \langle v \rangle^{2k} \int_{\R^2}  \langle \xi \rangle^{2\ell} \Big| P_{j}^1\Big({\xi \over |\xi|}\Big) \widehat {\Gamma}(g,g)(t,\xi) \Big|^2  d\xi \\
&\lesssim \int_{\R^2}  \langle \xi \rangle^{2\ell} \Big\| P_{j}^1\Big({\xi \over |\xi|}\Big) \widehat {\Gamma}(g,g)(t,\xi) \Big\|^2_{L^{\infty,k}_v} \, d\xi \\
&\lesssim \|\gamma_1(t,\xi)\|_{L^\infty_t L^2_\xi(\langle \xi \rangle^\ell)} \, , 
\end{align*}
and this quantity in uniformly bounded in time from Lemma~\ref{lem:gamma&tildegamma}-$(i)$. In the case when~$t \gtrsim 1$, we simply write
$$
t^{\frac12} \|\tilde H_j(t ) \|^2_{\ell,k} \lesssim t^{\frac12} {\|\gamma_1(t,\xi) \|_{L^2_\xi(\langle \xi \rangle^\ell)}^2}
$$
which is uniformly bounded in time thanks to Lemma~\ref{lem:gamma&tildegammaR2}-$(iii)$. Then, similarly, we write that 
$$
t^{\frac12}\|e^{- \beta_j t |\xi|^2}\tilde H_j(0) \|^2_{\ell,k} \lesssim t^{\frac12}  \int_{\R^2} e^{-2\beta_j t |\xi|^2} \langle \xi \rangle^{2\ell} \gamma_1^2(0,\xi) \, d\xi \, .
$$
Using the fact that $t^{\frac12} |\xi| e^{- 2\beta_j t |\xi|^2}$ is uniformly bounded, we obtain
\begin{align*}
t^{\frac12}\|e^{- \beta_j t |\xi|^2}\tilde H_j(0) \|^2_{\ell,k} &\lesssim \int_{\R^2} |\xi|^{-1} \langle \xi \rangle^{2\ell}  \gamma_1^2(0,\xi) \, d\xi \\
&\lesssim {\big\| \mathcal{F}_x^{-1}(\gamma_1)(0,x) \big\|_{\dot H^{-\frac12}_x} ^2+ \big\| \gamma_1(0,\xi)\big\|_{L^2_\xi(\langle \xi \rangle^\ell)}^2}
\end{align*}
from which we deduce, using again the Sobolev embedding~$L^\frac43(\R^2)\hookrightarrow \dot H^{-\frac12}(\R^2)$, that
$$
t^{\frac12}\|e^{- \beta_j t |\xi|^2}\tilde H_j(0,\xi) \|^2_{\ell,k} \lesssim{\big \|\mathcal{F}_x^{-1}(\gamma_1)(t,x)\big\|_{L^\infty_t L^{\frac43}_x}^2 +\big \| \gamma_1(t,\xi)\big\|_{L^\infty_t L^2_\xi(\langle \xi \rangle^\ell) }^2}\, .
$$
The last inequality yields the expected result thanks to Lemma~\ref{lem:gamma&tildegammaR2}-$(iii)$.

The terms~$ \Psi^{\eps\sharp}_{j0}$, $ \Psi^\eps_{j1}$ and~ $ \Psi^\eps_{j2}$ for $1 \le j \le 4$ are dealt with in a similar, though easier way. Indeed for~$ \Psi^{\eps\sharp}_{j0}$ we simply notice that thanks to~(\ref{estimatechi})
$$
\begin{aligned}
\big |\widehat\Psi^{\eps\sharp}_{j0}(t)(g,g)(\xi)\big| & \lesssim\Big|\chi\Big (\frac{\eps|\xi|}\kappa\Big)-1\Big| \int_0^te^{- \beta_j (t-t') |\xi|^2}|\xi|   \Big| P_{j}^1\Big({\xi \over |\xi|}\Big) \widehat  {\Gamma}(g,g)(t',\xi)  \Big| \, dt'\\
 & \lesssim \eps  \int_0^te^{- \beta_j (t-t') |\xi|^2}|\xi| ^2  \Big| P_{j}^1\Big({\xi \over |\xi|}\Big) \widehat  {\Gamma}(g,g)(t',\xi)  \Big| \, dt'
 \end{aligned}
$$
and the same estimates as above (see the term $H^1_j$) provide
$$
\big\| \Psi^{\eps\sharp}_{j0}(t)(g,g)\big\|_{\mathcal{X}_\infty^{\ell,k}}\lesssim  \eps\,.
$$
The term~$\Psi^\eps_{j1}$ is directly estimated by~(\ref{estimategamma2}):
$$
\big |\widehat\Psi^{\eps}_{j1}(t)(g,g)(\xi)\big|   \lesssim \eps  \int_0^te^{- \beta_j |\xi|^2{{t-t'} \over 4}}|\xi| ^2  \Big| P_{j}^1\Big({\xi \over |\xi|}\Big) \widehat  {\Gamma}(g,g)(t',\xi)  \Big| \, dt'
$$
so again
$$
\big\| \Psi^{\eps}_{j1}(t)(g,g)\big\|_{\mathcal{X}_\infty^{\ell,k}}\lesssim  \eps\, .
$$
Finally~$\Psi^\eps_{j2}$ is controlled in the same way thanks to the fact that~$P_{j}^2(\xi)$ is bounded from $L^2_v$ into $L^{\infty,k}_v$ uniformly in $|\xi| \le \kappa$ so there also holds
$$
\big\| \Psi^{\eps}_{j2}(t)(g,g)\big\|_{\mathcal{X}_\infty^{\ell,k}}\lesssim \eps\, .
$$
To end the proof of the proposition it remains to estimate~$ \Psi^{\eps \sharp}(t)(g,g)$ but this again is an easy matter. 
Indeed, {Lemma~\ref{lem:gamma&tildegamma}-$(i)$} and estimate~\eqref{decayUepssharp} imply that 
$$
\big\| \Psi^{\eps\sharp}(t)(g,g)\big\|_{\mathcal{X}_\infty^{\ell,k}} \lesssim \eps\, .
$$

\smallskip
\noindent  $\bullet $ $ $ {\it The cases of $\R^3$ and $\T^3$.} We recall that in those cases, $\chi_\Omega(t)=1$. All the terms can be treated using the same estimate as in the $\R^2$ case for $t \lesssim 1$ --- note that the Sobolev embedding~$L^\frac43(\R^2)\hookrightarrow \dot H^{-\frac12}(\R^2)$ must be replaced by the use of Lemma~\ref{lem:gamma&tildegamma}-$(ii)$.

\smallskip
\noindent  $\bullet $ $ $ {\it The case of $\T^2$.} We also have $\chi_\Omega(t)=1$ here. As previously, the terms $ \Psi^{\eps\sharp}_{j0}$, $ \Psi^\eps_{j1}$ and~ $ \Psi^\eps_{j2}$ for~$1 \le j \le 4$ can be treated exactly in the same way as for the small times case~$t \lesssim 1$ in the~$\R^2$ case. Concerning $\Psi^\eps_{j0}$ for~$j\in\{1,2\}$, the term $H^1_j$ can still be handled using the same estimate as in the $\R^2$ case for~$t \lesssim 1$ as well as the remaining terms $\tilde{H_j}(t)$ and $\tilde H_j(0)$. The only difference lies in the treatment of $H^2_j$ due to the special case of~$\xi=0$. Using Young's  inequality in time for the non-zero frequencies, we have 
\begin{align*}
&\|H^2_j(t)\|^2_{\ell,k} \\
&\quad \lesssim \eps^2 \sum_{\xi \in \Z^d} \langle \xi \rangle^{2\ell} \left( \int_0^t e^{-\b_j(t-t')|\xi|^2} {\gamma_2}({t'},\xi)\, dt' \right)^2 \\
&\quad \lesssim \eps^2 \left( \int_0^t {\gamma_2}({t'},0)\, dt' \right)^2 + \eps^2 \sum_{\xi \in \Z^d \setminus \{0\}} \langle \xi \rangle^{2\ell} \left( \int_0^t e^{-\b_j(t-t')|\xi|^2} |\xi| |\xi|^{-1} {\gamma_2}({t'},\xi)\, dt' \right)^2 \\
&\quad \lesssim \eps^2 \left( \int_0^t {\gamma_2}({t'},0)\, dt' \right)^2 + \eps^2 \sum_{\xi \in \Z^d \setminus \{0\}} \langle \xi \rangle^{2(\ell-1)}  \int_0^t   \gamma_2^2({t'},\xi)\, dt'  \, .
\end{align*}
For the first term in the right-hand side, we notice that 
$$
| {\gamma_2}({t'},0)| \lesssim 
{\|\mathcal{F}_x^{-1}(\gamma_2)(t)\|_{L^1_x}}
$$
which implies
$$
\eps^2  \left( \int_0^t {\gamma_2}({t'},0)\, dt' \right)^2\lesssim \eps^2
$$
from Lemma~\ref{lem:gamma&tildegammaT2}. 
The second term is also bounded by $\eps^2$ thanks to Lemma~\ref{lem:gamma&tildegamma}-$(ii)$, and this concludes the proof of Lemma~\ref{estimatePsiepslimitPsi0Xellk}.
\qed

\medskip

{Now let us turn to~$  {\mathcal D} ^{\eps,3}$. It vanishes in the well-prepared case and we control it thanks to Lemma~\ref{lem:Psiepsbardelta} in the ill-prepared case. The latter implies that for all~$\eta\in(0,1)$ 
$$
\lim_{\eps\to 0}  \Big\|\Psi^\eps(t)\Big(g^\eta + {1 \over 2} \overline\delta^{\eps,\eta},\overline\delta^{\eps,\eta}\Big)\Big\|_{\mathcal{X}^{\ell,k}_{T}} =0 \, . 
$$
Indeed, in both the $\R^2$ and $\R^3$ cases, we have that $g^\eta \in \mathcal{X}^{\ell,k}_T$ from Proposition~\ref{propNSF} and also that~$\lim_{\e\to0} \|\overline\delta^{\eps,\eta}\|_{\mathcal{X}^{\ell,k}_\infty} \le C_\eta$ from Lemma~\ref{resultstildegeps}. 

\smallskip
 Finally, in the cases $\Omega=\T^d$, $d=2,3$ and $\Omega=\R^3$ (where $\chi_\Omega(t)=1$),~$  {\mathcal D} ^{\eps,4}$ is very easily estimated thanks to the continuity bounds provided in Lemma~\ref{lem:Psiepscont}, along with~(\ref{estimatedeltaepseta0}),~\eqref{estimateLpXtildeg} and the fact that $g^\eta$ is uniformly bounded in time in $X^{\ell,k}$. Concerning the case $\Omega=\R^2$, we have to be more careful since $\delta^{\eps,\eta}$ is not bounded in $\mathcal{X}^{\ell,k}_\infty$. The result is a consequence of Lemma~\ref{lem:PsiepscontR2} combined with~\eqref{estimatebardeltaepseta0},~\eqref{estimatedeltaepseta0} and~\eqref{estimateLpXtildeg}.
}

\medskip

\noindent
 This ends the proof of Proposition~\ref{limitdatawp}. \qed

 \appendix
\section{Spectral decomposition for the linearized Boltzmann operator}\label{specBoltz}
 In this section we present a crucial spectral decomposition result  for the semigroup~$ U^\e(t)$ associated with the operator 
$$
  B^\e:={1 \over \e^2}(-\e v  \cdot \nabla_x +L)\, ,
$$
recalling that
$$ Lg  =M^{-\frac12}\big(
Q(M,M^\frac12 g) + Q(M^\frac12 g,M)\big)\,.
$$This theory is a key point to study the limits of $U^\eps(t)$ and $\Psi^\eps(t)$ as $\eps$ goes to~$0$.
We start by recalling a result from~\cite{Ellis-Pinsky} in which a Fourier analysis in $x$ on the semigroup~$U^1$ is carried out.  Roughly speaking, this result shows that the spectrum of the whole linearized operator can be seen as a perturbation of the homogeneous one.

Denoting $\mathcal{F}_x$ the Fourier transform in $x \in \R^d$ (resp. $x \in \T^d$) with $\xi \in \R^d$ (resp. $\xi \in \Z^d$) its dual variable, we write 
$$
U^\e(t) = \mathcal{F}_x^{-1} \widehat U^\e(t ) \mathcal{F}_x
$$
where $\widehat U^\e$ is the semigroup associated with the operator 
$$
\widehat B^\e:={1 \over \e^2}(-i\e \xi \cdot v +\widehat L)\, . 
$$
 In the following we denote by~$\chi$ a fixed, compactly supported function of the interval~$(-1,1)$, equal to one on~$[-\frac12 , \frac12]$.
 \begin{lem}[\cite{Ellis-Pinsky}] \label{EllisPinsky}
Let~$\ell\in \R$ and~$k>d/2+1$ be given.  
There exists $\kappa>0$ such that one can write
 \begin{equation} \label{decompUeps}\begin{aligned}
  U^\eps(t) &=\sum_{j=1}^4   U^\eps_j(t)  +   U^{\eps\sharp}(t)  \\
& \text{with} \quad \widehat U_j^\e(t,\xi): = \widehat U _j\Big({t \over \e^2},\e \xi\Big) \quad \text{and}\quad \widehat U^{\eps\sharp}(t,\xi):= \widehat U^{\sharp}\Big({t \over \e^2},\e \xi\Big) \, ,
\end{aligned}
\end{equation}
where for $1 \le j \le 4$,  
$$
\widehat U _j(t,\xi) := \chi\Big(\frac{|\xi|}\kappa\Big) \, e^{t \lambda_j(\xi)} P_j(\xi)
$$
with $\lambda_j \in \mathcal{C}^\infty( B(0,\kappa))$ satisfying
\begin{equation} \label{estimatelambdaj}
\begin{aligned}
\lambda_j(\xi) = i \alpha_j |\xi| - \beta_j|\xi|^2 + \gamma_j(|\xi|) \quad \text{as} \quad |\xi|\to 0
\, , \\
\alpha_1>0\, , \quad \alpha_2>0\, , \quad  \alpha_3=\alpha_4=0\,  , \quad \beta_j>0 \, , 
\\
\gamma_j(|\xi|) =O(|\xi|^3) \quad \text{and} \quad \gamma_j(|\xi|) \le \beta_j |\xi|^2/2\quad \text{for} \quad |\xi|\le \kappa\,  ,
\end{aligned}
\end{equation}
and 
$$
P_j(\xi) = P_{j}^0\left({\xi \over |\xi|}\right) +|\xi| P_{j}^1\left({\xi \over |\xi|}\right) + |\xi|^2 P_{j}^2(\xi)\, ,
$$
with  $P_{j}^n$     bounded linear operators on $L^2_v$ with operator norms uniform for $|\xi| \le \kappa$. {We also have that $P_{j}^n(\xi/|\xi|)$ is bounded from
$L^2_v$ into $L^{\infty,k}_v$ uniformly in~$\xi$. Moreover, if $j \neq n$, then we have that $P^0_j P^0_n=0$. We also have that the orthogonal  projector~$\Pi_{L}   $ onto $\operatorname{Ker} L$ satisfies
$$\Pi_{L}  = \sum_{j=1}^4 P_{j}^0\Big({\xi \over |\xi|}\Big)
$$
 and  is independent of $\xi/|\xi|$.}  Finally~$ \widehat U^\sharp$ satisfies
\begin{equation} \label{rateUsharp}
\| \widehat U^\sharp\|_{L^2_v \to L^2_v} \le C e^{-\alpha t}
\end{equation}
for some positive constants $C$ and $\alpha$ independent of $t$ and $\xi$.
\end{lem}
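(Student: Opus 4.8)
Since Lemma~\ref{EllisPinsky} is quoted from~\cite{Ellis-Pinsky}, the plan is only to recall how such a statement is obtained; I would proceed as follows. First I would pass to Fourier variables in~$x$ and use the parabolic scaling~$\widehat U^\eps(t,\xi)=\widehat U(t/\eps^2,\eps\xi)$ to reduce everything to a description, \emph{uniform in the frequency}~$\xi$, of the semigroup~$e^{t\widehat L(\xi)}$ generated on~$L^2_v$ by~$\widehat L(\xi):=-i\xi\cdot v+L$. The two inputs I would rely on are the self-adjointness and nonpositivity of~$L$ on~$L^2_v$, with spectral gap~$\mathrm{Spec}(L)\subset\{0\}\cup(-\infty,-\nu_0]$ and~$\Ker L=\mathrm{Span}(M^{1/2},v_iM^{1/2},|v|^2M^{1/2})$, and the splitting~$L=-\nu(v)+K$ with~$K$ compact and weight-gaining, exactly as in the proof of Lemma~\ref{lem:Uepscont}.

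The first half of the work is the small-frequency regime~$|\xi|\le\kappa$. I would view~$\xi\mapsto\widehat L(\xi)$ as an analytic family of type~(A) in Kato's sense (the perturbation~$-i\xi\cdot v$ being~$L$-bounded), so that for~$\kappa$ small enough the eigenvalue~$0$ of~$L$, of multiplicity~$d+2$, perturbs into a fixed number of eigenvalue branches. Writing~$\xi=|\xi|\omega$, I would expand in powers of~$|\xi|$ at fixed~$\omega\in\Sp^{d-1}$: the first-order term is governed by~$\Pi_{L}(-i\,\omega\cdot v)\Pi_{L}$ acting on~$\Ker L$, whose eigenvalues give the~$\alpha_j$ (two distinguished nonzero acoustic values, the others vanishing by isotropy and the structure of the collision invariants); the second-order term, computed by the Rayleigh--Schr\"odinger formula, equals~$-\beta_j|\xi|^2$ with~$\beta_j>0$ thanks to the coercivity of~$L$ on~$(\Ker L)^\perp$ --- this is where the functions~$\Phi,\Psi$ and the viscosities~$\mu_1,\mu_2$ enter; and the remainder is~$O(|\xi|^3)$, so shrinking~$\kappa$ yields~$\gamma_j(|\xi|)\le\beta_j|\xi|^2/2$. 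The projectors~$P_j(\xi)$ I would define by the Riesz integral~$\tfrac1{2\pi i}\oint(z-\widehat L(\xi))^{-1}\,dz$ around~$\lambda_j(\xi)$; their analyticity in~$|\xi|$ at fixed~$\omega$ gives the expansion~$P_j(\xi)=P_{j}^0(\omega)+|\xi|P_{j}^1(\omega)+|\xi|^2P_{j}^2(\xi)$, with~$P_{j}^0(\omega)$ the~$L^2_v$-orthogonal projector onto the corresponding subspace of~$\Ker L$, whence~$\sum_jP_{j}^0=\Pi_{L}$ and~$P_{j}^0P_{n}^0=0$ for~$j\neq n$; the bound~$P_{j}^n(\xi/|\xi|)\colon L^2_v\to L^{\infty,k}_v$ I would get from the explicit Gaussian-times-polynomial form of~$\Ker L$ when~$n=0$, and from the resolvent identity~$(z-\widehat L(\xi))^{-1}=(z+\nu+i\xi\cdot v)^{-1}\big(1+K(z-\widehat L(\xi))^{-1}\big)$ together with the weight-gain of~$K$ when~$n\ge1$.

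The second half, and the genuinely delicate point, is proving~\eqref{rateUsharp}: that the complementary piece~$\widehat U^\sharp(t,\xi):=\widehat U(t,\xi)-\chi(|\xi|/\kappa)\sum_je^{t\lambda_j(\xi)}P_j(\xi)$ decays like~$e^{-\alpha t}$ \emph{uniformly} in~$\xi$. For~$|\xi|\le\kappa$ this piece corresponds, via a contour integral, to the spectral part of~$\widehat L(\xi)$ in~$\{\mathrm{Re}\,z\le-\alpha\}$, a region stable under the small perturbation because of the gap of~$L$, and a uniform resolvent bound on a contour in that half-plane gives the decay. The hard case is~$|\xi|\ge\kappa$, where one must show the spectral gap of~$\widehat L(\xi)$ does not close as~$|\xi|\to\infty$; following~\cite{Ukai,Ellis-Pinsky} I would use that the free semigroup~$e^{-t(\nu(v)+i\xi\cdot v)}$ decays like~$e^{-\nu_0t}$ on~$L^2_v$, invoke Duhamel and the compactness of~$K$ to reduce to locating the finitely many eigenvalues of~$\widehat L(\xi)$ in~$\{\mathrm{Re}\,z>-\nu_0\}$, and then use the strict dissipativity of~$\widehat L(\xi)$ for~$\xi\neq0$ (the $H$-theorem ruling out purely imaginary eigenvalues) together with a compactness argument over the sphere~$|\xi|=\kappa$ and a large-$|\xi|$ expansion to push all these eigenvalues into~$\{\mathrm{Re}\,z\le-\alpha\}$ with~$\alpha>0$ independent of~$\xi$. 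Undoing the scaling then yields exactly~\eqref{decompUeps}, and it is this uniform-in-$\xi$ control of the~$|\xi|\ge\kappa$ part that I expect to require the most care.
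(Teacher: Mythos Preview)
Your proposal is correct and consistent with the paper's approach: both reduce to the case~$\eps=1$ via the parabolic rescaling~$\widehat U^\eps(t,\xi)=\widehat U^1(t/\eps^2,\eps\xi)$ and then invoke~\cite{Ellis-Pinsky} (and~\cite{Bardos-Ukai} for the projector properties). The paper's proof is in fact much terser --- it simply records the scaling and cites the references --- whereas you go further and sketch the actual content of the Ellis--Pinsky argument (Kato analytic perturbation for small~$|\xi|$, uniform spectral gap for~$|\xi|\ge\kappa$); this extra detail is accurate and helpful but not required here since the lemma is explicitly quoted.
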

\begin{proof}
The decomposition of~$\widehat U^\eps(t)$ follows that of~$\widehat U^1(t)$: we recall that according to~\cite{Ellis-Pinsky}, one can write
$$
\widehat U^1(t,\xi) = \sum_{j=1}^4 \widehat U _j(t,\xi)  + \widehat U^{\sharp}(t,\xi)  \, , 
$$
where for $1 \le j \le 4$,  
$$
\widehat U _j(t,\xi) := \chi\Big(\frac{|\xi|}\kappa\Big) \, e^{t \lambda_j(\xi)} P_j(\xi)
$$
and~$\lambda_j(\xi) \in \bC$   are the eigenvalues of $\widehat B^1$ with associated eigenprojections $P_j(\xi)$ on $L^2_v$, satisfying the properties stated in the lemma.
 The properties of the projectors come from~\cite{Ellis-Pinsky,Bardos-Ukai}. 
   \end{proof}
\begin{Rmk} \label{defU} Denoting
\begin{equation} \label{deftildeP1j}
\tilde P_j \Big(\xi, {\xi \over |\xi|}\Big) := P_{j}^1\Big({\xi \over |\xi|}\Big) + |\xi| P_{j}^2(\xi)
 \end{equation}
 for $1 \le j \le 4$ we can   further split $\widehat U^\eps_j(t)$ into four parts (a main part and three remainder terms):
$$
  U^\eps_j =   U^\eps_{j0} +  U^{\eps\sharp}_{j0} +   U^\eps_{j1} +   U^\eps_{j2}
$$
where 
\begin{align*}
&\widehat U^\eps_{j0}(t,\xi) := e^{i\alpha_j |\xi|{t \over \eps} - \beta_j t |\xi|^2} P_{j}^0\Big({\xi \over |\xi|}\Big), \\
&\widehat U^{\eps\sharp}_{j0}(t,\xi) := \Big(\chi\Big (\frac{\eps|\xi|}\kappa\Big)-1\Big) e^{i\alpha_j |\xi|{t \over \eps} - \beta_j t |\xi|^2}P_{j}^0\Big({\xi \over |\xi|}\Big)\\
&\widehat U^\eps_{j1}(t,\xi) := \chi\Big (\frac{\eps|\xi|}\kappa\Big) e^{i\alpha_j |\xi|{t \over \eps} - \beta_j t |\xi|^2} \Big(e^{t \frac{\gamma_j(\eps|\xi|)}{\eps^2}}-1 \Big)P_{j}^0\Big({\xi \over |\xi|}\Big)\, , \\
&\widehat U^\eps_{j2}(t,\xi) := \chi\Big (\frac{\eps|\xi|}\kappa\Big) e^{i\alpha_j |\xi|{t \over \eps} - \beta_j t |\xi|^2+ t \frac{\gamma_j(\eps|\xi|)}{\eps^2}} \eps |\xi|\tilde P_{j} \Big(\eps \xi, {\xi \over |\xi|}\Big)
\, .
\end{align*}
In the following we set
$$
U^\e_{\rm{disp}}:=U^\eps_{10} + U^\eps_{20} \, . 
$$
One can notice that $U_{30}:=U^\eps_{30}$ and $U_{40}:=U^\eps_{40}$ do not depend on $\eps$ since $\alpha_3=\alpha_4=0$. We set 
$$
U := U_{30} + U_{40}\, . 
$$
It is proved in~\cite{Bardos-Ukai} -- and in this paper (see Lemma~{\rm\ref{lem:UepstoU0}}) --  that the operator~$U(t)$ is a limit of~$U^\e(t)$.
\end{Rmk}

\begin{prop} We have that $U(0)$ is the projection on the subset of $\operatorname{Ker} L$ consisting of functions $f$ satisfying $\Div u_f = 0$ and also~$\rho_f+ \theta_f = 0$ where we recall that 
\begin{equation}\label{formulaU(0}
U(0) =\mathcal{F}_x^{-1} \bigg(P_{3}^0\Big({\xi \over |\xi|}\Big) + P_{4}^0\Big({\xi \over |\xi|}\Big)\bigg) \mathcal{F}_x
\end{equation}
and with the notations
$$
\begin{aligned}
\rho_{f}(x)= \int_{\R^d} f (x,v)M^\frac12 (v) \, dv \, , \quad  u_{f} (x)= \int_{\R^d} v\,  f (x,v)M^\frac12 (v) \, dv\, , \\
 \theta_{f} (x)= {1 \over d} \int_{\R^d} (|v|^2-d) f (x,v)M^\frac12 (v) \, dv \, .
\end{aligned}
$$
We also have
\begin{equation}\label{U(t)U(0}
U(t) f = U(t) U(0) f \, , \quad \forall \, t \ge 0, \quad \forall \, f \in X^{\ell,k} \, 
\end{equation}
and
\begin{equation} \label{P012}
\Div u_f = 0 \text{ and } \rho_f+ \theta_f = 0 \Rightarrow P_{j}^0\Big({\xi \over |\xi|}\Big) f =0 \text{ for } j=1,2. 
\end{equation}
\end{prop}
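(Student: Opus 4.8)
The plan is to argue fibrewise in the Fourier variable $\xi$, identifying the operators $P_j^0(\omega)$ (with $\omega:=\xi/|\xi|$) as spectral projections of an ``acoustic'' operator acting on the finite-dimensional space $\operatorname{Ker} L$. First, formula~$(\ref{formulaU(0})$ is immediate from Remark~\ref{defU}: since $\alpha_3=\alpha_4=0$ one has $U=U_{30}+U_{40}$ and $\widehat U(0,\xi)=\widehat U_{30}(0,\xi)+\widehat U_{40}(0,\xi)=P_3^0(\omega)+P_4^0(\omega)$, hence $U(0)=\mathcal F_x^{-1}(P_3^0+P_4^0)\mathcal F_x$; the whole statement then reduces to understanding the operator $P_3^0(\omega)+P_4^0(\omega)$ on $L^2_v$, for each fixed $\omega$.

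\emph{Step 1: the hydrodynamic modes.} Write $\xi=r\omega$ and insert the expansions from Lemma~\ref{EllisPinsky}, namely $\lambda_j(r\omega)=i\alpha_j r+O(r^2)$ and $P_j(r\omega)=P_j^0(\omega)+rP_j^1(\omega)+O(r^2)$, into the eigenprojection identity $(\widehat L-ir\,\omega\cdot v)P_j(r\omega)=\lambda_j(r\omega)P_j(r\omega)$. At order $r^0$ this gives $\widehat L P_j^0=0$, i.e. $\operatorname{Ran}P_j^0\subseteq\operatorname{Ker} L$; applying $\Pi_{L}$ to the order-$r^1$ identity, and using $\Pi_{L}\widehat L=0$ and $\Pi_{L}P_j^0=P_j^0$, it gives $\mathcal A_\omega P_j^0=-\alpha_j P_j^0$, where $\mathcal A_\omega:=\Pi_{L}(\omega\cdot v)\Pi_{L}$ is self-adjoint on $\operatorname{Ker} L$. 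Hence $\operatorname{Ran}P_j^0$ lies in the $(-\alpha_j)$-eigenspace of $\mathcal A_\omega$; in particular $\operatorname{Ran}P_3^0,\operatorname{Ran}P_4^0\subseteq\operatorname{Ker}\mathcal A_\omega$, while for $j=1,2$ the range lies in an eigenspace associated with a non-zero eigenvalue, hence in $(\operatorname{Ker}\mathcal A_\omega)^\perp$. Running the same expansion with the adjoint operator $\widehat L+ir\,\omega\cdot v$ shows that $\operatorname{Ran}(P_j^0)^*$ lies in the \emph{same} eigenspace of $\mathcal A_\omega$; combining this with $(P_j^0)^2=P_j^0$ (order $r^0$ of $P_j(\xi)^2=P_j(\xi)$), $P_j^0 P_n^0=0$ for $j\neq n$, and $\sum_{j=1}^4 P_j^0=\Pi_{L}$ from Lemma~\ref{EllisPinsky}, a direct sum and dimension count then shows that $P_1^0$ and $P_2^0$ are the orthogonal projections onto the two one-dimensional acoustic eigenspaces, so that
$$
P_3^0(\omega)+P_4^0(\omega)=\text{the orthogonal projection of }L^2_v\text{ onto }\operatorname{Ker}\mathcal A_\omega.
$$

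\emph{Step 2: computing $\operatorname{Ker}\mathcal A_\omega$.} This is the only genuine computation. For the generic element $f=M^\frac12\bigl(\rho_f+u_f\cdot v+\tfrac12(|v|^2-d)\theta_f\bigr)$ of $\operatorname{Ker} L$, the parameters $\rho_f,u_f,\theta_f$ are precisely the moments of $f$ appearing in the statement, which one checks using the Gaussian identities $\int M\,dv=1$, $\int M v_iv_j\,dv=\delta_{ij}$, $\int M(|v|^2-d)^2\,dv=2d$ (odd velocity moments of $M$ vanish). Computing the moments of $(\omega\cdot v)f$, using in addition $\int M v_iv_j|v|^2\,dv=(d+2)\delta_{ij}$, gives
$$
\mathcal A_\omega f=M^\frac12\Bigl(\omega\cdot u_f+(\rho_f+\theta_f)\,\omega\cdot v+\tfrac1d(\omega\cdot u_f)(|v|^2-d)\Bigr),
$$
so $\mathcal A_\omega f=0$ if and only if $\omega\cdot u_f=0$ and $\rho_f+\theta_f=0$. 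Back in the $x$ variable this reads $\Div u_f=0$ and $\rho_f+\theta_f=0$, hence $\operatorname{Ker}\mathcal A_\omega=\{f\in\operatorname{Ker} L:\Div u_f=0,\ \rho_f+\theta_f=0\}$; together with Step 1 and $(\ref{formulaU(0})$ this proves that $U(0)$ is the orthogonal projection onto that set. It also yields $(\ref{P012})$: if $\Div u_f=0$ and $\rho_f+\theta_f=0$, then for every $\xi$ we have $\widehat{\Pi_{L}f}(\xi,\cdot)\in\operatorname{Ker}\mathcal A_\omega$ and $\widehat{(\mathrm{Id}-\Pi_{L})f}(\xi,\cdot)\in(\operatorname{Ker} L)^\perp$, and both are annihilated by the orthogonal projection $P_j^0(\omega)$ for $j=1,2$; hence $P_j^0(\xi/|\xi|)f=0$.

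Finally, $(\ref{U(t)U(0})$ follows from a short Fourier computation: by Remark~\ref{defU}, $\widehat U(t,\xi)=e^{-\beta_3 t|\xi|^2}P_3^0(\omega)+e^{-\beta_4 t|\xi|^2}P_4^0(\omega)$ and $\widehat U(0,\xi)=P_3^0+P_4^0$, so $(P_j^0)^2=P_j^0$ and $P_3^0 P_4^0=P_4^0 P_3^0=0$ give $\widehat U(t,\xi)\widehat U(0,\xi)=\widehat U(t,\xi)$ for every $\xi$, whence $U(t)f=U(t)U(0)f$ on $X^{\ell,k}$ for all $t\ge0$. The main obstacle I anticipate is the second half of Step 1: the algebra with $\mathcal A_\omega$ is classical hydrodynamics and the underlying spectral picture is precisely that of Ellis and Pinsky~\cite{Ellis-Pinsky} (see also~\cite{Bardos-Ukai}), but because the operator $\widehat L-i\xi\cdot v$ is \emph{not} self-adjoint one must check carefully, through the matching of left and right eigenvectors at leading order in $|\xi|$, that the leading coefficients $P_j^0(\omega)$ are genuine orthogonal projections; this orthogonality is exactly what makes $U(0)$ a true projection, as the proposition asserts.
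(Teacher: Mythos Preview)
Your argument is correct and takes a genuinely different route from the paper. The paper simply quotes the explicit formulas for the projectors $P_j^0(\xi/|\xi|)$ from Ellis--Pinsky~\cite{Ellis-Pinsky}: namely
\[
P_{3}^0\Big(\frac{\xi}{|\xi|}\Big)\hat f=\frac{2}{d+2}\Big(-1+\tfrac12(|v|^2-d)\Big)M^{1/2}\int\Big(-1+\tfrac12(|v|^2-d)\Big)M^{1/2}\hat f\,dv,
\]
$P_4^0$ the projection onto the span of the transverse velocities $v-(v\cdot\omega)\omega$, and the analogous rank-one formulas for $P_{1,2}^0$; the three assertions of the proposition are then direct checks. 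You instead derive the structure intrinsically: by expanding the eigenprojection identity in $|\xi|$ you identify each $P_j^0(\omega)$ with a spectral projection of the acoustic operator $\mathcal A_\omega=\Pi_L(\omega\cdot v)\Pi_L$ on $\operatorname{Ker}L$, and then compute $\operatorname{Ker}\mathcal A_\omega$ by a moment calculation. Your approach is more self-contained and explains \emph{why} $U(0)$ singles out exactly the incompressible Boussinesq subspace (it is the kernel of the acoustic propagation operator), whereas the paper's approach is shorter but offloads the structural content to the reference.

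Two small remarks. First, the adjoint detour you flag as the ``main obstacle'' is in fact not needed: once you know $\operatorname{Ran}P_j^0\subseteq E_{-\alpha_j}$, the purely algebraic facts $P_i^0P_j^0=\delta_{ij}P_j^0$ and $\sum_jP_j^0=\Pi_L$ already give the direct-sum decomposition $\operatorname{Ker}L=\bigoplus_j\operatorname{Ran}P_j^0$; comparing with the orthogonal eigenspace decomposition of the self-adjoint $\mathcal A_\omega$ (your Step~2 shows $\dim\operatorname{Ker}\mathcal A_\omega=d$, forcing $\alpha_1\neq\alpha_2$ and both eigenspaces one-dimensional) yields $\operatorname{Ker}P_1^0=\operatorname{Ran}P_2^0\oplus\operatorname{Ran}P_3^0\oplus\operatorname{Ran}P_4^0=E_{-\alpha_1}^\perp$, so $P_1^0$ is automatically orthogonal. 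Second, your argument also shows $\alpha_1=-\alpha_2$, which is consistent with the explicit $\pm$ formulas for $P_{1,2}^0$ recalled in the paper (the statement $\alpha_1>0,\alpha_2>0$ in Lemma~\ref{EllisPinsky} should read $\alpha_1\alpha_2<0$).
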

 
 \begin{proof}
 The first part of the proof can be deduced from the form of the projectors $P_3^0(\xi/|\xi|)$ and $P_4^0(\xi/|\xi|)$ given in~\cite{Ellis-Pinsky}. We point out that many authors in previous works have omitted some factor that was present in~\cite{Ellis-Pinsky}, for the sake of clarity, we thus recall that 
 $$
 P_{3}^0\Big({\xi \over |\xi|}\Big) \hat f (\xi) = \frac{2}{d+2} \Big(-1+ \frac 12 (|v|^2-d)\Big) M^\frac12 \int_{\R^d} \Big(-1+ \frac 12 (|v|^2-d)\Big) M^{\frac12} \hat f \, dv 
 $$ 
 and also the fact that $P_4^0(\xi/|\xi|)$ is a projection onto the $(d-1)$-dimensional space spanned by~$v-\Big(v \cdot \frac{\xi}{|\xi|}\Big) \frac{\xi}{|\xi|}$ for any $\xi$.
 The second part of the proposition directly comes the forms of~$U(t)$ and $U(0)$ and from the results of Lemma~\ref{EllisPinsky} on projectors. For completeness, we here also recall the exact formulas for the projectors $P_1^0$ and $P^0_2$:
 $$
 \begin{aligned}
 &P_{1,2}^0\Big({\xi \over |\xi|}\Big) \hat f (\xi) \\
 &\quad = \frac{d}{2(d+2)} \Big(1 \pm \frac{\xi}{|\xi|} \cdot v + \frac 1d (|v|^2-d)\Big) M^\frac12 \int_{\R^d} \Big(1 \pm \frac{\xi}{|\xi|} \cdot v + \frac 1d (|v|^2-d)\Big) M^{\frac12} \hat f \, dv \, .
 \end{aligned}
 $$ 
 \end{proof}
In this paper, we call {well-prepared data} this class of functions $f$ that write:
\begin{equation}\label{defwp}
\begin{aligned}
&f (x,v) = M^\frac12(v)\Big( \rho_{f} (x) +  u_{f}(x) \cdot v + \frac12(|v|^2- d)  \theta_{f} (x)\Big) \\
& \hspace{5cm} \text{with} \quad  \Div u_f = 0 \quad \text{and} \quad \rho_f+ \theta_f = 0 \, .
\end{aligned}
 \end{equation}

 \begin{lem}\label{estimatePsiepslimitPsi0}
Let~$\ell>d/2$ and~$k>d/2+1$ be given.  
The following decomposition holds
$$    \Psi^\e   = \sum_{j=1}^4 \Psi^\eps_j +  \Psi^{\eps \sharp}  $$
with
$$
\widehat \Psi^{\eps \sharp}(t)(f,f):={1 \over \eps}  \int_0^t \widehat U^{\eps\sharp}(t-t') \widehat  \Gamma \big(f(t'),f(t')\big) \, dt'
$$
and
\begin{equation}\label{decompositionPsiappendix}
\Psi^\e_j = \Psi^\eps_{j0}  + \Psi^{\eps \sharp}_{j0}  + \Psi^\eps_{j1}  + \Psi^\eps_{j2} \end{equation}
where denoting~$F(t) := \Gamma \big(f(t),f(t)\big)$
\begin{align*}
&\mathcal{F}_x \left( \Psi^\eps_{j0}(t)(f,f)\right)(\xi) := \int_0^t e^{i\alpha_j |\xi|{t-t' \over \eps} - \beta_j (t-t') |\xi|^2} |\xi|P_{j}^1\Big({\xi \over |\xi|}\Big) \widehat F(t') \, dt' \,, \\
&\mathcal{F}_x \left( \Psi^{\eps\sharp}_{j0}(t)(f,f)\right)(\xi):= \Big(\chi\Big (\frac{\eps|\xi|}\kappa\Big)-1\Big) \int_0^te^{i\alpha_j |\xi|{t-t' \over \eps} - \beta_j (t-t') |\xi|^2}|\xi|P_{j}^1\Big({\xi \over |\xi|}\Big) \widehat F(t') \, dt' \, ,\\
&\mathcal{F}_x \left( \Psi^\eps_{j1}(t)(f,f)\right)(\xi) \\
&\quad := \chi\Big (\frac{\eps|\xi|}\kappa\Big) \int_0^te^{i\alpha_j |\xi|{t-t' \over \eps} - \beta_j (t-t') |\xi|^2} \Big(e^{(t-t') \frac{\gamma_j(\eps|\xi|)}{\eps^2}}-1 \Big)|\xi|P_{j}^1\Big({\xi \over |\xi|}\Big) \widehat F(t') \, dt'\, , \\
&\mathcal{F}_x \left( \Psi^\eps_{j2}(t)(f,f)\right)(\xi) \\
&\quad := \chi\Big (\frac{\eps|\xi|}\kappa\Big) \int_0^te^{i\alpha_j |\xi|{t-t' \over \eps} - \beta_j (t-t') |\xi|^2+ (t-t') \frac{\gamma_j(\eps|\xi|)}{\eps^2}} \eps |\xi|^2P^2_j(\eps\xi) \widehat F(t') \, dt'
\, .
\end{align*}

\end{lem}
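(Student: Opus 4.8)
The plan is to prove the identity by direct computation, pushing the Ellis--Pinsky decomposition of $U^\eps$ (Lemma~\ref{EllisPinsky} and Remark~\ref{defU}) through the Duhamel integral~\eqref{defPsiepsh1h2}. Taking the Fourier transform in $x$ one has, with $F(t):=\Gamma(f(t),f(t))$,
\[
\mathcal F_x\big(\Psi^\eps(t)(f,f)\big)(\xi)=\frac1\eps\int_0^t \widehat U^\eps(t-t',\xi)\,\widehat F(t',\xi)\,dt',
\]
and inserting $\widehat U^\eps=\sum_{j=1}^4\widehat U^\eps_j+\widehat U^{\eps\sharp}$ gives at once the splitting $\Psi^\eps=\sum_{j=1}^4\Psi^\eps_j+\Psi^{\eps\sharp}$, with $\widehat\Psi^{\eps\sharp}$ exactly as stated and, using $\widehat U^\eps_j(t,\xi)=\widehat U_j(t/\eps^2,\eps\xi)=\chi(\eps|\xi|/\kappa)\,e^{\frac t{\eps^2}\lambda_j(\eps\xi)}P_j(\eps\xi)$,
\[
\mathcal F_x\big(\Psi^\eps_j(t)(f,f)\big)(\xi)=\frac1\eps\int_0^t \chi\Big(\tfrac{\eps|\xi|}\kappa\Big)\,e^{\frac{t-t'}{\eps^2}\lambda_j(\eps\xi)}\,P_j(\eps\xi)\,\widehat F(t',\xi)\,dt'.
\]

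The first key step is the algebraic reduction of $P_j(\eps\xi)\widehat F$. Since $\Gamma$ takes values in $(\operatorname{Ker}L)^\perp$, i.e. $\Pi_L\Gamma(f_1,f_2)=0$ (the weak formulation of the collision operator together with the collision invariants, already invoked in the proof of Lemma~\ref{lem:Psiepscont}), we have $\widehat F=(I-\Pi_L)\widehat F$, so the $P_j^0$ part disappears and, by the expansion $P_j(\eps\xi)=P_j^0(\xi/|\xi|)+\eps|\xi|P_j^1(\xi/|\xi|)+\eps^2|\xi|^2P_j^2(\eps\xi)$ together with the identity $P_j(\eps\xi)(I-\Pi_L)=\eps|\xi|\big(P_j^1(\xi/|\xi|)+\eps|\xi|P_j^2(\eps\xi)\big)$ recorded in the proof of Lemma~\ref{lem:decayWeps},
\[
P_j(\eps\xi)\widehat F=\eps|\xi|\,P_j^1\Big(\tfrac\xi{|\xi|}\Big)\widehat F+\eps^2|\xi|^2\,P_j^2(\eps\xi)\widehat F.
\]
The prefactor $1/\eps$ absorbs one power of $\eps$: this produces the factor $|\xi|$ in front of $P_j^1(\xi/|\xi|)$ and leaves one power of $\eps$ in front of $P_j^2(\eps\xi)$. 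Writing moreover $\tfrac{t-t'}{\eps^2}\lambda_j(\eps\xi)=i\alpha_j|\xi|\tfrac{t-t'}\eps-\beta_j(t-t')|\xi|^2+(t-t')\tfrac{\gamma_j(\eps|\xi|)}{\eps^2}$ from~\eqref{estimatelambdaj}, the $P_j^2$ contribution is seen to be precisely $\Psi^\eps_{j2}$.

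It then remains to split the surviving $P_j^1$ contribution, which carries the weight $\chi(\eps|\xi|/\kappa)\,e^{a+b}$, into its three announced pieces via the elementary identity
\[
\chi\,e^{a+b}=e^{a}+(\chi-1)\,e^{a}+\chi\,e^{a}\big(e^{b}-1\big),
\]
applied with $a=i\alpha_j|\xi|\tfrac{t-t'}\eps-\beta_j(t-t')|\xi|^2$, $b=(t-t')\tfrac{\gamma_j(\eps|\xi|)}{\eps^2}$ and $\chi=\chi(\eps|\xi|/\kappa)$: the three terms are exactly the Fourier-side integrands of $\Psi^\eps_{j0}$, $\Psi^{\eps\sharp}_{j0}$ and $\Psi^\eps_{j1}$. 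Summing over $j$ yields~\eqref{decompositionPsiappendix}. The proof is purely computational; the only delicate point is the bookkeeping of the powers of $\eps$ and $|\xi|$, and in particular the cancellation of the $P_j^0$ terms coming from $\Pi_L\Gamma=0$ — it is precisely this cancellation that endows each term of the decomposition with the gain in $\eps$ (or, for $\R^d$, the dispersive/heat-type decay) exploited in Section~\ref{sec:Deps}.
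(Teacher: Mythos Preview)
Your proof is correct and follows essentially the same route as the paper: insert the Ellis--Pinsky decomposition of $\widehat U^\eps$ into the Duhamel formula, use $\Gamma(f,f)\in(\operatorname{Ker}L)^\perp$ to kill the $P_j^0$ contribution and absorb the $1/\eps$, and then split the remaining $P_j^1$ piece via the identity $\chi e^{a+b}=e^a+(\chi-1)e^a+\chi e^a(e^b-1)$. The paper is slightly terser, deferring this last algebraic splitting to the decomposition already recorded in Remark~\ref{defU}, but the argument is the same.
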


\begin{proof} 
Recall that
$$
 \Psi^\e(t)  (f,f) =\frac1\e \int_0^t U^\e (t-t') \Gamma \big(f(t'),f(t')\big) \, dt' \,.
$$
Following the decomposition of $  U^\e(t,\xi)$ in~\eqref{decompUeps}, we can split the Fourier transform of~$\Psi^\eps(t)(h,h)$ into five parts:
$$
\mathcal{F}_x\left({\Psi^\e(t)  (f,f)}\right)(\xi) = \sum_{j=1}^4 {1 \over \eps} \int_0^t \widehat U^\eps_j(t-t') \widehat F(t') \, dt'
+{1 \over \eps}  \int_0^t \widehat U^{\eps\sharp}(t- t') \widehat F(t') \, dt'\, .
$$
Remark that 
$$
F=\Gamma(f,f) \in (\operatorname{Ker} L)^\perp.
$$
From that, since for $1 \le j \le 4$, $P_j^0(\xi/|\xi|)$ is a projection onto a subspace of $\operatorname{Ker} L$, we deduce that 
$$
P_j(\e \xi) \widehat F = \e |\xi| \Big(P_j^1 \Big({\xi \over |\xi|}\Big)  + \e |\xi| P_j^2 \left(\e|\xi|\right)\Big)\widehat F
=\e |\xi| \tilde P_j\Big(\eps \xi, {\xi \over |\xi|}\Big) \widehat F\, , \quad \forall \, 1 \le j \le 4 \, .
$$
It implies that 
\begin{align*}
&\mathcal{F}_x\left({\Psi^\e(t)  (f,f)}\right)(\xi)  \\
&\quad = \sum_{j=1}^4 {1 \over \eps} \int_0^t \chi\Big(\frac{\eps |\xi|}  \kappa\Big)e^{i\alpha_j|\xi|{{t-t'} \over \eps}-\beta_j(t-t')|\xi|^2+(t-t')\frac{\gamma_j(\eps|\xi|)}{\eps^2}}  P_j(\eps \xi) \widehat F(t') \, dt' \\
&\qquad +{1 \over \eps}  \int_0^t \widehat U^{\eps\sharp}(t-t') \widehat F(t') \, dt' \\
&\quad = \sum_{j=1}^4 \int_0^t \chi\Big(\frac{\eps |\xi|}  \kappa\Big)e^{i\alpha_j|\xi|{{t-t'} \over \eps}-\beta_j(t-t')|\xi|^2+(t-t')\frac{\gamma_j(\eps|\xi|)}{\eps^2}}  |\xi| \tilde P_j\Big(\eps \xi, {\xi \over |\xi|}\Big) \widehat F(t') \, dt' \\
&\qquad +{1 \over \eps}  \int_0^t \widehat U^{\eps\sharp}(t-t') \widehat F( t') \, dt' \\
&\quad =: \sum_{j=1}^4\Psi^\eps_j(t)(f,f) + \Psi^{\eps \sharp}(t)(f,f) \, .
\end{align*}
The rest of the proof follows from the decomposition given in Remark~{\rm\ref{defU}}.
\end{proof}
\begin{Rmk}\label{defPsi}
Let us notice that  as in Remark~{\rm\ref{defU}} there holds
$$
 \Psi^\eps_{30} =  \Psi_{30} \quad \mbox{and}\quad  \Psi^\eps_{40}  =  \Psi_{40} 
$$
and we set
$$
 \Psi := \Psi_{30}+ \Psi_{40} \,.
$$
It is proved in~\cite{Bardos-Ukai} that given~$\bar g_{{\rm in}} \in X^{{\ell},k}$   of the form~{\rm(\ref{defg0dataperiodi})}, the function~$g$ defined in~{\rm(\ref{defg0solution})} satisfies
$$
g(t) =U (t)  \bar g_{{\rm in}}    +    \Psi(t)  (g  ,g  ) \, .
$$
\end{Rmk}

\section{Results on the Cauchy problem for the Boltzmann and Navier-Stokes-Fourier equations} 
\subsection{Functional spaces}
The spaces $\tilde L^\infty([0,T],H^s(\R^d))$ and $\tilde L^\infty([0,T],H^s(\T^d))$ are defined through their norms (see~\cite{cheminlerner})
$$
 \|f\|_{\tilde L^\infty([0,T],H^s(\R^d))}^2:= \int_{\R^d} \langle \xi \rangle^{2s} \sup_{t \in [0,T]} |\widehat f(t,\xi)|^2 \, d\xi
$$
and
$$
  \|f\|_{\tilde L^\infty([0,T],H^s(\T^d))}^2:=\sum_{\xi \in \Z^d} \langle \xi \rangle^{2s} \sup_{t \in [0,T]} |\widehat f(t,\xi)|^2 \, .
 $$ 
Let us now recall two elementary inequalities 
 \begin{equation} \label{lem:algebra}
\forall \, \ell>d/2 \, ,\quad \|f_1 f_2\|_{ \tilde L^\infty_tH^\ell} \lesssim \|f_1 \|_{ \tilde L^\infty_tH^\ell}  \|f_2\|_{ \tilde L^\infty_tH^\ell} \,,
\end{equation}
and
 \begin{equation}\label{lem:prodHell}
\forall \, \ell>d/2 \, ,   \forall \, m\ge 0 \, , \quad
\|f_1 f_2\|_{H^{m} } \lesssim \|f_1\|_{H^{m} } \|f_2\|_{H^\ell} + \|f_1\|_{H^\ell } \|f_2\|_{H^{m} } \, ,
\end{equation}
as well as the classical product rule
 \begin{equation}\label{prodrules}
\forall \, (s,t) \in \bigg(-\frac d2,\frac d2\bigg) \,, \quad s+t>0 \, , \quad \|f_1 f_2\|_{\dot H^{s+t-\frac d2}  } \lesssim \|f_1\|_{\dot H^{s} } \|f_2\|_{\dot H^t}   \, .
\end{equation}


We also define the space $L^2(\langle v \rangle^k)$ through
$$
\|f\|_{L^2_v(\langle v \rangle^k)}^2:=\int |f(v)|^2 \langle v \rangle^{2k}\,dv\,.
$$

\subsection{Results on the Boltzmann equation}\label{appendixbotlz}
\subsubsection{The Cauchy problem} The Cauchy problem for the classical Boltzmann equation (equation~\eqref{scaledboltzmann} with $\eps=1$) has been widely studied in the last decades. Let us perform a very brief review of the results concerning the Cauchy theory of this equation in our framework of strong solutions in a close to equilibrium regime. Those results are based on a careful study of the associate linearized problems around   equilibrium.
Such studies started with Grad~\cite{Grad} and Ukai~\cite{Ukai} who developed Cauchy theories in~$X^{\ell,k}$ type spaces  (see~\eqref{Xellk}), proving the following type of result (see~\cite{Ukai,Bardos-Ukai,Glassey} for example).
\begin{prop}\label{propBoltzmannGradUkai}
Let~$\ell>d/2$ and~$k>d/2+1$ be given. For any initial data~$g_{\rm in} \in X^{\ell,k}$ there is a time~$T>0$ and a unique solution~$g$ to~\eqref{scaledboltzmann} with $\eps=1$, in the space~$C([0,T]; X^{\ell,k})$.
\end{prop}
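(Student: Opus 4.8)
This is the classical close-to-equilibrium local Cauchy theory of Grad and Ukai, and every analytic ingredient needed is in fact already present in the body of the paper; the plan is just to repackage it as a fixed point. Writing $f = M + M^{1/2}g$ as in the introduction, at $\eps = 1$ the function $g$ solves the $\eps = 1$ instance of~\eqref{eqgeps}, equivalently the mild equation
\[
g(t) = U^1(t) g_{\rm in} + \Psi^1(t)(g,g)\,, \qquad \Psi^1(t)(g,g) := \int_0^t U^1(t-s)\,\Gamma\big(g(s),g(s)\big)\, ds\,,
\]
where $U^1$ is the semigroup generated by $-v\cdot\nabla_x + L$. Setting $E_T := C([0,T],X^{\ell,k})$, it suffices to solve this equation by a contraction argument in $E_T$ for $T>0$ small.

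Two estimates do the work, both available at $\eps = 1$ from earlier in the paper. First, the semigroup bound $\|U^1(t)f\|_{\ell,k}\leq C\|f\|_{\ell,k}$ uniformly in $t\geq 0$, together with strong continuity of $t\mapsto U^1(t)f$ on $X^{\ell,k}$ --- this is exactly Lemma~\ref{lem:Uepscont}. Second, the bilinear estimate for $\Psi^1$: by Lemma~\ref{lem:Psiepscont} at $\eps = 1$ (which uses the product bound~\eqref{estimUkai1} for $\Gamma$ and the identity $\Pi_{L}\Gamma = 0$) one has $\|\Psi^1(t)(f_1,f_2)\|_{E_T}\leq C\|f_1\|_{E_T}\|f_2\|_{E_T}$ with $C$ independent of $T$. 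If $\|g_{\rm in}\|_{\ell,k}$ is small this already closes the argument: apply Lemma~\ref{cacciopoli+} with $X = E_T$, $x_0 = U^1(\cdot)g_{\rm in}$, $\mathcal L = 0$, $\mathcal B = \Psi^1$, which in fact yields a global ($T = +\infty$) solution. For arbitrary data one must instead manufacture smallness from the shortness of the time interval, i.e. refine the previous bound to $\|\Psi^1(t)(f_1,f_2)\|_{E_T}\leq C\,\omega(T)\,\|f_1\|_{E_T}\|f_2\|_{E_T}$ with $\omega(T)\to 0$ as $T\to 0$.

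To produce such an $\omega$, I would rerun the proof of Lemma~\ref{lem:Psiepscont} at $\eps = 1$, splitting $U^1 = e^{tA^1} + (\text{smoothing part})$ as in~\eqref{eq:decompUeps}. The smoothing contribution and the $H^\ell_xL^2_v$ piece carry time kernels that are integrable at $0$ with vanishing primitive at $0$, hence are $o(1)\,\|f_1\|_{E_T}\|f_2\|_{E_T}$. The only delicate term is the free-transport part $\int_0^t e^{(t-s)A^1}\Gamma(f_1,f_2)(s)\,ds$ measured in $X^{\ell,k}$, where the crude estimate only supplies the factor $\sup_v(1 - e^{-\nu(v)t}) = 1$. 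Here one truncates velocities at level $M$: on $\{|v|\leq M\}$ one has $1-e^{-\nu(v)t}\leq \nu_1(1+M)t$, while on $\{|v|>M\}$ one uses the decay condition $\sup_{|v|\geq M}\langle v\rangle^k\|\,\cdot\,(\cdot,v)\|_{H^\ell_x}\to 0$ that is built into the definition~\eqref{Xellk} of $X^{\ell,k}$ and which is propagated both by $U^1$ (at a controlled rate) and by $\Gamma$, so the $|v|>M$ part contributes $o_M(1)\,\|f_1\|_{E_T}\|f_2\|_{E_T}$; letting $M = M(T)\to\infty$ slowly enough gives $\omega(T)\to 0$. Since the relevant rate is uniform on the subset of $E_T$ of functions whose $v$-tail is dominated by that of $U^1(\cdot)g_{\rm in}$ --- a set preserved by the Duhamel map --- the contraction is run there, on the ball of radius $R := 2C\|g_{\rm in}\|_{\ell,k}$, for $T$ small enough that $C\omega(T)R\leq 1/2$; the fixed point is the desired solution, uniqueness in $E_T$ follows from the same Lipschitz bound, $g\in C([0,T],X^{\ell,k})$ from strong continuity of $U^1$ and continuity of the Duhamel integral, and the $R\to\infty$ tail property of~\eqref{Xellk} is inherited.

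The only step I expect to require genuine care is this last refinement: Lemma~\ref{cacciopoli+} by itself only yields small-data (global) solutions, so the entire content of the proposition for arbitrary data sits in the estimate $\|\Psi^1\|_{E_T\times E_T\to E_T}\leq C\,\omega(T)$ with $\omega(T)\to 0$, i.e. in the velocity truncation combined with the tail condition of~\eqref{Xellk}. Everything else is quoted verbatim from the paper or is routine Picard bookkeeping.
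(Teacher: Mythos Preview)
The paper does not supply its own proof of this proposition; it is quoted as a classical result with references to Ukai, Bardos--Ukai, and Glassey. Your sketch follows the standard fixed-point strategy in those references and correctly singles out the one nontrivial issue: the bilinear estimate of Lemma~\ref{lem:Psiepscont} is uniform in $T$, so Lemma~\ref{cacciopoli+} alone only gives small-data global existence; for arbitrary data the short-time smallness has to come from elsewhere, and the free-transport piece $\Psi^{1,1}$ is the obstruction because $\sup_v(1-e^{-\nu(v)t})=1$ for every $t>0$.

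Your proposed fix --- truncating velocities at a level $M=M(T)$ and invoking the tail condition built into~\eqref{Xellk} --- is the right idea and is essentially what is done in the cited references. The one imprecision is the claim that ``the relevant rate is uniform on the subset of $E_T$ of functions whose $v$-tail is dominated by that of $U^1(\cdot)g_{\rm in}$'': the $v$-tail of $\Lambda^{-1}\Gamma(f_1,f_2)$ is not controlled purely by the $v$-tails of $f_1,f_2$, since the collision integral mixes velocities (the loss term does inherit the tail of the inputs, but the gain term behaves differently). What actually closes the argument in Ukai's work is that $\Lambda^{-1}\Gamma$ maps $X^{\ell,k}\times X^{\ell,k}$ into $X^{\ell,k}$ with a \emph{quantitative} tail bound depending only on the norms $\|f_i\|_{\ell,k}$ and a fixed tail profile, so that the contraction can be run on a ball equipped with a single tail envelope preserved by the Duhamel map. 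This is a refinement of~\eqref{estimUkai1} rather than a new idea; once it is stated precisely, your $\omega(T)\to 0$ follows and the rest of your sketch goes through.
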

 It has then been extended to larger spaces of the type $H^\ell_{x,v}(M^{-\frac12})$ thanks to hypocoercivity methods (see for example the paper by Mouhot-Neumann~\cite{Mouhot-Neumann}). More recently, thanks to an ``enlargement argument'', Gualdani, Mischler and Mouhot in~\cite{GMM} were able to develop a Cauchy theory in spaces with polynomial or stretched exponential weights instead of the classical weight prescribed by the Maxwellian equilibrium. We also refer the reader to the review~\cite{Ukai-Yang} by Ukai and Yang in which several results are presented. 

The study of the case $\eps=1$ is justified by rescaling or changes of physical units. However, if one wants to capture the hydrodynamical limit of the Boltzmann equation, one has to take into account the Knudsen number and obtain explicit estimates with respect to it. 

\subsubsection{Nonlinear estimates on the Boltzmann collision operator}
We here give simple estimates on the Boltzmann collision operator $\Gamma$ defined in~\eqref{defoperator}. Those estimates are not optimal in terms of weights but are enough for our purposes.  

\begin{lem} \label{lem:GammaL2v}
For $f_1=f_1(v)$ and $f_2=f_2(v)$, there holds
$$
\|\Gamma(f_1,f_2)\|_{L^2_v} \lesssim \|f_1\|_{L^2_v(\langle v \rangle^k)} \|f_2\|_{L^2_v(\langle v \rangle)} + \|f_1\|_{L^2_v(\langle v \rangle)} \|f_2\|_{L^2_v(\langle v \rangle^k)}\, , \quad k>d/2 \, .
$$
\end{lem}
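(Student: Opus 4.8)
The plan is to split the collision term into its gain and loss parts, $\Gamma=\Gamma^+-\Gamma^-$, and estimate each separately using only elementary manipulations of the hard--sphere kernel $|v-v_*|$ together with the classical relations of elastic collisions: $v+v_*=v'+v'_*$, $|v|^2+|v_*|^2=|v'|^2+|v'_*|^2$ (hence $M(v)M(v_*)=M(v')M(v'_*)$ and $|v-v_*|=|v'-v'_*|$), and the fact that $(v,v_*,\sigma)\mapsto(v',v'_*,(v-v_*)/|v-v_*|)$ is a measure-preserving involution of $\R^d\times\R^d\times\Sp^{d-1}$. First I would record that $M^{-1/2}(v)Q(M^{1/2}h_1,M^{1/2}h_2)(v)=\int_{\R^d\times\Sp^{d-1}}|v-v_*|M_*^{1/2}\bigl(h_1(v'_*)h_2(v')-h_1(v_*)h_2(v)\bigr)\,d\sigma\,dv_*$, a direct consequence of $M(v)M(v_*)=M(v')M(v'_*)$; this yields $\Gamma=\Gamma^+-\Gamma^-$ with the loss term
$$\Gamma^-(f_1,f_2)(v)=\frac{|\Sp^{d-1}|}{2}\Bigl(f_2(v)\,\nu[f_1](v)+f_1(v)\,\nu[f_2](v)\Bigr),\qquad \nu[g](v):=\int_{\R^d}|v-v_*|M_*^{1/2}g(v_*)\,dv_*,$$
(the $\sigma$-integral being trivial here) and $\Gamma^+$ the corresponding gain term.

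For the loss term I would bound $|v-v_*|\le\langle v\rangle\langle v_*\rangle$ and apply Cauchy--Schwarz in $v_*$, using $\langle v\rangle M^{1/2}\in L^2_v$, to get $|\nu[g](v)|\lesssim\langle v\rangle\,\|g\|_{L^2_v}$; hence $\|\Gamma^-(f_1,f_2)\|_{L^2_v}\lesssim\|f_1\|_{L^2_v}\|f_2\|_{L^2_v(\langle v\rangle)}+\|f_1\|_{L^2_v(\langle v\rangle)}\|f_2\|_{L^2_v}$, which is dominated by the right-hand side since $L^2_v(\langle v\rangle^k)\hookrightarrow L^2_v(\langle v\rangle)\hookrightarrow L^2_v$ (recall $k>d/2\ge1$).

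For the gain term the idea is to reduce everything to the model operator
$$T(g_1,g_2)(v):=\int_{\R^d\times\Sp^{d-1}}M_*^{1/4}\,g_1(v'_*)\,g_2(v')\,d\sigma\,dv_*\qquad(g_1,g_2\ge0).$$
Using $|v-v_*|\le\langle v\rangle\langle v_*\rangle$ and then $\langle v\rangle\le\langle v'\rangle+\langle v'_*\rangle+\langle v_*\rangle$ (since $v=v'+v'_*-v_*$), together with $\langle v_*\rangle^2M_*^{1/2}\lesssim M_*^{1/4}$, one obtains the pointwise bound $|v-v_*|M_*^{1/2}\lesssim\langle v'\rangle\langle v'_*\rangle M_*^{1/4}$, whence
$$|\Gamma^+(f_1,f_2)(v)|\lesssim T\bigl(\langle\cdot\rangle|f_1|,\langle\cdot\rangle|f_2|\bigr)(v)+T\bigl(\langle\cdot\rangle|f_2|,\langle\cdot\rangle|f_1|\bigr)(v).$$
It then remains to prove $\|T(g_1,g_2)\|_{L^2_v}\lesssim\|g_1\|_{L^2_v}\|g_2\|_{L^2_v}$. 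For this, Cauchy--Schwarz in $(\sigma,v_*)$ against the finite measure $M_*^{1/4}\,d\sigma\,dv_*$ gives $T(g_1,g_2)(v)^2\lesssim\int M_*^{1/4}g_1(v'_*)^2g_2(v')^2\,d\sigma\,dv_*$; integrating in $v$ and applying the measure-preserving involution $(v,v_*,\sigma)\mapsto(v',v'_*,\cdot)$ turns $M(v_*)^{1/4}$ into $M(v'_*)^{1/4}\le(2\pi)^{-d/8}$ and the factors $g_1(v'_*),g_2(v')$ into $g_1(v_*),g_2(v)$, so the triple integral is $\lesssim\|g_1\|_{L^2_v}^2\|g_2\|_{L^2_v}^2$. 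Plugging back, $\|\Gamma^+(f_1,f_2)\|_{L^2_v}\lesssim\|f_1\|_{L^2_v(\langle v\rangle)}\|f_2\|_{L^2_v(\langle v\rangle)}$, once more controlled by the right-hand side.

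The only point requiring care is the bookkeeping in the gain-term reduction: one power of $\langle v\rangle$ generated by the kernel must be transferred onto the arguments $f_1,f_2$ --- using $v=v'+v'_*-v_*$ and the Gaussian decay of $M_*$ --- rather than onto the output or onto a dual test function, so that the estimate closes with the weight $\langle v\rangle$ only, a fortiori with the weaker weights $\langle v\rangle^k$ appearing in the statement. Everything else is a routine application of Cauchy--Schwarz and the change of variables.
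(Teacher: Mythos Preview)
Your proof is correct, and it takes a genuinely different route for the gain term than the paper does. The paper splits $\Gamma$ into gain and loss exactly as you do and treats the loss term in essentially the same way; for the gain term, however, the paper invokes the Mouhot--Villani convolution estimate $\|Q^+(g_1,g_2)\|_{L^2_v}\lesssim\|g_1\|_{L^1_v}\|g_2\|_{L^2_v}$ and then uses the embedding $L^2_v(\langle v\rangle^k)\hookrightarrow L^1_v$, which is precisely where the restriction $k>d/2$ enters. Your argument is entirely self-contained: the pointwise bound $|v-v_*|M_*^{1/2}\lesssim\langle v'\rangle\langle v'_*\rangle M_*^{1/4}$ followed by Cauchy--Schwarz against $M_*^{1/4}\,d\sigma\,dv_*$ and the pre/post-collisional involution yields $\|\Gamma^+(f_1,f_2)\|_{L^2_v}\lesssim\|f_1\|_{L^2_v(\langle v\rangle)}\|f_2\|_{L^2_v(\langle v\rangle)}$ directly. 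This is more elementary, avoids the external citation, and in fact only uses $k\ge1$ rather than $k>d/2$; the paper's route is shorter to write but hides the work inside a cited result. One minor wording issue: you call the weights $\langle v\rangle^k$ ``weaker'' when you mean the corresponding norms are \emph{stronger}; the logic is right but the phrasing is inverted.
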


\begin{proof}
For   simplicity, in what follows, we denote $\mu:= {M^\frac12}$. 
We recall that 
$$
\Gamma(f_1,f_2)= \frac12\mu^{-1}
\big(
Q(\mu f_1,\mu f_2) + Q(\mu f_2,\mu f_1)\big)=:  \frac12\Big( \Gamma_1(f_1,f_2) + \Gamma_2(f_1,f_2)\Big) \, .
$$ 
By symmetry, we focus on the first term $\Gamma_1(f_1,f_2)$.
We then split the collision operator~$Q$ into two parts (the gain and   loss terms): 
\begin{align*}
\Gamma_1(f_1,f_2) &=\frac12 \mu^{-1}\int_{\R^d \times {\mathbb S}^{d-1}} |v-v_*| \, \mu'_*f_{1*}' \mu'f_2' \, d\sigma \, dv_* - \frac12\mu^{-1} \int_{\R^d} |v-v_*| \, \mu_* f_{1*} \, dv_* \, \mu f_2 \\
&=: \Gamma_1^+(f_1,f_2) - \Gamma_1^-(f_1,f_2) \, . 
\end{align*}
We also use the notation
\begin{equation}\label{defGammapm}
 \Gamma_1^+ (f_1,f_2)= \mu^{-1}
Q^+(\mu f_1,\mu f_2) \quad \mbox{and}\quad \Gamma_1^-(f_1,f_2) = \mu^{-1}
Q^-(\mu f_1,\mu f_2)\,.
\end{equation}
We thus have
$$
\|\Gamma_1(f_1,f_2))\|_{L^2_v} \le \|\Gamma_1^+(f_1,f_2)\|_{L^2_v} + \|\Gamma_1^-(f_1,f_2)\|_{L^2_v} \, .
$$
We first focus on the simplest term, the loss one. There holds
\begin{align*}
\|\Gamma_1^-(f_1,f_2)\|_{L^2_v} &= \frac12\sup_{\|\varphi\|_{L^2_v} \le 1} \int_{\R^d \times \R^d} |v-v_*| \, \mu_* f_{1*} f_2 \varphi \, dv_* \, dv \\
&\le\frac12 \sup_{\|\varphi\|_{L^2_v} \le 1} \|f_1\|_{L^1_v(\langle v \rangle \mu)} \int_{\R^d} \langle v \rangle |f_2| |\varphi| \, dv \\
&\le\frac12 \|f_1\|_{L^2_v} \|f_2\|_{L^2_v(\langle v \rangle)}\, . 
\end{align*}
For the gain term, we first recall that from~\cite[Theorem~2.1]{Mouhot-Villani}, there holds
$$
\|Q^+(f_1,f_2)\|_{L^2_v} \lesssim \|f_1\|_{L^1_v} \|f_2\|_{L^2_v} \lesssim  \|f_1\|_{L^2_v(\langle v \rangle^k)} \|f_2\|_{L^2_v}\,, \quad k>d/2 \, . 
$$
Then, using the equality $\mu \mu_* = \mu' \mu'_*$ (and bounding~$\mu_*$ by 1), we get  \begin{align*}
\|\Gamma_1^+(f_1,f_2)\|_{L^2_v} &= \frac12 \sup_{\|\varphi\|_{L^2_v} \le 1}\int_{\R^d \times \R^d \times {\mathbb S}^{d-1}} \mu^{-1} |v-v_*| \, \mu'_*f_{1*}' \mu'f_2' \varphi \, d\sigma \, dv_* \\
&\le  \frac12\sup_{\|\varphi\|_{L^2_v} \le 1}\int_{\R^d \times \R^d \times {\mathbb S}^{d-1}} |v-v_*| \mu_* |f_{1*}' | |f_2'| |\varphi| \, d\sigma \, dv_* \, dv \\
&\le  \frac12 \sup_{\|\varphi\|_{L^2_v} \le 1}\int_{\R^d} Q^+(|f_1|,|f_2|) |\varphi| \, dv \\
&\le \frac12 \|Q^+(|f_1|,|f_2|)\|_{L^2_v} \lesssim \|f_1\|_{L^2_v(\langle v \rangle^k)} \|f_2\|_{L^2_v}\,, \quad k>d/2 \, . 
\end{align*}
The lemma is proved since $\Gamma_2(f_1,f_2)= \Gamma_1(f_2,f_1)$.
\end{proof}

\begin{lem} \label{lem:GammaLinftyv}
If $f_1=f_1(v)$ and $f_2=f_2(v)$ grow polynomially in $v$, then for any $k \ge 0$,
$$
\Gamma(M^\frac12 f_1,M^\frac12 f_2) \in L^{\infty,k}_v \,. 
$$
\end{lem}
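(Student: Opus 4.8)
The plan is to estimate $\Gamma(M^{1/2}f_1, M^{1/2}f_2)$ pointwise in $v$, using the explicit structure of the hard-sphere collision operator together with the Gaussian prefactors. First I would reduce to bounding each of the two symmetrized pieces, and within each piece split $Q$ into its gain and loss parts exactly as in the proof of Lemma~\ref{lem:GammaL2v}, writing with $\mu = M^{1/2}$
$$
\Gamma_1(M^{1/2}f_1, M^{1/2}f_2)\cdot \text{(abuse)} \;=\; \mu^{-1}Q^+(\mu^2 f_1, \mu^2 f_2) - \mu^{-1}Q^-(\mu^2 f_1, \mu^2 f_2)\,,
$$
so it suffices to show both terms lie in $L^{\infty,k}_v$ for every $k\ge 0$.

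For the loss term, I would use the identity $\mu^{-1}(v)Q^-(\mu^2 f_1,\mu^2 f_2)(v) = \mu(v) f_2(v)\int_{\R^d}|v-v_*|\,\mu^2(v_*) f_{1*}\,dv_*$. Since $f_1$ grows only polynomially and $\mu^2(v_*)=M(v_*)$ decays like a Gaussian, the integral is bounded by $C\langle v\rangle$ (the $|v-v_*|$ factor contributes at most a linear growth in $v$ after integrating the Gaussian against the polynomial), while $\mu(v) f_2(v)$ decays faster than any polynomial; hence $\langle v\rangle^k$ times this term stays bounded for all $k$. For the gain term, the key point is the conservation identity $\mu(v)\mu(v_*) = \mu(v')\mu(v'_*)$ for collisional velocities, which gives $\mu^{-1}(v)Q^+(\mu^2 f_1,\mu^2 f_2)(v) = \int_{\R^d\times\Sp^{d-1}}|v-v_*|\,\mu(v_*)\,\mu(v')f_1'\,\mu(v'_*)f_2'\,d\sigma\,dv_*$ (after distributing the Gaussians appropriately). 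Now one uses that $|v'|^2+|v'_*|^2 = |v|^2+|v_*|^2$, so $\mu(v')\mu(v'_*)=e^{-(|v|^2+|v_*|^2)/2}/(2\pi)^{d/2}$ carries a full Gaussian in $v$; combined with the polynomial growth of $f_1', f_2'$ (which, since $|v'|,|v'_*|\le C(|v|+|v_*|)$, is at most polynomial in $v$ and $v_*$) and the remaining $\mu(v_*)|v-v_*|$, the $v_*$ and $\sigma$ integrations converge and leave a quantity bounded by $C_k\langle v\rangle^{-k}$ for any $k$. Multiplying by $\langle v\rangle^k$ gives the claim.

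The main obstacle is purely bookkeeping: tracking how the Maxwellian weights redistribute after the collision change of variables and verifying that the polynomial growth of $f_1,f_2$ evaluated at the pre-collisional velocities $v',v'_*$ is genuinely dominated by the Gaussian $\mu(v')\mu(v'_*)$ uniformly in $\sigma\in\Sp^{d-1}$ and $v_*\in\R^d$. There is nothing deep here beyond the standard hard-sphere estimates already invoked in the proof of Lemma~\ref{lem:GammaL2v} (in particular the control of $Q^+$ and $Q^-$), so I would present the argument compactly, emphasizing the weight redistribution and citing the elementary inequality $\langle v'\rangle\langle v'_*\rangle \lesssim \langle v\rangle\langle v_*\rangle$ to absorb the polynomial factors into the Gaussian. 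No smallness or regularity in $x$ is needed since the statement is pointwise in $v$ and $f_1,f_2$ depend only on $v$.
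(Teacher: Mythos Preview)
Your proposal is correct and follows essentially the same approach as the paper: both arguments hinge on the collisional identity $M'M'_*=MM_*$ (equivalently $\mu'\mu'_*=\mu\mu_*$) to redistribute the Gaussian weights, then absorb the polynomial growth of $f_1,f_2$ evaluated at $v',v'_*$ via $\langle v'\rangle+\langle v'_*\rangle\lesssim\langle v\rangle\langle v_*\rangle$, leaving a bound of the form $M^{1/2}(v)\langle v\rangle^q\int M_*\langle v_*\rangle^q\,dv_*$. The only cosmetic difference is that the paper handles all four terms of the integrand at once rather than splitting into gain and loss; your decomposition is slightly more explicit but not needed here.
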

\begin{proof}
From the definition of the collision operator $\Gamma$ in~\eqref{defoperator}, we have:
\begin{align*}
&\Gamma(M^\frac12 f_1,M^\frac12 f_2) =\\
&  \frac{M^{-\frac12}}2 \int_{\R^d \times \Sp^{d-1}} |v-v_*| \left(M'_*f'_{1*} M' f_2' + M'_*f'_{2*} M' f_1' - M_*f_{1*} M f_2 -  M_*f_{2*} M f_1 \right) \, dv_* \, d\sigma \, .
\end{align*}
Then, we use that $M'M'_*=MM_*$ to get 
 \begin{align*}
&|\Gamma(M^\frac12 f_1,M^\frac12 f_2)|  \\
&\quad \lesssim M^\frac12 \int_{\R^d \times \Sp^{d-1}} |v-v_*| M_* \left(|f'_{1*}| |f_2'| + |f'_{2*}| |f_1'| + |f_{1*}| |f_2| +  |f_{2*}| |f_1| \right) \, dv_* \, d\sigma \, .
\end{align*}
Finally, since
$
\langle v' \rangle + \langle v'_* \rangle \lesssim \langle v \rangle \langle v_* \rangle 
$
and $f_1$, $f_2$ are polynomial in $v$, we obtain a bound of the form 
 \begin{align*}
&|\Gamma(M^\frac12 f_1,M^\frac12 f_2)|  \\
&\quad \lesssim M^\frac12 \langle v \rangle^q \int_{\R^d} M_* \langle v_* \rangle^q \, dv_* \le C 
\end{align*}
for some $q \ge 0$.  The lemma follows.
\end{proof}
\begin{lem}
We have, for~$\ell>d/2$ and~$k>d/2+1$,
\begin{equation} \label{eq:GammaHellxL2v}
\|\Gamma(f_1,f_2)\|_{H^\ell_xL^2_v} \lesssim \|f_1\|_{L^{\infty,k}_v W^{\ell,\infty}_x} \|f_2\|_{\ell,k}\,,
\end{equation}
\begin{equation} \label{estimUkai1}
\|\Gamma(f_1,f_2)\|_{H^\ell_xL^2_v} \lesssim \|\Lambda^{-1}\Gamma(f_1,f_2)\|_{X^{\ell,k}}\lesssim \|f_1\|_{\ell,k} \|f_2\|_{\ell,k} 
\end{equation}
and
\begin{equation} \label{estimUkai2}
   \|\Gamma(f_1,f_2)\|_{L^2_vL^1_x} \lesssim \|f_1\|_{\ell,k} \|f_2\|_{\ell,k} \, . 
\end{equation}
\end{lem}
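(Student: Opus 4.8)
The plan is to prove the three estimates~\eqref{eq:GammaHellxL2v}, \eqref{estimUkai1}, \eqref{estimUkai2} by reducing everything to the velocity-only bounds of Lemmas~\ref{lem:GammaL2v} and~\ref{lem:GammaLinftyv}, handling the $x$-dependence through the algebra/product rules recalled in~\eqref{lem:algebra}--\eqref{lem:prodHell} (applied in Fourier space in $x$, pointwise in $v$). The key structural observation is that $\Gamma$ is \emph{local in $x$}: for fixed $x$, $\Gamma(f_1,f_2)(x,\cdot)$ is a bilinear expression in $f_1(x,\cdot)$ and $f_2(x,\cdot)$, so any spatial derivative $\partial_x^\alpha$ falls on one of the two factors by the Leibniz rule, and only the velocity integration needs the collision estimates.

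First I would establish the middle chain~\eqref{estimUkai1}. The first inequality $\|\Gamma(f_1,f_2)\|_{H^\ell_xL^2_v}\lesssim\|\Lambda^{-1}\Gamma(f_1,f_2)\|_{X^{\ell,k}}$ is just the pointwise bound $|h(v)|\le C\langle v\rangle^{-k}\|\Lambda^{-1}h\|_{L^{\infty,k}_v}$ valid since $\Lambda(v)=\nu(v)\ge\nu_0(1+|v|)\gtrsim\langle v\rangle$ and $k>d/2$ gives $\langle v\rangle^{-k}\in L^2_v$; this costs one power of $\langle v\rangle$ which is absorbed because $\nu^{-1}\Gamma$ lives in $L^{\infty,k}_v$ while we only need $L^2_v$. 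For the second inequality I would write, in Fourier variables in $x$,
$$
\|\Lambda^{-1}\Gamma(f_1,f_2)\|_{X^{\ell,k}}
=\Big\|\langle v\rangle^{k}\,\nu(v)^{-1}\big\|\widehat{\Gamma(f_1,f_2)}(\cdot,v)\big\|_{H^\ell_x}\Big\|_{L^\infty_v},
$$
bound $\|\widehat{\Gamma(f_1,f_2)}(\xi,v)\|_{\text{(in $\xi$, weighted by $\langle\xi\rangle^\ell$)}}$ by a convolution in $\xi$ of the two factors, then use Lemma~\ref{lem:GammaL2v} in $v$ together with~\eqref{lem:prodHell} (or directly~\eqref{lem:algebra}) to get $\lesssim\|f_1\|_{\ell,k}\|f_2\|_{\ell,k}$; the two gains of $\langle v\rangle$ coming from $\|f_i\|_{L^2_v(\langle v\rangle)}$ in Lemma~\ref{lem:GammaL2v} are exactly compensated by the single $\nu^{-1}\sim\langle v\rangle^{-1}$, and using the $L^{\infty,k}_v$ control of the other factor absorbs the remaining weight. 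Concretely one splits symmetrically, controlling one factor in $L^{\infty,k}_v$ (hence in $L^2_v(\langle v\rangle^j)$ for $j\le k$) and the other in $L^2_v(\langle v\rangle)$, using $\ell>d/2$ so that $H^\ell_x$ is an algebra.

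Next, \eqref{eq:GammaHellxL2v}: apply $\partial_x^\alpha$, $|\alpha|\le\ell$, distribute by Leibniz onto $f_1$ and $f_2$, put the (up to $\ell$) $x$-derivatives of $f_1$ in $L^\infty_x$ (controlled by $\|f_1\|_{L^{\infty,k}_vW^{\ell,\infty}_x}$) and the remaining derivatives of $f_2$ in $L^2_x$, then integrate in $v$ using Lemma~\ref{lem:GammaL2v} with $f_1$-factor in $L^2_v(\langle v\rangle^k)$ (from the $L^{\infty,k}_v$ bound, since $k>d/2$) and $f_2$-factor in $L^2_v(\langle v\rangle)$ (absorbed into $\|f_2\|_{\ell,k}$ as $k>d/2+1$). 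Finally \eqref{estimUkai2}: $\|\Gamma(f_1,f_2)\|_{L^2_vL^1_x}$ — take $L^2_v$ first via Lemma~\ref{lem:GammaL2v} pointwise in $x$, then $L^1_x$ via Cauchy--Schwarz in $x$ using $H^\ell_x\hookrightarrow L^2_x$ and $\ell>d/2$ (so in fact $H^\ell_x\hookrightarrow L^\infty_x\cap L^2_x$, and the product $f_1(x,\cdot)f_2(x,\cdot)$-type bilinear form is in $L^1_x$); alternatively bound $L^1_x$ by $L^2_x$ on the whole space after noting we may as well prove the stronger $L^2_vL^2_x=H^0_xL^2_v$ bound, which is the $\ell=0$ case of~\eqref{estimUkai1}.

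The main obstacle is bookkeeping the velocity weights: Lemma~\ref{lem:GammaL2v} produces a loss of $\langle v\rangle$ on \emph{each} factor, so one must be careful to (i) use $\nu^{-1}\sim\langle v\rangle^{-1}$ on the left for one of them, and (ii) trade the $L^{\infty,k}_v$-norm of the other factor, using $k>d/2$ to sit inside $L^2_v$ and $k>d/2+1$ to keep one extra power of $\langle v\rangle$ in reserve; the asymmetric roles of the two arguments (one measured in $L^\infty_x$ resp. $L^{\infty,k}_v$, the other in $L^2$) must be chosen consistently in each of the three estimates. Everything else is routine application of the $H^\ell_x$ algebra property and Minkowski/Young in the Fourier variable $\xi$.
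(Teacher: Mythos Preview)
Your overall architecture is the right one and coincides with the paper's: use that $\Gamma$ is local in $x$, handle the $x$–variable by the $H^\ell$ algebra/Leibniz structure, and reduce to a pure velocity estimate. The paper in fact only proves~\eqref{eq:GammaHellxL2v} by hand and simply cites \cite[Lemma~4.5.1]{Ukai} for~\eqref{estimUkai1}--\eqref{estimUkai2}.

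There is, however, a genuine gap in your velocity bookkeeping. You repeatedly invoke Lemma~\ref{lem:GammaL2v} as the velocity estimate, but that lemma outputs an $L^2_v$ norm; it cannot feed the middle term of~\eqref{estimUkai1}, which is an $L^{\infty,k}_v$ quantity ($\|\Lambda^{-1}\Gamma\|_{X^{\ell,k}}=\sup_v\langle v\rangle^{k}\nu(v)^{-1}\|\Gamma(\cdot,v)\|_{H^\ell_x}$). Relatedly, your claim ``$f_1$ in $L^2_v(\langle v\rangle^k)$ from the $L^{\infty,k}_v$ bound, since $k>d/2$'' is false: $L^{\infty,k}_v\hookrightarrow L^2_v(\langle v\rangle^{j})$ only for $j<k-d/2$, so to place one factor in $L^2_v(\langle v\rangle^{k'})$ with $k'>d/2$ you would need $k>d$, which the hypothesis $k>d/2+1$ does not guarantee in $d=3$. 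Thus your route to~\eqref{eq:GammaHellxL2v} via Lemma~\ref{lem:GammaL2v} does not close in dimension~$3$ for $k\in(5/2,3]$.

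The fix is exactly what the paper does for~\eqref{eq:GammaHellxL2v}: exploit the \emph{nonnegativity} of the kernel in $\Gamma^\pm$ together with locality in $x$ to pull the spatial norms inside the collision integral,
\[
\|\Gamma^{\pm}(f_1,f_2)(\cdot,v)\|_{H^\ell_x}\ \lesssim\ \big|\Gamma^{\pm}\big(\|f_1(\cdot,v)\|_{W^{\ell,\infty}_x},\,\|f_2(\cdot,v)\|_{H^\ell_x}\big)\big|\,,
\]
and then use the classical Grad--Ukai \emph{pointwise-in-$v$} estimate $\|\nu^{-1}\Gamma^{\pm}(g,h)\|_{L^{\infty,k}_v}\lesssim\|g\|_{L^{\infty,k}_v}\|h\|_{L^{\infty,k}_v}$ (valid for $k>d/2+1$), which gives directly $\|\Lambda^{-1}\Gamma\|_{X^{\ell,k}}\lesssim\|f_1\|_{L^{\infty,k}_vW^{\ell,\infty}_x}\|f_2\|_{\ell,k}$ and hence~\eqref{eq:GammaHellxL2v} via the embedding $\Lambda^{-1}X^{\ell,k}\hookrightarrow H^\ell_xL^2_v$ you already identified. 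The same pointwise estimate, combined with the $H^\ell_x$ algebra property (in place of $W^{\ell,\infty}_x$ on one factor), yields the second inequality in~\eqref{estimUkai1}; and~\eqref{estimUkai2} follows by the positivity trick with $L^1_x$ replacing $H^\ell_x$ and Cauchy--Schwarz in~$x$. In short: replace Lemma~\ref{lem:GammaL2v} by the $L^{\infty,k}_v$ estimate on $\nu^{-1}\Gamma$ throughout.
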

\begin{proof}
Let us recall that $\Lambda^{-1} X^{\ell,k} \hookrightarrow H^\ell_xL^2_v$. Then, we write 
$$
\|\Lambda^{-1} \Gamma(f_1,f_2)\|_{\ell,k} = \sup_v \left( \langle v \rangle^{k-1} \|\Gamma(f_1,f_2)\|_{H^\ell_x} (v)\right)
$$
Using the quadratic form of the gain and the loss terms of the collision operator $\Gamma$ given in~(\ref{defGammapm}) and the fact that it is local in $x$, we notice that 
$$
\|\Gamma^{\pm}(f_1,f_2)\|_{H^\ell_x} \lesssim \Big|\Gamma^{\pm}\big(\|f_1\|_{W^{\ell,\infty}_x}, \|f_2\|_{H^{\ell}_x}\big)\Big| \, .
$$
As a consequence, we get 
\begin{align*}
\|\Gamma(f_1,f_2)\|_{H^\ell_xL^2_v} &\lesssim \big\|\Gamma^{+}\big(\|f_1\|_{W^{\ell,\infty}_x}, \|f_2\|_{H^{\ell}_x}\big)\big\|_{L^{\infty,k-1}_v} + \big\|\Gamma^{-}\big(\|f_1\|_{W^{\ell,\infty}_x}, \|f_2\|_{H^{\ell}_x}\big)\big\|_{L^{\infty,k-1}_v}  \\
&\lesssim \|f_1\|_{L^{\infty,k}_v W^{\ell,\infty}_x} \|f_2\|_{L^{\infty,k}_v H^\ell_x} \, .
\end{align*}
The other estimates are taken from~\cite[Lemma~4.5.1]{Ukai}. 
\end{proof}

\subsection{Results on the limit equation}\label{appendixNS}
\subsubsection{The Navier-Stokes-Fourier system}
The results used in this paper are summarized in the following statement, elements of proofs are given below.  Note that we
make no attempt at exhaustivity in this presentation, nor do we state the optimal results present in the literature (we refer among other references to~\cite{cheminSIAM,BCD,lemarie1,lemarie2} for more on the subject). 
\begin{prop}\label{propNSF}
Let~$\ell \geq d/2-1$.
Given~$ (\rho_{\rm in},u_{\rm in},\theta_{\rm in}) $ in~$ H^\ell(\Omega)$, there is  a unique maximal time~$T^*>0$ and  a unique solution~$ (\rho ,u ,\theta ) $ to~{\rm(\ref{NSF})} in~$ L^\infty([0,T],H^\ell(\Omega)) \cap L^2([0,T],H^{\ell+1}(\Omega))$ for all times~$T<T^*$.  It satisfies
\begin{equation}\label{estimateH1/2}
\begin{aligned}
& \|(\rho ,u ,\theta ) \|_{\tilde L^\infty\big([0,T],H^{\frac d2-1}(\Omega)\big) } +   \|(\nabla\rho ,\nabla u ,\nabla\theta ) \|_{L^2\big([0,T],H^{\frac d2-1}(\Omega)\big) }   \lesssim \| (\rho_{\rm in},u_{\rm in},\theta_{\rm in})\|_{H^{\frac d2-1}(\Omega)) }   \, ,
\end{aligned}
\end{equation}
and if~$\ell>d/2-1$
\begin{equation}\label{estimateHell}
\begin{aligned}
& \|(\rho ,u ,\theta ) \|_{\tilde L^\infty([0,T],H^\ell(\Omega)) } +   \|(\nabla\rho ,\nabla u ,\nabla\theta ) \|_{L^2([0,T],H^\ell(\Omega)) } \\
&\quad\qquad\quad \lesssim \| (\rho_{\rm in},u_{\rm in},\theta_{\rm in})\|_{H^\ell(\Omega)} \times \exp C   \|\nabla u\|^2_{L^2\big([0,T], H^{\frac d2-1}(\Omega)\big) } \, .
\end{aligned}
\end{equation}
Moreover if~$d=2$ then~$T^*=\infty$, and  if~$ (\rho_{\rm in},u_{\rm in},\theta_{\rm in}) $ lies in~$H^\ell\cap L^1(\Omega)$ then
  for any~$t >0$, 
 $$
 \begin{aligned}
 \|(\rho ,u ,\theta )(t)\|_{L^q(\Omega) } &\lesssim {1 \over\langle t \rangle^{1-\frac1q}} \, , \quad \forall \, 2 \le q < \infty
\\
 \big\|(D^\alpha \rho,D^\alpha u,D^\alpha \theta)(t)\big \|^2_{L^2(\Omega) }& \lesssim {1 \over \langle t \rangle^{1+|\alpha|}} \, , \quad \forall \, \alpha \in \N^2 \, , \quad  |\alpha| \le \ell \, ,
 \end{aligned}
 $$ 
 with~$D = \sqrt{-\Delta}$.
Similarly   if~$d=3$, if~$T^*=\infty$ and  if~$ (\rho_{\rm in},u_{\rm in},\theta_{\rm in}) $ lies in~$H^\ell\cap L^1(\Omega)$ 
then   for any~$t >0$, 
$$
\|(\rho ,u ,\theta ) (t)\|_{H^\ell(\R^3)}   \lesssim {1 \over \langle t \rangle^{\frac14}}\,  \cdotp
$$
Furthermore  if~$d=3$, there is a constant~$c>0$ such that if
$$
\|u_{\rm in}\|_{\dot H ^{\frac 12 }(\Omega)} \leq c
$$
then~$T^*=\infty$.    

\smallskip

\noindent Finally~{\rm(\ref{NSF})} is stable in the sense that if~$ (\rho_{\rm in},u_{\rm in},\theta_{\rm in}) $ in~$ H^\ell(\Omega)$ generates a   unique solution on~$[0,T]$ then there is~$c'>0$ (independent of~$T$ if~$ (\rho_{\rm in},u_{\rm in},\theta_{\rm in}) $ generates a global solution) such that any initial data in a ball of~$ H^\ell(\Omega)$  centered at~$ (\rho_{\rm in},u_{\rm in},\theta_{\rm in}) $ and of radius~$c'$ also generates a unique solution on~$[0,T]$.
\end{prop}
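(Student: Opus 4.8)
The plan is to exploit the special structure of~(\ref{NSF}): since the temperature~$\theta$ does not appear in the momentum equation, the velocity~$u$ solves the incompressible Navier-Stokes equations \emph{alone}, the temperature~$\theta$ then satisfies the linear advection-diffusion equation~$\partial_t\theta + u\cdot\nabla\theta - \mu_2\Delta\theta = 0$ with~$u$ as a given divergence-free drift, and~$\rho$ is recovered from~$\nabla(\rho+\theta)=0$ (so that~$\rho+\theta$ is spatially constant, and~$\rho=-\theta$ in the mean-free, respectively decaying, setting, consistently with~$\bar\theta_{\rm in}=-\bar\rho_{\rm in}$). Thus every assertion of the proposition reduces to the corresponding statement for Navier-Stokes, supplemented by the elementary energy analysis of a parabolic equation with divergence-free transport; I would only spell out the modifications due to the temperature equation and the density constraint.

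First I would establish local well-posedness: applying~$\mathbb P$ to eliminate the pressure and running a Friedrichs/Galerkin approximation (or, equivalently, a fixed-point argument in~$L^\infty_TH^\ell\cap L^2_TH^{\ell+1}$), one passes to the limit using parabolic smoothing and obtains uniqueness from an energy estimate on the difference of two solutions. The~$H^\ell$ bounds~(\ref{estimateHell}) come from testing the~$\Lambda^\ell$-differentiated equations against~$\Lambda^\ell u$ and~$\Lambda^\ell\theta$, absorbing the transport terms by integration by parts (the drift being divergence free) and by the tame product and commutator estimates derived from~(\ref{lem:prodHell})--(\ref{prodrules}), together with~(\ref{lem:algebra}) for the Chemin--Lerner norms; the scale-invariant estimate~(\ref{estimateH1/2}) at regularity~$\dot H^{d/2-1}$ closes using the product law~(\ref{prodrules}) without any smallness, and then~(\ref{estimateHell}) follows by a Gronwall argument in which the forcing of the higher norms is controlled by~$\|\nabla u\|_{L^2_TH^{d/2-1}}$. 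Global existence in dimension two is then immediate, since the~$L^2$ energy identity gives~$\|\nabla u\|_{L^2(\R^+;L^2)}<\infty$ and likewise~$\|\theta\|_{L^2}$ is non-increasing, so by~(\ref{estimateHell}) neither~$H^\ell$ norm can blow up; in dimension three the condition~$\|u_{\rm in}\|_{\dot H^{1/2}}\le c$ is handled by the classical Fujita--Kato fixed point in~$\dot H^{1/2}$, after which~$\theta$ is propagated by the linear equation.

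For the decay estimates I would use the Fourier-splitting method: one splits frequency space into the region~$|\xi|\lesssim\langle t\rangle^{-1/2}$ and its complement; the high-frequency part is damped by the heat dissipation, while the low-frequency part is controlled through the~$L^1\hookrightarrow L^\infty_\xi$ bound on~$\widehat{(\rho,u,\theta)}$, which persists because the quadratic nonlinearity is in divergence form. Iterating this yields the base~$L^2$ rate, then the~$L^q$ rates in~$2$D by interpolation with the high-norm bounds, and then the derivative rates~$\langle t\rangle^{-(1+|\alpha|)}$ by a derivative-by-derivative bootstrap (the low-frequency contribution of the nonlinear term being bounded by the already-established decay of lower norms), and similarly the~$H^\ell$ rate~$\langle t\rangle^{-1/4}$ in~$3$D; since~$\theta$ solves a linear divergence-form equation driven by~$u$, it inherits the same rates. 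Finally, stability follows from a Gronwall estimate on the difference of two solutions in~$L^\infty_TH^\ell\cap L^2_TH^{\ell+1}$, using the a priori bounds on the reference solution; the radius~$c'$ is independent of~$T$ when that solution is global because~(\ref{estimateHell}) is then uniform in~$T$.

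The main obstacle is the sharp decay part in dimension two: obtaining the algebraic rates on \emph{all} derivatives requires a careful iteration of the Fourier-splitting estimate, keeping track at each step of how the low-frequency part of the quadratic term is absorbed into the decay already proved for lower-order norms, and one must verify that the passively coupled scalar~$\theta$ genuinely inherits these rates. Everything else is standard parabolic and Navier-Stokes theory, and I would present it telegraphically, referring to the existing literature for the parts that are not affected by the temperature equation or the constraint~$\nabla(\rho+\theta)=0$.
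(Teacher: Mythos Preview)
Your plan is correct and matches the paper's approach almost exactly: the paper's ``proof'' is a sketch that cites the Navier--Stokes literature (Fujita--Kato, Chemin, Leray, Schonbek, Wiegner, Gallagher--Iftimie--Planchon) for every claim about~$u$ and then observes, as you do, that~$\theta$ and~$\rho$ obey a linear transport--diffusion equation and inherit the same bounds. Your proposal to run Fourier splitting explicitly is precisely what the cited references~\cite{Schonbek,Schonbek24,wiegner} do.

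One point worth flagging: for the $H^\ell$ decay rate~$\langle t\rangle^{-1/4}$ in three dimensions when the initial data is \emph{large} but the solution happens to be global, the paper does not rely on Fourier splitting alone. It invokes~\cite{GIP} to assert that any global strong solution in~$\dot H^{1/2}$ decays to zero in that norm (hence eventually becomes small), combines this with Schonbek's $H^\ell$ decay for suitable Leray solutions, and then uses weak--strong uniqueness to identify the two. Your direct Fourier-splitting bootstrap will produce the $L^2$ rate without difficulty, but to climb to~$H^\ell$ for large data you will implicitly need the eventual-smallness input of~\cite{GIP} (or an equivalent argument) to control the exponential in~(\ref{estimateHell}) on~$[T_0,\infty)$; you should make that step explicit rather than folding it into ``similarly''.
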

\noindent {\it Sketch of proof.} $ $  Let us start by considering the Navier-Stokes system. It is known since~\cite{fujitakato,cheminSIAM} that
  given~$ u_{\rm in} $ in~$ H^\ell(\Omega)$ with~$\ell \geq \displaystyle \frac d2-1$, there is a unique maximal time~$T^*>0$ and a unique associate solution~$u$  to the Navier-Stokes equations  in~$ L^\infty([0,T],H^\ell(\Omega)) \cap L^2([0,T],H^{\ell+1}(\Omega))$ for all times~$T<T^*$ (see~\cite{cheminlerner} for the case of~$\tilde L^\infty([0,T],H^\ell(\Omega)) \cap L^2([0,T],H^{\ell+1}(\Omega))$) satisfying~(\ref{estimateH1/2})-(\ref{estimateHell}).  Moreover, the solution $u$ is global in time if~$u_{\rm in}$ is small   in~$ \dot H^{\frac d2-1}(\Omega)$, and it satisfies  in that case $$
  \|u\|_{L^\infty \big(\R^+, H^{\frac d2-1}(\Omega) \big)} +   \|\nabla u\|_{L^2 \big(\R^+, H^{\frac d2-1}(\Omega) \big) } \leq   \|u_{\rm in}\|_{H^{\frac d2-1}(\Omega)} \, .
  $$
   On the other hand   if~$d=2$
  then global existence and uniqueness in~$ L^\infty(\R^+,L^2(\Omega)) \cap L^2(\R^+,\dot H^{1}(\Omega))$ (where the space~$\dot H^{1}(\Omega)$ is the homogeneous Sobolev space) holds unconditionnally (see~\cite{leray2D}).      
  \medskip
  
  Let us turn to the time decay properties. In the periodic case the mean free assumption implies that global solutions have exponential decay in time so let us consider the whole space case. 
 In two space dimensions it is proved in~\cite{wiegner} (see also~\cite{Schonbek24}) that if the initial data lies in~$L^2\cap L^1(\R^2)$ (whatever its size) then the solution decays in~$H^\ell(\R^2)$ as~$\langle t \rangle^{-\frac12}$, and moreover for any~$t >0$ and for all~$ \alpha \in \N^2 $ such that~$ |\alpha| \le \ell$, 
 $$
 \|D^\alpha u(t)\|^2_{L^2} \lesssim {1 \over \langle t \rangle^{1+|\alpha|}} \,  \cdotp
 $$ 
 The time decay in three space dimensions is due to the fact that    any global solution in~$\dot H^{\frac 12}(\Omega)$, regardless of the size of the initial data, decays to zero   in large times in~$\dot H^\frac 12 (\Omega)$  (see~\cite{GIP}). On the other hand it is known (\cite{Schonbek}) that
some Leray-type~\cite{leray} weak solutions associated with~$L^2$ initial data decay in~$H^\ell$ with the rate~$\langle t\rangle^{-\frac14}$, so weak strong uniqueness gives the result.
 The stability of solutions for short times is an easy computation, for large times it follows from the fact that large solutions become small in large times    (see~\cite{GIP,wiegner}).  

 \medskip
 
 Finally it is an easy matter to prove that the temperature~$\theta$ and the density~$\rho$, which solve a linear transport-diffusion equation, enjoy the same properties as~$u$. 
     \qed
     
        \subsubsection{The limit   equation}
Let us start by noticing   that if~$(\rho,u,\theta)$ belongs to~$ L^\infty([0,T],H^\ell(\Omega)) $ and if~$\nabla (\rho,u,\theta)$ belongs to~$L^2([0,T],H^{\ell }(\Omega))$, then clearly 
$$
g
(t,x,v) := M^\frac12(v)\big(\rho (t,x) + u (t,x) \cdot v + \frac12(|v|^2- d) \theta(t,x) \big)
$$
belongs to~$ L^\infty([0,T],X^{\ell,k}) $ and~$\nabla g$ belongs to~$L^2([0,T],X^{\ell ,k})$ for all~$k\geq 0$. Similarly all the results stated in Proposition~\ref{propNSF} are easily extended to~$g $ in the space~$X^{\ell,k}$.
Moreover it will be useful in the following to
 remark that~$g$ is of the following form:
\begin{equation} \label{eq:gp12}
g(t,x,v) = \sum_{p=1}^{d+2} g_{p }(t,x) \widetilde g _{p }(v) M^\frac12(v)
\end{equation}
where $\widetilde g _{p}(v)$ is polynomial in $v$. 

\medskip
   In what follows, when it is not mentioned, the Lebesgue norms in time (denoted~$L^p_t$) are taken on $\R^+$ if the solutions of~\eqref{NSF} are global in time or in $[0,T]$ for any $T<T^*$ if $T^*$ is the maximal time of existence of solutions.

\medskip

The following statement is an immediate consequence of Proposition~\ref{propNSF}.
    \begin{lem} \label{lem:propNS}
Let $\Omega = \T^d$ or $\R^d$ with $d = 2,3$, and set~$\ell\geq d/2-1$ For any $1 \le p \le d+2$,   there holds
\begin{enumerate}[(i)]
\item $g_p \in \tilde L^\infty_t H^\ell $,
\item $\nabla g_p \in L^2_t  H^{\ell }$,
{\item $g_p \in L^2_t  H^{\ell+1} $ if $\Omega = \T^d$ from the Poincar\'e inequality}.
\end{enumerate}
Moreover if $\Omega=\R^2$ then for any $1 \le p \le 4$ and any $t \ge 1$,   there holds
$$
\|g_p(t,\cdot)\|_{L^q} \lesssim {1 \over\langle t \rangle^{1-\frac1q}} \, , \quad \forall \, 2 \le q < \infty
$$
and 
$$
\|D^\alpha g_p(t,\cdot)\|^2_{L^2} \lesssim {1 \over \langle t \rangle^{1+|\alpha|}} \, , \quad \forall \, \alpha \in \N^2 \, , \quad \, |\alpha| \le \ell. 
$$
\end{lem}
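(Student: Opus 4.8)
The statement to prove is Lemma~\ref{lem:propNS}, which asserts that the scalar components $g_p$ appearing in the decomposition~\eqref{eq:gp12} of $g$ inherit all the regularity and decay properties of $(\rho,u,\theta)$ recorded in Proposition~\ref{propNSF}. Since the decomposition~\eqref{eq:gp12} writes $g$ as a finite sum of products $g_p(t,x)\widetilde g_p(v)M^{1/2}(v)$ with $\widetilde g_p$ polynomial in $v$, each $g_p$ is, up to a fixed linear change of unknowns (inverting the $(d+2)\times(d+2)$ velocity-moment matrix built from $1,v,|v|^2$ against $M^{1/2}$), one of the components of $(\rho,u,\theta)$ — equivalently a component of the velocity field $u$, of $\rho$, or of $\theta$. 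So the proof is entirely a matter of bookkeeping: read off from Proposition~\ref{propNSF} the corresponding statement for $(\rho,u,\theta)$ and note that it transfers verbatim to each $g_p$ because the map $(\rho,u,\theta)\mapsto(g_1,\dots,g_{d+2})$ is a fixed linear isomorphism (independent of $t,x$) that is bounded in every space considered.

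\textbf{Key steps.} First, I would make explicit that the coefficients $g_p$ are linear combinations, with constant coefficients, of $\rho$, the components $u_1,\dots,u_d$ of $u$, and $\theta$; this follows by testing $g$ against $M^{1/2}$, $v_iM^{1/2}$ and $(|v|^2-d)M^{1/2}$ and inverting the resulting invertible Gram-type system (this is exactly the computation underlying the definitions~\eqref{bargetageta} of $\rho_{\rm in}^\eta,u_{\rm in}^\eta,\theta_{\rm in}^\eta$ from $g_{\rm in}^\eta$). Second, for $(i)$ and $(ii)$: Proposition~\ref{propNSF} gives $(\rho,u,\theta)\in\tilde L^\infty_tH^\ell$ and $\nabla(\rho,u,\theta)\in L^2_tH^\ell$ (using the improved regularity~\eqref{estimateHell} for $\ell>d/2-1$ and the borderline~\eqref{estimateH1/2} for $\ell=d/2-1$), hence the same holds for each $g_p$. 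Third, for $(iii)$: on $\T^d$ the unknowns are mean free, so $g_p$ is mean free, and the Poincaré inequality upgrades $\nabla g_p\in L^2_tH^\ell$ to $g_p\in L^2_tH^{\ell+1}$. Fourth, for the decay statements when $\Omega=\R^2$: Proposition~\ref{propNSF} states precisely $\|(\rho,u,\theta)(t)\|_{L^q}\lesssim\langle t\rangle^{-(1-1/q)}$ for $2\le q<\infty$ and $\|(D^\alpha\rho,D^\alpha u,D^\alpha\theta)(t)\|_{L^2}^2\lesssim\langle t\rangle^{-(1+|\alpha|)}$ for $|\alpha|\le\ell$, under the hypothesis that the data lie in $H^\ell\cap L^1$ — which is assumed throughout for the $\R^2$ case (cf.~\eqref{deltainetaR2} and the statement of Theorem~\ref{mainthmwp}). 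Again these transfer linearly to $g_p$.

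\textbf{Expected obstacle.} There is essentially no obstacle: the lemma is flagged in the excerpt as ``an immediate consequence of Proposition~\ref{propNSF}'', and the only thing to be careful about is the linear-algebra step identifying $g_p$ with a constant-coefficient combination of $(\rho,u,\theta)$, together with checking that this combination respects the mean-free condition on $\T^d$ (it does, since it is linear with constant coefficients) and that the $L^1\cap H^\ell$ hypothesis needed for the $\R^2$ decay is in force. Thus the proof is a short paragraph: invoke~\eqref{eq:gp12}, observe the linear correspondence with $(\rho,u,\theta)$, and quote the corresponding lines of Proposition~\ref{propNSF} for each of $(i)$--$(iii)$ and for the two-dimensional decay estimates.
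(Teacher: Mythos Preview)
Your proposal is correct and matches the paper's approach: the paper simply states that Lemma~\ref{lem:propNS} is ``an immediate consequence of Proposition~\ref{propNSF}'' and gives no further argument. One minor simplification: from the explicit form~\eqref{defg0solution} the $g_p$ are \emph{exactly} $\rho,u_1,\dots,u_d,\theta$ (with $\widetilde g_p$ equal to $1,v_1,\dots,v_d,\tfrac12(|v|^2-d)$), so no Gram-matrix inversion is needed---the transfer of estimates is literal rather than up to a linear change of variables.
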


\medskip

The properties recalled above on the Navier-Stokes equations imply in particular the following results. In the rest of this section we consider~$\ell>d/2$. We define
$$
\gamma_1(t,\xi):=\big\| \widehat {\Gamma}(g,g)({t},\xi)\big\|_{L^2_v}\quad \text{and} \quad 
{\gamma_2}({t},\xi) :=  \|  \partial_{t}  \widehat  {\Gamma}(g,g)(t,\xi) \|_{L^2_v} \, .
$$
Let us prove the following lemma.
\begin{lem} \label{lem:gamma&tildegamma} 
Let $\Omega= \T^d$ or $\R^d$ for $d= 2,3$. There holds
{ \begin{enumerate}[(i)]
\item  $  \gamma_1   \in \tilde L^\infty_tL^2_\xi(\langle \xi \rangle^\ell) $ and $\|\Gamma(g,g)\|_{\ell,k} \in L^\infty_t$, 
 \item $\gamma_2\in L^2_t L^2_\xi(\langle \xi \rangle^{\ell-1})$ and
 $\mathcal{F}_x^{-1} \gamma_2 \in  L^\frac43_{t} \dot H^{-\frac12}_{x}$ if~$d=3$.
 \end{enumerate}}
\end{lem}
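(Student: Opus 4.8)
### Proof Proposal for Lemma~\ref{lem:gamma&tildegamma}

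\textbf{Overall approach.} The plan is to reduce both statements to product estimates in Sobolev spaces, using the bilinear structure $\Gamma(g,g)$ together with the explicit form \eqref{eq:gp12} of $g$. Recall from Lemma~\ref{lem:GammaLinftyv} that $\Gamma(M^{1/2}\widetilde g_p, M^{1/2}\widetilde g_q)$ lies in $L^{\infty,k}_v$ for every $k\geq 0$, and $\Gamma$ is local in $x$. Hence, writing $g(t,x,v)=\sum_{p} g_p(t,x)\widetilde g_p(v)M^{1/2}(v)$, bilinearity gives
$$
\Gamma(g,g)(t,x,v) = \sum_{p,q} g_p(t,x)\,g_q(t,x)\,\Gamma\big(M^{1/2}\widetilde g_p, M^{1/2}\widetilde g_q\big)(v)\,,
$$
so that $\|\Gamma(g,g)(t,x,\cdot)\|_{L^{\infty,k}_v} \lesssim \sum_{p,q}|g_p(t,x)||g_q(t,x)|$, and likewise with $L^2_v$ in place of $L^{\infty,k}_v$ up to a constant. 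The $v$-variable therefore completely factors out, and everything reduces to estimates on the scalar functions $g_p g_q$ and $\partial_t(g_p g_q)$ in $x$-Sobolev spaces, which are controlled by Proposition~\ref{propNSF} (via Lemma~\ref{lem:propNS}) and the product rules \eqref{lem:algebra}, \eqref{lem:prodHell}, \eqref{prodrules}.

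\textbf{Step 1: proof of $(i)$.} Since $\|\widehat{\Gamma(g,g)}(t,\xi)\|_{L^2_v}\lesssim \sum_{p,q}|\widehat{g_pg_q}(t,\xi)|$, we get $\|\gamma_1(t,\cdot)\|_{L^2_\xi(\langle\xi\rangle^\ell)}\lesssim \sum_{p,q}\|g_p g_q\|_{H^\ell_x}$, and with the $\widetilde L^\infty_t$ norm, $\|\gamma_1\|_{\widetilde L^\infty_t L^2_\xi(\langle\xi\rangle^\ell)}\lesssim \sum_{p,q}\|g_p g_q\|_{\widetilde L^\infty_t H^\ell_x}$. By the algebra estimate \eqref{lem:algebra} (valid since $\ell>d/2$), $\|g_p g_q\|_{\widetilde L^\infty_t H^\ell}\lesssim \|g_p\|_{\widetilde L^\infty_t H^\ell}\|g_q\|_{\widetilde L^\infty_t H^\ell}$, which is finite by Lemma~\ref{lem:propNS}$(i)$. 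For the second part, $\|\Gamma(g,g)(t)\|_{\ell,k}= \sup_v\langle v\rangle^k\|\Gamma(g,g)(t,\cdot,v)\|_{H^\ell_x}$; using again the factorization and $\Gamma(M^{1/2}\widetilde g_p,M^{1/2}\widetilde g_q)\in L^{\infty,k}_v$, we bound $\|\Gamma(g,g)(t)\|_{\ell,k}\lesssim \sum_{p,q}\|g_pg_q\|_{H^\ell_x}\lesssim \sum_{p,q}\|g_p\|_{H^\ell_x}\|g_q\|_{H^\ell_x}$, uniformly in $t$ by Lemma~\ref{lem:propNS}$(i)$ (here one uses the inhomogeneous $H^\ell$ bound, which on $\T^d$ follows from mean-freeness and on $\R^d$ from $L^\infty_t H^\ell$ control).

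\textbf{Step 2: proof of $(ii)$.} Write $\partial_t\Gamma(g,g)=2\Gamma(\partial_t g,g)$ by symmetry, so $\gamma_2(t,\xi)\lesssim \sum_{p,q}|\widehat{(\partial_t g_p)\,g_q}(t,\xi)|$. For the first claim, $\|\gamma_2\|_{L^2_tL^2_\xi(\langle\xi\rangle^{\ell-1})}\lesssim\sum_{p,q}\|(\partial_t g_p) g_q\|_{L^2_t H^{\ell-1}_x}$; by the product rule \eqref{lem:prodHell} with $m=\ell-1\geq 0$, $\|(\partial_t g_p)g_q\|_{H^{\ell-1}}\lesssim \|\partial_t g_p\|_{H^{\ell-1}}\|g_q\|_{H^\ell}+\|\partial_t g_p\|_{H^\ell}\cdot(\text{lower order})$, but it is cleaner to use the equation: $\partial_t g_p = \mu\Delta g_p - \text{(transport terms)}$ schematically from \eqref{NSF}, so $\partial_t g_p\in L^2_t H^{\ell-1}$ follows from $\nabla g_p\in L^2_t H^\ell$ (Lemma~\ref{lem:propNS}$(ii)$) for the diffusion term and from the algebra property for the quadratic transport term $u\cdot\nabla g_p$. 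Combining with $g_q\in L^\infty_t H^\ell$ and applying Hölder in time gives $(\partial_t g_p)g_q\in L^2_t H^{\ell-1}$. For the second claim (case $d=3$), $\|\mathcal F_x^{-1}\gamma_2\|_{\dot H^{-1/2}_x}\lesssim \sum_{p,q}\|(\partial_t g_p)g_q\|_{\dot H^{-1/2}_x}$; by the product rule \eqref{prodrules} with $s=1,\ t=1$ (so $s+t-d/2 = -1/2$ in $d=3$), $\|(\partial_t g_p)g_q\|_{\dot H^{-1/2}}\lesssim \|\partial_t g_p\|_{\dot H^1}\|g_q\|_{\dot H^1}$. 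Then one invokes the parabolic maximal regularity / energy bound from Proposition~\ref{propNSF}: $\nabla g_q\in L^2_t\dot H^{\ell}$ and the equation gives $\partial_t g_p\in L^2_t\dot H^{\ell-1}\subset L^2_t\dot H^1$ when $\ell>d/2$; the precise time-integrability exponent $L^{4/3}_t$ comes from interpolating $\partial_t g_p\in L^2_t\dot H^1$ against $g_q\in L^\infty_t\dot H^1\cap L^2_t\dot H^2$ via the bound $\|(\partial_t g_p)g_q\|_{\dot H^{-1/2}}\lesssim\|\partial_t g_p\|_{\dot H^1}\|g_q\|_{\dot H^1}$ and Hölder $\tfrac1{4/3}=\tfrac12+\tfrac{?}{}$, i.e. one needs $\|g_q\|_{\dot H^1}\in L^4_t$, which holds by interpolation between $L^\infty_t\dot H^1$ and $L^2_t\dot H^2$ combined with Gagliardo--Nirenberg.

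\textbf{Main obstacle.} The delicate point is the endpoint time-integrability in $(ii)$, i.e. obtaining exactly $L^{4/3}_t$ for $\mathcal F_x^{-1}\gamma_2$ in $d=3$ rather than a worse exponent. This requires carefully combining the parabolic smoothing $\nabla g\in L^2_t H^\ell$ with the structure of $\partial_t g$ from the Navier-Stokes-Fourier system \eqref{NSF} and an interpolation argument; the quadratic terms $u\cdot\nabla g_p$ in $\partial_t g_p$ must be handled so as not to lose time-integrability, using the algebra property in $H^{\ell-1}$ and Hölder in $t$ with the right exponents. Everything else is a routine application of the product estimates \eqref{lem:algebra}--\eqref{prodrules} and the decay/energy bounds of Proposition~\ref{propNSF}.
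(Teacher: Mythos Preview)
Your treatment of part $(i)$ is correct and coincides with the paper's: factorize through \eqref{eq:gp12}, invoke Lemmas~\ref{lem:GammaL2v}--\ref{lem:GammaLinftyv}, and reduce to the algebra estimate \eqref{lem:algebra} for $g_pg_q$. Likewise, for the first claim in $(ii)$ your strategy is sound; the paper differs only in that it substitutes the equation \emph{before} taking the product, writing $(\partial_t g_p)g_q$ as $(\Delta g_p)g_q$ plus trilinear terms and then using the Leibniz identity $\Delta g_p\, g_q=\operatorname{div}(\nabla g_p\, g_q)-\nabla g_p\cdot\nabla g_q$ so that \eqref{lem:prodHell} applies with only quantities in $H^\ell$ or $H^{\ell-1}$ already controlled by Lemma~\ref{lem:propNS}. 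Your route instead needs the asymmetric product bound $\|fh\|_{H^{\ell-1}}\lesssim\|f\|_{H^{\ell-1}}\|h\|_{H^\ell}$, which is true for $\ell>d/2$ but is not exactly \eqref{lem:prodHell}.

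The genuine gap is in the $d=3$ endpoint claim $\mathcal F_x^{-1}\gamma_2\in L^{4/3}_t\dot H^{-1/2}_x$. Two points fail. First, your application of \eqref{prodrules} with $s=t=1$ gives $s+t-\tfrac d2=\tfrac12$, not $-\tfrac12$, so the inequality $\|(\partial_t g_p)g_q\|_{\dot H^{-1/2}}\lesssim\|\partial_t g_p\|_{\dot H^1}\|g_q\|_{\dot H^1}$ does not follow. Second, even granting that bound, the inclusion $\dot H^{\ell-1}\subset\dot H^{1}$ you invoke requires $\ell\ge 2$, whereas the hypothesis is only $\ell>\tfrac32$; for $\ell$ close to $\tfrac32$ you cannot place $\partial_t g_p$ (via $\Delta g_p\in L^2_tH^{\ell-1}$) into $L^2_t\dot H^1$. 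The paper avoids both issues by again substituting the equation and using the Leibniz trick on $(\Delta g_p)g_q$, then applying \eqref{prodrules} with carefully chosen exponents that stay within the available regularity for all $\ell>\tfrac32$: for instance $\|\nabla g_p\nabla g_q\|_{\dot H^{-1/2}}\lesssim\|\nabla g_p\|_{\dot H^{1/2}}\|\nabla g_q\|_{\dot H^{1/2}}$ and $\|\nabla g_p\,g_q\|_{\dot H^{1/2}}\lesssim\|\nabla g_p\|_{\dot H^{7/6}}\|g_q\|_{\dot H^{5/6}}$, combined via H\"older in time ($\tfrac12+\tfrac14=\tfrac34$) with $\nabla g\in L^2_t\dot H^{1/2}\cap L^4_t\dot H^{1/2}$ and $g\in L^4_t\dot H^{5/6}$, all of which follow from Lemma~\ref{lem:propNS} by interpolation. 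The trilinear term is handled analogously with $L^{8/5}_tL^2_x$ and $L^8_t\dot H^1_x$. You should rework this part along these lines.
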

\begin{proof} 
For $(i)$, using the form of $g$ given in~\eqref{eq:gp12} together with Lemma~\ref{lem:GammaL2v}, we remark that
 \begin{equation}\label{lem:hatGamma(i)}
\gamma_1(t,\xi) = \big\| \widehat {\Gamma}(g,g)({t},\xi)\big\|_{L^2_v}  \lesssim \sum_{p,q=1}^{d+2} |\mathcal{F}_x({g_{p }g_{q }})(t,\xi)|  \, ,
\end{equation}
and similarly,
\begin{equation} \label{lem:hatGamma(i)bis}
\|\Gamma(g,g)(t)\|_{\ell,k}\lesssim \sum_{p,q=1}^{d+2} \|(g_pg_q)(t)\|_{H^\ell_x} \|\Gamma(M^{\frac12} \widetilde g_p, M^{\frac12} \widetilde g_q)\|_{L^{\infty,k}_v} \lesssim \sum_{p,q=1}^{d+2} \|(g_pg_q)(t)\|_{H^\ell_x} 
\end{equation} 
from Lemma~\ref{lem:GammaLinftyv}. 
So to prove $(i)$, it is enough to prove that $g_pg_q \in \tilde L^\infty_tH^\ell $ for any $p$ and~$q$. And this is actually immediate using~(\ref{lem:algebra}) and the fact that every~$g_{p} $ is in $\tilde L^\infty_tH^\ell$ from Lemma~\ref{lem:propNS}. {The second part is then obvious since $g_pg_q \in \tilde L^\infty_tH^\ell $ implies that $g_pg_q \in L^\infty_tH^\ell$.}

\smallskip

Now let us turn to $(ii)$. From
 \begin{equation}\label{lem:hatGamma(ii)}
 \big\|\partial_t\widehat {\Gamma}(g,g)({t},\xi)\big\|_{L^2_v}  \lesssim\sum_{p,q=1}^{d+2} |\mathcal{F}_x({(\partial_t g_{p }) g_{q }})(t,\xi)| 
\end{equation}
 it is enough to prove estimates on $(\partial_t g_p)g_p$ for any~$p$ and $q$. 
Using the equation satisfied by~$g_p$ and recalling that~${\mathbb P}$ is the Leray projector onto divergence free vector fields
we find that
$$
\sum_{p,q=1}^{d+2} \|(\partial_t g_{p }) g_{q }\|_{H^{\ell-1}} \lesssim\sum_{p,q,r=1}^{d+2}\Big(   \|\Delta g_p g_q\|_{H^{\ell-1} }  + \|{\mathbb P}(g_p \cdot\nabla g_r )g_q\|_{H^{\ell-1} }\Big)\,.
$$
On the one hand  we have 
\begin{equation} \label{eq:Deltagg}
\begin{aligned}
&\|\Delta g_p g_q\|_{H^{\ell-1} } \\
&\quad \lesssim \|\nabla g_p \nabla g_q\|_{H^{\ell-1} } + \|\nabla g_p g_q\|_{H^\ell} \\
&\quad \lesssim \|\nabla g_p\|_{H^{\ell-1} } \|\nabla g_q\|_{H^\ell} + \|\nabla g_p\|_{H^\ell} \|\nabla g_q\|_{H^{\ell-1} } + \|\nabla g_p\|_{H^\ell} \|g_q\|_{H^\ell}\, , 
\end{aligned}
\end{equation}
where we used~(\ref{lem:prodHell}) to get the second inequality. 
We conclude for this term using that for any~$p$, we have $g_p \in L^\infty_t H^\ell$ and $\nabla g_p \in L^2_t H^\ell $. Then, the terms of the form~${\mathbb P}(g_p \cdot\nabla g_r )g_q$ are treated crudely bounding the $H^{\ell-1} $ norm by the $H^\ell$ one. We thus have
$$
\begin{aligned}
\|{\mathbb P}(g_p \cdot\nabla g_r )g_q\|_{L^2_t H^{\ell-1} } &\lesssim \|g_p\cdot\nabla g_r\|_{L^2_t H^\ell }  \|g_q\|_{L^\infty_t H^\ell}\\
 &\lesssim \| g_p\|_{L^\infty_t H^\ell} \| \nabla g_r\|_{L^2_t H^\ell}  \|g_q\|_{L^\infty_t H^\ell}  \, ,
\end{aligned}
$$
which ends the first part of the proof. To prove the second one we start by noticing that thanks to~(\ref{prodrules})
$$
\begin{aligned}
\| \Delta g_p g_q\|_{L^\frac43_{t} \dot H^{-\frac12}_{x} }& \lesssim  \|\nabla g_p \nabla g_q\|_{L^\frac43_{t} \dot H^{-\frac12}_{x}} + \|\nabla g_p g_q\|_{L^\frac43_{t} \dot H^{ \frac12}_{x} }\\
& \lesssim  \|\nabla g_p\|_{L^2_{t} \dot H^{ \frac12}_{x}}   \| \nabla g_q\|_{L^4_{t} \dot H^{ \frac12}_{x}}  +   \|\nabla g_p\|_{L^2_{t} \dot H^{ \frac76}_{x}}   \|   g_q\|_{L^4_{t} \dot H^{ \frac56}_{x}}
\end{aligned}
$$
which is bounded thanks to Lemma~\ref{lem:propNS}. Similarly
$$
\begin{aligned}
\|{\mathbb P}(g_p \cdot\nabla g_r )g_q\|_{L^\frac43_{t} \dot H^{-\frac12}_{x}  } & \lesssim  \|  {\mathbb P}(g_p \cdot\nabla g_r )\|_{L^\frac85_{t} L^2_{x}}     \|    g_q\|_{L^8 _{t} \dot H^{ 1}_{x} }\\
& \lesssim   \|  \nabla g_r\|_{L^2_{t} \dot H^{ \frac12}_{x}}   \|  g_p \|_{L^8 _{t} \dot H^{ 1}_{x} }  \|  g_q \|_{L^8 _{t} \dot H^{ 1}_{x} }\,,
\end{aligned}
$$
which is also bounded thanks to Lemma~\ref{lem:propNS}.  \end{proof}

\begin{lem} \label{lem:gamma&tildegammaT2}
Let $\Omega = \T^2$, then {$\mathcal{F}_x^{-1}(\gamma_2)  \in L^1_{t,x}$}. 
\end{lem}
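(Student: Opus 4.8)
The plan is to exploit two features of the torus that fail on $\R^2$: the finiteness of the measure of $\T^2$, which lets us bound the $L^1_x$ norm by the $L^2_x$ norm, and the extra regularity $g_p\in L^2_tH^{\ell+1}_x$ coming from the Poincar\'e inequality for mean-free functions (Lemma~\ref{lem:propNS}). First I would reduce the statement to an $H^{\ell-1}_x$ bound on the quadratic quantities $(\partial_t g_p)\,g_q$. Starting from~\eqref{lem:hatGamma(ii)}, which gives $\gamma_2(t,\xi)\lesssim \sum_{p,q}|\mathcal F_x((\partial_t g_p)g_q)(t,\xi)|$ with a constant independent of $\xi$, I would use Cauchy--Schwarz on the bounded set $\T^2$ and Plancherel to get $\|\mathcal F_x^{-1}(\gamma_2)(t)\|_{L^1_x}\lesssim \|\gamma_2(t,\cdot)\|_{L^2_\xi}$, and then, since $\ell>d/2=1$ implies $\langle\xi\rangle^{\ell-1}\ge1$, bound this by $\|\gamma_2(t,\cdot)\|_{L^2_\xi(\langle\xi\rangle^{\ell-1})}\lesssim \sum_{p,q}\|(\partial_t g_p)\,g_q(t)\|_{H^{\ell-1}_x}$.

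It then remains to show that $\sum_{p,q}\|(\partial_t g_p)\,g_q\|_{H^{\ell-1}_x}$ is integrable in time over $\R^+$. Here I would reuse the decomposition already carried out in the proof of Lemma~\ref{lem:gamma\&tildegamma}$(ii)$: substituting the equation for $g_p$ and using that $\mathbb P$ is bounded on every $H^s_x$, one has $\sum_{p,q}\|(\partial_t g_p)g_q\|_{H^{\ell-1}}\lesssim \sum_{p,q,r}\big(\|\Delta g_p\,g_q\|_{H^{\ell-1}}+\|\mathbb P(g_p\cdot\nabla g_r)\,g_q\|_{H^{\ell-1}}\big)$, with $\|\Delta g_p\,g_q\|_{H^{\ell-1}}$ controlled as in~\eqref{eq:Deltagg} and the nonlinear term bounded crudely by $\|g_p\cdot\nabla g_r\|_{H^{\ell-1}}\|g_q\|_{H^\ell}\lesssim \|g_p\|_{H^\ell}\|g_r\|_{H^\ell}\|g_q\|_{H^\ell}$ via the product rule. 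A single Cauchy--Schwarz in time then turns the $\int_0^\infty(\cdot)\,dt$ into a finite sum of products of the form $\|g_p\|_{L^\infty_tH^\ell_x}$, $\|\nabla g_p\|_{L^2_tH^\ell_x}$ and $\|g_p\|_{L^2_tH^\ell_x}$, each of which is finite on $\T^2$ by Lemma~\ref{lem:propNS} --- crucially $g_p\in L^2_tH^{\ell+1}_x\subset L^2_tH^\ell_x$, which is where the Poincar\'e inequality enters.

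The only delicate point --- and the reason this is a separate statement from Lemma~\ref{lem:gamma\&tildegamma}$(ii)$, which only yields an $L^2_t$ bound --- is making the time integrability \emph{global}: on $\R^2$ one merely knows $\nabla g_p\in L^2_t$ and not $g_p\in L^2_tH^\ell_x$ (the low frequencies are not controlled), so the factor $\|\nabla g_r\|_{H^\ell}\in L^2_t$ cannot be paired into an $L^1_t$ integral; on $\T^2$, Poincar\'e supplies the missing $L^2_tH^\ell_x$ control of $g_q$ itself, which closes the estimate. (One could alternatively invoke the exponential-in-time decay of mean-free solutions recalled in Proposition~\ref{propNSF}, but the $L^2_tH^{\ell+1}_x$ route is cleaner and needs no decay.) Beyond this, I expect only routine bookkeeping of Sobolev exponents, with no genuine obstacle.
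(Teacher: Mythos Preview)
Your approach is correct and rests on the same key idea as the paper: on $\T^2$ the Poincar\'e inequality upgrades $\nabla g_p\in L^2_tH^\ell_x$ to $g_p\in L^2_tH^{\ell+1}_x$, which supplies the extra $L^2_t$ factor needed to close an $L^1_t$ bound. The paper, however, takes a shorter route: instead of passing through $H^{\ell-1}_x$ and reusing the product estimates of Lemma~\ref{lem:gamma&tildegamma}$(ii)$, it applies Cauchy--Schwarz directly in $(t,x)$ to get $\|(\partial_t g_p)\,g_q\|_{L^1_{t,x}}\le\|\partial_t g_p\|_{L^2_{t,x}}\|g_q\|_{L^2_{t,x}}$, and then shows $\partial_t g_p\in L^2_{t,x}$ and $g_q\in L^2_{t,x}$ separately. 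Your detour through $\|\gamma_2\|_{L^2_\xi(\langle\xi\rangle^{\ell-1})}$ is harmless but unnecessary --- you could stop at $\|\gamma_2\|_{L^2_\xi}$ and work in $L^2_x$ throughout, which is exactly what the paper does.

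One small slip: your bound $\|g_p\cdot\nabla g_r\|_{H^{\ell-1}}\lesssim\|g_p\|_{H^\ell}\|g_r\|_{H^\ell}$ is not quite right; the product rule~\eqref{lem:prodHell} gives $\|g_p\|_{H^{\ell-1}}\|\nabla g_r\|_{H^\ell}+\|g_p\|_{H^\ell}\|\nabla g_r\|_{H^{\ell-1}}$, so one factor should be $\|\nabla g_r\|_{H^\ell}$ (or $\|g_r\|_{H^{\ell+1}}$). This does not affect the conclusion, since on $\T^2$ both $\|\nabla g_r\|_{H^\ell}$ and $\|g_q\|_{H^\ell}$ lie in $L^2_t$ and H\"older still yields $L^1_t$.
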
 
\begin{proof}
As previously, it is enough to get estimates on norms of $(\partial_t g_p)g_q$ for any $p$, $q$. First, using the Cauchy-Schwarz inequality, we have:
$$
\|(\partial_t g_{p} ) g_{q}  \|_{L^1_{t,x}} \lesssim \|\partial_t g_{p} \|_{L^2_{t,x}} \| g_{q} \|_{L^2_{t,x}} \, .
$$
{Moreover, since $\Omega = \T^2$,  we have that $g_q \in L^2_{t,x}$ for any $q$ from Lemma~\ref{lem:propNS}}. Concerning the~$L^2_{t,x}$-norm of $\partial_t g_{p} $, 
as above we use the fact that we can replace a control on~$\partial_t g_{p}$ by a control on~$\Delta g_p$ and on~${\mathbb P}(g_p \cdot\nabla g_r )$. The term~$\Delta g_{p} $ is clearly in $L^2_{t,x}$ since $\nabla g_{p}  \in L^2_t H^{\ell}$ with~$\ell>d/2=1$ from Lemma~\ref{lem:propNS}. The terms ${\mathbb P}(g_p \cdot\nabla g_r )$ can be treated as follows since $H^\ell$ is an algebra:
$$
\|{\mathbb P}(g_p \cdot\nabla g_r )\|_{L^2_{t,x}} \lesssim \|g_p \nabla g_r\|_{L^2_{t} H^\ell} 
\lesssim \|g_p\|_{L^\infty_t H^\ell} \|\nabla g_r\|_{L^2_t H^\ell } \, .
$$
The lemma follows.
\end{proof}

\begin{lem} \label{lem:gamma&tildegammaR2}
Let $\Omega = \R^2$. Then, for any $1 \le p, q \le 4$  there holds
{\begin{enumerate}[(i)]

\item For any $b \le 1/2$, $\langle t \rangle^b  |\xi| \gamma_1 \in L^2_tL^2_\xi(\langle \xi \rangle^\ell) $ and $ |\xi| \gamma_1  \in L^{\frac43}_tL^2_\xi(\langle \xi \rangle^\ell)$,
\item For any $b\le 1/2$, $\langle t \rangle^b  \gamma_2  \in L^2_t L^2_\xi(\langle \xi \rangle^{\ell-1}) $, for any $b \le 3/4$, $\langle t \rangle^b  \mathcal{F}_x^{-1}(\gamma_2) \in L^2_t L^{\frac43}_x$ and for any~$b<1/2$, $\langle t \rangle^b  \mathcal{F}_x^{-1}(\gamma_2)  \in L^{\frac43}_{t,x}$,
\item For any $b \le 1$, $\langle t \rangle^b  \gamma_1 \in  L^\infty_tL^2_\xi(\langle \xi \rangle^\ell)$ and $ \mathcal{F}_x^{-1}(\gamma_1)  \in L^\infty_t L^{\frac43}_x$. 
\end{enumerate}}
\end{lem}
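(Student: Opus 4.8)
The strategy is exactly the one already used in the proof of Lemma~\ref{lem:gamma&tildegamma}: reduce everything to bilinear products of the hydrodynamic profiles $g_p$ by means of the pointwise bounds~\eqref{lem:hatGamma(i)} and~\eqref{lem:hatGamma(ii)}, and then feed in the decay information on $(\rho,u,\theta)$ collected in Lemma~\ref{lem:propNS} (which is itself a transcription of Proposition~\ref{propNSF} to $\R^2$). The new ingredient compared to Lemma~\ref{lem:gamma&tildegamma} is that in two space dimensions all global solutions decay polynomially in time, with $\|g_p(t)\|_{L^q}\lesssim \langle t\rangle^{-(1-1/q)}$ and $\|D^\alpha g_p(t)\|_{L^2}^2\lesssim \langle t\rangle^{-(1+|\alpha|)}$, so we may afford to insert time weights $\langle t\rangle^b$.

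For~$(i)$: from~\eqref{lem:hatGamma(i)} we have $|\xi|\gamma_1(t,\xi)\lesssim \sum_{p,q}|\xi|\,|\mathcal F_x(g_pg_q)(t,\xi)|$, so $\big\||\xi|\gamma_1(t)\big\|_{L^2_\xi(\langle\xi\rangle^\ell)}\lesssim\sum_{p,q}\|g_pg_q(t)\|_{H^{\ell+1}}$. Bounding this product through~\eqref{lem:prodHell}, $\|g_pg_q\|_{H^{\ell+1}}\lesssim \|\nabla g_p\|_{H^\ell}\|g_q\|_{H^\ell}+\|g_p\|_{H^\ell}\|\nabla g_q\|_{H^\ell}$ (plus lower order terms absorbed the same way), and then using $g_p\in L^\infty_tH^\ell$ together with the decay $\|g_q(t)\|_{H^\ell}\lesssim\langle t\rangle^{-1/2}$ and $\|\nabla g_p(t)\|_{H^\ell}\lesssim\langle t\rangle^{-1/2}\cdot(\text{something }L^2_t)$ — more precisely $\nabla g_p\in L^2_tH^\ell$ and $\|\nabla g_p(t)\|_{H^\ell}\lesssim \langle t\rangle^{-1}$ pointwise from Lemma~\ref{lem:propNS} — one gets that $\langle t\rangle^{b}|\xi|\gamma_1\in L^2_tL^2_\xi(\langle\xi\rangle^\ell)$ for $b\le 1/2$ (the gain of $\langle t\rangle^{1/2}$ coming from the extra decay of the unweighted factor), and that $|\xi|\gamma_1\in L^{4/3}_tL^2_\xi(\langle\xi\rangle^\ell)$ since $\langle t\rangle^{-1/2}\cdot(\nabla g_p\text{-factor in }L^2_t)$ lands in $L^{4/3}_t$ by Hölder.

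For~$(ii)$ and~$(iii)$: the reasoning is identical, replacing $\gamma_1$ by $\gamma_2$ via~\eqref{lem:hatGamma(ii)}, i.e. estimating $\sum_{p,q}|\mathcal F_x((\partial_tg_p)g_q)|$, and using the equation for $g_p$ to trade $\partial_tg_p$ for $\Delta g_p$ and $\mathbb P(g_p\cdot\nabla g_r)$ exactly as in~\eqref{eq:Deltagg}. The estimate $\|\Delta g_pg_q\|_{H^{\ell-1}}$ is controlled by~\eqref{eq:Deltagg}, and the transport term by the crude bound $\|\mathbb P(g_p\cdot\nabla g_r)g_q\|_{H^{\ell-1}}\lesssim\|g_p\|_{H^\ell}\|\nabla g_r\|_{H^\ell}\|g_q\|_{H^\ell}$; inserting the $\R^2$ decay rates then gives the three weighted statements. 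For the $L^{4/3}_x$ and $L^{4/3}_{t,x}$ assertions one uses in addition the Sobolev embedding $L^{4/3}(\R^2)\hookrightarrow\dot H^{-1/2}(\R^2)$ (i.e. $\dot H^{1/2}(\R^2)\hookrightarrow L^4$ by duality) to pass between the Fourier-side weight $|\xi|^{-1/2}$ and the physical-side $L^{4/3}$ norm, together with the product rule~\eqref{prodrules} in homogeneous spaces; the time integrability in $L^2_t$ or $L^{4/3}_t$ is then read off from the $L^2_t$ membership of $\nabla g_p$ and the pointwise polynomial decay of the remaining factor, Hölder in time doing the bookkeeping.

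The only genuinely delicate point — and the one I expect to be the main obstacle — is keeping the time weights sharp: one must be careful that the total power of $\langle t\rangle$ extracted from the decay of the \emph{unweighted} factors in each bilinear product never exceeds what is needed, since the critical exponents ($b\le 1/2$, $b\le 3/4$, $b<1/2$, $b\le1$) are exactly at the edge of integrability in $L^2_t$ respectively $L^{4/3}_t$. In particular the strict inequality $b<1/2$ in the $L^{4/3}_{t,x}$ statement signals a logarithmic borderline, so one should organize the Hölder split in time (typically $\langle t\rangle^{b}\mathcal F_x^{-1}\gamma_2 = \langle t\rangle^{b-1/2}\cdot\langle t\rangle^{1/2}\mathcal F_x^{-1}\gamma_2$ with the second factor in $L^2_tL^{4/3}_x$ by~$(ii)$) so that the first factor lies in $L^{r}_t$ with $1/r = 3/4 - \text{(slack)}$, which forces $b<1/2$. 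No step requires anything beyond Lemma~\ref{lem:propNS}, the elementary product inequalities~\eqref{lem:prodHell}--\eqref{prodrules}, and the two-dimensional Sobolev embedding; the proof is therefore a (careful) bookkeeping exercise rather than a conceptual one.
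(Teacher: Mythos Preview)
Your plan is essentially the paper's: reduce $\gamma_1,\gamma_2$ to bilinear products of the $g_p$ via~\eqref{lem:hatGamma(i)}--\eqref{lem:hatGamma(ii)}, replace $\partial_t g_p$ by $\Delta g_p$ and $\mathbb P(g_p\cdot\nabla g_r)$ through the equation, and then feed in the $\R^2$ decay rates from Lemma~\ref{lem:propNS}. Two concrete points deserve correction, though.

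First, the pointwise claim $\|\nabla g_p(t)\|_{H^\ell}\lesssim\langle t\rangle^{-1}$ is not available: Lemma~\ref{lem:propNS} gives $\|D^\alpha g_p(t)\|_{L^2}^2\lesssim\langle t\rangle^{-(1+|\alpha|)}$ only for $|\alpha|\le\ell$, so the top order $D^{\ell+1}g_p$ is missing. You do not need it. The paper simply pulls the weight onto the undifferentiated factor, $\int_1^\infty\langle t\rangle^{2b}\|g_p\|_{H^\ell}^2\|\nabla g_q\|_{H^\ell}^2\,dt\le\big(\sup_t\langle t\rangle^{2b}\|g_p\|_{H^\ell}^2\big)\|\nabla g_q\|_{L^2_tH^\ell}^2$, which is finite for $2b\le 1$ since $\|g_p(t)\|_{H^\ell}^2\lesssim\langle t\rangle^{-1}$.

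Second, your H\"older split for the $L^{4/3}_{t,x}$ bound in~$(ii)$ is written with the wrong weight: splitting off $\langle t\rangle^{1/2}\mathcal F_x^{-1}\gamma_2\in L^2_tL^{4/3}_x$ leaves $\langle t\rangle^{b-1/2}$ needing to be in $L^4_t$, which forces $b<1/4$, not $b<1/2$. Either split off $\langle t\rangle^{3/4}\mathcal F_x^{-1}\gamma_2$ (the sharp case of the second claim in~$(ii)$), or follow the paper and work directly in physical space: $\langle t\rangle^{b}\|\Delta g_p\,g_q\|_{L^{4/3}_x}\lesssim\|\nabla g_p\|_{H^1}\,\langle t\rangle^{b}\|g_q\|_{L^4}$, and since $\|g_q(t)\|_{L^4}\lesssim\langle t\rangle^{-3/4}$ the second factor lies in $L^4_t$ exactly when $b<1/2$, after which H\"older in time ($L^2\cdot L^4\hookrightarrow L^{4/3}$) closes. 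More generally, for all the $L^{4/3}_x$ statements the paper bypasses the Sobolev embedding $L^{4/3}(\R^2)\hookrightarrow\dot H^{-1/2}(\R^2)$ in favour of direct H\"older in $x$ using the $L^q$ decay $\|g_p(t)\|_{L^q}\lesssim\langle t\rangle^{-(1-1/q)}$; this is slightly cleaner and avoids the homogeneous product rule~\eqref{prodrules}, though your route would also go through.
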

\begin{proof}
Similarly to above, it is enough to get estimates on $g_p \nabla g_q$ for $(i)$, on $(\partial_t g_p) g_q$ for $(ii)$ and on $g_pg_q$ for $(iii)$ for any $p$ and $q$. The proof mainly relies on Lemma~\ref{lem:propNS}. 

\smallskip

For the point~$(i)$, since $H^\ell $ is an algebra, we have for any $p$, $q$:
\begin{align*}
&\|\langle t \rangle^b g_p \nabla g_q\|^2_{L^2_tH^\ell } \\
&\quad \lesssim \int_0^1 \|g_p(t,\cdot) \|^2_{H^\ell } \|\nabla g_q(t,\cdot)\|^2_{H^\ell } \, dt + \int_1^\infty \langle t \rangle^{2b} \|g_p(t,\cdot)\|^2_{H^\ell } \|\nabla g_q(t,\cdot)\|^2_{H^\ell } \, dt \\
&\quad =: I_1+I_2.
\end{align*}
The term $I_1$ is finite since for any $p$, we have $g_p \in L^\infty_t H^\ell$ and $\nabla g_p \in L^2_t H^\ell$ from Lemma~\ref{lem:propNS}. For $I_2$, from Lemma~\ref{lem:propNS}, we have:
$$
I_2 \lesssim\Big( \sup_t {\langle t \rangle^{2b -1}} \|\nabla g_q\|_{L^2_t H^\ell}^2\Big) \Big( \sup_t \langle t \rangle \|  g_p(t)\|_{ H^\ell}^2\Big)  
$$
which is finite since~$b \leq 1/2,$
from which we can conclude. 
For the second part of $(i)$, using that $H^\ell$ is an algebra and H\"older's inequality in time, we can write that for any $p$, $q$:
$$
\|g_p \nabla g_q\|_{L^{\frac43}_t H^\ell} \lesssim \|g_p\|_{L^4_t H^\ell} \|\nabla g_q\|_{L^2_t H^\ell}
$$
which gives the result still using Lemma  \ref{lem:propNS}.
 
\smallskip

Concerning $(ii)$, we use the same strategy keeping in mind that norms on~$\partial_t g_p$ can be controled by  the same norms on~$\Delta g_p$ and ${\mathbb P}(g_p \cdot\nabla g_r )$. Moreover, for any~$1 \le p,q,r \le 4$, using the second inequality of~\eqref{eq:Deltagg}, we have 
\begin{align*}
& \int_1^\infty \langle t \rangle^{2b} \left(\|\Delta g_p(t,\cdot) g_q(t,\cdot)\|^2_{H^{\ell-1} }+\|{\mathbb P}(g_p \cdot\nabla g_r )  g_q(t,\cdot)\|^2_{H^{\ell-1} }\right) \, dt \\
&\quad \lesssim \int_1^\infty \langle t \rangle^{2b} \Big(\|\nabla g_p\|^2_{H^{\ell-1} } \|\nabla g_q\|^2_{H^\ell} + \|\nabla g_p\|^2_{H^\ell} \|\nabla g_q\|^2_{H^{\ell-1}} + \|\nabla g_p\|^2_{H^\ell} \|g_q\|^2_{H^\ell} \\
&\hskip 10cm + \|g_p\|^2_{H^\ell} \|g_q\|^2_{H^\ell} \|\nabla g_r\|^2_{H^\ell}\Big) \, dt \, .
\end{align*}
So recalling that from Lemma~\ref{lem:propNS}, for any $p$,
$$
\|\nabla g_p\|^2_{H^{\ell} } \in L^1_t\, ,  \quad  \|g_p(t)\|^2_{H^\ell} \lesssim \frac1{\langle t \rangle}\quad \mbox{and}\quad  \|\nabla g_p(t)\|^2_{H^{\ell-1} }\lesssim {1 \over \langle t \rangle^2} \, ,
$$
we get the result as soon as~$b\le1/2$. For the second part of $(ii)$, still using Lemma~\ref{lem:propNS}, we notice that thanks to H\"older's inequality, for~$t \gtrsim 1$, 
$$
\|\Delta g_p g_q\|^2_{L^{\frac43}} \lesssim \|\Delta g_p \|^2_{L^2} \|g_q\|^2_{L^4} \lesssim  \|\nabla g_p\|^2_{H^{\ell} } {1 \over {\langle t \rangle^{\frac32}}}
$$
and 
$$
\begin{aligned}
\|{\mathbb P}(g_p \cdot\nabla g_r ) g_q\|^2_{L^{\frac43}}& \lesssim \||{\mathbb P}(g_p \cdot\nabla g_r ) \|^2_{L^\frac85}  \|g_q\|^2_{L^8}
\\& \lesssim \|\nabla g_r\|^2_{L^2} \|g_p\|^2_{L^8} \|g_q\|^2_{L^8} \\& 
\lesssim  \|\nabla g_r\|^2_{H^{\ell} }  {1 \over {\langle t \rangle^{\frac72}}}\, \cdotp
\end{aligned}
$$
Consequently, for any $b\le 3/4$, we have 
$$
\langle t \rangle^b (\partial_t g_p) g_q \in L^2_t \big(\R^+,L^{\frac43}\big) \, .
$$
For the last part of $(ii)$, we notice that 
$$
\langle t \rangle^b \|\Delta g_p g_q\|_{L^{\frac43}} \lesssim \|\nabla g_p\|_{H^1} \langle t \rangle^b \|g_q\|_{L^4}
$$
and from Lemma~\ref{lem:propNS}, we have $\langle t \rangle^b \|g_q\|_{L^4} \in L^4_t$ as soon as $b<1/2$. Similarly, 
$$
\begin{aligned}
\|\langle t \rangle^b {\mathbb P}(g_p \cdot\nabla g_r ) g_q\|_{L^{\frac43}_{t,x}}
& \lesssim
 \big\|\langle t \rangle^{\frac b2}  {\mathbb P}(g_p \cdot\nabla g_r )\big\|_{L^\frac85_{t,x}} 
 \big\|\langle t \rangle^{\frac b2} g_q\big\|_{L^8_{t,x}} 
\\
& \lesssim \|\nabla g_r\|_{L^2_{t,x}} \big\|\langle t \rangle^{\frac b2} g_p\big\|_{L^8_{t,x}}\big\|\langle t \rangle^{\frac b2} g_q\big\|_{L^8_{t,x}} 
\end{aligned}
$$
which is also bounded if $b<1/2$ from Lemma~\ref{lem:propNS}. 

\smallskip

Finally, the first part of $(iii)$ is clear from Lemma~\ref{lem:propNS} and the second one comes from the   H\"older inequality
$$
\|g_pg_q\|_{L^{\frac43}} \le \|g_p\|_{L^2} \|g_q\|_{L^4}
$$
which is uniformly bounded in time from Lemma~\ref{lem:propNS}. 
This concludes the proof of   Lemma~\ref{lem:gamma&tildegammaR2}.
\end{proof}

\end{document}